\def\@tocline#1#2#3#4#5#6#7{\relax
  \ifnum #1>\c@tocdepth % then omit
  \else
    \par \addpenalty\@secpenalty\addvspace{#2}%
    \begingroup \hyphenpenalty\@M
    \@ifempty{#4}{%
      \@tempdima\csname r@tocindent\number#1\endcsname\relax
    }{%
      \@tempdima#4\relax
    }%
    \parindent\z@ \leftskip#3\relax \advance\leftskip\@tempdima\relax
    \rightskip\@pnumwidth plus4em \parfillskip-\@pnumwidth
    #5\leavevmode\hskip-\@tempdima
      \ifcase #1
       \or\or \hskip 1em \or \hskip 2em \else \hskip 3em \fi%
      #6\nobreak\relax
    \dotfill\hbox to\@pnumwidth{\@tocpagenum{#7}}\par
    \nobreak
    \endgroup
  \fi}
 \numberwithin{equation}{section}
\def\bB{{\mathbb{B}}}
\def\bR{{\mathbb{R}}}
\def\bS{{\mathbb{S}}}
\def\bN{{\mathbb{N}}}
\def\cH{{\mathcal{H}}}
\def\cA{{\mathscr{A}}}
\def\cC{{\mathscr{C}}}
\def\cF{{\mathscr{F}}}
\def\cG{{\mathscr{G}}}
\def\csH{{\mathscr{H}}}
\def\cM{{\mathscr{M}}}
\def\cP{{\mathscr{P}}}
\def\cS{{\mathscr{S}}}
\def\one{\mathds{1}}
\def\ve{\varepsilon}
\renewcommand{\d}{{\partial}}
\def\lec{\lesssim}
\def\gec{\gtrsim}
\def\ext{\mathop\mathrm{ext}} 	
\DeclareMathOperator{\diam}{diam}
\def\Tan{\mathop\mathrm{Tan}} 					%tangent measures
\def\BMO{\mathop\mathrm{BMO}} 					%BMO
\def\VMO{\mathop\mathrm{VMO}} 					%VMO
\def\Lip{\mathop\mathrm{Lip}} 						%lipschitz constant 
\def\dim{\mathop\mathrm{dim}} 					%dimension	
\def\dist{\mathop\mathrm{dist}} 						%distance
\def\supp{\mathop\mathrm{supp}}					%support
\def\loc{\mathop\mathrm{loc}}						%locally
\renewcommand{\div}{\mathop\mathrm{div }}			%divergence
\def\warrow{\rightharpoonup}								%weak limit
\newcommand{\ps}[1]{\left( #1 \right)}
\newcommand{\bk}[1]{\left[ #1 \right]}
\newcommand{\ck}[1]{\left\{#1 \right\}}
\newcommand{\av}[1]{\left| #1 \right|}
\newcommand{\ceil}[1]{\left\lceil #1 \right\rceil}
\newcommand{\cnj}[1]{\overline{#1}}
\def\Xint#1{\mathchoice
{\XXint\displaystyle\textstyle{#1}}%
{\XXint\textstyle\scriptstyle{#1}}%
{\XXint\scriptstyle\scriptscriptstyle{#1}}%
{\XXint\scriptscriptstyle\scriptscriptstyle{#1}}%
\!\int}
\def\XXint#1#2#3{{\setbox0=\hbox{$#1{#2#3}{\int}$ }
\vcenter{\hbox{$#2#3$ }}\kern-.58\wd0}}
\def\avint{\Xint-}
\def\grad{\nabla}
\theoremstyle{plain}
\newtheorem{theorem}{Theorem}
\newtheorem{corollary}[theorem]{Corollary}
\newtheorem{lemma}[theorem]{Lemma}
\newtheorem{proposition}[theorem]{Proposition}
\theoremstyle{definition}
\newtheorem{definition}[theorem]{Definition}
\numberwithin{equation}{section}
\numberwithin{theorem}{section}
\newcommand\eqn[1]{\eqref{e:#1}}
\def\Claim{ {\bf Claim: }}
\newcommand\Theorem[1]{Theorem \ref{t:#1}}
\newcommand\Lemma[1]{Lemma \ref{l:#1}}
\newcommand\Corollary[1]{Corollary \ref{c:#1}}
\newcommand\Proposition[1]{Proposition \ref{p:#1}}
  \DeclareFontFamily{U}{mathb}{\hyphenchar\font45} 
\DeclareFontShape{U}{mathb}{m}{n}{
      <5> <6> <7> <8> <9> <10> gen * mathb
      <10.95> mathb10 <12> <14.4> <17.28> <20.74> <24.88> mathb12
      }{}
\DeclareSymbolFont{mathb}{U}{mathb}{m}{n}
\DeclareMathSymbol{\toitself}      {3}{mathb}{"FD}  %*
\newcommand{\vv}{\vspace{2mm}}
\newcommand{\vvv}{\vspace{4mm}}
\newcommand{\dv}{\mathop{\rm div}}
\def\R{\mathbb{R}}
\def\vphi{\varphi}
\def\om{\Omega}
\def\hm{\omega}
  \newtheorem{main}{Theorem}
    \newtheorem{mainprop}{Proposition}
\begin{document}

\title[Tangent measures of elliptic harmonic measure]{Tangent measures of elliptic harmonic measure and applications}

\author{Jonas Azzam}
\address{School of Mathematics, University of Edinburgh, JCMB, Kings Buildings,
Mayfield Road, Edinburgh,
EH9 3JZ, Scotland.}
\email{j.azzam ``at" ed.ac.uk}

\author[Mihalis Mourgoglou]{Mihalis Mourgoglou}
\address{Departamento de Matem\`aticas, Universidad del Pa\' is Vasco, Aptdo. 644, 48080 Bilbao, Spain and\\
Ikerbasque, Basque Foundation for Science, Bilbao, Spain.}
\email{michail.mourgoglou@ehu.eus}

\keywords{Harmonic measure, elliptic measure, capacity density condition, $\Delta$-regular domains, tangent measures, absolute continuity, rectifiability, }
\subjclass[2010]{31A15,28A75,28A78,28A33}
\thanks{Both authors were supported by the ERC grant 320501 of the European Research Council (FP7/2007-2013). M.M. was supported  by the Basque Government through IKERBASQUE and the BERC 2014-2017 program, and by Spanish Ministry of Economy and Competitiveness MINECO: BCAM Severo Ochoa excellence accreditation SEV-2013-0323 and MTM2014-53850.}

\newcommand{\mih}[1]{}%\marginpar{\color{red} \scriptsize \textbf{Mi:} #1}}
\newcommand{\jonas}[1]{}%\marginpar{\color{magenta} \scriptsize \textbf{Jonas:} #1}}

\maketitle

\begin{abstract}
	Tangent measure and blow-up methods, are powerful tools for understanding the relationship between the infinitesimal structure of the boundary of a domain and the behavior of its harmonic measure. We introduce a method for studying tangent measures of elliptic measures in arbitrary domains associated with (possibly non-symmetric) elliptic operators in divergence form whose coefficients have vanishing mean oscillation at the boundary. In this setting, we show the following for domains $ \om \subset \bR^{n+1}$:
	\begin{enumerate}
		\item We extend the results of Kenig, Preiss, and Toro \cite{KPT09} by showing mutual absolute continuity of interior and exterior elliptic measures for {\it any} domains implies the tangent measures are a.e. flat and the elliptic measures have dimension $n$.
		\item We generalize the work of Kenig and Toro \cite{KT06} and show that $\VMO$ equivalence of doubling interior and exterior elliptic measures for general domains implies the tangent measures are always elliptic polynomials.
		\item  In a uniform domain that satisfies the capacity density condition and whose boundary is locally finite and has a.e. positive lower $n$-Hausdorff density, we show that if the elliptic measure is absolutely continuous with respect to $n$-Hausdorff measure then the boundary is rectifiable. This generalizes the work of Akman, Badger, Hofmann, and Martell \cite{ABHM17}.
		\end{enumerate}
	
	Finally, we generalize one of the main results of \cite{Bad11} by showing that if $\omega$ is a Radon measure for which all tangent measures at a point are harmonic polynomials vanishing at the origin, then they are all homogeneous harmonic polynomials.
\end{abstract}

\tableofcontents

\section{Introduction}
\subsection{Background} 
%The harmonic measure for a domain $\Omega\subset \bR^{n+1}$ with pole at $x\in \Omega$ is a measure $\omega_{\Omega}^{x}$ that computes the probability that Brownian motion starting at $x$ will first hit $\d\Omega$ in some set. Let $\Omega^{+}$ be a connected open set with exterior $\Omega^{-}=\ext(\Omega):=(\cnj{\Omega})^{c}$ and let $\omega^{\pm}$ denote harmonic measures for these two domains. Note that $\omega^{\pm}$ are both measures on the same set $\d\Omega^{+}$. 
In this paper, we study how the relationships between the elliptic measures, $\omega^{+}$ and $\omega^{-}$, of two complementary domains, $\Omega^{+}$ and $\Omega^{-}$ respectively, dictate the geometry of their common boundaries. In \cite{KPT09}, Kenig, Preiss, and Toro showed that if $\Omega^{\pm}$ are both nontangentially accesible (or NTA) domains in $\R^{n+1}$ and the interior and exterior harmonic measures are mutually absolutely continuous, then at every point of the common boundary except for a set of harmonic measure zero, $\d\Omega^{+}$ looks flatter and flatter as we zoom in. We will not define NTA but refer the reader to its inception in \cite{JK82}.  Recently, the authors of the current paper, along with Tolsa \cite{AMT16}, as well as with Tolsa and Volberg \cite{AMTV16}, showed  that additionally the boundary is $n$-rectifiable in the sense that, off a set of harmonic measure zero, the boundary is a union of Lipschitz images of $\bR^{n+1}$, and in fact $\Omega^{+}$ and $\Omega^{-}$ need not be NTA but just connected. 

These are, however, almost everywhere phenomena, so it is interesting to ask what assumptions we need on $\omega^{\pm}$ to guarantee some nice limiting behavior of our blowups at {\it every} point. In \cite{KT06}, Kenig and Toro showed that if  $\Omega^+$ is 2-sided NTA and $\log \frac{d\omega^{-}}{d\omega^{+}}\in \VMO(d\omega^{+})$, then as we zoom in on any point of the boundary for a particular sequence of scales, $\d\Omega^{+}$ begins to look more and more like the zero set of a harmonic polynomial (see Section \ref{s:vmo} for the definition of $\VMO$). In \cite{Bad11}, Badger proved that if the blowups of harmonic measure for an NTA domain are harmonic polynomials, they are in fact homogeneous polynomials, and later in \cite{Bad13} investigated the topological properties of the sets where the blowups were polynomials of particular degrees.

%
%Engelstein has also shown in \cite{E15} that higher regularity in $\frac{d\omega^{-}}{d\omega^{+}}$ implies better regularity of $\d\Omega^{+}$ if $\Omega^{+}$ is two-sided NTA.

To explain these results in more detail, we need to discuss what we mean by ``blowups" and what it means for these to look like not necessarily one object but any one of a class of objects as we zoom in on harmonic measure. There are two ways we can consider this. Firstly, we can look at the Hausdorff convergence of rescaled copies of the support of a measure as we zoom in. To do this, we follow the framework of Badger, Engelstein, and Toro \cite{BET17}.
\begin{definition}
Let $A\subset \bR^{n+1}$ be a set. For $x\in A$, $r>0$, and $\cS$ a collection of sets, define
\[
\Theta_{A}^{\cS}(x,r)= \inf_{S\in \cS} \max\ck{\sum_{a\in A\cap B(x,r)} \frac{\dist(a,x+S)}{r}, \hspace{-12pt}\sum_{z\in (x+S)\cap B(x,r)} \frac{\dist(z,A)}{r}}.\]
We say $x\in A$ is a {\it $\cS$ point} of $A$ if $\lim_{r\rightarrow 0} \Theta_{A}^{\cS}(x,r)=0$. We say $A$ is {\it locally bilaterally well approximated by $\cS$} (or simply $LBWA(\cS)$) if for all $\ve>0$ and all compact sets $K\subset A$, there is $r_{\ve,K}>0$ such that $\Theta_{A}^{\cS}(x,r)<\ve$ for all $x\in K$ and $0<r<r_{\ve,K}$.
\end{definition}

Thus, for $x\in A$ to be a $\cS$ point means that, as we zoom in on $A$ at the point $x$, the set $A$ resembles more and more like an element of $\cS$ (though that element may change as we zoom in). 

Secondly, we can look at the weak convergence of rescaled copies of the measure itself. To do this, we follow the framework of Preiss in \cite{Pr87}. For $a\in\bR^{n+1}$ and $r>0$, set
\[T_{a,r}(x) = \frac{x-a}{r}.\]
Note that $T_{a,r}(B(a,r))=B(0,1)$. Given a Radon measure $\mu$, the notation $T_{a,r}[\mu]$ is the image measure of $\mu$ by $T_{a,r}$.
That is,
$$T_{a,r}[\mu](A) = \mu(rA+a),\qquad A\subset\bR^{n+1}.$$
Here and later, for a function $f$ and a measure $\mu$, we write $f[\mu]$ to denote the push-forward measure measure $f[\mu](A)=\mu(f^{-1}(A))$.
\begin{definition}
We say that $\nu$ is a {\it tangent measure} of $\mu$ at a point $a\in\bR^{n+1}$ if
$\nu$ is a non-zero Radon measure on $\bR^{n+1}$ and there are sequences $c_{i}>0$ and $r_{i}\downarrow 0$ so that $c_i\,T_{a,r_{i}}[\mu]$ converges weakly to $\nu$ as $i\to\infty$ and write $\nu\in \Tan(\mu,a)$.
\end{definition}

That is, $\nu$ is a tangent measure of $\mu$ at a point $\xi$ if, as we zoom in on $\mu$ at $\xi$ for a sequence of scales, the rescaled $\mu$ converges weakly to $\nu$.

The collections of measures and sets that we will consider are associated to zero sets of harmonic functions. 
Let $H$ denote the set of harmonic functions vanishing at the origin, $P(k)$ denote the set of harmonic polynomials $h$ of degree $k$ such that $h(0)=0$ and $F(k)$ the set of homogeneous polynomials of degree $k$. For $h\in H$, we define 
\[
\Sigma_{h}=\{h=0\}, \;\;\; \Omega_{h}=\{h>0\}, \]
and  
\[\csH=\{\omega_{h}: h\in H\}, \; \cP(k)=\{\omega_{h}:h\in P(k)\}, \; \cF(k)=\{\omega_{h}: h\in F(k)\}, \]
where
\[
\omega_{h}= -\nu_{\Omega_{h}} \cdot \grad h d\sigma_{\Sigma_{h}}.\]
Also set
\[
\cP_{\Sigma}(k)=\{\Sigma_{h}: h\in P(1)\cup\cdots \cup P(k)\},\;\; \cF_{\Sigma}(k)=\{\Sigma_{h}:h\in F(k)\}\,
\]
and
\[
 \cH_{\Sigma}=\{\Sigma_{h}:h\in H\}.\]
Here $\nu_{\Omega_{h}}(x)$ stands for the measure theoretic  unit  outward normal of $\Omega_{h}$ at $x \in \d^* \Omega_{h}$, the reduced boundary of $\Omega_{h}$. Since $h$ is a harmonic function and thus, real analytic, which implies that $\Sigma_{h}$ is an $n$-dimensional real analytic variety, $\Omega_h$ is a set of locally finite perimeter and one can prove that $\cH^{n}(\d \Omega_{h} \setminus \d^* \Omega_{h})=0$, where $\cH^{n}$ stands for the $n$-Hausdorff measure. Notice now that $\nu_{\Omega_{h}}(x)$ is defined at $\cH^{n}$-almost every point of $\Sigma_{h}$ and $\sigma_{\Sigma_h}$ is the usual surface measure. For a detailed proof of this see  \cite[p. 21]{AMT16}.

%We also let
%\[\cH'=\{\Sigma_{h}: h\in H\}, \;\;\; \cP(k)'=\{\Sigma_{h}:h\in P(k)\}, \;\;\; \cF(k)'=\{\Sigma_{h}: h\in F(k)\}.\]

We summarize the best results to date. We first mention a result by the authors, Tolsa, and Volberg.

\begin{theorem}
	Let $\Omega^{\pm}\subset \bR^{n+1}$ be two disjoint domains and $\omega^{\pm}=\omega^{L_{A},x_{\pm}}_{\Omega^{\pm}}$ for some $x_{\pm}\in \Omega^{\pm}$. If $\omega^{\pm}$ are mutually absolutely continuous on $E$, then for $\omega^{\pm}$-a.e. $\xi\in E$, $\Tan(\omega^{\pm},\xi)\subset \cF(1)$ and $\omega^{+}|_{E}$ can be covered up to a set of $\omega^{+}$-measure zero by $n$-dimensional Lipschitz graphs. Furthermore, if $\d\Omega^{\pm}$ are CDC, then $\lim_{r\rightarrow 0}\Theta_{\d\Omega^{+}}^{\cF_{\Sigma}(1)}(\xi,r)=0$ for $\omega^{+}$-a.e. $\xi\in E$. 
\end{theorem}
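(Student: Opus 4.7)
The plan is to reduce everything to the harmonic-measure setting treated in \cite{KPT09}, and then to apply the rectifiability machinery of \cite{AMT16,AMTV16}. First I would fix a point $\xi$ at which both Radon--Nikodym derivatives $h^{\pm} = \frac{d\omega^{\mp}}{d\omega^{\pm}}$ exist and are positive and finite; by mutual absolute continuity this happens $\omega^{\pm}$-a.e.\ on $E$. For such a $\xi$, a Preiss-type argument shows that if $c_i T_{\xi,r_i}[\omega^{+}] \rightharpoonup \nu^{+}$, then after passing to a subsequence the same rescaling applied to $\omega^{-}$ converges to $\nu^{-} = h^{-}(\xi)\,\nu^{+}$. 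So tangent measures of $\omega^{+}$ and $\omega^{-}$ at $\xi$ come in mutually absolutely continuous pairs with constant density.

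Next I would identify the pair $(\nu^{+},\nu^{-})$ as harmonic measures of two complementary open sets. Since $A$ has vanishing mean oscillation at the boundary, the rescaled coefficient matrices $A_{r_i}(x) = A(\xi+r_i x)$ converge in $L^1_{\loc}$ to the constant matrix $A_{0} := \lim_{r\to 0}\fint_{B(\xi,r)\cap\partial\Omega^{\pm}} A$; a linear change of variables then turns $L_{A_{0}}$ into the Laplacian. Combining this with compactness for elliptic measures in arbitrary domains (as developed in the body of the paper), the blow-up domains $\Omega^{\pm}_{\infty}$ are complementary open sets and $\nu^{\pm}$ are their harmonic measures. The rigidity in \cite{KPT09}, or rather its extension to non-NTA domains in \cite{AMT16,AMTV16}, then forces $\Omega^{\pm}_{\infty}$ to be complementary half-spaces and $\nu^{\pm}$ to be (a multiple of) Lebesgue measure on the common hyperplane, i.e.\ $\Tan(\omega^{\pm},\xi)\subset \cF(1)$. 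The covering of $\omega^{+}|_{E}$ by $n$-dimensional Lipschitz graphs follows from this flatness a.e., again by \cite{AMTV16}: once tangent measures are $n$-flat at $\omega^{+}$-a.e.\ point, one bounds the Jones $\beta_{2}$-coefficients of $\omega^{+}$ and invokes the rectifiability criterion via square functions.

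Finally, for the CDC part, the goal is to upgrade a measure-theoretic flatness statement to the bilateral Hausdorff statement $\Theta^{\cF_{\Sigma}(1)}_{\partial\Omega^{+}}(\xi,r)\to 0$. Here CDC is essential because it prevents the boundary from having thin cusps that are invisible to $\omega^{\pm}$ but visible to $\Theta$. The strategy is: at a flat scale $r$, CDC gives us corkscrew points from both $\Omega^{+}$ and $\Omega^{-}$ near any would-be boundary point far from the approximating plane; elliptic-measure non-degeneracy (Bourgain-type lower bounds on CDC domains) then yields a mass contribution to either $\omega^{+}$ or $\omega^{-}$ that contradicts the weak convergence to a flat measure. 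I expect the main obstacle to be precisely the last step: transferring flatness from the measure side to the set side without NTA, which requires fine elliptic estimates (Hölder continuity at the boundary, comparison principle, and Caffarelli--Fabes--Mortola--Salsa-type lemmas) valid only under CDC. The identification of $\nu^{\pm}$ as \emph{harmonic} measures, rather than merely elliptic measures for some limit operator, is the other delicate point, and is where the VMO assumption on $A$ is used in its sharpest form.
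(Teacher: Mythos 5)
This theorem is a restatement of results from \cite{AMT16,AMTV16} and is not re-proved in the present paper; the closest thing actually proved here is its elliptic generalization, Theorem~\ref{newKPT}, whose Section~6 argument is the natural one to compare against.

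The principal gap in your proposal is the claim that rigidity ``forces $\Omega^{\pm}_{\infty}$ to be complementary half-spaces.'' No such rigidity is available at the first blow-up scale. What the compactness arguments (Theorem~\ref{azmotovo2}, or the analogous ones in \cite{AMT16,AMTV16}) give is only that $\omega_\infty = \omega_{u_\infty}$ for a nontrivial global $L_{A_0}$-harmonic function $u_\infty$ of polynomial growth, with $\Omega^\pm_\infty=\{\pm u_\infty>0\}$; there is nothing that forces $u_\infty$ to be linear, and for an individual blow-up sequence it generally will not be. The conclusion $\Tan(\omega^\pm,\xi)\subset\cF(1)$ requires two further steps that your outline omits: (i) apply ``tangent measures of tangent measures are tangent measures'' (Theorem~\ref{t:ttt}) together with the a.e.\ rectifiability of the real-analytic variety $\Sigma_{u_\infty}$ to get $\Tan(\omega^+,\xi)\cap\cF(1)\neq\varnothing$ at $\omega^+$-a.e.\ $\xi$ (Lemma~\ref{l:capf}); then (ii) invoke Preiss's connectivity machinery (Lemma~\ref{l:newpreiss}, via Lemma~\ref{logw}), which crucially uses that $\cF(1)$ has compact basis, to upgrade ``contains a flat measure'' to ``is contained in $\cF(1)$.'' Without step (ii) the a.e.\ flatness does not follow. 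Two smaller points: CDC does not supply corkscrew points as you suggest — the mechanism behind $\Theta\to 0$ is boundary H\"older continuity of the rescaled Green functions and the Hausdorff convergence $\d\Omega^+_j\to\Sigma$ recorded in Lemma~\ref{l:azmoto}(a). And the theorem as written uses $\omega^{L_A}$ with no oscillation hypothesis on $A$; for a general elliptic operator one needs the $\VMO$ condition of Theorem~\ref{newKPT} to run the constant-coefficient blow-up you describe, so the statement should be read for $A=I$ or amended accordingly.
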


This was originally shown by Bishop, Carleson, Garnett, and Jones for simply connected planar domains \cite{BCGJ88}.  Later, Kenig, Preiss and Toro showed that, under the same assumptions, provided that the domain is also 2-sided locally NTA, it holds that $\dim \omega^{+}=n$ (but not that $\omega^{+}$ is rectifiable).

Below we summarize the results so far in the situation when $\Omega$ is 2-sided NTA and the interior and exterior harmonic meausres are $\VMO$ equivalent, which brings together results and techniques from Badger \cite{Bad11,Bad13} and Kenig and Toro \cite{KT06}. 

\begin{theorem}\label{t:bet}
Let $\Omega^{+}\subset \bR^{n+1}$ and $\Omega^{-}=\ext(\Omega^{+})$ be NTA domains, and let $\omega^{\pm}$ be the harmonic measure in $\Omega^{\pm}$ with pole $x^{\pm}\in \Omega^{\pm}$. Assume that $\omega^{+}$ and $\omega^{-}$ are mutually absolutely continuous and $f:=\frac{d\omega^{-}}{d\omega^{+}}$ satisfies $\log f\in \VMO(d\omega^{+})$. Then, there exists $d \in \bN$ (depending on $n$ and the NTA constants) such that the boundary $\d\Omega^{+}$ is $LBWA(\cP_{\Sigma}(d))$ and may be decomposed into sets $\Gamma_{1},...,\Gamma_{d}$ satisfying the following.
\begin{enumerate}
%\item For $1\leq k\leq d$, $\Gamma_{k}$ is the set of $\cF(k)'$ points for $\d\Omega$,
\item For $1\leq k\leq d$, $\Gamma_{k}
=\{\xi\in \d\Omega^{+}: \Tan(\omega^{+},\xi)\subset \cF(k)\}$.
\item $\Gamma_{1}\cup\cdots \Gamma_{d}= \d\Omega^{+}$.
\item $\lim_{r\rightarrow 0}\Theta_{\d\Omega^{+}}^{\cF_{\Sigma}(k)}(\xi,r)=0$ for $\xi\in \Gamma_{k}$. 
%\item \cite{BET17} For $1\leq k\leq d$, $U_{k}:=\Gamma_{1}\cup \cdots \cup \Gamma_{k}$ is an open dense subset of $\d\Omega^{+}$
%$U_{k}\in LBWA(\cP(k)')$ (in particular, $\d\Omega^{+}\in LBWA(\cP(k)')$), and $\Gamma_{k+1}\cup \cdots \cup \Gamma_{d}$ is closed. 
%\item \cite{BET17} $\d\Omega^{+}\backslash \Gamma_{1}$ has upper Minkowski dimension at most $n-1$.
%\item \cite{BET17}  The ``even singular set" $\Gamma_{2}\cup \Gamma_{4}\cup\cdots$ has Hausdorff dimension at most $n-2$.
%\item \cite{BET17}  When $n\geq 2$, $\d\Omega^{+}\backslash \Gamma_{1}$ has Newtonian capacity zero.
\end{enumerate}
\end{theorem}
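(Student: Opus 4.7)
The plan is to synthesize three previously established ingredients: the Kenig-Toro tangent-measure identification for $\VMO$-equivalent harmonic measures \cite{KT06}, Badger's polynomial-to-homogeneous upgrade \cite{Bad11}, and the degree stratification of \cite{Bad13}. The statement summarizes results from these three papers, and the job of the proof is to assemble them into a single package and verify the uniform degree bound.

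First I would invoke the main theorem of \cite{KT06}: under the stated hypotheses, for every $\xi\in\d\Omega^{+}$ one has $\Tan(\omega^{+},\xi)\subset \bigcup_{k\geq 1}\cP(k)$. That argument uses $A_{\infty}$ estimates on 2-sided NTA domains to produce pseudo-blowups of the Green function which are harmonic on the interior blowup, then uses the $\VMO$ hypothesis on $\log f$ together with a reflection-type extension across the flattened boundary to continue them to entire space, and concludes via Liouville that these extensions are polynomials. Combining with \cite{Bad11}, every polynomial tangent measure is in fact homogeneous, so $\Tan(\omega^{+},\xi)\subset \bigcup_{k\geq 1}\cF(k)$. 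The uniform degree bound $d=d(n,\text{NTA})$ is extracted from doubling: harmonic measure on NTA domains is doubling with a constant depending only on $n$ and the NTA parameters, doubling passes to weak-$*$ limits, and for $h\in F(k)$ one computes $\omega_{h}(B(0,2))/\omega_{h}(B(0,1))=2^{n+k-1}$, so the doubling constant of $\omega_{h}$ grows without bound in $k$. This produces the integer $d$ appearing in the statement.

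For (1)-(2) define $\Gamma_{k}$ as stated; the genuine content is that at each $\xi\in\d\Omega^{+}$, \emph{all} tangent measures of $\omega^{+}$ at $\xi$ lie in a single $\cF(k)$. This is the main technical point and is essentially the content of \cite{Bad13}: after identifying measures that differ by a positive scalar, $\Tan(\omega^{+},\xi)$ is connected in the weak-$*$ topology by Preiss's classical tangent-of-tangents principle, while the quotiented classes $\cF(k)$ are pairwise separated because the ratios $\omega_{h}(B(0,2))/\omega_{h}(B(0,1))$ are quantized by the degree as computed above. Connectedness then forces all tangent measures at $\xi$ to share a common degree, giving $\d\Omega^{+}=\Gamma_{1}\cup\cdots\cup\Gamma_{d}$.

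For (3), fix $\xi\in \Gamma_{k}$ and run a compactness argument: any subsequential Hausdorff limit of $T_{\xi,r_{i}}(\d\Omega^{+})\cap \overline{B(0,1)}$ along a sequence $r_{i}\downarrow 0$ must coincide with $\Sigma_{h}$ for some $h\in F(k)$, because the NTA corkscrew and Harnack chain conditions prevent the boundary from collapsing and upgrade weak convergence of the rescaled elliptic measures to Hausdorff convergence of their supports (a staple of the blowup machinery in \cite{KT06}). Since every subsequence has such a limit, $\Theta_{\d\Omega^{+}}^{\cF_{\Sigma}(k)}(\xi,r)\to 0$. The principal obstacle in the whole argument is the pointwise degree control in the previous paragraph, i.e., ruling out that $\Tan(\omega^{+},\xi)$ splits across different strata $\cF(k)$; this is precisely why the topological/connectedness machinery of \cite{Bad13} is needed rather than a direct citation of \cite{KT06,Bad11}.
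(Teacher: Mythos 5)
This theorem is not actually proved in the paper: it is stated in the introduction as a synthesis of prior work, with a citation to \cite{KT06}, \cite{Bad11}, and \cite{Bad13} and no proof given. So there is no internal proof to compare against. Treating your proposal as a reconstruction from the literature, the skeleton is right (polynomial blowups from \cite{KT06}, homogeneity and degree stratification from Badger, doubling bounds for the degree), but there are two substantive issues.

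First, an attribution error that matters because it pushes the "main technical point" onto the wrong source. You write that the pointwise degree-uniqueness --- all tangent measures at a fixed $\xi$ sharing a single $\cF(k)$ --- "is essentially the content of \cite{Bad13}." It is not: that statement is exactly \cite[Theorem 1.1]{Bad11}, which this paper quotes verbatim as Theorem~\ref{t:badtheorem}. The paper \cite{Bad13} studies the topological and dimensional structure of the strata $\Gamma_k$ once this decomposition is in place; it does not provide the decomposition itself. You also cite \cite{Bad11} earlier only for "polynomial implies homogeneous," which underclaims: \cite[Theorem 1.1]{Bad11} gives both homogeneity and uniqueness of degree at once.

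Second, your mechanism for separating the $\cF(k)$'s is incomplete. The observation that $\omega_h(B(0,2))/\omega_h(B(0,1))=2^{n+k-1}$ for $h\in F(k)$ is correct (and correctly yields the uniform degree bound $d$ via Theorem~\ref{t:compactdouble}), but "quantized doubling ratios at scale $1$" is not by itself a separation statement between the $d$-cones $\cF(k)$ in the $d_r$-metric, which is what the Preiss connectivity lemma (Lemma~\ref{l:newpreiss} in the paper, or \cite[Corollary 2.16]{KPT09}) requires as input. What plugs into it is the quantitative lemma \cite[Lemma 4.7]{Bad11}, reproduced as Lemma~\ref{l:bad}: if $h\in P(m)$ and $d_r(\omega_h,\cF(k))<\ve$ for all $r\geq r_0$, then $m=k$. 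The route via connectivity is the correct one; the doubling-ratio heuristic sits at the wrong level of precision to close it, because along a connecting path of rescalings the intermediate measures need not be homogeneous, and then the ratio is not quantized --- one needs the cone-distance version. Finally, a minor note: the \cite{KT06} blowup does not use a reflection extension or Liouville per se; it shows the two-sided Green function difference $u_\infty^{+}-c^{-1}u_\infty^{-}$ is directly a global harmonic function, and then the degree bound comes from polynomial growth via Cauchy estimates, as in Lemma~\ref{logw} and Lemma~\ref{l:blowup1} of this paper.
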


The work of \cite{BET17} studies the geometric structure of the set as well as the tangent measure structure using the conclusions of the results above. We refer to their work for more details.

%In fact, they are able to replace $\cF(k)'$ and $\cP(k)'$ with smaller classes of measures that are boundaries of two-sided NTA domains, but we state it this way for the sake of exposition. 

%For the non-VMO case, if we just assume mutual absolute continuity of $\omega^{\pm}$ on the boundary of an NTA domain $\Omega^{+}$ with NTA complement $\Omega^{-}$, Kenig, Preiss, and Toro first showed this implied that $\dim \omega^{\pm}\leq n$ \cite{KPT09} and the tangent measures are almost everywhere flat measures, that is, $\Tan(\omega^{\pm},\xi)\subset \cF(1)$ for $\omega^{\pm}$-a.e. $\xi\in \d\Omega^{\pm}$. In \cite{AMT16} and \cite{AMTV16}, the authors, Tolsa, and Volberg showed that mutual absolute continuity implied $n$-rectifiability of the measure.
There are two common threads to all these results: the first is that tangent measures of harmonic measures that are mutually absolutely continuous typically correspond to measures supported along zero sets of harmonic polynomials, and secondly, the use of tangent measures, specifically the connectivity techniques introduced by Preiss in \cite{Pr87}. Using this last technique guarantees, roughly, that if all the tangent measures of a harmonic measure at a point are harmonic polynomials, then they must all be of the same degree (see \cite{KPT09} and \cite{Bad11}). 

\subsection{Blowups of elliptic measures}
In this paper, our objective is to recreate some parts of these results to a class of elliptic measures. Admittedly, there are more results that could be generalized to this setting, like Tsirelson's theorem (using the method of Tolsa and Volberg \cite{TV16-prep}), but we content ourselves with the present results to convey the flexibility of the method.

Let  $\Omega \subset \R^{n+1}$ be open and $A=(a_{ij})_{1 \leq i,j \leq n+1}$ be a matrix with real measurable coefficients in $\Omega$.
We say that $A$ is a {\it  uniformly elliptic matrix} with constant $\Lambda \geq 1$ and write $A\in \cA$ if it satisfies the following conditions:
\begin{align}\label{eqelliptic1}
&\Lambda^{-1}|\xi|^2\leq \langle  A\xi,\xi\rangle,\quad \mbox{ for all $\xi \in\R^{n+1}$},\\
&\langle A \xi,\eta \rangle  \leq\Lambda |\xi| |\eta|, \quad \mbox{ for all $\xi, \eta \in\R^{n+1}$}.\label{eqelliptic2}
\end{align}
 Notice that the matrix is {\it possibly non-symmetric,} and has variable coefficients. We say $L_{A}$ is a {\it uniformly elliptic operator} on $\Omega$, if $L_{A}$ is given by 
$$L_A = -{\rm div} (A(\cdot)\nabla).$$
We will let $\omega_{\Omega}^{A,x}$ denote the $L_{A}$-(elliptic) harmonic measure in $\Omega$ with pole at $x$ (see \cite{HKM} for the definition).  It is clear that the transpose matrix of $A$, which we denote $A^T$, is also uniformly elliptic on $\om$.  Finally, a function $u: \Omega \to \R$ that satisfies the equation $Lu=0$ in the weak sense is called {\it $L$-harmonic}. We will denote $\cC$ the subclass of $\cA$ consisting of matrices with constant entries. 

%Fix $0<\lambda<\Lambda <\infty$ and let $\cA$ be the set of $(n+1)\times (n+1)$ matrices $A$ such that \jonas{Do we need to assume that the matrices $A$ are real?}
%\[
%\lambda |\xi|^{2} \leq  A\xi \cdot \xi\leq \Lambda |\xi|^{2}  \;\; \mbox{ for all }\xi\in \R^{n+1}.\]

%Whenever we say $A$ is an elliptic matrix, we mean that it belongs to $\cA$, and so all results should be understood to have constants depending implicitly on $\lambda$ and $\Lambda$. For a domain $\Omega\subset \R^{n+1}$, we say $L_{A}$ is an elliptic operator on $\Omega$, we mean that $L_{A}$ is given by 
%\[
%L_{A}=-\div A\grad \]
%for a function $A$ with $A(x)\in\cA$ for all $x\in \Omega$. We will let $\omega_{\Omega}^{A,x}$ denote the $L_{A}$-(elliptic) harmonic measure in $\Omega$ with pole at $x$ (see \cite{HKM} for the definition). 

To make sense of tangent measures of an elliptic measure at a point $\xi$ in its support, we need to assume that the coefficients $A$ do not oscillate too much there on small scales. 

\begin{definition}
Let $\Omega\subset \bR^{n+1}$ and and $L_{A}$ an elliptic operator on $\Omega$. For a compact set $E\subset \d\Omega$,  we will say that the coefficients of $L_A$ have {\it vanishing mean oscillation on $K$} with respect to $\Omega$ (or just $L_A \in \,\,\VMO(\om, K)$)  if
\begin{equation}\label{e:boundarylimitK}
\lim_{r\rightarrow 0} \sup_{\xi\in K}\frac{1}{r^{n+1}} \inf_{C\in \cC}\int_{B(\xi,r)\cap \Omega} |A(x)-C|dx=0.
\end{equation}
We also say the coefficients of $L_A$ have {\it $\VMO$ at $\xi\in \d\Omega$} if 
\begin{equation}\label{e:boundarylimit}
\lim_{r\rightarrow 0} \frac{1}{r^{n+1}} \inf_{C\in \cC}\int_{B(\xi,r)\cap \Omega} |A(x)-C|dx=0.
\end{equation}
\end{definition}

Much like the harmonic case, the tangent measures we will obtain are supported on zero sets of elliptic polynomials associated with an elliptic operator with constant coefficients. For a constant coefficient matrix $A$ with real entries, we will denote by $H_{A}$ the set of $L_{A}$-harmonic functions $u$ vanishing at zero, i.e. those functions $u$ for which
\[
\int A\grad u \grad \vphi dx=0 \mbox{ for all }\vphi\in C_{c}^{\infty}(\bR^{n+1}) \mbox{ and } u(0)=0. \]
We also let $P_{A}(k)$ denote the set of $L_{A}$-harmonic polynomials of degree $k$ vanishing at the origin, and $F_{A}(k)\subset P_{A}(k)$ the subset of homogeneous $L_{A}$-harmonic polynomials of degree $k$. When $A=I$, we will simply write $F(k), P(k)$ and $H$ in place of $F_{A}(k),P_{A}(k)$ and $H_{A}$. 

For $h\in H_{A}$, we will write 
\[d\omega_{h}^{A}=-\nu_{\Omega_{h}} \cdot A \nabla h \,d\sigma_{\Sigma_{h}},\]
where $\sigma_{S}$ stands for the surface measure on a surface $S$ and $\nu$ is the outward normal vector at $x \in \d^* \Omega_{h}$, the reduced boundary of $\Omega_{h}$. Again, when $A$ is the identity, we will drop the subscripts and, for example, write $\omega_{h}$ in place of $\omega_{h}^{A}$. For $\cS\subset \cC$, we write 
\[\mathscr{H}_{\cS}=\{\omega_{h}^{A}: h\in H_{A}, \; A\in \cS\}, \;\;\; \cP_{\cS}(k)=\{\omega_{h}^{A}:h\in P_{A}(k), \; A\in \cS\}, \]
\[ \cF_{\cS}(k)=\{\omega_{h}^{A}: h\in F_{A}(k), \; A\in \cS\},\]
\[
\mathscr{H}_{A}=\mathscr{H}_{\{A\}}, \;\; \cP_{A}=\cP_{\{A\}}, \;\; \cF_{A}=\cF_{\{A\}}
\]
and define $\mathscr{H}_{\cS,\Sigma},\cP_{\cS,\Sigma},$ and $\cF_{\cS,\Sigma}$ as we did before. Observe that $\cF_{\cC}(1) = \cF_{A}(1)=\cF(1)$ for any $A\in \cC$.
%%Here $\nu_{\Omega_{h}}(x)$ stands for the measure theoretic  unit  outward normal of $\Omega_{h}$ at $x \in \d^* \Omega_{h}$, the reduced boundary of $\Omega_{h}$. 
%Since $h$ is an $A$-harmonic function and thus, real analytic, which implies that $\Sigma_{h}$ is an $n$-dimensional real analytic variety, $\Omega_h$ is a set of locally finite perimeter and one can prove that $\cH^{n}(\d \Omega_{h} \setminus \d^* \Omega_{h})=0$, where $\cH^{n}$ stands for the $n$-Hausdorff measure. Notice now that $\nu_{\Omega_{h}}(x)$ is defined at $\cH^{n}$-almost every point of $\Sigma_{h}$ and $\sigma_{\Sigma_h}$ is the usual surface measure. For a detailed proof of this see  \cite[p. 21]{AMT16}.  When $A$ is the identity, this reduces to
%\[
%\omega_{h}= -\nu_{\Omega_{h}} \cdot \grad h d\sigma_{\Sigma_{h}}\]
%

%We also let
%\[\cH'=\{\Sigma_{h}: h\in H\}, \;\;\; \cP(k)'=\{\Sigma_{h}:h\in P(k)\}, \;\;\; \cF(k)'=\{\Sigma_{h}: h\in F(k)\}.\]

Our results also recover some LBWA properties implied in previous results if we consider domains satisfying the  Capacity Density Condition (CDC), whose complements also satisfy the CDC (see Definition \ref{CDC} below) and whose associated elliptic measures are doubling. Examples of domains satisfying these conditions are NTA domains or,  by \cite[Theorem 3.1]{Mar79},  any domain $\Omega$ for which there is $s>n-1$ such that $\cH^{s}_{\infty}(B(\xi,r)\cap d\Omega)/r^{s}\geq c>0$ for all $\xi\in \d\Omega$ and $r>0$ is a CDC domain. 
%The CDC is also equivalent to $\Delta$-regularity \cite{Anc86}. 

%Our first result generalizes the results of \cite{KT06} and \cite{Bad11} to elliptic measures.

Our first result extends the work of \cite{KPT09} to the elliptic case, and for domains beyond NTA. 
\begin{main}
	\label{newKPT}
	Let $\Omega^{\pm}\subset \bR^{n+1}$ be two disjoint domains and $L_{A}$ an elliptic operator on $\Omega^{+}\cup \Omega^{-}$ that is in $\VMO(\om^+ \cup \om^-,\xi)$ at $\omega^{+}$-almost every $\xi\in E\subset \d\Omega^{+}\cap \d\Omega^{-}$ with respect to either $\Omega^{\pm}$. Let $\omega^{\pm}=\omega^{L_{A},x_{\pm}}_{\Omega^{\pm}}$ for some $x_{\pm}\in \Omega^{\pm}$. If $\omega^{\pm}$ are mutually absolutely continuous on $E$, then for $\omega^{\pm}$-a.e. $\xi\in E$, $\Tan(\omega^{\pm},\xi)\subset \cF(1)$ and $\dim \omega^{\pm}|_{E}=n$. Furthermore, if $\d\Omega^{\pm}$ are CDC, then $\lim_{r\rightarrow 0}\Theta_{\d\Omega^{+}}^{\cF_{\Sigma}(1)}(\xi,r)=0$ for $\omega^{+}$-a.e. $\xi\in E$. 
\end{main}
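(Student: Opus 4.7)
The plan is to reduce this elliptic theorem to the harmonic-case theorem from \cite{AMT16, AMTV16} quoted as the first (unnumbered) theorem of the introduction, via two successive reductions: a $\VMO$ blow-up argument, which identifies tangent measures of $\omega^\pm$ at $\xi$ with elliptic measures for some constant-coefficient operator $L_C$, and an affine change of variables, which turns $L_C$ into the Laplacian so the harmonic theorem applies to the rescaled data.

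Fix $\xi\in E$ at which $L_A\in\VMO(\Omega^+\cup\Omega^-,\xi)$, at which both Radon--Nikodym derivatives $d\omega^\pm/d\omega^\mp$ exist and are positive and finite, and which is a Lebesgue point of $A$ in each of $\Omega^\pm$; these conditions exclude only an $\omega^+$-null subset of $E$. Given sequences $r_i\downarrow 0$ and $c_i^\pm>0$ with $c_i^\pm T_{\xi,r_i}[\omega^\pm]\warrow \nu^\pm$, pick constant matrices $C_i\in\cC$ with $r_i^{-(n+1)}\int_{B(\xi,r_i)\cap(\Omega^+\cup\Omega^-)}|A-C_i|\to 0$; after passing to a subsequence, $C_i\to C\in\cC$ and $T_{\xi,r_i}(\Omega^\pm)$ converge to open sets $\Omega^\pm_\infty$ in the Hausdorff/Kuratowski sense. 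Combining Caccioppoli, De Giorgi--Nash--Moser interior regularity, and Caffarelli--Fabes--Mortola--Salsa boundary H\"older decay for elliptic measures with the $\VMO$ stability of $L_A$-harmonic functions under coefficient perturbations, the rescaled Green functions converge locally uniformly on $\Omega^\pm_\infty$ to $L_C$-Green functions with pole at infinity; integration by parts then identifies $\nu^\pm$ as $L_C$-elliptic measures on the complementary pair $(\Omega^+_\infty,\Omega^-_\infty)$. The joint extraction of the limit operator, limit domain, and limit Green function---in the absence of any a priori regularity of $\partial\Omega^\pm$---is the main technical obstacle.

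Replacing $C$ by its symmetric part $C^s=(C+C^T)/2$, which gives the same divergence-form operator on scalar functions, the linear map $\Phi(y)=(C^s)^{1/2}y$ conjugates $L_{C^s}$ with the Laplacian, sends complementary open sets to complementary open sets, and preserves mutual absolute continuity of boundary measures. Consequently, $\Phi^{-1}[\nu^+]$ and $\Phi^{-1}[\nu^-]$ are mutually absolutely continuous interior and exterior \emph{harmonic} measures on a pair of complementary open sets in $\bR^{n+1}$. The harmonic-case theorem therefore gives $\Tan(\Phi^{-1}[\nu^+],y)\subset\cF(1)$ for $\Phi^{-1}[\nu^+]$-a.e. $y$. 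Using Preiss' principle that tangents of tangents are tangents, together with the fact that $\nu^\pm$ is itself a tangent measure of $\omega^\pm$ at $\xi$---so every tangent of $\nu^\pm$ at every point is already a tangent of $\omega^\pm$ at $\xi$---and a connectedness-of-tangent-measures argument as in \cite{KPT09, Bad11}, one promotes this pointwise flatness to $\nu^\pm\in\cF(1)$ itself. Since $\Phi$ maps hyperplanes through the origin to hyperplanes through the origin, $\nu^\pm\in\cF(1)$, hence $\Tan(\omega^\pm,\xi)\subset\cF(1)$.

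The bound $\dim\omega^\pm|_E=n$ then follows from Marstrand's theorem applied to flat tangent measures, or equivalently from the same connectedness argument in \cite{KPT09}. Finally, if $\partial\Omega^\pm$ are CDC, then $\omega^\pm$ satisfy a quantitative non-degeneracy property, and any subsequential Hausdorff limit of $T_{\xi,r_i}(\partial\Omega^+)$ must coincide with the support of a tangent measure of $\omega^+$, which by the above is a hyperplane. A standard compactness argument then upgrades this pointwise flatness to
\[
\lim_{r\to 0}\Theta^{\cF_{\Sigma}(1)}_{\partial\Omega^+}(\xi,r)=0
\]
for $\omega^+$-a.e. $\xi\in E$.
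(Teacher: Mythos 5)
Your overall scaffold—blow up to a constant-coefficient operator, conjugate that operator to the Laplacian by $\Phi = (C^s)^{1/2}$, locate one flat tangent, then run a connectivity argument to show all tangents at $\xi$ are flat—matches the paper's strategy. Two steps, however, do not quite work as stated.

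\textbf{Applying the harmonic-case theorem to the blow-up.} You claim that $\Phi^{-1}[\nu^{\pm}]$ are ``mutually absolutely continuous interior and exterior harmonic measures'' and invoke the AMT/AMTV theorem for them. But that theorem concerns harmonic measures $\omega^{L,x^{\pm}}_{\Omega^{\pm}}$ with \emph{finite} poles $x^{\pm}\in\Omega^{\pm}$, whereas the blow-up of Theorem \ref{azmotovo2} produces measures of the form $\nu^{\pm}=\omega_{h}^{A_0}=-\nu_{\Omega_h}\cdot A_0\nabla h\,d\sigma_{\Sigma_h}$ for some globally defined $L_{A_0}$-harmonic function $h$—i.e.\ polynomial-type measures with pole at infinity, not finite-pole harmonic measures. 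The invoked theorem does not apply to them directly. The paper sidesteps this entirely (Lemma \ref{l:capf}): since $\nu^+=\omega_\infty$ is absolutely continuous with respect to surface measure on the real analytic variety $\Sigma=\{u_\infty=0\}$, and real analytic varieties are $n$-rectifiable, the tangent measure at $\cH^n$-a.e.\ (hence $\omega_\infty$-a.e.) point of $\Sigma$ is already flat—no appeal to the two-phase harmonic-measure theorem is needed. You should replace your intermediate application of that theorem with this rectifiability observation; the conjugation by $\Phi$ then becomes optional (it is essentially Lemma \ref{l:pushforpol}, used elsewhere in the paper, but not in this proof).

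\textbf{The dimension count.} You suggest $\dim\omega^{\pm}|_{E}=n$ follows ``from Marstrand's theorem applied to flat tangent measures, or equivalently from the same connectedness argument in \cite{KPT09}.'' Neither is quite right. Having flat tangent measures does not by itself give existence of densities, which is what Marstrand's theorem requires; and the dimension argument in \cite{KPT09} rests on the Alt--Caffarelli--Friedman monotonicity formula, which the paper explicitly notes is unavailable for $L_A$-harmonic functions with merely $\VMO$ coefficients. The paper's replacement (Lemma \ref{logw}) is new: once $\Tan(\omega^+,\xi)\subset\cF_{\cS}(1)$, the homogeneity $F_r(\omega_h^A)=r^{n+1}F_1(\omega_h^A)$ together with the compact-basis machinery forces $\omega^+(B(\xi,2^\ell r))/\omega^+(B(\xi,r))\approx 2^{\ell n}$ with error $2^{\pm\ell\gamma}$, $\gamma\to 0$, which gives the pointwise dimension $n$ directly. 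You should cite an argument of this type (quantitative doubling from the flatness of all tangents) rather than ACF or Marstrand.

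With these two repairs your proof would essentially coincide with the paper's proof of Theorem \ref{newKPT}.

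The remaining pieces—the measure-theoretic reduction to Lebesgue points, the blow-up lemma (whose technical burden you correctly identify as the joint extraction of limit operator, domain, and Green function in the absence of a priori boundary regularity), the connectivity step via compactness of $\cF(1)$, and the passage to the LBWA conclusion under CDC via Hausdorff convergence of the rescaled boundaries—all match the paper.
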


Kenig, Preiss, and Toro originally showed this if $\Omega^{\pm}$ were both NTA domains, and the dimension was computed by estimating the Hausdorff dimension directly from above and then using the monotonicity formula of Alt, Cafarelli, and Friedman \cite{ACF84} to estimate it from below. The latter is not available for $L$-harmonic functions  when $L$ satisfies the $VMO$ condition above. For this reason, we use instead  the fact that the tangent measures are all flat, which forces $\omega^{\pm}$ to decay like a planar $n$-dimensional Hausdorff measure on small scales.

%\begin{theorem}\label{t:lbwaH}
%Let $\Omega^{+}$ be a $\Delta$-regular domain with $\Delta$-regular connected complement $\Omega^{-}=\ext(\Omega^{+})$. If $\omega^{-}\in VA_{\infty}(d\omega^{+})$ and $\omega^{+}$ is $C$-doubling, then $\d\Omega^{+}$ is $LBWA(\cP_{d})$ for some $d$ depending on $C$, the $\Delta$-regularity constants, and $n$. Moreover, for each $\xi\in \d\Omega$, there is $k\leq d$ so that $\Tan(\xi,\omega^{+})\subset \cF(k)$ and $\xi$ is a $\cF(k)$ point.
%\end{theorem}
%
%Domains with doubling  harmonic measures include include NTA domains whose compliments are NTA just as in \Theorem{bet}  (see \cite[Lemma 4.2]{JK82} and \cite[Lemma 1]{B87}), but also $\Delta$-regular semi-uniform domains (see \cite[Theorem 1.2]{AH08}). 
%
%
%In \cite{BET17}, \Theorem{bet} is a consequence of their main result, which is a decomposition theorem for sets that are LBWA($\cP(d)$) (see \Theorem{betmain} below). By the results of Badger, Kenig, and Toro, domains satisfying the conditions of the theorem are LBWA($\cP(d)$) for some $d$ depending on the NTA constants and $d$. Using this decomposition theorem, the following corollary is almost immediate.

Assuming a $\VMO$ condition on the interior and exterior  elliptic measures, we can also obtain the results of \cite{KT06} and \cite{Bad11} for elliptic measures on non-NTA domains. We first state a pointwise version of these. 

\begin{main}\label{t:main}
	Let $\Omega^{+}\subseteq \bR^{n+1}$, $\Omega^{-}:=\ext(\Omega^{+})$ be its exterior, and let $L_{A}$ be a uniformly elliptic operator in $\om^+ \cup \om^-$. Denote $\omega^{\pm}$ the $L_{A}$-harmonic measures of $\Omega^{\pm}$ with poles at some points $x^{\pm}\in \Omega^{\pm}$, and assume  that $\omega^{\pm}$ are mutually absolutely continuous with  $f=\frac{d\omega^{-}}{d\omega^{+}}$. If for a fixed $ \xi\in \d\Omega^+ \cap \d\Omega^+ $ it holds that $L_A \in \VMO(\om^+ \cup \om^-, \xi)$, 
	\begin{equation}\label{e:vmo}
	\lim_{r\rightarrow 0} \ps{\avint_{B(\xi,r)} fd\omega^{+}}\exp\ps{-\avint_{B(\xi,r)}\log fd\omega^{+}}=1,
	\end{equation}
	%\lim_{r\rightarrow 0} \avint_{B(\xi,r)} \av{\log \frac{d\omega^{-}}{d\omega^{+}}(\xi) -\avint_{B(\xi,r)}\log \frac{d\omega^{-}}{d\omega^{+}}d\omega^{+}}d\omega^{+}=0,\end{equation}
	and $\Tan(\omega^{+},\xi)\neq \varnothing$, then $\Tan(\omega^{+},\xi)\subset \cF_{\cC}(k)$ for some $k$ and
	\begin{equation}
	\label{e:wdoubling}
	\limsup_{r\rightarrow 0} \frac{\omega^{+}(B(\xi,2r))}{\omega^{+}(B(\xi,r))}<\infty.
	\end{equation}
	If $\Omega^{\pm}$ have the CDC, then additionally
		\[
	\lim_{r\rightarrow 0}\Theta_{\d\Omega^{+}}^{\cF_{\cC,\Sigma}(k)}(\xi,r)=0.\] 
\end{main}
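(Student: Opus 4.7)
The plan is to follow the Kenig--Toro--Badger strategy in three linearization passes: linearize the operator $L_A$ via the $\VMO$ hypothesis on $A$, linearize the Radon--Nikodym derivative $f$ via \eqref{e:vmo}, and then apply a Preiss-type connectivity argument to extract a single degree and homogeneity.

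Given any sequence $r_i\downarrow 0$ and scaling constants $c_i>0$ with $c_i\,T_{\xi,r_i}[\omega^+]\warrow \omega^+_\infty\in\Tan(\omega^+,\xi)$, the hypothesis $L_A\in \VMO(\om^+\cup\om^-,\xi)$ lets me pass to a subsequence along which the pulled-back matrices $A(\xi+r_i\cdot)$ converge in $L^1_{\loc}$ to a constant matrix $A_0\in\cC$. A standard stability-under-operator-convergence argument should then identify both $\omega^{\pm}_\infty$ as $L_{A_0}$-elliptic measures on complementary limit domains $\Omega^\pm_\infty$ with poles at infinity. Because $A$ need not be symmetric, the key point is to verify that weak limits of $L_A$-harmonic functions are $L_{A_0}$-harmonic, and the $\VMO$ assumption on $A$ is precisely what enables this passage to the limit.

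The quantity appearing in \eqref{e:vmo} is the defect in Jensen's inequality for $\log f$, and its vanishing forces $f$ to be asymptotically constant on $B(\xi,r)$ in the $\omega^+$-mean; in the blow-up this yields $\omega^-_\infty = c\,\omega^+_\infty$ for some $c>0$. The next step is then to classify such pairs: two mutually absolutely continuous $L_{A_0}$-elliptic measures on complementary domains with a constant Radon--Nikodym derivative must arise from an $L_{A_0}$-harmonic polynomial, so that $\omega^+_\infty\in\cP_{\cC}(k_0)$ for some $k_0\in\bN$. After an affine change of coordinates $L_{A_0}$ reduces to the Laplacian and one can invoke the classical Kenig--Toro classification, which rests on an Alt--Caffarelli--Friedman-type monotonicity formula applied to the Green's functions of the complementary domains.

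Finally, a Preiss-type connectivity argument upgrades polynomiality to homogeneity of a common degree. The set $\Tan(\omega^+,\xi)$, viewed modulo positive scalars, is weakly connected, while as in Badger \cite{Bad11} the pieces of $\cP_{\cC}$ corresponding to different top degrees, or to polynomials with nontrivial lower-order parts, form a weakly disconnected stratification; since $\cC$ itself is connected the same argument survives even as $A_0$ varies with the sequence, giving $\Tan(\omega^+,\xi)\subset \cF_{\cC}(k)$ for some fixed $k$. The weak doubling \eqref{e:wdoubling} follows because measures in $\cF_{\cC}(k)$ are uniformly doubling (with constants depending only on $k$, $n$, and ellipticity) together with compactness of tangent measures at $\xi$. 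Under CDC, the boundary $\d\Omega^\pm$ cannot collapse under blow-up, so the weak convergence $\omega^+_\infty\in \cF_{\cC}(k)$ along every sequence upgrades to Hausdorff convergence of $T_{\xi,r}(\d\Omega^+)$ to zero sets $\Sigma_h$, yielding the $LBWA$ statement $\lim_{r\to 0}\Theta^{\cF_{\cC,\Sigma}(k)}_{\d\Omega^+}(\xi,r)=0$. I expect the main obstacle to be the classification step: the non-symmetry of $A_0$ and the absence of an ACF formula for general divergence-form operators make the rigidity delicate, and the compatibility of the affine reduction to the Laplacian with the class $\cP_{\cC}$, and with the Preiss connectivity argument that follows, must be handled carefully.
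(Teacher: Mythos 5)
Your high-level plan (blow up the operator via the $\VMO$ condition on $A$, blow up $f$ via \eqref{e:vmo}, identify the blow-up limit as a constant-coefficient elliptic measure, then run a Preiss-type connectivity argument; obtain doubling from compactness of $\cF_{\cC}(k)$; upgrade to Hausdorff convergence under CDC) follows the same broad outline as the paper. But your classification step contains a genuine gap.

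You propose that the blow-up limit is directly an $L_{A_0}$-harmonic \emph{polynomial} measure, and invoke ``the classical Kenig--Toro classification, which rests on an Alt--Caffarelli--Friedman monotonicity formula.'' Two things go wrong. First, the ACF formula is not the ingredient that yields polynomiality in Kenig--Toro: in \cite{KT06} the blow-up produces an entire harmonic function, and polynomial growth is deduced from the \emph{doubling} of harmonic measure (supplied by the 2-sided NTA hypothesis) plus Cauchy estimates; ACF is used elsewhere (in \cite{KPT09}, for dimension), and the paper explicitly says it is unavailable for variable $\VMO$ coefficients. Second, and more fundamentally, Theorem~\ref{t:main} makes no doubling hypothesis at all, so even after the affine reduction to the Laplacian the ``standard trick'' of polynomial growth from doubling is blocked. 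What the blow-up lemma (Theorem~\ref{azmotovo2}) actually gives is only that the tangent measure lies in $\mathscr{H}_{\cC}$, i.e.\ is an $\omega_u^{A_0}$ for an \emph{entire} $L_{A_0}$-harmonic $u$ which need not be a polynomial.

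The paper's resolution is that polynomiality and homogeneity are obtained \emph{simultaneously} from the modified Preiss connectivity apparatus (Lemma~\ref{l:newpreiss}, Lemma~\ref{l:pstep}, Corollary~\ref{c:pstep}), packaged as Lemma~\ref{logw}: assuming $\Tan(\omega^+,\xi)\not\subset\cF_{\cC}(k)$ for the minimal $k$, one extracts an intermediate tangent measure $\nu=\omega_h^A$ together with quantitative distance bounds $d_r(\nu,\cF_{\cC}(k))\le\ve$ for all $r\ge r_0$; these translate via \eqref{e:ufinbound2} into growth bounds $\|h\|_{L^\infty(2^\ell\bB)}\lesssim 2^{\ell(k+\beta)}$, which by Cauchy estimates force $h$ to be a polynomial of degree $\le k$, whence Lemma~\ref{l:taylor} contradicts minimality. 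This is also where \eqref{e:wdoubling} comes from (via \eqref{pointwisedim}). Your proposal has the Preiss step doing only the last upgrade (polynomial $\Rightarrow$ homogeneous), missing that it is what supplies polynomiality in the first place. Two further technical points you pass over silently but which matter: $\mathscr{H}_{\cC}$ does \emph{not} have compact basis, so the version of Preiss's connectivity lemma requiring compactness of both cones (as in \cite[Cor.~2.16]{KPT09}) has to be replaced by the one-sided version Lemma~\ref{l:newpreiss}; and one needs $\cP_{\cC}(k)$ to have compact basis (Proposition~\ref{p:pcompact}), which was open after \cite{Bad11} and is proved in the paper.
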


\vvv

It is well known that $\Tan(\omega^{+},\xi)\neq\varnothing$ whenever $\omega^{+}$ satisfies the pointwise doubling condition \eqn{wdoubling}. In our situation, however, we do not assume that, but we get it for free since $\cF_{\cC}(k)$ is compact (see \cite[Lemma 4.10]{Bad11} for the harmonic case and \Theorem{compactdouble} below).

One might have guessed that a pointwise version of \Theorem{bet} would have assumed instead that 
\[\lim_{r\rightarrow 0} \avint_{B(\xi,r)}\av{f-\avint_{B(\xi,r)}\log fd\omega^{+}}d\omega^{+}=0,\]
but this is not enough to imply \Theorem{main}. However, under certain conditions they are equivalent. We will discuss this matter in depth in Section \ref{s:vmo} below.\\

Next, we state a global version.
\begin{main}\label{t:main2}
Let $\Omega^{\pm}\subset \bR^{n+1}$ be two disjoint domains in $\bR^{n+1}$ with common boundary, and let $L_{A}$ be a uniformly elliptic operator in $\om^+ \cup \om^-$ such that $L_A \in \VMO(\om^+ \cup \om^-, \xi)$ at every $\xi\in \d\Omega^+ \cap \d\Omega^+ $. Denote $\omega^{\pm}$ the $L_{A}$-harmonic measures of $\Omega^{\pm}$ with poles at some points $x^{\pm}\in \Omega^{\pm}$.
If $\omega^{+}$ is $C$-doubling, $\omega^{\pm}$ are mutually absolutely continuous, and $\log f=\log \frac{d\omega^{-}}{d\omega^{+}}\in \VMO(d\omega^{+})$, then there is $d$ depending on $n$ and the doubling constant so that, for every compact subset $K\subseteq \d\Omega^{+}$, 
%may be decomposed into sets $\Gamma_{1},...,\Gamma_{d}$, where 
%$$\Gamma_{k}=\{\xi\in \d\Omega^{+}: \Tan(\omega^{+},\xi)\subset  \cF_{\cC}(k)\},$$
 %and $d$ depends only on $n$ and $C$. In fact, for each compact set $K\subset \Gamma_k$,\mih{In the last part, is it really $\mathscr{H}_{\cC}(k)$ or is it $\mathscr{F}_{\cC}(k)$?}\jonas{It would be stronger to say it about F...ok}
\begin{equation}\label{klimsup}
\lim_{r\rightarrow 0}\sup_{\xi\in K} d_{1}(T_{\xi,r}[\omega^{+}],\mathscr{P}_{\cC}(d)) =0.
\end{equation}
If additionally $\Omega^{\pm}$ are CDC domains, then for any compact set $K\subseteq \Gamma_k$,
\[
\lim_{r\rightarrow 0}\sup_{\xi\in K} \Theta_{\d\Omega^{+}}^{\cP_{\cC,\Sigma}(d)}(\xi,r).
\]
That is, $\d\Omega^{+}\in LBWA(\cP_{\cC,\Sigma}(d))$. 
\end{main}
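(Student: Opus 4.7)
The plan is to bootstrap the pointwise result \Theorem{main} to uniform statements on compact sets via a compactness-contradiction argument, and then to upgrade measure convergence to Hausdorff convergence of supports using the CDC. First, I would check that the hypotheses of \Theorem{main} apply at every $\xi\in\d\Omega^{+}$: the assumption $L_A\in \VMO(\om^+\cup\om^-,\xi)$ is built in, and the assumption $\log f\in \VMO(d\omega^{+})$ together with the $C$-doubling property of $\omega^+$ yields the multiplicative VMO condition \eqn{vmo} at every $\xi$ (doubling plus VMO gives a John--Nirenberg exponential integrability which is exactly what is needed to pass from the additive to the multiplicative formulation; this equivalence is to be discussed in \Section{vmo}). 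Consequently, at every $\xi$ one has $\Tan(\omega^{+},\xi)\subset \cF_{\cC}(k_{\xi})$ and the asymptotic doubling \eqn{wdoubling}.

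The first key step is to produce a single $d=d(n,C)$ dominating every $k_{\xi}$. For a homogeneous $L_A$-harmonic polynomial $h\in F_A(k)$, the associated measure $\omega_h^A$ is $(n+k-1)$-homogeneous under the dilations $T_{0,r}$, so $\omega_h^A(B(0,2r))/\omega_h^A(B(0,r))=2^{n+k-1}$. Since weak limits of $c_i T_{\xi,r_i}[\omega^{+}]$ inherit a doubling constant controlled by $C$, this forces $k_{\xi}\leq d(n,C)$ uniformly. Next, suppose \eqn{klimsup} fails on some compact $K$: there exist $\ve_{0}>0$ and sequences $\xi_{i}\in K$, $r_{i}\downarrow 0$ with $d_{1}(T_{\xi_{i},r_{i}}[\omega^{+}],\cP_{\cC}(d))\geq \ve_{0}$. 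Extract $\xi_{i}\to \xi^{*}\in K$, normalize by $c_{i}=\omega^{+}(B(\xi_{i},r_{i}))^{-1}$ (doubling makes this legitimate), and extract a weak-$*$ limit $\nu$. One must then show $\nu\in\cP_{\cC}(d)$, which will contradict the lower bound. For this, the global (rather than pointwise) VMO assumptions are essential: the averages of $A$ over $B(\xi_{i},r_{i})\cap(\Omega^{+}\cup\Omega^{-})$ converge to some $A^{*}\in\cC$, and the averages of $\log f$ over $B(\xi_{i},r_{i})$ against $\omega^{+}$ also stabilize. The core technical work is to run the same argument used in \Theorem{main} (where the basepoint is fixed) but now with a moving basepoint $\xi_{i}\to\xi^{*}$; this is where the uniformity of the VMO limits does the heavy lifting, allowing one to identify $\nu=\omega_{h}^{A^{*}}$ for some $h\in P_{A^{*}}(k)$ with $k\leq d$.

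For the LBWA conclusion under CDC, the route is to transfer the just-obtained uniform weak-$*$ convergence of measures to uniform Hausdorff-type convergence of the boundary $\d\Omega^{+}$ to zero sets $\Sigma_{h}$. The CDC on both $\Omega^{\pm}$ guarantees that $\supp\omega^{\pm}=\d\Omega^{+}$ and gives lower mass bounds of the form $\omega^{\pm}(B(\xi,r))\gtrsim\omega^{\pm}(B(\xi,2r))$ for every boundary ball, which in turn forces that if $c_{i}T_{\xi_{i},r_{i}}[\omega^{+}]\to \omega_{h}^{A^{*}}$ then $T_{\xi_{i},r_{i}}(\d\Omega^{+})\to \Sigma_{h}$ in the Hausdorff sense inside $B(0,1)$. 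A standard compactness-contradiction (again passing to a weak-$*$ limit and using that the support map is upper semicontinuous) then gives uniform smallness of $\Theta_{\d\Omega^{+}}^{\cP_{\cC,\Sigma}(d)}(\xi,r)$ over compact pieces of $\d\Omega^{+}$.

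The main obstacle is the moving-basepoint compactness step: the pointwise argument in \Theorem{main} relies on both the VMO of $\log f$ at the fixed $\xi$ and on a connectivity-of-tangent-measures argument in the spirit of Preiss. When the basepoint varies, one must show that the resulting limit measure $\nu$ is still the elliptic measure of a polynomial, which requires a delicate passage in which one diagonalizes simultaneously over $\xi_i$, $r_i$, and the internal VMO averaging scales. Subordinate to this is the equivalence between the additive and multiplicative VMO formulations for $\log f$ on doubling measures (\Section{vmo}), which must be established before one can invoke \eqn{vmo} uniformly.
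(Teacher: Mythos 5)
Your outline is essentially the same contradiction-plus-blow-up approach the paper takes: suppose $d_1(T_{\xi_j,r_j}[\omega^+],\mathscr{P}_{\cC}(d))\geq\ve$ for some sequence, use doubling to extract a weak limit $\omega$ of $T_{\xi_j,r_j}[\omega^+]/\omega^+(B(\xi_j,r_j))$, pass the additive $\VMO$ of $\log f$ together with doubling through John--Nirenberg and \Corollary{afincafin} to get the multiplicative condition \eqn{vmo} at every scale, invoke \Lemma{vmo1} to conclude $\omega_j^-$ has the same limit, identify $\omega=\omega_h$ via the blow-up lemma, and bound the degree of $h$ via doubling plus Cauchy estimates. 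So the skeleton is right.

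Two remarks, though. First, you overstate the ``moving basepoint'' obstacle. The paper's blow-up machinery (Lemmas \ref{l:azmoto}, \ref{l:azmotovo}, \ref{azmotovo2}) is already stated for sequences $\xi_j\in\d\Omega^+$, so one never needs to extract $\xi_i\to\xi^*$ nor run any extra diagonalization over the basepoint: the contradiction proceeds verbatim as in the fixed-basepoint case. Relatedly, you flag the Preiss connectivity argument as something that must be carried over; in fact \Theorem{main2} does not require it. Connectivity is only what shows in \Theorem{main} that \emph{all} tangent measures at a single $\xi$ are homogeneous of a \emph{common} degree $k$; for the $LBWA(\cP_{\cC,\Sigma}(d))$ statement one only needs that each blow-up is the elliptic measure of \emph{some} polynomial of degree at most $d$, and no single-degree rigidity is needed. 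Second, your mechanism for producing the uniform degree bound $d$ (bound each $k_\xi$ using the $2^{n+k-1}$-homogeneity of $\omega_h$ for $h\in\cF_A(k)$ and control by the global doubling constant $C$) is a valid alternate route, but the paper dispenses with a two-step bound and instead applies Cauchy estimates directly to the blow-up limit $h$, using the inherited doubling of $\omega_h$ and the estimate \eqn{ufinbound2} to kill all Taylor coefficients of order beyond a threshold depending only on $n$ and $C$. Either way gives the same $d$; the paper's route is a bit more direct because it never needs to establish the pointwise $\Tan(\omega^+,\xi)\subset\cF_{\cC}(k_\xi)$ classification as an intermediate step.
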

See Section \ref{s:tan} for the definition of $d_{1}(\cdot,\mathscr{P}_{\cC}(d))$, which is essentially a distance between measures and the set $\mathscr{P}_{\cC}(d)$. Many things can be concluded from this LBWA property, see \cite{BET17}.

%\begin{theorem}\label{t:main}
%Let $\Omega^{+}$ be a $\Delta$-regular domain in $\bR^{n+1}$ such that $\Omega^{-}:=\ext\Omega^{+}$ is also a $\Delta$-regular domain. Let $\omega^{\pm}$ denote the harmonic measure of $\Omega^{\pm}$ with pole at some $x^{\pm}\in \Omega^{\pm}$. Let $\xi\in \supp \omega^{+}$ and assume $\omega^{\pm}$ are mutually $A_{\infty}$-equivalent and 
%\begin{equation}\label{e:vmo}
%\lim_{r\rightarrow 0} \avint_{B(\xi,r)} \av{\log \frac{d\omega^{-}}{d\omega^{+}}(\xi) -\avint_{B(\xi,r)}\log \frac{d\omega^{-}}{d\omega^{+}}d\omega^{+}}d\omega^{+}=0,\end{equation}
%If $\Tan(\omega^{+},\xi)\neq \varnothing$, then $\Tan(\omega^{+},\xi)\subset \cF(k)$ for some $k$. In particular, it follows that
%\begin{equation}
%\label{e:wdoubling}
%\limsup_{r\rightarrow 0} \frac{\omega^{+}(B(\xi,2r))}{\omega^{+}(B(\xi,r))}<\infty,
%\end{equation}
%\end{theorem}

The proof of \Theorem{main} involves some useful lemmas about tangent measures that may be of independent interest. Specifically, we refer the reader to \Lemma{newpreiss}. \\

%Observe that by the generalized Gauss-Green's identity, since $h$ is harmonic,
%
%\begin{align*}
%\int \vphi d\omega_{h}
%& =\int_{\Sigma_{h}} -\vphi \nu_{\Omega_{h}}\cdot \grad h d\sigma_{\Sigma_{h}}
%=\int_{\Sigma_{h}} \ps{h \nu_{\Omega_{h}}\cdot \grad \vphi  -\vphi \nu_{\Omega_{h}}\cdot \grad h }d\sigma_{\Sigma_{h}}\\
%& =\int_{\Omega_{h}} (h\triangle \vphi - \vphi\triangle h)dx
%=\int_{\Omega_{h}} h\triangle \vphi dx\end{align*}
%
%where $\sigma_{\Sigma}$ is $n$-dimensional surface measure on $\Sigma$ and $\nu_{h}$ is the outward pointing normal measure theoretic normal, which exists from the fact that $\Omega$ has locally finite perimeter. 

Over the course of working on this manuscript, we also resolved a question left open in \cite{Bad11} (see the discussion on page 16 of \cite{Bad11}).

\begin{mainprop}\label{p:pcompact}
The $d$-cone $\cP(k)$ has compact basis for each $k\in (0,n]$.
\end{mainprop}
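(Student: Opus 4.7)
The plan is to leverage the finite-dimensionality of the parameter space for $\cP(k)$ together with a Preiss-style basis function to deduce weak-$*$ compactness. I interpret $\cP(k)$ as the cone of nonzero measures $\omega_h$ with $h$ a harmonic polynomial of degree at most $k$ vanishing at the origin, since the conclusion requires limits within the cone to accommodate possible degree drops. I would fix the basis function $b(\mu) = \int_{\bR^{n+1}} e^{-|x|^2/2}\,d\mu(x)$, which is continuous in the weak-$*$ topology and positively $1$-homogeneous. For any nonzero $h \in P(k)$, the zero set $\Sigma_h$ is a nontrivial $n$-dimensional real analytic variety containing $0$, and $\nabla h$ does not vanish identically on $\Sigma_h$ (a standard fact for nonconstant harmonic polynomials), so $b(\omega_h) > 0$; thus $b$ qualifies as a basis function on $\cP(k)$ in the Preiss sense.

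Next, consider $\Phi : P(k) \setminus \{0\} \to (0, \infty)$, $\Phi(h) := b(\omega_h)$, which is positively $1$-homogeneous in $h$ since $\omega_{ch} = c\omega_h$ for $c>0$. I would then prove continuity of $\Phi$ on the finite-dimensional space $P(k)$. Any nonconstant harmonic polynomial must depend nontrivially on at least two coordinate variables (since a one-variable harmonic polynomial is linear), so the critical set $\{\nabla h_\infty = 0\}$ has codimension at least two in $\bR^{n+1}$, hence $\cH^n$-measure zero on $\Sigma_{h_\infty}$ and $\omega_{h_\infty}$-measure zero. For a sequence $h_i \to h_\infty$ in $P(k)$ with $h_\infty \ne 0$, the zero sets $\Sigma_{h_i}$ converge locally in Hausdorff distance to $\Sigma_{h_\infty}$ off this critical set, $|\nabla h_i| \to |\nabla h_\infty|$ uniformly on compacts, and the Gaussian decay in $b$ controls the tails; together these give $\omega_{h_i} \to \omega_{h_\infty}$ weakly-$*$ and thus $\Phi(h_i) \to \Phi(h_\infty)$.

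Since $P(k)$ is finite-dimensional and $\Phi$ is continuous, positively $1$-homogeneous, and strictly positive off $0$, it is equivalent to every norm on $P(k)$: there exist constants $c_1, c_2 > 0$ with $c_1\|h\| \le \Phi(h) \le c_2\|h\|$. Thus $\{h \in P(k) : \Phi(h) = 1\}$ is closed and bounded, hence compact in $P(k)$, and the basis $\{\mu \in \cP(k) : b(\mu) = 1\}$ is the image of this compact set under the continuous map $h \mapsto \omega_h$, therefore weak-$*$ compact.

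The main obstacle is making the continuity step fully rigorous in the presence of limit polynomials whose zero sets pinch or acquire multiply-covered sheets, and in ensuring uniform local finiteness $\sup_i \cH^n(\Sigma_{h_i} \cap B(0,R)) < \infty$ with control on small neighborhoods of the critical set of $h_\infty$. The restriction $k \le n$ is expected to enter here, via Bezout-type bounds on real algebraic hypersurfaces of degree at most $k$ in $\bR^{n+1}$ and the geometry of their critical sub-varieties, which together keep the possible degenerations of $\Sigma_{h_i}$ controlled enough for the weak-$*$ convergence and the norm equivalence to go through.
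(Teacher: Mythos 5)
Your strategy (finite-dimensionality of $P(k)$, plus continuity of $h\mapsto\omega_h$, plus a norm-equivalence argument on the coefficient space) is the same high-level approach as the paper, which runs it through the general Lemma \ref{l:ccompactbasis} using Lemmas \ref{l:hbound} and \ref{l:uc}. The concrete difference is your choice of the Gaussian basis function $b(\mu)=\int e^{-|x|^2/2}\,d\mu$ in place of Preiss's $F_{1}(\mu)=\int(1-|z|)_{+}\,d\mu$, and this is exactly where a gap appears.

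The paper defines ``compact basis'' specifically as compactness of $\{\mu\in\cP(k):F_{1}(\mu)=1\}$, and this is the notion fed into \Proposition{compact} (Preiss, Cor.\ 2.7 and Prop.\ 2.2), which then forces $0\in\supp\Psi$ for every $\Psi$ in the cone. Your argument delivers compactness of $\{\mu\in\cP(k):b(\mu)=1\}$ instead. These two sets are homeomorphic via $\mu\mapsto\mu/F_{1}(\mu)$ \emph{only if} $F_{1}(\omega_h)>0$ for every nonzero $h\in P(k)$, i.e.\ only if $0\in\supp\omega_h$. You never show this, and it is genuinely not free: since $e^{-|x|^2/2}$ has full support, $b(\omega_h)>0$ is much weaker than $\omega_h(B(0,1))>0$. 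If one tries to pass from your basis to $\{F_1=1\}$, one must normalize $\mu_j/b(\mu_j)\to\nu$, rescale, and then show $F_1(\nu)>0$; that last step is precisely $0\in\supp\omega_h$ again. In the paper this \emph{is} the content of Lemma \ref{l:hbound}: they must rule out $0\notin\supp\omega_h$ in the limit, and they do so with {\L}ojasiewicz's structure theorem for real analytic varieties plus the critical-set dimension bound of Cheeger--Naber--Valtorta \cite{CNV15}, to show the top stratum $V^n$ off the critical set accumulates at the origin. Your Gaussian detour does not avoid this; it defers it to a step you have not written. This is the key missing ingredient.

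Two smaller points. Your continuity step for $h\mapsto\omega_h$ is argued via Hausdorff convergence of zero sets away from the critical set, with uniform local finiteness flagged as an open issue; the paper's Lemma \ref{l:uc} does this much more cheaply by observing $\one_{\Omega_{h_j}}\to\one_{\Omega_h}$ a.e.\ (because $\Sigma_h$ has Lebesgue measure zero), then applying dominated convergence to $\int_{\Omega_{h_j}}h_j\,\Delta\vphi$, with no need to track geometry of the level sets. Also, your inference ``a nonconstant harmonic polynomial depends on at least two variables, so $\{\nabla h=0\}$ has codimension $\geq 2$'' does not follow as stated; the actual codimension bound used in the paper is the cited \cite{CNV15} result, which is not elementary. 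Finally, your speculation that $k\le n$ enters through Bezout-type bounds is a red herring: the paper's proof (and the more general Lemma \ref{l:ccompactbasis}) does not use $k\le n$ at all.
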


See Section \ref{s:tan} for the definition of compact bases. A consequence of this result is that we can improve on the following result of Badger.

\begin{theorem}[\cite{Bad11} Theorem 1.1] \label{t:badtheorem} Let $\Omega\subset \bR^{n+1}$ be an NTA domain with harmonic measure $\omega$ and let $\xi\in \d\Omega$. If $\Tan(\omega,\xi)\subset \cP(d)$, then $\Tan(\omega,\xi)\subset \cF(k)$ for some $k\leq d$.
\end{theorem}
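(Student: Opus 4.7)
The plan is to combine Proposition~\ref{p:pcompact} (which provides a compact basis for $\cP(d)$) with the classical Preiss tangent-measure connectedness machinery. Set $\mathcal{T}:=\Tan(\omega,\xi)\subseteq\cP(d)$ by hypothesis, and recall the standard ``tangents of tangents are tangents'' fact: for any $\nu\in\mathcal{T}$ and any $y\in\supp\nu$, one has $\Tan(\nu,y)\subseteq\mathcal{T}$.

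I would first show that every tangent measure spawns a homogeneous tangent measure in $\mathcal{T}$. Given $\nu=c\,\omega_h\in\mathcal{T}$ with $h$ a nonzero harmonic polynomial of degree $\leq d$ vanishing at $0$, decompose $h=h_k+h_{k+1}+\cdots+h_m$ into homogeneous components, where $h_k,h_m\neq 0$ are the lowest- and highest-degree nonzero pieces. A direct rescaling computation based on $h(\lambda y)=\sum_j \lambda^j h_j(y)$ shows that the dilations $T_{0,\lambda}[\omega_h]$, normalized via $\omega_h(B(0,\lambda))^{-1}$, converge weakly to a positive multiple of $\omega_{h_k}\in\cF(k)$ as $\lambda\downarrow 0$, and (under the analogous normalization) to a positive multiple of $\omega_{h_m}\in\cF(m)$ as $\lambda\to\infty$. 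Combining the $\lambda\downarrow 0$ case with tangents-of-tangents applied at $y=0$ gives $\mathcal{T}\cap\cF(k(\nu))\neq\varnothing$, where $k(\nu)$ denotes the lowest homogeneous degree of $h$. Since $k(\nu)\in\{1,\ldots,d\}$, the number $k^*:=\min\{k(\nu):\nu\in\mathcal{T}\}$ is well-defined and $\mathcal{T}\cap\cF(k^*)\neq\varnothing$.

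Next I would apply a Preiss-type disconnection argument inside the compact basis of $\cP(d)$ furnished by Proposition~\ref{p:pcompact}. Consider the sub-cone
\[
\cP_{\geq k^*+1}(d):=\{\omega_h\in\cP(d):\text{all nonzero homogeneous parts of }h\text{ have degree}\geq k^*+1\}.
\]
This is a closed $d$-cone, since the coefficients of degree $\leq k^*$ vanish identically along weak limits of normalized polynomials, and it is disjoint from $\cF(k^*)$ except at the zero measure. To rule out $\mathcal{T}\cap\cP_{\geq k^*+1}(d)\neq\varnothing$, suppose some $\nu=\omega_h\in\mathcal{T}$ has $k(\nu)>k^*$; then the homogeneous tangent built in the previous step lies in $\cF(k(\nu))\subseteq\cP_{\geq k^*+1}(d)$, violating the choice of $k^*$. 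Similarly, for a mixed-degree tangent $\nu=\omega_h\in\mathcal{T}$ with $k(\nu)=k^*$ but top degree $m>k^*$, the family $\lambda\mapsto T_{0,\lambda}[\nu]\in\mathcal{T}$ (which lies in $\mathcal{T}$ because $T_{\xi,\lambda r_j}[\omega]=T_{0,\lambda}[T_{\xi,r_j}[\omega]]$ for any defining scales $r_j$, after a diagonal extraction) produces, via the $\lambda\to\infty$ limit, a tangent in $\cF(m)\subseteq\cP_{\geq k^*+1}(d)$, again a contradiction. Hence $\mathcal{T}\subseteq\cF(k^*)$, as desired.

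The main obstacle will be making precise the two limit-extraction arguments involving the scaling curve $\lambda\mapsto T_{0,\lambda}[\nu]$: one needs careful control of the normalizing constants $\omega_h(B(0,\lambda))^{-1}$ in the small- and large-$\lambda$ regimes, and a diagonal argument to ensure the relevant limits remain inside $\mathcal{T}$ (which requires $\lambda_j r_j\downarrow 0$). The closedness of $\cP_{\geq k^*+1}(d)$ as a $d$-cone in the compact basis topology---critical for the Preiss disconnection---is where Proposition~\ref{p:pcompact} is indispensable, since without a compact basis for $\cP(d)$ one cannot even make weak limits of the coefficients behave well enough to guarantee that degree information is preserved under passage to limits.
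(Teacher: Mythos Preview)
Your first step --- extracting the lowest homogeneous part via the $\lambda\to 0$ limit and using tangents-of-tangents to place $\omega_{h_{k(\nu)}}$ back in $\mathcal{T}$ --- is correct and coincides with the paper's Lemma~\ref{l:taylor} and Lemma~\ref{l:mink}. The gap is in the second half, where you try to upgrade $\mathcal{T}\cap\cF(k^*)\neq\varnothing$ to $\mathcal{T}\subset\cF(k^*)$.

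Your ``disconnection'' argument does not yield a contradiction. You write that if some $\nu\in\mathcal{T}$ has $k(\nu)>k^*$, then its $\lambda\to 0$ tangent lies in $\cF(k(\nu))\subset\cP_{\geq k^*+1}(d)$, ``violating the choice of $k^*$.'' But $k^*$ is defined as the \emph{minimum} of $k(\nu)$ over $\nu\in\mathcal{T}$; exhibiting a tangent whose lowest degree is strictly larger than $k^*$ is entirely consistent with that minimum. The same objection applies to the mixed-degree case: the $\lambda\to\infty$ limit (even granting closedness of $\mathcal{T}$ and the diagonal extraction) gives a measure in $\cF(m)$ with $m>k^*$, which again contradicts nothing. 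In short, you have shown $\mathcal{T}\cap\cF(k^*)\neq\varnothing$ and $\mathcal{T}$ may meet $\cF(m)$ for various $m>k^*$, but nothing you have written forces these to be incompatible. Note also that $\cF(k^*)\cup\cP_{\geq k^*+1}(d)$ is not all of $\cP(d)$: the mixed-degree measures with lowest part of degree $k^*$ lie in neither sub-cone, so even a valid two-cone dichotomy would not suffice.

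What is missing is precisely the quantitative separation criterion the paper imports from Badger (Lemma~\ref{l:bad}): if $\omega_h\in\cP(d)$ satisfies $d_r(\omega_h,\cF(k))<\ve$ for all $r\geq r_0$, then $h\in F(k)$. This is the input that makes the Preiss connectivity machinery (Lemma~\ref{l:newpreiss}) work: one assumes $\mathcal{T}\setminus\cF(k^*)\neq\varnothing$, uses Corollary~\ref{c:pstep} to produce $\mu\in\mathcal{T}\setminus\cF(k^*)$ with $d_r(\mu,\cF(k^*))\leq\ve$ for all large $r$, and then Lemma~\ref{l:bad} forces $\mu\in\cF(k^*)$, a contradiction. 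Your sketch invokes ``Preiss-type disconnection'' but never supplies a separation statement of this kind, and the $\lambda\to\infty$ limit is not a substitute for it. Proposition~\ref{p:pcompact} is indeed relevant (it guarantees the compact-basis hypothesis needed in Lemma~\ref{l:newpreiss} and Lemma~\ref{l:pstep}), but compactness alone does not rule out tangents of higher homogeneous degree; the separation lemma is the essential missing piece.
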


In the proof of this result, Badger relied on the NTA assumption to conclude that $\Tan(\omega,\xi)$ was compact. By using \Proposition{pcompact} (whose proof is rather short), the compactness of homogeneous harmonic polynomials (to which much of the proof of \Theorem{badtheorem} is dedicated), and a connectivity theorem of Preiss, we can improve this by showing that, to get the same conclusion, no a priori information about the geometry of $\omega$ is needed: it need not have been a harmonic measure, let alone one for an NTA domain.

\begin{mainprop}\label{p:taninf}
Let $\omega$ be a Radon measure in $\bR^{n+1}$ and $\xi\in \R^{n+1}$ such that  $\Tan(\omega,\xi)\subset \cP(k)$ for some integer $k$. If $\Tan(\omega,\xi)\cap \cF(k)\neq\varnothing$ for some integer $k$, then $\Tan(\omega,\xi)\subset \cF(k)$.
\end{mainprop}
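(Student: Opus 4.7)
The plan is to reprise the argument in the proof of \Theorem{badtheorem}, replacing the compactness of $\Tan(\omega,\xi)$ that Badger extracts from the NTA hypothesis with the compactness provided by \Proposition{pcompact}.

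First, since $\cP(k)$ has compact basis by \Proposition{pcompact} and $\Tan(\omega,\xi)\subset\cP(k)$ by hypothesis, a standard argument as in Lemma~4.10 of \cite{Bad11} shows that $\omega$ is asymptotically doubling at $\xi$, so $\Tan(\omega,\xi)$ itself has compact basis. Preiss's theorem \cite{Pr87} that tangent measures of tangent measures are tangent measures then applies, so $\Tan(\omega,\xi)$ is closed under the tangent operation; moreover, Preiss's connectivity theorem makes the projectivization of $\Tan(\omega,\xi)$ a compact connected subset of the basis of $\cP(k)$.

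Next, for any $\nu=\omega_h\in\Tan(\omega,\xi)$ with $h=h_j+\cdots+h_\ell$ expanded into its nonzero homogeneous components ($1\le\ell\le j\le k$), a direct scaling computation (using that $\nabla h_i$ is homogeneous of degree $i-1$ and that $h(rx)/r^\ell\to h_\ell(x)$ as $r\to 0$) shows that a suitable rescaling of $T_{0,r}[\omega_h]$ converges weakly as $r\to 0$ to a positive multiple of $\omega_{h_\ell}\in\cF(\ell)$; since $\Tan(\omega,\xi)$ is closed under tangents, we obtain $\omega_{h_\ell}\in\Tan(\omega,\xi)\cap\cF(\ell)$ for every such $\nu$.

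Finally, the bases of the $\cF(j)$, $1\le j\le k$, are compact (by the proof of \Theorem{badtheorem}) and pairwise disjoint within the basis of $\cP(k)$, distinguished by the $d$-homogeneity exponent $n+j-1$ of their members. Combining this with the connectivity of $\Tan(\omega,\xi)$ and the hypothesis $\Tan(\omega,\xi)\cap\cF(k)\ne\varnothing$ forces all the homogeneous tangent measures produced in the previous step to lie in $\cF(k)$, i.e.\ $\ell=k$ for every $\nu=\omega_h\in\Tan(\omega,\xi)$. Since $\deg h\le k$, this pins $h$ to be homogeneous of degree $k$, so $\nu\in\cF(k)$. The main obstacle sidestepped by \Proposition{pcompact} is securing compactness of $\Tan(\omega,\xi)$ in the absence of any a priori geometric structure on $\omega$; once compactness is at hand, the connectivity argument of Preiss runs essentially as in Badger's proof, and the compactness of the bases of the $\cF(j)$ (from the homogeneous case) does the rest.
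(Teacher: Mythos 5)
Your proposal is correct, and it settles the statement by a route that is genuinely different from, though parallel to, the one used in the paper. You use \Proposition{pcompact} to show that $\Tan(\omega,\xi)$ has compact basis: since tangent-measure sets are relatively closed among the nonzero Radon measures, the inclusion $\Tan(\omega,\xi)\subset\cP(k)$ together with the compactness of the basis of $\cP(k)$ and $\Tan(\omega,\xi)\neq\varnothing$ gives that $\Tan(\omega,\xi)$ has compact basis, and then Theorem \ref{t:compactdouble} yields that $\omega$ is asymptotically doubling at $\xi$. From there you run Badger's original argument: the Taylor-series blow-up of Lemma \ref{l:taylor} produces a homogeneous tangent $\omega_{h_\ell}\in\cF(\ell)$ for each $\omega_h\in\Tan(\omega,\xi)$, and Preiss's connectivity theorem (in its original form, with $\Tan(\omega,\xi)$ compact) together with the degree-separation of the $\cF(j)$'s and the hypothesis $\Tan(\omega,\xi)\cap\cF(k)\neq\varnothing$ pins $\ell=k$ throughout. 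The paper does not establish doubling at $\xi$; instead it goes directly through Lemma \ref{l:mink} (take the minimal degree $m$ with $\Tan(\omega,\xi)\cap\cP(m)\neq\varnothing$, and use Theorem \ref{t:ttt} at the origin together with Lemma \ref{l:taylor} to conclude $\Tan(\omega,\xi)\cap\cP(m)\subset\cF(m)$) and then Lemma \ref{l:newpreiss}, which is the connectivity corollary rebuilt so that $\Tan(\omega,\xi)$ itself need not have compact basis (only $\cF(m)$ does), with Lemma \ref{l:bad} supplying the separation. What your route buys is a shorter reduction to Badger's published argument, at the cost of importing Theorem \ref{t:compactdouble} and the closedness of tangent-measure cones; what the paper's route buys is that Lemma \ref{l:newpreiss} is self-contained (it never needs to assert doubling of $\omega$) and is the same tool reused in the proofs of the main theorems of the paper. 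One small slip worth smoothing over: in your step 4 the decisive ingredient is not connectivity in the naive topological sense but the Preiss separation-plus-connectivity criterion (the combination Lemma \ref{l:bad} with Corollary 2.16 of \cite{KPT09}, or equivalently Lemma \ref{l:newpreiss} applied with $\cF=\cF(m)$), which is what rules out two distinct homogeneity degrees appearing among the blow-ups; a passing appeal to compactness and pairwise disjointness of the $\cF(j)$'s is not quite enough on its own.
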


\subsection{Rectifiability and harmonic measure for uniform domains}
The blowup arguments we use also have an application to studying the relationship between rectifiability and harmonic measure, a subject in which there have been a flurry of results in the last few years.  For simply connected planar domains, the problem of when harmonic measure is absolutely continuous with respect to $\cH^{1}$ is classical. Bishop and Jones showed in \cite{BJ90} that, if $\Omega$ is simply connected, $\omega_{\Omega}^{x}\ll \cH^{1}$ on the subset of any Lipschitz curve intersecting $\d\Omega$. Conversely, Pommerenke showed in \cite{Pom86} that if $\omega_{\Omega}\ll \cH^{1}$ on a subset $E\subset \d\Omega$, then that set can be covered by Lipschitz graphs up to a set of harmonic measure zero. In fact, a much earlier result of the Riesz brothers says that any Jordan domain has harmonic measure and $\cH^{1}$ mutually absolutely continuous if and only if the boundary is rectifiable (see \cite{RR16} or \cite[Chapter VI.1]{Harmonic-Measure}).

In higher dimensions, the problem is trickier. There are some simple examples of simply connected domains  $\Omega\subset \R^{n+1}$ with $n$-rectifiable boundaries of finite $\cH^{n}$-measure so that either $\omega_{\Omega}\not \ll \cH^{n}$ or $\cH^{n}\not \ll \omega_{\Omega}$ (see \cite{Wu86}, \cite{Zie74}). David and Jerison showed in \cite{DJ90} that mutual absolute continuity occured for NTA domains with Ahlfors-David regular boundaries. Building on that, Badger showed in \cite{Bad12} that $\cH^{n}\ll \omega_{\Omega}$ if $\Omega$ was an NTA domain whose boundary simply had locally finite $\cH^{n}$ measure, although we showed with Tolsa that the converse relation $\omega_{\Omega} \ll \cH^{n}$ could be false for such domains \cite{AMT17}. 

However, in \cite{AHMMMTV16}, along with Hofmann, Martell, Mayboroda, Tolsa, and Volberg, we showed that for {\it any}  domain $\Omega\subset \R^{n+1}$ and $E\subset \d\Omega$ with $\omega_\Omega(E)>0$ and $\cH^{n}(E)<\infty$, if $\omega\ll \cH^{n}$ on $E$, then $E$ may be covered up to $\omega$ measure zero by Lipschitz graphs. By a theorem of Wolff, harmonic measure in the plane lies on a set of $\sigma$-finite $\cH^{1}$-measure, and so the assumption that $\cH^{1}(E)<\infty$ is unnecessary in this case (although very necessary in higher dimensions due to the existence of Wolff snowflakes).  With Akman, we developed a converse for domains $\Omega\subset \R^{n+1}$ with {\it big complements}, meaning 
\begin{equation}
\cH^{n}_{\infty}(B(\xi,r)\backslash\Omega)\geq cr^{n} \mbox{ for all }\xi\in \d\Omega \mbox{ and } 0<r<\diam \d\Omega.
\end{equation} 
We showed that, for such domains, $\omega_{\Omega}\ll \cH^{n}$ on the subset of any $n$-dimensional Lipschitz graph \cite{AAM16}, and hence, for these domains, we know that absolute continuity is equivalent to rectifiability of harmonic measure (versus rectifiability of the boundary). 

There are fewer positive results concerning absolute continuity and rectifiability of {\it elliptic} harmonic measures. Even in the case of the half plane, without some extra assumptions on the behavior of the elliptic coefficients, elliptic measure can be singular \cite{CFK81,Swe92,Wu94}, and some sort of Dini condition on the coefficients near the boundary is needed \cite{FJK84,FKP91}. In \cite{KP01}, for example, Kenig and Pipher considered the following condition.

\begin{definition}\label{d:KP}
	Let $\delta(x)=\mbox{dist}(x,\partial\Omega)$. We will say that an elliptic operator $L=-\div {A}\grad$ satisfies the {\it Kenig-Pipher condition (or KP-condition)} if ${A}=(a_{ij}(x))$ is a uniformly elliptic real matrix that has distributional derivatives such that
\begin{equation}\label{eq:KP}
	\ve_{\Omega}^{{L}}(z):=\sup\{\delta(x)|\grad a_{ij}(x)|^{2}: x\in B(z,\delta(z))/2,\;\; 1\leq i,j\leq n+1\}
\end{equation}
	is a Carleson measure in $\Omega$, by which we mean that for all $x\in \d\Omega$ and $r\in (0,\diam \d\Omega)$,
	\[
	\int_{B(x,r)\cap \Omega}\ve_{\Omega}^{{L}}(z)dz\leq Cr^{n}.\]
\end{definition}

In \cite{KP01}, they showed that for Lipschitz domains in $\R^{n+1}$, elliptic operators satisfying  the $KP$-$condition$ gave rise to elliptic measures which were $A_{\infty}$-equivalent to surface measure. In fact, it was proved in \cite{HMT16} that the same result can be obtained under the following more general assumptions on the coefficients:
 \begin{equation}\label{eq:KP-bis}
    (\widetilde{KP})=
    \begin{cases}
      \nabla a_{ij} \in \Lip_{loc}(\om),  \\
      \| \delta_{\om} |\nabla a_{ij}|\|_{L^\infty(\om)}< \infty,\\
      \delta(x)|\grad a_{ij}(x)|^{2}\,\,\text{is a Carleson measure},
    \end{cases},
\end{equation}
for $ 1\leq i,j\leq n+1$. 
Akman, Badger, Hofmann, and Martell observed in \cite[Section 3.2]{ABHM17} that, using the same arguments in \cite{DJ90}, this result can be extended to NTA domains with Ahlfors-David regular boundaries. They used this fact to show that, on a {\it uniform domain} $\Omega$ {(see Definition \ref{d:uniform} below)} with Ahlfors-David regular boundary, if $L_{A}$ is a {\it symmetric} elliptic operator satisfying a local  $L^1$ version of \eqref{eq:KP}, i.e., $A \in \Lip_{\loc}(\om)$ and $\sup\{|\grad a_{ij}(x)|: x\in B(z,\delta(z))/2,\;\; 1\leq i,j\leq n+1\}|$ is a Carleson measure with Carleson constant depending on the ball, then $\cH^{n}\ll \omega^{L}_{\Omega}$ implies $n$-rectifiability of the boundary.%\footnote{In fact, it is enough to ask for an $L^1$ version of \eqref{eq:KP-bis}, i.e., $|\grad a_{ij}(x)|$ to be a Carleson measure.}

\vv

Using our blowup arguments, we can obtain the following improvement. 
\begin{main}\label{t:newABHM}
	Let $\Omega\subset \R^{n+1}$ be a uniform CDC domain so that $\cH^{n}|_{ \d\Omega}$ is locally finite. Let $\omega_{\Omega}^{L_{A}}$ be the harmonic measure associated to a (possibly non-symmetric) elliptic operator satisfying \eqref{eqelliptic1} and \eqref{eqelliptic2}. Let $E\subseteq \d\Omega$ be a set with $\cH^{n}(E)>0$ such that $\cH^{n}\ll \omega_{\Omega}^{L_{A}}$ on $E$ and for $\cH^{n}$-a.e. $\xi\in E$,
	\[
	\theta_{\d\Omega,*}^{n}(\xi,r):= \liminf_{r\rightarrow 0}\frac{ \cH^{n}(B(\xi,r)\cap \d\Omega)}{(2r)^{n}}>0
	\]
	and $A$ has vanishing mean oscillation at $\xi$. Then $E$ is $n$-rectifiable. 
\end{main}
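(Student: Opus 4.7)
The plan is to show that, at $\cH^{n}$-almost every $\xi\in E$, every tangent measure of $\cH^{n}|_{E}$ at $\xi$ is of the form $c\,\cH^{n}|_{V}$ for some affine $n$-plane $V\ni 0$; the $n$-rectifiability of $E$ then follows from Mattila's tangent-measure characterization of rectifiability.

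\textbf{Good points and two-sided $n$-growth.} First, discard a $\cH^{n}$-null subset of $E$ so that each remaining $\xi$ satisfies: (i) $\theta^{n}_{\d\Omega,*}(\xi)>0$ (by hypothesis) and $\theta^{n,*}(\cH^{n}|_{E},\xi)<\infty$ (automatic $\cH^{n}$-a.e.\ by local finiteness); (ii) $A$ has $\VMO$ at $\xi$; (iii) the density $f(\xi):=d\cH^{n}|_{E}/d\omega$ exists in $(0,\infty)$, where $\omega:=\omega_{\Omega}^{L_{A}}$, by Lebesgue differentiation applied to $\omega$ (which is pointwise doubling on boundary balls under the $CDC$ hypothesis and Bourgain-type elliptic-measure estimates); (iv) $E$ has $\cH^{n}$- and $\omega$-density one at $\xi$ in $\d\Omega$. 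Combining (i), (iii), and (iv) yields the two-sided estimate
\[
\omega(B(\xi,r))\simeq r^{n}\quad\text{for all sufficiently small } r>0.
\]

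\textbf{Tangent structure and flatness.} At any such $\xi$, the two-sided $n$-growth guarantees that $\Tan(\omega,\xi)$ (with the natural $r^{n}$ normalization) is nonempty, and every $\nu\in\Tan(\omega,\xi)$ satisfies $\nu(B(0,R))\simeq R^{n}$ for all $R>0$. The $\VMO$ condition lets rescalings of $A$ converge to a constant matrix $A_{0}\in\cC$, while the $CDC$/uniform hypothesis on $\Omega$ provides the regularity needed for rescaled elliptic measures to converge to an $L_{A_{0}}$-elliptic measure on a tangent domain $\Omega_{\infty}$. Appealing to the blowup methods of this paper (as in the proof of Main Theorem I), each $\nu\in\Tan(\omega,\xi)$ admits the representation $\nu=c\,\omega_{h}^{A_{0}}$ for some $h\in H_{A_{0}}$. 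Combining the compactness of elliptic polynomial-measure cones with the elliptic analogue of Preiss's connectivity theorem (the two ingredients encoded in the paper's Main Propositions and in the proofs of Main Theorems II and III), one further upgrades $h$ to a homogeneous $L_{A_{0}}$-harmonic polynomial of a common degree $k$. For such $h\in F_{A_{0}}(k)$, the measure $\omega_{h}^{A_{0}}$ is $(n+k-1)$-homogeneous:
\[
\omega_{h}^{A_{0}}(B(0,R))=R^{n+k-1}\,\omega_{h}^{A_{0}}(B(0,1)).
\]
Matching this against $\nu(B(0,R))\simeq R^{n}$ forces $k=1$, so $\Sigma_{h}$ is an $n$-plane through $0$ and $\nu\in\cF(1)$. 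By (iii), tangent measures of $\cH^{n}|_{E}$ at $\xi$ are scalar multiples of $\nu$, hence are all flat; Mattila's theorem then delivers the $n$-rectifiability of $E$.

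\textbf{Main obstacle.} The principal technical difficulty is realizing the weak limit $\nu$ as $c\,\omega_{h}^{A_{0}}$ with $h$ defined on all of $\bR^{n+1}$. In the two-sided setting of Main Theorems I--III, the complementary domain supplies the continuation of $h$ across $\Sigma_{h}$; here we only have the one-sided domain $\Omega$, so the global representation will require an auxiliary analysis of the $CDC$-regular complement $\Omega^{c}$ (for example via $L_{A^{T}}$-measure on $\Omega^{c}$, or via a reflection argument once $A_{0}$ is known). Once this identification is secured, the conclusion $k=1$ is a direct homogeneity-versus-growth comparison.
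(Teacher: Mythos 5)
Your first step (positive lower and finite upper $n$-density of $\cH^{n}|_{\d\Omega}$ at a.e.\ $\xi\in E$, plus Lebesgue differentiation for the doubling elliptic measure of a uniform CDC domain, giving $\omega(B(\xi,r))\simeq r^{n}$ at a.e.\ $\xi\in E$ for $r$ small) is correct and matches the paper's setup. The gap is exactly the one you flag as the ``main obstacle,'' and it is fatal as written. In the one-sided blow-up (\Lemma{azmoto} / \Lemma{harmblow}) the limit $u_{\infty}$ is $L_{A_{0}}$-harmonic only on the open limiting domain $\Omega_{\infty}$ and vanishes on its complement; there is no continuation of $u_{\infty}$ across $\Sigma=\d\Omega_{\infty}$ and hence no representation $\nu=c\,\omega_{h}^{A_{0}}$ with $h\in H_{A_{0}}$ defined on $\bR^{n+1}$. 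In Main Theorems I--III that global $L_{A_{0}}$-harmonic function is manufactured as $u_{\infty}^{+}-c^{-1}u_{\infty}^{-}$ using the \emph{exterior} domain and the mutual-absolute-continuity (or $\VMO$) hypothesis. Here the theorem places no hypothesis on $\Omega^{c}$ at all (it need not have interior corkscrews, let alone be Wiener regular, nor is there any comparison between interior and exterior measures), so the suggested reflection or $L_{A^{T}}$-measure patch is unavailable. Moreover, even granting a global $h$, your invocation of ``Preiss's connectivity'' to upgrade $h$ to a homogeneous $L_{A_{0}}$-harmonic polynomial is unjustified: the connectivity argument (\Lemma{newpreiss}, Lemma \ref{logw}) presupposes $\Tan(\omega,\xi)\subset\mathscr{H}_{\cC}$, and that inclusion is precisely what the missing two-sided structure was supposed to supply. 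So the chain ``$\nu = c\,\omega_h^{A_0}$ $\Rightarrow$ homogeneous of degree $k$ $\Rightarrow$ $k=1$ by $n$-growth matching'' never gets off the ground.

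The paper resolves this by going in an entirely different direction: it never tries to identify the tangent measure with a polynomial measure. From the ADR property of $\cH^{n}|_{\d\Omega}$ (and hence of $\omega$) at a.e.\ point, every tangent measure of $\omega$ is ADR. Assuming toward a contradiction that the purely unrectifiable part $F_{2}$ of $E$ has positive $\cH^{n}$-measure, one blows up at a density point of $F_{2}$ via \Lemma{harmblow} to a uniform domain $\Omega_{\infty}$ with ADR boundary on which the constant-coefficient elliptic kernel $k_{B}$ trivially satisfies a reverse H\"older inequality. Theorem \ref{HMUT} then gives that $\d\Omega_{\infty}$ is uniformly rectifiable, so by \cite{AHMNT} $\Omega_{\infty}$ is NTA and has exterior corkscrews. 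Passing this back to the approximating rescaled domains $\Omega_{j}$, each $\d\Omega_{j}\cap\bB$ separates an interior from an exterior corkscrew ball and therefore, by the Besicovitch--Federer projection theorem, contains an $n$-rectifiable subset of $\cH^{n}$-measure $\gec1$. Rescaling back contradicts $\xi$ being a point of density of the purely unrectifiable set $F_{2}$. So the paper's argument uses the blow-up only to reach a uniform rectifiability / NTA statement and avoids the polynomial-measure identification that blocks your route.
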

Surprisingly, to get this improvement requires a very different set of techniques than originally considered in \cite{ABHM17}.

Having $\VMO$ coefficients $\cH^{n}$-a.e. on $\d\Omega$ is natural as it is implied by the Carleson condition considered in \cite{ABHM17} and \cite{KP01} by the following proposition:

\begin{mainprop}\label{carlesonprop}
	Let $\Omega\subset \bR^{n+1}$ be a domain and suppose that $A$ is an elliptic matrix satisfying \eqref{eqelliptic1} and \eqref{eqelliptic2} such that $A \in \Lip_{\loc}(\om)$ and, for some ball $B_{0}$ centered on $\d\Omega$,
	
	 \begin{equation}\label{e:notcarl}
	 \int_{B_{0}}\delta(x)|\grad a_{ij}(x)|^{2}dx<\infty.
	 \end{equation}  
	 Then $L_A \in \VMO(\om, \xi)$ for $\cH^{n}$-a.e. $\xi \in B_{0}\cap \d\Omega$. 
\end{mainprop}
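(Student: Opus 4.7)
The plan is to combine a density estimate for the Carleson-type measure induced by the hypothesis with a Whitney-cube Poincar\'e bound. First, I would set
\[
\mu := \delta(x)\,|\nabla A(x)|^2\,\mathbf{1}_{B_0 \cap \Omega}(x)\,dx,
\]
which by hypothesis is a finite Borel measure on $\bR^{n+1}$, absolutely continuous with respect to Lebesgue. Since $\mu$ is supported in the open set $\Omega$, it assigns zero mass to $\partial\Omega$, so a Vitali-type covering argument combined with the outer regularity of $\mu$ gives
\[
\limsup_{r \to 0}\frac{\mu(B(\xi,r))}{r^n} = 0 \quad \text{for } \cH^n\text{-a.e.\ } \xi \in B_0 \cap \partial\Omega.
\]

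Next, I would fix such a $\xi$ and introduce a Whitney decomposition $\{Q\}$ of $\Omega$ with $\ell(Q)\sim\dist(Q,\partial\Omega)$, so that $\delta\sim\ell(Q)$ on $Q$. Any cube $Q$ meeting $B(\xi,r)$ satisfies $\ell(Q)\lesssim r$, and the $L^1$-Poincar\'e inequality together with Cauchy-Schwarz yields
\[
\int_Q |A - a_Q|\,dx \lesssim \ell(Q)\int_Q |\nabla A|\,dx \lesssim \ell(Q)^{(n+2)/2}\,\mu(Q)^{1/2},
\]
where $a_Q := \avint_Q A\,dy$. Summing by Cauchy-Schwarz over the Whitney cubes intersecting $B(\xi,r)$ and using the packing bound $\sum_Q \ell(Q)^{n+2}\lesssim r\cdot |B(\xi,2r)|\lesssim r^{n+2}$, I obtain
\[
\int_{B(\xi,r)\cap\Omega}|A - \tilde A|\,dx \lesssim r^{(n+2)/2}\,\mu(B(\xi,2r))^{1/2} = o(r^{n+1}),
\]
where $\tilde A := \sum_Q a_Q\,\mathbf{1}_Q$ is the piecewise-constant Whitney approximation and the final equality uses the density from the first paragraph.

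To conclude I must upgrade this to a bound against a single constant matrix $A^\ast(\xi)\in\cC$. Setting $C_r := \avint_{B(\xi,r)\cap\Omega} A\,dy$, the plan is to show via a telescoping argument across dyadic scales---using paragraph two to control the successive differences $|C_{2^{-k}r_0}-C_{2^{-(k+1)}r_0}|$---that $\{C_r\}$ is Cauchy as $r\to 0$, and to take $A^\ast(\xi)$ as its limit; inserting $A^\ast(\xi)$ into the $\VMO$ infimum then completes the argument. The hard part will be precisely this passage to a global constant: the Whitney estimate only controls $A$ against the locally varying averages $a_Q$, so transferring this to a single constant requires a comparison of the $a_Q$'s across scales which depends on a lower Lebesgue density of $\Omega$ at $\xi$. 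At $\cH^n$-a.e.\ such $\xi$ this density is positive, while at the remaining points the $\VMO$ condition follows trivially from $|A|\leq\Lambda$.
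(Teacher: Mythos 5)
Your Whitney-cube computation in the middle paragraph is sound, and your initial density claim for $\mu=\delta|\nabla A|^2\mathbf 1_{B_0\cap\Omega}\,dx$ is exactly the content of a lemma in the paper (proved there by a Besicovitch covering argument), but the final step has a genuine gap. The telescoping scheme you sketch only controls the difference $|C_{2^{-k}r_0}-C_{2^{-(k+1)}r_0}|$ if $A$ and its Whitney averages can be compared \emph{across} the annular scale, and this comparison requires connectivity of $\Omega\cap B(\xi,r)$ (through a chain of overlapping Whitney cubes), not merely positive lower Lebesgue density of $\Omega$ at $\xi$. If $\Omega$ splits near $\xi$ into several components --- say parallel thin tubes --- then $A$ could equal distinct constants on distinct tubes with $\nabla A\equiv 0$, so both $\mu(B(\xi,r))$ and your Whitney sum vanish, yet no single constant matrix approximates $A$ well on $B(\xi,r)\cap\Omega$. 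Positive lower density is fully compatible with this scenario, so the final sentence of your proposal does not close the gap. What is really needed is the corkscrew/Harnack-chain structure of a uniform domain (which is the setting where this proposition is used in the paper, feeding into Theorem V).

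The paper takes a different and considerably cleaner route that builds the connectivity into the Poincar\'e constant: it produces, for each small $r$, an intermediate bounded uniform domain $\Omega_r$ sandwiched between $\Omega\cap B(\xi,r)$ and $\Omega\cap B(\xi,Mr)$, and then applies the Hurri--Syrj\"anen improved Poincar\'e inequality
$\inf_{a}\|u-a\|_{L^2(\Omega_r)}\lesssim |\Omega_r|^{\frac{1}{2(n+1)}}\,\big\|\,|\nabla u|\,\dist(\cdot,\Omega_r^c)^{1/2}\big\|_{L^2(\Omega_r)}$
directly to $u=a_{ij}$, producing an honest single-constant bound in one stroke. Your approach, once the uniformity hypothesis is invoked and a Harnack-chain comparison of the $a_Q$'s is carried out, would recover the same conclusion, but the chaining step is exactly the technical work the improved Poincar\'e inequality encapsulates; so the two proofs differ chiefly in whether one wishes to reprove that inequality by hand or cite it. If you pursue your version, you must state the uniform-domain hypothesis explicitly and replace the lower-density argument with a chaining estimate; as written, the proof is incomplete.
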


\vv

\noindent{\bf Discussion of related results.} Near the completion of this work, we learned that Toro and Zhao simultaneously had proved that $\cH^{n}\ll \omega_{\Omega}$ implies rectifiability of the boundary if $\Omega\subseteq \R^{n+}$ is a uniform domain with Ahlfors-David $n$-regular boundary and the elliptic coefficients are in $W^{1,1}(\Omega)$ \cite{TZ17}. They also exploit the vanishing oscillation of the coefficients at almost every boundary point (which they show is implied by the $W^{1,1}$ condition) in the context of uniform domains, though their proof is distinct by their use of pseudo-tangents and stopping-time arguments.

\vvv

\noindent{\bf Acknowldegements.} This project was started while we were post-docs under X. Tolsa's ERC grant 320501 of the European Research Council (FP7/2007-2013) between 2015 and 2016, and also during mutual visits between their home institutions of Universitat Aut\`onoma de Barcelona, the Basque Center for Applied Mathematics, Universidad del Pa\' is Vasco and the University of Edinburgh. The authors are greatful to these institutions for their hospitality, to Xavier Tolsa for his helpful discussions and encouragement on the project, Jos\'e Conde for his clear presentation of \cite{Bad11} in our working seminar at the UAB, and to Tatiana Toro for her comments on our paper.

\section{Notation}

We will write $a\lesssim b$ if there is $C>0$ so that $a\leq Cb$ and $a\lesssim_{t} b$ if the constant $C$ depends on the parameter $t$. We write $a\approx b$ to mean $a\lesssim b\lesssim a$ and define $a\approx_{t}b$ similarly.

\section{Tangent Measures}\label{s:tan}

\subsection{Cones and Compactness}
 
Given two Radon measure $\mu$ and $\sigma$, we set
$$F_{B}(\mu,\sigma) = \sup_f \int f\,d(\mu-\sigma),$$
where the supremum is taken over all the $1$-Lipschitz functions supported on $B$. 
For $r>0$, we write
\[
F_{r}(\mu,\nu)= F_{\overline B(0,r)},\qquad
F_{r}(\mu)=F_{r}(\mu,0)=\int (r-|z|)_{+} d\mu.\]

{
A set of Radon measures $\cM$ is a {\it $d$-cone} if $cT_{0,r}[\mu]\in \cM$ for all $\mu\in\cM$, $c>0$ and $r>0$. We say a $d$-cone has {\it closed (resp. compact) basis} if its basis $\{\mu\in \cM: F_{1}(\mu)=1\}$ is closed (resp. compact) with respect to the weak topology.}

 For a $d$-cone $\cM$, $r>0$, and $\mu$ a Radon measure with $0<F_{r}(\mu)<\infty$, we define the {\it distance} between $\mu$ and $\cM$ as 
%\xavi{I have modified the definition of $d_r$ to guaranty the validity of the identities in \rf{e:ojF}}
\[
d_{r}(\mu,\cM)=\inf\ck{F_{r}\ps{\frac{\mu}{F_{r}(\mu)},\nu}: \nu\in \cM, F_{r}(\nu)=1 }.
%\mbox{ and }F_{r}(\nu)=1}.
\]

\begin{lemma}[\cite{KPT09} Section 2]\label{preiss}
Let $\mu$ be a Radon measure in $\bR^{n+1}$ and $\cM$ a $d$-cone. For $\xi\in \bR^{n+1}$ and $r>0$,
\begin{enumerate}
\item $\displaystyle T_{\xi,r}[\mu](B(0,s))=\mu(B(\xi,sr))$,
\item $\displaystyle \int f \, d T_{\xi,r}[\mu] = \int f\circ T_{\xi,r} \, d\mu$,
\item $ \displaystyle F_{B(\xi,r)}(\mu)=r F_{1}(T_{\xi,r}[\mu])$,
\item $\displaystyle F_{B(\xi,r)}(\mu,\nu)=rF_{1}(T_{\xi,r}[\mu],T_{\xi,r}[\nu])$,
\item $\displaystyle \mu_{i}\rightarrow \mu$ weakly if and only if $F_{r}(\mu_{i},\mu)\rightarrow 0$ for all $r>0$,
\item $\displaystyle d_{r}(\mu,\cM)\leq 1$,
\item $\displaystyle d_{r}(\mu,\cM)=d_{1}(T_{0,r}[\mu],\cM)$,
\item if $\displaystyle \mu_{i}\rightarrow \mu$ weakly and $\displaystyle F_{r}(\mu)>0$, then $\displaystyle d_{r}(\mu_{i},\cM)\rightarrow d_{r}(\mu,\cM)$. 
\end{enumerate}
\end{lemma}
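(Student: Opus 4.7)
The eight properties are formal consequences of the definitions of push-forward measure, $F_B$, $F_r$, and $d_r$, so my plan is to verify them in the order stated, each item resting on the previous ones. Items (1) and (2) are immediate from the definition of push-forward: since $T_{\xi,r}^{-1}(A)=rA+\xi$, one has $T_{\xi,r}[\mu](B(0,s))=\mu(B(\xi,rs))$, and (2) is the usual change-of-variables identity, verified first for indicator functions and then extended by linearity and monotone convergence.

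For (3), the key observation is that $F_{B(\xi,r)}(\mu)=\int (r-|z-\xi|)_{+}\,d\mu(z)$. Indeed, $z\mapsto (r-|z-\xi|)_{+}$ is 1-Lipschitz and supported on $\overline{B(\xi,r)}$, hence admissible in the sup defining $F_{B(\xi,r)}$, while any other admissible $f$ satisfies $|f(z)|\le \dist(z,\bR^{n+1}\setminus B(\xi,r))=(r-|z-\xi|)_{+}$, so it realizes the maximum. Changing variables $y=T_{\xi,r}(z)$ via (2) then gives
\[
F_{B(\xi,r)}(\mu)=\int (r-r|y|)_{+}\,dT_{\xi,r}[\mu](y)=rF_{1}(T_{\xi,r}[\mu]).
\]
Item (4) follows from the same identification applied to the signed integrand $f\,d(\mu-\nu)$, together with the fact that $f\mapsto r\,f\circ T_{\xi,r}$ is, up to the factor $r$, a bijection between 1-Lipschitz functions supported on $\overline{B(0,1)}$ and on $\overline{B(\xi,r)}$.

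Items (5)--(7) are then corollaries. Item (5) is the standard characterization of weak convergence of Radon measures: any $f\in C_{c}(\bR^{n+1})$ can be uniformly approximated (after truncation) by 1-Lipschitz functions supported on a fixed ball, so the convergence of all $F_{r}$-pairings is equivalent to weak convergence. For (6), I would pick any $\nu_{0}\in\cM$ with $F_{r}(\nu_{0})>0$, use the $d$-cone property to rescale so that $F_{r}(\nu_{0})=1$, and then bound $F_{r}(\mu/F_{r}(\mu),\nu_{0})\le 1$ by decomposing an admissible test function into its positive and negative parts and controlling each by $(r-|\cdot|)_{+}$. Item (7) is then immediate from (3) and (4) after noting that the $d$-cone property gives $\{cT_{0,r}[\nu]:\nu\in\cM,\,c>0\}=\cM$, so the infima in $d_r(\mu,\cM)$ and $d_1(T_{0,r}[\mu],\cM)$ agree.

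Finally, (8) combines (5) with a continuity argument. Weak convergence $\mu_{i}\to\mu$ gives $F_{r'}(\mu_{i},\mu)\to 0$ for every $r'$, hence $F_{r}(\mu_{i})\to F_{r}(\mu)>0$, and for each fixed $\nu$ the functional $\mu\mapsto F_{r}(\mu/F_{r}(\mu),\nu)$ is continuous on $\{F_{r}(\mu)>0\}$; taking infima and using (6) to restrict to a set of $\nu$'s with uniformly bounded $F_{r}$ yields the claim. The main obstacle I expect is twofold: getting the sharp constant in (6) (as opposed to the naive $\le 2$ from a triangle inequality), which requires the positive/negative decomposition noted above, and making the limiting argument in (8) uniform in $\nu\in\cM$ when $\cM$ is not assumed to have compact basis---this requires an approximate-minimizer selection rather than a direct compactness argument.
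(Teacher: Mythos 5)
The paper cites this lemma from \cite{KPT09} without reproducing a proof, so there is no in-text argument to compare against; I therefore evaluate your sketch on its own. Items (1)--(4) and (7) are correct and argued the natural way (change of variables for push-forwards, identification of $F_{B(\xi,r)}(\mu)$ with $\int (r-|z-\xi|)_{+}\,d\mu$, and the bijection between Lipschitz test functions on $\overline{B(0,1)}$ and $\overline{B(\xi,r)}$). Items (5) and (8) are also essentially right; for (8) the cleanest route is to observe that $F_r$ restricted to normalized measures is a pseudometric satisfying the triangle inequality, so $|d_r(\mu_i,\cM)-d_r(\mu,\cM)|\le F_r\bigl(\mu_i/F_r(\mu_i),\,\mu/F_r(\mu)\bigr)\to 0$; this is a Lipschitz estimate on $d_r(\cdot,\cM)$ and is the precise form of the ``approximate-minimizer selection'' you gesture at, and it does not require any compactness of $\cM$.

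Your treatment of item (6), however, has a genuine gap. You propose to bound $F_r(\mu/F_r(\mu),\nu)$ by decomposing an admissible $f$ into $f^{+}-f^{-}$ and controlling each piece by $(r-|\cdot|)_{+}$. Doing this honestly yields
\[
\int f\,d\bigl(\tfrac{\mu}{F_r(\mu)}-\nu\bigr)
\le \int f^{+}\,d\tfrac{\mu}{F_r(\mu)} + \int f^{-}\,d\nu \le 1+1=2,
\]
which is exactly the ``naive'' constant you say you are improving upon. With $F_B$ taken as written in this paper (supremum over \emph{all} $1$-Lipschitz $f$ supported on $B$), the decomposition does not reach $1$. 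The resolution is that the definition of $F_B$ in \cite{KPT09}, and in Preiss's original framework, restricts to \emph{nonnegative} $1$-Lipschitz test functions (and takes an absolute value of the pairing, which restores symmetry of $F_r$); the present paper's restatement of the definition silently drops ``nonnegative.'' Under the correct convention, item (6) is a one-line estimate: $0\le f\le (r-|\cdot|)_{+}$ gives $\int f\,d(\mu/F_r(\mu)-\nu)\le \int f\,d\mu/F_r(\mu)\le F_r(\mu)/F_r(\mu)=1$, with no decomposition at all. So your claimed mechanism for (6) does not work as stated; you should instead note the sign restriction in the definition of $F_B$ and discard the decomposition step. This also quietly matters for (5) and (8), where the triangle inequality and symmetry of $F_r$ as a pseudometric are being used.
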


%\begin{lemma}[\cite{Pr87} Proposition 1.11]
%Let $\{\mu_i\}$ be a sequence of Radon measures such that $\limsup \mu_{i}(B(0,r))<\infty$ for all $r>0$. Then $\mu_{i}$ converges weakly to a measure $\mu$ if and only if $F_{r}(\mu_{i},\mu)\rightarrow 0$ for every $r>0$.
%\end{lemma}

%\begin{definition} \cite[Section 2]{Pr87} 
%\begin{enumerate}[(a)]
%\item A set $\cM$ of non-zero Radon measures in $\bR^{n+1}$ is a {\it cone} if $c\mu\in \cM$ whenever $\mu\in \cM$ and $c>0$.
%\item A cone $\cM$ is a {\it $d$-cone} if $T_{0,r}[\mu]\in \cM$ for all $\mu\in \cM$ and $r>0$.
%\item The {\it basis} of a $d$-cone $\cM$ is the set $\{\Psi\in \cM: F_{1}(\Psi)=1\}$. 
%\end{definition}
%
%%\item The {\it basis} of a $d$-cone $\cM$ is the set $\{\mu\in \cM: F_{1}(\mu)=1\}$.
%\item\end{enumerate}
%\end{definition}

\begin{lemma}[\cite{KPT09} Remark 2.13] \label{l:closed} A $d$-cone $\cM$ of Radon measures in $\bR^{n+1}$ has a closed basis if and only if it is a relatively closed subset of the non-zero Radon measures in $\bR^{n+1}$. 
\end{lemma}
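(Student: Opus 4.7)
The plan is to exploit the weak continuity of the functional $F_1(\mu)=\int(1-|z|)_+\,d\mu$ — which holds because $(1-|z|)_+$ is continuous and compactly supported — together with the scaling identities of Lemma~\ref{preiss}(2),(3). The $d$-cone structure is what allows one to freely pass between arbitrary members of $\cM$ and members of its basis via positive rescaling combined with the dilations $T_{0,r}$.

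The direction ``relatively closed $\Rightarrow$ closed basis'' will be immediate. If $\nu_i$ lies in the basis of $\cM$ and $\nu_i\to\nu$ weakly, then $F_1(\nu)=\lim F_1(\nu_i)=1$, so $\nu$ is a nonzero Radon measure. The relative closedness hypothesis gives $\nu\in\cM$, and since $F_1(\nu)=1$, $\nu$ is in the basis.

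For the converse, I would assume the basis is closed and suppose $\mu_i\in\cM$ with $\mu_i\to\mu$ weakly to a nonzero Radon measure $\mu$. Since $\mu\neq 0$, I pick $r>0$ with $F_r(\mu)>0$; by Lemma~\ref{preiss}(3) this is equivalent to $F_1(T_{0,r}[\mu])>0$. The change-of-variables formula in Lemma~\ref{preiss}(2) gives $T_{0,r}[\mu_i]\to T_{0,r}[\mu]$ weakly, and therefore $c_i:=F_1(T_{0,r}[\mu_i])\to c:=F_1(T_{0,r}[\mu])>0$, so $c_i>0$ for all sufficiently large $i$. For such $i$, the measures $\nu_i:=c_i^{-1}T_{0,r}[\mu_i]$ belong to the basis of $\cM$ by the $d$-cone property, and they converge weakly to $\nu:=c^{-1}T_{0,r}[\mu]$, which also satisfies $F_1(\nu)=1$. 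Closedness of the basis forces $\nu\in\cM$, hence $T_{0,r}[\mu]=c\nu\in\cM$ again by the cone property. Finally, applying the dilation $T_{0,1/r}$ (whose push-forward inverts that of $T_{0,r}$, since $T_{0,1/r}\circ T_{0,r}=\mathrm{id}$) and using the cone property one last time yields $\mu=T_{0,1/r}[T_{0,r}[\mu]]\in\cM$.

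The whole argument is essentially bookkeeping with the dilation and scaling operations; the only real subtlety is ensuring the constants $c_i$ are eventually positive so that the normalization $\nu_i=c_i^{-1}T_{0,r}[\mu_i]$ is legitimate, and this is precisely what the weak continuity of $F_1$ guarantees after an initial dilation to the scale $r$ where $\mu$ has nontrivial mass.
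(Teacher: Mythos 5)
Your proposal is correct and follows essentially the same approach as the paper: pick a radius $r$ with $F_r(\mu)>0$, normalize the sequence into the basis at scale $r$, pass to the weak limit, and invoke closedness of the basis together with the $d$-cone property. The paper does this by noting that $\{\nu\in\cM: F_r(\nu)=1\}$ is closed and working with $\mu_i/F_r(\mu_i)$ directly, while you carry out the dilation by $T_{0,r}$ and back explicitly — the same argument with the change of scale unfolded.
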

\begin{proof}
One direction is obvious, so suppose $\cM$ has closed basis and $\mu_{i}\in \cM$ converges weakly to some non-zero Radon measure $\mu$. Then $F_{r}(\mu)>0$ for some $r>0$. The set $\{\nu\in \cM:F_{1}(\nu)=1\}$ is closed by assumption, and since $\cM$ is a $d$-cone, the set $\{\nu\in \cF: F_{r}(\nu)=1\}$ is also closed. Hence, since $\mu_{i}/F_{r}(\mu_{i})\rightarrow \mu/F_{r}(\mu)$, $\mu/F_{r}(\mu)\in \cM$, and thus $\mu\in \cM$.
\end{proof}

\begin{lemma}\label{l:dM}
If $\mu$ is a nonzero Radon measure and $\cM$ is a $d$-cone with closed basis, then $\mu\in \cM$ if and only if $d_{r}(\mu,\cM)=0$ for all $r>0$ for which $F_{r}(\mu)>0$. 
\end{lemma}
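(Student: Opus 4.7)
The plan is to prove each direction separately, with the forward implication being essentially immediate and the converse requiring a construction across an unbounded sequence of scales combined with Lemma \ref{l:closed}.

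For the forward direction ($\mu\in\cM \Rightarrow d_r(\mu,\cM)=0$ whenever $F_r(\mu)>0$), I would simply take $\nu:=\mu/F_r(\mu)$ as the competitor in the infimum defining $d_r(\mu,\cM)$. Because $\cM$ is a $d$-cone it is closed under multiplication by positive scalars, so $\nu\in\cM$; moreover $F_r(\nu)=1$ and $F_r(\mu/F_r(\mu),\nu)=0$, giving $d_r(\mu,\cM)=0$.

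For the converse, my goal is to produce a sequence $(\tilde\nu_i)\subset\cM$ that converges weakly to $\mu$. Once I have this, Lemma \ref{l:closed} (which identifies $d$-cones with closed basis as relatively closed subsets of the nonzero Radon measures), together with the assumption $\mu\neq 0$, will force $\mu\in\cM$. To construct the sequence I would first pick an increasing sequence of scales $r_i\to\infty$ with $F_{r_i}(\mu)>0$; since $\mu$ is a nonzero Radon measure one has $F_r(\mu)\geq (r-r_0)\,\mu(B(0,r_0))>0$ for all $r>r_0$, where $r_0$ is any radius with $\mu(B(0,r_0))>0$, so such scales are eventually available. Using $d_{r_i}(\mu,\cM)=0$ and the definition of the infimum, I can select $\nu_i\in\cM$ with $F_{r_i}(\nu_i)=1$ and
\[
F_{r_i}\!\left(\frac{\mu}{F_{r_i}(\mu)},\,\nu_i\right) < \frac{1}{i\,F_{r_i}(\mu)}.
\]
Setting $\tilde\nu_i:=F_{r_i}(\mu)\,\nu_i$ keeps the measure in $\cM$ (cone property) and, by bilinearity of $F_r(\cdot,\cdot)$ in its arguments up to translation, gives $F_{r_i}(\mu,\tilde\nu_i)=F_{r_i}(\mu)\cdot F_{r_i}(\mu/F_{r_i}(\mu),\nu_i)<1/i$.

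The final step upgrades closeness at the single scale $r_i$ to weak convergence. The point is the monotonicity $F_s(\mu,\sigma)\leq F_r(\mu,\sigma)$ whenever $s\leq r$, which holds because any $1$-Lipschitz function supported in $B(0,s)$ is also $1$-Lipschitz and supported in $B(0,r)$. Thus for every fixed $s>0$ and every $i$ with $r_i\geq s$ we have $F_s(\mu,\tilde\nu_i)\leq F_{r_i}(\mu,\tilde\nu_i)<1/i$, so $F_s(\mu,\tilde\nu_i)\to 0$. Lemma \ref{preiss}(5) then yields $\tilde\nu_i\to\mu$ weakly, and Lemma \ref{l:closed} closes the argument. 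The only delicate point I would flag is the need to undo the $F_r$-normalization built into the definition of $d_r$ (forcing the $1/(i\,F_{r_i}(\mu))$ in the choice of $\nu_i$) and to harvest information from an unbounded family of scales rather than a single one; past that, no further idea is required.
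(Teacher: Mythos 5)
Your proof is correct and takes essentially the same route as the paper: both directions are handled the same way, with the converse harvesting the hypothesis at an unbounded sequence of scales, rescaling the near-minimizers by $F_{r_i}(\mu)$ so they stay in $\cM$, using monotonicity of $F_r(\cdot,\cdot)$ in $r$ to upgrade closeness at the single large scale $r_i$ to convergence at every fixed scale, and then invoking the closed-basis characterization from Lemma \ref{l:closed}. Your version is in fact slightly more streamlined, since the paper first extracts an intermediate weak limit $\mu_j$ at each scale $j$ and only then runs a diagonal/triangle-inequality argument, whereas you pick the approximants $\nu_i$ with an explicit quantitative rate and pass directly to the limit.
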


\begin{proof}
Suppose $d_{r}(\mu,\cM)=0$ for all $r>0$ for which $F_{r}(\mu)>0$. For $j\in \bN$ large enough, we can find a sequence $\mu_{j,k}\in \cM$ such that 
\begin{equation}\label{flims}
F_{j}(\mu_{j,k})=1 \;\; \mbox{and} \;\; \lim_{k\rightarrow\infty} F_{j}\ps{\frac{\mu}{F_{j}(\mu)},\mu_{j,k}}=0.
\end{equation}
In particular, we can pass to a subsequence so that $\mu_{j,k}$ converges weakly in $B(0,j)$ to a measure $\mu_{j}$ supported in $B(0,j)$ with $F_{j}(\mu_{j})=1$. In view of \eqref{flims}, the latter implies that $\mu = F_{j}(\mu)\, \mu_{j} $ in $B(0,j)$, and thus, 
$$ F_{j}(\mu)\, \mu_{j}\warrow \mu.$$ 
Since, $\mu_{j,k}\rightharpoonup \mu_{j}$ and $F_{j}(\mu)\neq 0$ for $j$ large, we can pick $k_{j}$ so that 
\[
F_{j}(\mu_{j,k_{j}},\mu_{j})<\frac{1}{jF_{j}(\mu)}. \]
In particular, for any $r>0$ and $j>r$,
\begin{align*}
F_{r}\ps{\mu_{j,k_{j}} F_{j}(\mu),\mu}
& \leq F_{j}\ps{\mu_{j,k_{j}}F_{j}(\mu),\mu}\\
& \leq F_{j}\ps{\mu_{j,k_{j}}F_{j}(\mu),\mu_{j}F_{j}(\mu)}+F_{j}\ps{\mu_{j}F_{j}(\mu),\mu}\\
& <\frac{1}{j} +F_{j}\ps{\mu_{j}F_{j}(\mu),\mu}\rightarrow 0.
\end{align*}

Thus, $\mu_{j,k_{j}} F_{j}(\mu)  \rightharpoonup \mu$. 
%Again, since $\cM$ is a $d$-cone with closed basis, 
%we can pick $\mu_{j}\in \cM$ so that $F_{j}(\mu_{j})=1$ and $F_{j}(\mu/F_{j}(\mu),\mu_{j})=0$, so in particular, $\mu= F_{j}(\mu)\mu_{j}$ in $B(0,j)$. Thus, $F_{j}(\mu)\mu_{j}\rightarrow \mu$. 
By \Lemma{closed}, $\cM$ is closed, and since we have $\mu_{j,k_{j}} F_{j}(\mu)\in \cM$ for all $j$, this implies $\mu\in \cM$. The other implication is trivial.
\end{proof}

\begin{theorem}[\cite{Pr87} Corollary 2.7] Let $\mu$ be a Radon measure on $\bR^{n+1}$, and $\xi\in \supp \mu$. Then $\Tan(\mu,\xi)$ has compact basis if and only if 
\begin{equation}
\label{e:compactdouble}
\limsup_{r\rightarrow 0} \frac{\mu(B(\xi,2r))}{\mu(B(\xi,r))}<\infty.
\end{equation}
In this case, for any $\nu\in \Tan(\mu,\xi)$, it holds that $0\in \supp \nu$ and 
\[
\frac{\nu(B(0,2r))}{\nu(B(0,r))}\leq \limsup_{\rho \rightarrow 0} \frac{\mu(B(\xi,2\rho))}{\mu(B(\xi,\rho))},\mbox{ for all }r>0.
\] 
\label{t:compactdouble}
\end{theorem}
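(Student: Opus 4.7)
The plan is to prove both implications by reducing weak compactness of the basis (which is closed in the weak topology by \Lemma{closed}) to a uniform bound $\sup_{\nu\in\mathrm{basis}}\nu(B(0,R))<\infty$ for every $R>0$, and to trade such uniform bounds with the doubling ratios $\mu(B(\xi,2r))/\mu(B(\xi,r))$ via the defining sequences. Write $D=\limsup_{r\downarrow 0}\mu(B(\xi,2r))/\mu(B(\xi,r))$ throughout.

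For $(\Leftarrow)$, I would assume $D<\infty$ and pick any $\nu$ in the basis, writing $\nu$ as the weak limit of $c_j T_{\xi,r_j}[\mu]$ with $r_j\downarrow 0$. Since the test function $(1-|z|)_+$ is continuous with compact support, $c_j F_1(T_{\xi,r_j}[\mu])\to F_1(\nu)=1$. Using $(1-|z|)_+\geq 1/2$ on $B(0,1/2)$ gives $F_1(T_{\xi,r_j}[\mu])\geq \tfrac12\mu(B(\xi,r_j/2))$, and $D<\infty$ upgrades this to $F_1(T_{\xi,r_j}[\mu])\gtrsim_D \mu(B(\xi,r_j))$. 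Iterating doubling $k$ times yields $\mu(B(\xi,2^k r_j))\lesssim (D+1)^k\mu(B(\xi,r_j))$ for $j$ large, so
\[
\nu(B(0,2^k))\leq\liminf_j c_j\mu(B(\xi,2^k r_j))\lesssim_D (D+1)^k,
\]
uniformly in $\nu\in\mathrm{basis}$. Together with closedness, Prokhorov's theorem then gives weak sequential compactness.

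For the quantitative inequality in the second assertion, fix $\nu=w\text{-}\lim c_j T_{\xi,r_j}[\mu]\in\Tan(\mu,\xi)$. For $r>0$ outside the countable set where $\nu$ charges $\partial B(0,r)$ or $\partial B(0,2r)$, weak convergence applied to numerator and denominator gives
\[
\frac{\nu(B(0,2r))}{\nu(B(0,r))}=\lim_j\frac{\mu(B(\xi,2rr_j))}{\mu(B(\xi,rr_j))}\leq D,
\]
and the monotonicity of $s\mapsto\nu(B(0,s))$ extends the bound to all $r>0$. Iterating yields $\nu(B(0,r))\geq D^{-k}\nu(B(0,2^k r))$; since $\nu\neq 0$ has positive mass on some $B(0,R)$, this forces $\nu(B(0,r))>0$ for every $r>0$, i.e.\ $0\in\supp\nu$.

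The main obstacle is $(\Rightarrow)$. The naive attempt---assuming $\mu(B(\xi,2r_i))/\mu(B(\xi,r_i))\to\infty$ along some $r_i\downarrow 0$ and normalizing $T_{\xi,r_i}[\mu]/F_1(T_{\xi,r_i}[\mu])$ to obtain rescalings with $F_1=1$ but mass on $B(0,2)$ diverging---fails because these rescalings need not themselves lie in the basis: if their mass escapes to infinity they have no weak limit, and so compactness of the basis is not directly contradicted. My plan is instead to renormalize by $\mu(B(\xi,2r_i))$, producing a sequence $\sigma_i$ tight on $B(0,2)$ whose weak subsequential limit $\sigma\in\Tan(\mu,\xi)$ is nonzero with $\sigma(\bar B(0,2))\geq 1$ but $\sigma(B(0,1))=0$ (the latter from $\liminf_i \sigma_i(B(0,1))=\liminf_i \mu(B(\xi,r_i))/\mu(B(\xi,2r_i))=0$). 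Such a $\sigma$ already has $0\notin\supp\sigma$. Then, invoking Preiss's translation invariance from \cite{Pr87} Theorem~2.5 (if $\nu\in\Tan(\mu,\xi)$ and $z\in\supp\nu$, then $T_{z,r}[\nu]\in\Tan(\mu,\xi)$ for all $r>0$), I would iterate the non-doubling construction at points of $\supp\sigma$ to produce basis elements whose masses on some fixed $B(0,R)$ diverge, contradicting the uniform bound $\sup_{\nu\in\mathrm{basis}}\int \phi\,d\nu<\infty$ (valid for every $\phi\in C_c$ by compactness). The delicate step is precisely this iteration: converting the pathology ``mass missing at the origin'' for a single tangent measure into ``unbounded mass on a compact set'' distributed across a family of honest basis elements.
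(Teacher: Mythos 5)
The paper does not prove this statement; it cites it as \cite[Corollary 2.7]{Pr87}, so your attempt stands on its own. Your $(\Leftarrow)$ direction and the quantitative ratio bound are essentially correct: doubling together with $c_jF_1(T_{\xi,r_j}[\mu])\to F_1(\nu)=1$ does give the uniform bound $\sup_{\nu\in\mathrm{basis}}\nu(B(0,R))<\infty$ for every $R$, and the basis is closed. Two small points: the closedness of the basis of $\Tan(\mu,\xi)$ needs its own diagonal argument over the defining sequences (the cited \Lemma{closed} only translates between the two formulations of closedness, it does not establish either); and for the second assertion you should first derive $\nu(B(0,2r))\le D\,\nu(B(0,r))$ for all $r$ by monotonicity, and only then deduce $0\in\supp\nu$ by iteration, instead of dividing by $\nu(B(0,r))$ before you have shown it is positive.

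The $(\Rightarrow)$ direction has a genuine gap, and you put your finger on exactly where it is without closing it. Setting $\sigma_i=T_{\xi,r_i}[\mu]/\mu(B(\xi,2r_i))$, you control $\sigma_i(B(0,2))=1$ and $\sigma_i(B(0,1))\to 0$, but $\sigma_i(B(0,R))$ for $R>2$ can diverge, which is exactly the pathology non-doubling permits: take $\mu=\sum_j 2^{-j^2}\delta_{x_j}$ with $|x_j-\xi|=2^{-j}$ and $r_i=2^{-i}$; then $\sigma_i(B(0,4))=\mu(B(\xi,4r_i))/\mu(B(\xi,2r_i))\approx 2^{2i-1}\to\infty$. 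So $\sigma_i$ has no weak subsequential limit on $\bR^{n+1}$, and you do not produce any $\sigma\in\Tan(\mu,\xi)$ with a hole at the origin from this sequence; some stopping-time adjustment of the scales (or an entirely different normalization) is required. Moreover, the translation-invariance of tangent measures you plan to invoke (Theorem \ref{t:ttt}, i.e.\ \cite[Theorem 2.12]{Pr87}) holds only for $\mu$-a.e.\ $x$, whereas the present theorem is pointwise at every $\xi\in\supp\mu$; it cannot be applied at a fixed point. Converting a single bad ratio sequence into a genuine tangent measure that violates compactness is where the substance of Preiss's proof lies, and what you have written in that direction is a plan, not a proof.
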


\begin{lemma} \cite[Theorem 14.3]{Mattila} Let $\mu$ be a Radon measure on $\bR^{n+1}$. If $\xi\in \bR^{n+1}$ and \eqn{compactdouble} holds, then every sequence $r_{i}\downarrow 0$ contains a subsequence such that 
\begin{equation}\label{eq:tang-doubl}
\frac{T_{\xi,r_{j}}[\mu]}{\mu(B(\xi,r_{j}))}  \rightharpoonup \nu,
\end{equation}
for some measure $\nu\in \Tan(\mu,\xi)$.
\end{lemma}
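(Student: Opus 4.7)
The plan is to apply the standard weak sequential compactness theorem for Radon measures to the normalized blowups
\[
\nu_{j} := \frac{T_{\xi,r_{j}}[\mu]}{\mu(B(\xi,r_{j}))},
\]
after verifying two things: uniform local boundedness of $\{\nu_{j}\}$ and non-triviality of the resulting weak limit. Both follow from the pointwise doubling assumption \eqn{compactdouble}. Let $D$ be any real number strictly larger than the $\limsup$ in \eqn{compactdouble}, and pick $r_{0}>0$ so that $\mu(B(\xi,2r))\leq D\,\mu(B(\xi,r))$ for all $0<r<r_{0}$. Iterating yields $\mu(B(\xi,2^{k}r))\leq D^{k}\mu(B(\xi,r))$ whenever $0<2^{k}r<r_{0}$, so for any fixed $R>0$ and all $j$ large enough that $Rr_{j}<r_{0}$,
\[
\nu_{j}(B(0,R))=\frac{\mu(B(\xi,Rr_{j}))}{\mu(B(\xi,r_{j}))}\leq D^{\lceil\log_{2}R\rceil+1}.
\]

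This uniform bound on every compact subset of $\bR^{n+1}$ permits, via the standard weak compactness theorem for Radon measures, the extraction of a subsequence $\{\nu_{j_{m}}\}$ converging weakly to some Radon measure $\nu$. To see that $\nu$ is nonzero, I apply the doubling inequality in the opposite direction: $\mu(B(\xi,r_{j}))\leq D\,\mu(B(\xi,r_{j}/2))$ for $j$ large gives $\nu_{j_{m}}(B(0,1/2))\geq D^{-1}$, hence
\[
F_{1}(\nu_{j_{m}})=\int(1-|z|)_{+}\,d\nu_{j_{m}}\geq \tfrac{1}{2}\,\nu_{j_{m}}(B(0,1/2))\geq \frac{1}{2D}.
\]
Since $z\mapsto (1-|z|)_{+}$ is continuous and compactly supported, weak convergence of Radon measures immediately yields $F_{1}(\nu_{j_{m}})\to F_{1}(\nu)$, and therefore $F_{1}(\nu)\geq 1/(2D)>0$, so $\nu\neq 0$.

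Finally, by construction $\nu_{j_{m}}=c_{j_{m}}\,T_{\xi,r_{j_{m}}}[\mu]$ with positive scalars $c_{j_{m}}=1/\mu(B(\xi,r_{j_{m}}))$ and with $r_{j_{m}}\downarrow 0$, so $\nu$ lies in $\Tan(\mu,\xi)$ directly from the definition. The only conceptual subtlety is ensuring that the weak limit does not vanish entirely — weak convergence of Radon measures need not preserve total mass on non-compact sets — and the lower half of the doubling estimate is precisely what anchors a uniform positive amount of mass on $B(0,1/2)$ to survive the passage to the limit.
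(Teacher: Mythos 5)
The lemma is stated in the paper as a direct citation of \cite[Theorem 14.3]{Mattila}, with no proof supplied, and your argument is precisely the standard one found in that reference: the pointwise doubling bound gives uniform mass bounds on every $B(0,R)$ (hence weak sequential compactness), and it also gives the lower bound $\nu_{j}(B(0,1/2))\gtrsim 1/D$, which, tested against the compactly supported continuous function $z\mapsto(1-|z|)_{+}$, shows the weak subsequential limit is nonzero and therefore an honest tangent measure. Correct, and essentially the same as the cited source.
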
\label{l:tanexist}
 Having tangent measures that arise as limits of the form \eqref{eq:tang-doubl} is very convenient, but this limit does not always converge weakly to something. This may happen if $\mu$ is not pointwise doubling at the point $a$. However, all tangent measures are at least dilations of tangent measures arising in this way.

\begin{lemma}\label{l:nicetan}
Let $\mu$ be a nonzero Radon measure, $\xi\in \supp \mu$, and $\nu\in \Tan(\mu,\xi)$. Then there are $\rho_{j}\downarrow 0$ and $\rho,c>0$ so that 
$$
\frac{T_{\xi,\rho_{j}}[\mu]}{\mu(B(\xi,\rho_{j}))}\rightharpoonup c \, T_{0,\rho}[\nu] \quad \textup{and}\quad c\,T_{0,\rho}[\nu](\bB)>0.
$$ 
\end{lemma}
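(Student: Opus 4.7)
The plan is to start from the given sequence realizing $\nu$ as a tangent measure and then rescale it so that the normalization by the constants $c_i$ becomes a normalization by mass of a ball. Specifically, by definition of $\Tan(\mu,\xi)$ there are $c_i>0$ and $r_i\downarrow 0$ with $c_i T_{\xi,r_i}[\mu]\rightharpoonup \nu$. The natural idea is to pick a scale $\rho>0$ so that evaluating both sides of this weak convergence on $B(0,\rho)$ converts the unknown scalars $c_i$ into the measure of an appropriate ball of $\mu$.

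First I would choose $\rho>0$ with two properties: $\nu(B(0,\rho))>0$ (possible since $\nu$ is nonzero) and $\nu(\partial B(0,\rho))=0$ (possible because $\nu$ is Radon, so at most countably many spheres centered at $0$ carry mass, and we may perturb $\rho$ slightly if needed). Setting $\rho_j:=\rho\, r_j\downarrow 0$, the Portmanteau theorem applied to the continuity set $B(0,\rho)$ gives
\[
c_j\,\mu(B(\xi,\rho_j))=c_j T_{\xi,r_j}[\mu](B(0,\rho))\longrightarrow \nu(B(0,\rho))>0.
\]
In particular $\mu(B(\xi,\rho_j))>0$ for $j$ large, and defining $c:=1/\nu(B(0,\rho))$, we have $c_j\,\mu(B(\xi,\rho_j))\to 1/c$.

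Next, I would use the identity $T_{\xi,\rho_j}=T_{0,\rho}\circ T_{\xi,r_j}$, which gives $T_{\xi,\rho_j}[\mu]=T_{0,\rho}[T_{\xi,r_j}[\mu]]$; this is straightforward from the definition $T_{a,r}[\mu](A)=\mu(rA+a)$. Then for any $f\in C_c(\bR^{n+1})$, the function $f\circ T_{0,\rho}$ is again in $C_c(\bR^{n+1})$, so $c_j T_{\xi,r_j}[\mu]\rightharpoonup\nu$ pushes forward under $T_{0,\rho}$ to give $T_{0,\rho}[c_j T_{\xi,r_j}[\mu]]\rightharpoonup T_{0,\rho}[\nu]$. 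Combining these observations yields
\[
\frac{T_{\xi,\rho_j}[\mu]}{\mu(B(\xi,\rho_j))}=\frac{T_{0,\rho}[c_j T_{\xi,r_j}[\mu]]}{c_j\,\mu(B(\xi,\rho_j))}\rightharpoonup \frac{T_{0,\rho}[\nu]}{\nu(B(0,\rho))}=c\,T_{0,\rho}[\nu].
\]
Finally, $c\,T_{0,\rho}[\nu](\bB)=c\,\nu(B(0,\rho))=1>0$, which verifies the second conclusion.

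The only subtle point is the selection of $\rho$ as a continuity radius of $\nu$, which is essential to invoke Portmanteau; without it the equality $c_j\mu(B(\xi,\rho_j))\to\nu(B(0,\rho))$ could fail. Everything else is a routine manipulation of push-forward measures, so this is really the only step that requires care.
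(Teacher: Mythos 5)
Your proof is correct, but it takes a genuinely different route from the paper's. The paper picks an arbitrary $r$ with $\nu(B(0,r/2))>0$, bounds $\limsup_j T_{\xi,r_j}[\mu](B(0,R))/\mu(B(\xi,r r_j))$ for each $R$ via the upper/lower semicontinuity inequalities of weak convergence, extracts a weak limit $\lambda$ by compactness along a further subsequence, and then recovers the constant $a$ by integrating a test function against both $c_j T_{\xi,r_j}[\mu]$ and the renormalized sequence. Your key idea is to choose $\rho$ to be a \emph{continuity radius} of $\nu$ (i.e.\ $\nu(\partial B(0,\rho))=0$), which upgrades the semicontinuity inequalities to genuine convergence $c_j\mu(B(\xi,\rho_j))\to\nu(B(0,\rho))$ via Portmanteau. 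That single observation lets you sidestep the compactness argument and the additional subsequence, and the limit measure is identified by a direct algebraic computation rather than by testing against a function. The trade-off is negligible: both proofs rely only on standard facts about vague convergence, but yours is slightly leaner and establishes convergence along the full sequence $\rho_j=\rho r_j$ rather than a subsequence.
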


\begin{proof}
Let $c_{j}>0$ and $r_{j}\downarrow 0$ be so that $\mu_{j}=c_{j} T_{\xi,r_{j}}[\mu]\rightharpoonup \nu$. Since $\nu\neq 0$, there is $r>0$ so that $\nu(B(0,r/2))>0$. Thus, for $R>r$,
\begin{align*}
\limsup_{j\rightarrow\infty} \frac{T_{\xi,r_{j}}[\mu](B(0,R))}{\mu(B(\xi,r_{j}r))}
=&\limsup_{j\rightarrow\infty} \frac{c_{j} T_{\xi,r_{j}}[\mu](B(0,R))}{c_{j} T_{\xi,r_{j}}[\mu](B(0,r))}\\
&\leq \frac{\nu(\cnj{B(0,R)})}{\nu(B(0,r))}<\infty.
\end{align*}
Therefore, we can find a subsequence so that 
$$\nu_{j}:=\frac{T_{\xi,r_{j}}[\mu]}{\mu(B(\xi,rr_{j}))}\rightharpoonup \lambda,$$ 
for some measure $\lambda$. Hence, we  have that
$$
T_{0,r}[\nu_{j}]=\frac{T_{\xi,r_{j}r}[\mu]}{\mu(B(\xi,rr_{j}))} \rightharpoonup T_{0,r}[\lambda].
$$
Let $\vphi$ be any compactly supported continuous function so that $\int \vphi d\lambda>0$ and $\int \vphi d\nu>0$. Then
\[
c_{j}\, \mu(B(\xi,rr_{j}))=\frac{\int \vphi\, dc_{j} T_{\xi,r_{j}}[\mu]}{\int \vphi \, d \nu_{j}}\rightarrow \frac{\int \vphi d\nu}{\int \vphi d\lambda}=:a>0.\]
Thus, 
\[\nu = \lim_{j\rightarrow \infty} c_{j}T_{\xi,r_{j}}[\mu]
=\lim_{j\rightarrow \infty} c_{j} \mu(B(\xi,rr_{j}))
 \frac{T_{\xi,r_{j}}[\mu]}{\mu(B(\xi,rr_{j}))}
=a \lambda .\]
In particular, $T_{0,r}[\nu]=a \,T_{0,r}[\lambda]$. Moreover, $a^{-1}T_{0,r}[\nu](\bB)=a^{-1}\nu(r\bB)>0$ by our choice of $r$. Thus, the lemma follows by setting $\rho_{j}=rr_{j}$, $c=1/a$, and $\rho=r$. 
\end{proof}

%\begin{theorem}\label{t:main}
%Let $\omega$ be a Radon measure in $\bR^{n+1}$ and suppose $\Tan(\omega,\xi)\subset \cP(k)$ for some integer $k$. Then $\Tan(\omega,\xi)\subset \cF(k)$.
%\end{theorem}

\begin{proposition}[\cite{Pr87} Proposition 2.2] Let $\cM$ be a $d$-cone. Then $\cM$ has compact basis if and only if for every $\lambda>1$ there is $\tau>1$ such that 
\begin{equation}
F_{\tau r}(\Psi)\leq \lambda F_{r}(\Psi) \mbox{ for every }\Psi\in \cM \mbox{ and }r>0.
\label{e:compact}
\end{equation}
 In this case, $0\in \supp \Psi$ for all $\Psi\in \cM$.
\label{p:compact}
\end{proposition}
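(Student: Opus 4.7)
My plan is to prove both implications simultaneously, together with the side claim that $0\in\supp\Psi$ for every $\Psi\in\cM$. The key technical tool is a scaling identity: whenever $F_{r}(\Psi)>0$, the normalised dilate
\[
\Psi'\;:=\;\frac{T_{0,r}[\Psi]}{F_{1}(T_{0,r}[\Psi])}
\]
lies in the basis $\cM_{1}=\{\mu\in\cM:F_{1}(\mu)=1\}$ (since $\cM$ is a $d$-cone), and Lemma~\ref{preiss}(3) yields $F_{\tau r}(\Psi)/F_{r}(\Psi)=F_{\tau}(\Psi')$. Thus \eqref{e:compact} is equivalent to the single uniform estimate $\sup_{\Psi'\in\cM_{1}}F_{\tau}(\Psi')\le\lambda$, provided $F_{r}(\Psi)>0$ for every $r>0$ and $\Psi\in\cM$.

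\emph{Condition $\Rightarrow$ compact basis.} If some $\Psi\in\cM$ had $r_{0}:=\dist(0,\supp\Psi)>0$, then the choice $r=r_{0}/2$ would give $F_{r}(\Psi)=0$ while $F_{\tau r}(\Psi)>0$ for any $\tau>2$, contradicting \eqref{e:compact}; so $0\in\supp\Psi$ and the scaling identity is always valid. Iterating \eqref{e:compact} gives $F_{\tau^{k}}(\Psi)\le\lambda^{k}$ on $\cM_{1}$, which together with $(R-|z|)_{+}\ge R/2$ on $B(0,R/2)$ forces a uniform mass bound $\Psi(B(0,R))\lesssim_{R}1$ on $\cM_{1}$. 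Prokhorov's theorem then extracts weak subsequential limits, and continuity of $F_{1}$ against the bump $(1-|z|)_{+}$ preserves $F_{1}=1$ in the limit. Identifying this limit with an element of $\cM_{1}$ requires $\cM$ to be relatively closed in the nonzero Radon measures, which by \Lemma{closed} is the conventional setting for $d$-cones in \cite{Pr87}.

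\emph{Compact basis $\Rightarrow$ condition.} First I argue $0\in\supp\Psi$ for every $\Psi\in\cM_{1}$. If $r_{0}:=\dist(0,\supp\Psi)\in(0,1)$, then for $s\downarrow r_{0}$ the dilates $\widetilde\Psi_{s}:=T_{0,s}[\Psi]/F_{1}(T_{0,s}[\Psi])$ again lie in $\cM_{1}$, yet
\[
F_{1}(T_{0,s}[\Psi])=\frac{F_{s}(\Psi)}{s}\le\frac{s-r_{0}}{s}\,\Psi(B(0,s))\longrightarrow 0,
\]
while $\widetilde\Psi_{s}(B(0,2))\ge\Psi(B(0,2r_{0}))/F_{1}(T_{0,s}[\Psi])\to\infty$, since $\Psi(B(0,2r_{0}))>0$ by definition of $r_{0}$. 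The blow-up of local mass violates weak precompactness of $\cM_{1}$, so $F_{r}(\Psi)>0$ for all $r>0$ and the scaling identity applies. Setting $M(\tau):=\sup_{\Psi'\in\cM_{1}}F_{\tau}(\Psi')$, weak continuity of $F_{\tau}$ together with compactness of $\cM_{1}$ makes $M(\tau)$ finite and attained; the Lipschitz estimate $|F_{\tau}(\Psi')-F_{1}(\Psi')|\le(\tau-1)\,\Psi'(B(0,\tau))$ combined with the uniform mass bound on $\cM_{1}$ (upper semicontinuity of $\Psi'\mapsto\Psi'(B(0,\tau))$ on a weakly compact set) yields $M(\tau)\to M(1)=1$ as $\tau\downarrow 1$. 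Given $\lambda>1$, choosing $\tau>1$ with $M(\tau)\le\lambda$ produces \eqref{e:compact} via the scaling identity.

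The trickiest step will be the blow-up argument forcing $0\in\supp\Psi$ in the compact-basis direction: one must show that the normalising denominator $F_{1}(T_{0,s}[\Psi])$ vanishes strictly faster than the numerator $\Psi(B(0,2s))$ as $s\downarrow r_{0}$, which is what makes the local mass of $\widetilde\Psi_{s}$ diverge and contradict weak compactness. After that, both directions reduce to weak continuity of $F_{\tau}$ plus uniform mass control on $\cM_{1}$.
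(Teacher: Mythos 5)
The paper does not actually prove this proposition; it is cited from \cite{Pr87}, so there is no in-paper argument to compare against. Your proof captures the correct mechanism: the scaling identity $F_{\tau r}(\Psi)/F_r(\Psi)=F_\tau(\Psi')$ with $\Psi'$ the normalised dilate in the basis reduces \eqref{e:compact} to the single uniform bound $\sup_{\cM_{1}}F_\tau\le\lambda$, and in each direction the side claim $0\in\supp\Psi$ is what makes that reduction legitimate.

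Two steps are written imprecisely and should be tightened. In the direction ``condition $\Rightarrow 0\in\supp\Psi$,'' your sentence that ``$F_{\tau r}(\Psi)>0$ for any $\tau>2$, contradicting \eqref{e:compact}'' does not close the case, since \eqref{e:compact} supplies \emph{one} value $\tau>1$, which may well be $\le 2$; you must iterate, as you do two lines later for the mass bound: $F_{\tau^{k}r}(\Psi)\le\lambda^{k}F_{r}(\Psi)=0$ for all $k$, and choosing $k$ with $\tau^{k}r>r_{0}$ produces $0<F_{\tau^{k}r}(\Psi)=0$. In the direction ``compact basis $\Rightarrow$ condition,'' the parenthetical ``upper semicontinuity of $\Psi'\mapsto\Psi'(B(0,\tau))$'' is not correct — open-ball evaluation is \emph{lower}, not upper, semicontinuous under weak convergence; the uniform bound you need instead comes from dominating $\one_{B(0,\tau)}$ by a fixed $\vphi\in C_c$ and noting $\Psi'\mapsto\int\vphi\,d\Psi'$ is continuous, hence bounded, on the compact basis. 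Finally, your caveat that the ``condition $\Rightarrow$ compact basis'' implication needs $\cM$ relatively closed (equivalently, by Lemma \ref{l:closed}, to have closed basis) is accurate: \eqref{e:compact} does not by itself force closedness, so the statement as literally quoted presupposes it, and in the paper's actual application (Lemma \ref{l:ccompactbasis}) the closedness is produced directly rather than deduced from \eqref{e:compact}.
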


\begin{theorem} \label{t:ttt}
\cite[Theorem 14.16]{Mattila}
Let $\mu$ be a Radon measure on $\bR^{n+1}$. For $\mu$-almost every $x\in \bR^{n+1}$, if $\nu\in \Tan(\mu,x)$, the following hold:
\begin{enumerate}
\item $T_{y,r}[\nu]\in \Tan(\mu,x)$ for all $y\in \supp \nu$ and $r>0$.
\item $\Tan(\nu,y)\subset \Tan(\mu,x)$ for all $y\in \supp \nu$.
\end{enumerate}
\end{theorem}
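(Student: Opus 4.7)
The plan is to first establish Part~(1) and then derive Part~(2) from it via a clean diagonal argument. Throughout, the main difficulty is a \emph{change of base point}: if $\nu\in\Tan(\mu,x)$ arises as a weak limit $c_j\,T_{x,r_j}[\mu]\warrow\nu$ and $y\in\supp\nu$, then the natural attempt to translate $\nu$ produces a rescaled measure centered at a nearby point $x_j\in\supp\mu$, not at $x$ itself. Controlling this change of center for $\mu$-almost every $x$ is where the essential work lies.

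For Part~(1), I would fix $y\in\supp\nu$ and $r>0$. Since $c_j\,T_{x,r_j}[\mu]\warrow\nu$, one can extract $y_j=(x_j-x)/r_j$ with $x_j\in\supp\mu$ and $y_j\to y$. A direct push-forward computation (cf.\ \Lemma{preiss}) gives
\[
T_{y_j,r}\bigl[c_j\,T_{x,r_j}[\mu]\bigr]\;=\;c_j\,T_{x_j,\,r r_j}[\mu],
\]
and the left side converges weakly to $T_{y,r}[\nu]$ by continuity of push-forward under weak convergence on bounded sets. Hence $T_{y,r}[\nu]$ arises as a limit of dilations of $\mu$ whose centers $x_j$ drift from $x$ at rate $O(r_j)$ — a \emph{pseudo-tangent measure} in the sense of Preiss.

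The heart of the argument is then to show that at $\mu$-a.e.\ $x$, every such pseudo-tangent measure is in fact a genuine element of $\Tan(\mu,x)$. I would parameterize by a countable dense set of pairs $(y,r)\in\bR^{n+1}\times(0,\infty)$, and for each such pair show that the exceptional set of $x$ has $\mu$-measure zero. The mechanism is a Besicovitch-type bound asserting that for $\mu$-a.e.\ $x$,
\[
\limsup_{s\to 0}\;\sup_{|z-x|\le s/2}\frac{\mu(B(z,s))}{\mu(B(x,s))}<\infty,
\]
which lets one replace rescalings centered at $x_j$ by rescalings centered at $x$ at a comparable scale, at the cost of an extra positive scalar (harmless, since $\Tan(\mu,x)$ is a $d$-cone). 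The countable union of exceptional sets is still $\mu$-null, and a continuity/density argument upgrades the conclusion to \emph{all} $y\in\supp\nu$ and \emph{all} $r>0$, using that $\supp\nu$ is closed and that dilation and push-forward act continuously on weak limits. This measurable-selection/exceptional-set analysis is the main obstacle I expect.

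Part~(2) then follows by iteration. If $\lambda\in\Tan(\nu,y)$, there exist $d_k>0$ and $s_k\downarrow 0$ with $d_k\,T_{y,s_k}[\nu]\warrow\lambda$. By Part~(1), each $d_k\,T_{y,s_k}[\nu]$ already lies in $\Tan(\mu,x)$, so $\lambda$ is a weak limit of elements of $\Tan(\mu,x)$. Choosing for each $k$ a defining sequence $c_{k,j}\,T_{x,r_{k,j}}[\mu]\warrow d_k\,T_{y,s_k}[\nu]$ as $j\to\infty$, a standard diagonal extraction yields indices $j(k)$ with $c_{k,j(k)}\,T_{x,r_{k,j(k)}}[\mu]\warrow\lambda$, placing $\lambda\in\Tan(\mu,x)$. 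Once Part~(1) is in hand, Part~(2) is routine; the whole content of the theorem is thus concentrated in handling the drifted base points.
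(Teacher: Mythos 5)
First, a structural note: the paper does not prove this statement; it cites it as \cite[Theorem 14.16]{Mattila}, so the benchmark is Preiss's original argument as presented in Mattila's book rather than anything in this paper.

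Your reduction to a change-of-base-point problem is the right starting move: the pushforward identity $T_{y_j,r}\bigl[c_jT_{x,r_j}[\mu]\bigr]=c_j\,T_{x_j,\,rr_j}[\mu]$ with $x_j\to x$, $(x_j-x)/r_j\to y$ is the correct mechanism, and your derivation of Part~(2) from Part~(1) by diagonal extraction is standard and correct. The problem is the lemma you invoke to close the gap. Since $B(z,s)\subset B(x,3s/2)$ whenever $|z-x|\le s/2$, your claimed bound
\[
\limsup_{s\to 0}\;\sup_{|z-x|\le s/2}\frac{\mu(B(z,s))}{\mu(B(x,s))}<\infty
\]
is equivalent (after iterating once) to $\limsup_{s\to 0}\mu(B(x,2s))/\mu(B(x,s))<\infty$, i.e.\ to the asymptotic doubling condition at $x$. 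By Preiss's Corollary~2.7 (quoted in this paper as \Theorem{compactdouble}), this holds at $x$ if and only if $\Tan(\mu,x)$ has compact basis, and there are nonatomic Radon measures for which this fails at $\mu$-almost every point of the support. So your ``Besicovitch-type bound'' is simply false for general Radon measures; were it true $\mu$-a.e., every Radon measure would be asymptotically doubling a.e., which it is not.

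The actual proof avoids any pointwise doubling estimate. Rather than re-centering a single blow-up sequence and comparing masses, one shows directly that the exceptional set
\[
E=\bigl\{x\in\supp\mu : \exists\,\nu\in\Tan(\mu,x),\ y\in\supp\nu,\ r>0,\ T_{y,r}[\nu]\notin\Tan(\mu,x)\bigr\}
\]
is $\mu$-null. The failure to be a tangent measure is detected against a countable family of compactly supported test functions and rational tolerances, which decomposes $E$ into countably many pieces on which the failure is quantitatively uniform; assuming one such piece has positive measure, a Vitali--Besicovitch covering argument at a $\mu$-density point of that piece produces the purportedly missing dilation of $\mu$ and yields a contradiction. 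The countable parametrization and the density-point argument are what replace doubling. Your proposal has the right overall shape (pseudo-tangents, a.e.\ exhaustion, then diagonalization for Part~(2)) but substitutes an unavailable quantitative hypothesis for the measure-theoretic exhaustion that actually carries the proof.
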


\subsection{Connectivity of cones}
The main tool from \cite{KPT09} and \cite{Bad11} is the following ``connectivity" lemma, which was originally shown in \cite[Corollary 2.16]{KPT09} under the assumption that $\cM$ had compact basis. For our purposes, we need to remove this assumption. 

\begin{lemma}\label{l:newpreiss}
Let $\cF$ and $\cM$ be $d$-cones and assume $\cF$ has compact basis. Furthermore, suppose that there is $\ve_{0}>0$ such that for $\mu\in \cM$, if there is $r_{0}>0$ so that $d_{r}(\mu,\cF)\leq \ve$ for all $r\geq r_{0}$, then $\mu\in \cF$. For a Radon measure $\eta$ and $x\in \supp \eta$, if $\Tan(\eta,x)\subset \cM$ and $\Tan(\eta,x)\cap \cF\neq\varnothing$, then $\Tan(\eta,x)\subset \cF$. %and there is $\alpha>0$ so that $\mu(B(0,r))\leq Cr^{\alpha}$ for all $r\geq r_{0}$.
\end{lemma}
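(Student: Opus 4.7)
The plan is to prove this by contradiction, in the spirit of the Preiss connectivity argument of \cite[Cor.~2.16]{KPT09}, with the compact basis of $\cM$ required there replaced by the present hypothesis, which plays the role of an asymptotic rigidity of $\cF$ inside $\cM$. Suppose for contradiction there exists $\nu \in \Tan(\eta,x) \setminus \cF$. Since $\Tan(\eta,x) \subset \cM$ and $\nu \notin \cF$, the contrapositive of the hypothesis yields a sequence $R_k \uparrow \infty$ with $d_{R_k}(\nu,\cF) > \ve_0$. Writing $\nu = \lim_j c_j\, T_{x,r_j}[\eta]$, one checks directly that $T_{0,R}[\nu] = \lim_j c_j\, T_{x,Rr_j}[\eta] \in \Tan(\eta,x)$ for every $R>0$, so Lemma~\ref{preiss}(7) produces a sequence $\mu_k := T_{0,R_k}[\nu] \in \Tan(\eta,x)$ with $d_1(\mu_k,\cF) > \ve_0$. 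Simultaneously, $\nu^F \in \Tan(\eta,x) \cap \cF$ together with the $d$-cone property of $\cF$ yields a family of tangent measures lying inside $\cF$.

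Next I would introduce the normalised rescalings $\tilde\mu_s := T_{x,s}[\eta]/F_1(T_{x,s}[\eta])$, which are well defined and weakly continuous in $s \in (0,\infty)$ since $x \in \supp\eta$, and the continuous function $\phi(s) := d_1(\tilde\mu_s, \cF)$; continuity of $\phi$ is a consequence of Lemma~\ref{preiss}(8). Realising the tangent measures above as weak subsequential limits of $\tilde\mu_s$ (after possibly normalising in the sense of Lemma~\ref{l:nicetan}), one sees that $\phi(s_j) \to 0$ along some $s_j \downarrow 0$ while $\phi(t_j) > \ve_0 - o(1)$ along some $t_j \downarrow 0$. For each $n$, I would pick $s_{j(n)}$ with $\phi(s_{j(n)}) < \ve_0/2$ and set
\[
\sigma_n := \sup\{s \in (0, s_{j(n)}] : \phi(s) \geq \ve_0\};
\]
continuity of $\phi$ forces $\phi(\sigma_n) = \ve_0$ and $\phi < \ve_0$ on $(\sigma_n, s_{j(n)}]$. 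A diagonal choice of $j(n)$ will then be arranged so that $s_{j(n)}/\sigma_n \to \infty$.

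To close the argument, I would extract a weak subsequential limit $\mu^* := \lim_n \tilde\mu_{\sigma_n} \in \Tan(\eta,x)$; the required local mass control comes from pairing $\tilde\mu_{\sigma_n}$ with near-optimal competitors in $\cF$ for $d_r(\tilde\mu_{\sigma_n}, \cF)$ across an expanding range of $r$ and invoking the compact basis of $\cF$, via a diagonal in $r$. The limit then satisfies $\mu^* \in \cM$, $d_1(\mu^*,\cF) = \ve_0 > 0$ (so $\mu^* \notin \cF$), and for every fixed $r \geq 1$, $d_r(\mu^*,\cF) = \lim_n \phi(r\sigma_n) \leq \ve_0$ because eventually $r\sigma_n \in [\sigma_n, s_{j(n)}]$. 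The hypothesis applied to $\mu^*$ with $r_0 = 1$ then forces $\mu^* \in \cF$, contradicting $d_1(\mu^*,\cF) = \ve_0$. The main obstacle will be the diagonal selection of $j(n)$ guaranteeing $s_{j(n)}/\sigma_n \to \infty$, together with the compactness argument extracting $\mu^*$; both rely crucially on the compact basis of $\cF$ to compensate for the possible non-compactness of $\Tan(\eta,x)$.
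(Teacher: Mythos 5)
Your proposal follows essentially the same route as the paper, which proves this via Lemma~\ref{l:pstep} and Corollary~\ref{c:pstep}: locate a scale at which the measure is far from $\cF$, interpolate with a scale at which it is close, identify an intermediate scale where the distance equals a fixed $\ve$, show that the ratio of the two scales diverges, and extract a weak limit using the compact basis of $\cF$. You reorganize the argument in a continuous form, replacing the discrete maximal choice $\rho_j\in(s_j,r_j]$ of the paper by $\sigma_n := \sup\{s\le s_{j(n)} : \phi(s)\ge\ve_0\}$ together with the intermediate value theorem for $\phi(s) = d_1(\tilde\mu_s,\cF)$; this is a cosmetic difference, not a different proof.

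There is, however, a genuine gap at the step you flag as the ``main obstacle.'' The assertion that ``a diagonal choice of $j(n)$ will then be arranged so that $s_{j(n)}/\sigma_n\to\infty$'' reads as if this divergence can be engineered by selecting $j(n)$ cleverly, which it cannot: once $s_{j(n)}$ is fixed, $\sigma_n$ is determined, and there is nothing left to choose. What is true --- and what must be \emph{proved} --- is that the ratio diverges automatically. This is exactly the claim $\rho_j/r_j\to0$ in the paper's proof of Lemma~\ref{l:pstep}. The argument is: if along a subsequence $\sigma_n/s_{j(n)}\to t\in(0,1]$, one must derive $\phi(\sigma_n)\to 0$, contradicting $\phi(\sigma_n)=\ve_0$. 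In the paper this is done by choosing $s_{j(n)}$ to be precisely the scales along which $c_nT_{x,s_{j(n)}}[\eta]$ converges \emph{weakly} to $\nu^F\in\cF$, so that $c_nT_{x,\sigma_n}[\eta]=T_{0,\sigma_n/s_{j(n)}}[c_nT_{x,s_{j(n)}}[\eta]]\to T_{0,t}[\nu^F]\in\cF$, and then Lemma~\ref{preiss}(8) (using $0\in\supp\nu^F$ from Proposition~\ref{p:compact}) gives $\phi(\sigma_n)\to d_1(T_{0,t}[\nu^F],\cF)=0$. Your sketch only requires $\phi(s_{j(n)})<\ve_0/2$, which is weaker than weak convergence to a member of $\cF$: one would then have to argue via the compact basis of $\cF$, extracting a limit of the near-optimal competitors $\psi_n\in\cF$ and observing that for $t\le 1$ the distance $d_1(T_{0,t}[\tilde\mu_{s_{j(n)}}],\cF)$ only depends on $\tilde\mu_{s_{j(n)}}|_{B(0,1)}$. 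That can be made to work, but as written it is not established. Finally, a minor point: the mass-control estimate needed to extract $\mu^*$ (the analogue of \eqref{e:F-e}) requires $\ve_0$ to be small, say $\ve_0<1/8$; since the hypothesis is monotone in $\ve_0$, one can simply shrink $\ve_0$ at the outset, but this should be stated.
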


We will first require some lemmas.

\begin{lemma}\label{l:pstep}
Let $\cF$ be a $d$-cone with compact basis. There is $\beta>0$ depending only on $\cF$ so that the following holds. Suppose $\omega$ is a Radon measure in $\bR^{n+1}$, $\xi\in \supp\omega$, $\Tan(\omega,\xi)\cap \cF\neq\varnothing$ and 
$$\limsup_{r\rightarrow 0} d_{r_{0}}(T_{\xi,r}[\omega],\cF)\geq \ve_{0}>0,\,\, \textup{for some}\,\,r_{0}>0.$$
Then for $\ve<\ve_{0}$ small enough, we may find $\mu\in \Tan(\omega,\xi)\backslash \cF$ so that
\begin{enumerate}
\item $d_{r_{0}}(\mu,\cF)=\ve$, 
\item $d_{r}(\mu,\cF)\leq \ve$ for all $r>r_{0}$, and 
\item $\mu(B(0,r))\leq r^{\beta} \, \mu(B(0,4r_0))$ for all $r\geq r_{0}$.
\end{enumerate}
\end{lemma}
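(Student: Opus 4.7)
The plan is to extract $\mu$ as a tangent measure of $\omega$ at $\xi$ along a carefully chosen sequence of scales $\rho_j \downarrow 0$ at which the base-scale distance to $\cF$ equals $\ve$ while a long upward window of $\ve$-closeness is available. I write $g(r) := d_{r_0}(T_{\xi,r}[\omega],\cF)$; since $r \mapsto T_{\xi,r}[\omega]$ is weakly continuous in $r$ and $F_{r_0}(T_{\xi,r}[\omega]) > 0$ for small $r$ (because $\xi \in \supp \omega$), \Lemma{preiss}(8) makes $g$ continuous on some interval $(0,r^{*})$.

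First, I choose the scales. From $\Tan(\omega,\xi)\cap \cF\neq \varnothing$ take $c_n T_{\xi,s_n}[\omega]\rightharpoonup \nu \in \cF$ with $s_n \downarrow 0$; since $\cF$ is relatively closed by \Lemma{closed}, \Lemma{dM} gives $d_R(\nu,\cF)=0$ for every $R>0$. Weak convergence together with the joint continuity of $(R,\mu)\mapsto d_R(\mu,\cF)$ (a consequence of \Lemma{preiss}(8) combined with the scaling identity (7)) yields uniform convergence $d_R(c_n T_{\xi,s_n}[\omega],\cF)\to 0$ on compact intervals of $R$. Fixing $(\lambda,\tau)$ from \Proposition{compact} applied to $\cF$ with $\lambda$ close to $1$ and setting $\beta := \log \lambda/\log\tau$, a diagonal extraction supplies, for each integer $N$, a scale $\sigma_N$ so small that $g < \ve/2$ on $[\sigma_N,\tau^N \sigma_N]$. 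The limsup hypothesis furnishes $t_m \downarrow 0$ with $g(t_m) \geq \ve_0$; choosing $t_{m(N)} < \sigma_N$ and defining
\[
\rho_N := \sup\{r \leq \sigma_N : g(r) > \ve\},
\]
I obtain $\rho_N \geq t_{m(N)} > 0$, $g(\rho_N)=\ve$ by continuity, and $g \leq \ve$ on $(\rho_N, \sigma_N]$. Combined with the good window past $\sigma_N$, this yields $g \leq \ve$ on $[\rho_N,\tau^N \rho_N]$, while $\rho_N \leq \sigma_N \to 0$.

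Next, I normalize $\mu_N := T_{\xi,\rho_N}[\omega]/F_{r_0}(T_{\xi,\rho_N}[\omega])$ and pass to a weak subsequential limit. For each $R \in [r_0,\tau^N r_0]$ one has $d_R(\mu_N,\cF)=g(\rho_N R/r_0)\leq \ve$, giving $\tilde\nu_R \in \cF$ with $F_R(\tilde\nu_R)=F_R(\mu_N)$ and $F_R(\mu_N,\tilde\nu_R)\leq \ve F_R(\mu_N)$. Applying \Proposition{compact} to $\tilde\nu_{\tau R}$ and bounding $F_R(\tilde\nu_{\tau R}) \leq F_R(\mu_N)+\ve F_{\tau R}(\mu_N)$ gives $F_{\tau R}(\mu_N) \leq \tfrac{\lambda}{1-\lambda\ve}\, F_R(\mu_N)$ for $\ve < 1/\lambda$; iterating produces uniform polynomial growth of $F_r(\mu_N)$, hence of $\mu_N(B(0,r))$ via $r\mu_N(B(0,r))\leq F_{2r}(\mu_N)$, across $[r_0,\tau^{N-1} r_0]$. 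Tightness then yields $\mu_N\rightharpoonup \mu \in \Tan(\omega,\xi)$. Applying \Lemma{preiss}(8) scale-by-scale delivers (1) $d_{r_0}(\mu,\cF)=\ve$ and (2) $d_R(\mu,\cF)\leq \ve$ for every $R \geq r_0$, while the uniform polynomial estimate passes to the limit to yield (3) with $\beta$ depending only on $\cF$ (provided $\ve_0$ is chosen small enough that $\lambda/(1-\lambda\ve)$ stays below a fixed power of $\lambda$). The membership $\mu \notin \cF$ then follows from (1) via \Lemma{dM}.

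The hardest part will be the choice of $\rho_N$ in the first step—specifically, ensuring that the window $[\rho_N,\tau^N \rho_N]$ has length ratio tending to infinity. The hypothesis $\Tan(\omega,\xi)\cap \cF \neq \varnothing$ is essential precisely for this: it supplies scales on which $g$ is uniformly small across arbitrarily long geometric windows, rather than merely dipping below $\ve$ along isolated sequences. Without this input, walking leftward from a small good scale could produce windows of bounded length ratio, and conclusion (2) would fail in the limit.
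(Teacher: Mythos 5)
Your proposal is correct and follows essentially the same strategy as the paper's proof: select intermediate scales $\rho_N \to 0$ at which the base-scale distance $d_{r_0}$ to $\cF$ equals $\ve$ while remaining $\le \ve$ on a geometrically growing window above, normalize, use Proposition \ref{p:compact} to bootstrap a polynomial growth bound, and pass to a subsequential weak limit in $\Tan(\omega,\xi)$. The only substantive difference is cosmetic — you make the continuity of $r\mapsto d_{r_0}(T_{\xi,r}[\omega],\cF)$ explicit and select $\rho_N$ via a sup below diagonally-extracted good scales $\sigma_N$, whereas the paper pins $\rho_j$ between the two given sequences and proves $\rho_j/r_j\to 0$ by a subsequence-contradiction argument — and your declared $\beta=\log\lambda/\log\tau$ should really be (a fixed multiple of) $\log\bigl(\lambda/(1-\lambda\ve)\bigr)/\log\tau$, which is harmless after shrinking $\ve$.
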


This is an adaptation of the proof of \cite[Corollary 2.16]{KPT09}, but with some extra care. 

\begin{proof}
Without loss of generality, we will assume $r_{0}=1$. Let $c_{j}>0$ and $r_{j}\downarrow 0$ be such that $c_{j}T_{\xi,r_{j}}[\omega]\rightarrow \nu\in \cF$. Since $\cF$ is compact, by Proposition \ref{p:compact}, $0\in \supp \nu$ and so $\nu(\bB)>0$.  Thus, by Lemma \ref{preiss} (5), $c_{j}T_{\xi,r_{j}}[\omega](\bB)>0$ for $j$ large. By Lemma \ref{preiss} (8), we have that, given $\ve>0$, for $j$ large enough,
\begin{equation}\label{e:distrj}
d_1(T_{\xi,r_{j}}[\omega],\cF)=d_1(c_{j}T_{\xi,r_{j}}[\omega],\cF)<\ve.
\end{equation}
Note that $0\in \supp T_{\xi,r_{j}}[\omega]$ since $\xi\in \supp \omega$, and so there is no accidental dividing by zero in the definition of $d_{1}$. By assumption, there is also $s_{j}\downarrow 0$ so that
\begin{equation}\label{e:distsj}
d_1(T_{\xi,s_{j}}[\omega],\cF)>\ve.
\end{equation}
We can assume $s_{j}<r_{j}$ by passing to a subsequence. Then by \eqn{distrj} and \eqn{distsj}, let $\rho_{j}\in (s_{j},r_{j})$ be the maximal number such that
\begin{equation}\label{e:distrhoj}
d_1(T_{\xi,\rho_{j}}[\omega],\cF)=\ve.
\end{equation}
Then, by the maximality of $\rho_{j}$,
\begin{equation}\label{e:t>rho}
\sup_{t\in [\rho_{j},r_{j}]} d_1(T_{\xi,t}[\omega],\cF)\leq \ve.
\end{equation}
We claim $\rho_{j}/r_{j}\rightarrow 0$. If not, then since $\rho_{j}/r_{j}\leq 1$, we may pass to a subsequence so that $\rho_{j}/r_{j}\rightarrow t\in (0,1)$, and so 
\[
c_{j} T_{\xi,\rho_{j}}[\omega]=T_{0,\rho_{j}/r_{j}}\bk{c_{j}T_{\xi,r_{j}}[\omega]}\rightarrow T_{0,t}[\mu]\in \cF,\]
which contradicts \eqn{distrhoj}. Thus, $\rho_{j}/r_{j}\rightarrow 0$, and so \eqn{t>rho} implies that for $\alpha\geq 1$, if $j$ is large enough, we have $1\leq  \alpha<r_{j}/\rho_{j}$. If  $\omega_{j}=T_{\xi,\rho_{j}}[\omega]$, then by Lemma \ref{preiss} (7), it holds
\begin{equation}\label{e:ojF}
d_{\alpha}(\omega_{j},\cF)=d_{\alpha}(T_{\xi,\rho_{j}}[\omega],\cF)
=d_{1}(T_{\xi,\alpha \rho_{j}}[\omega],\cF)
\stackrel{\eqref{e:t>rho}}{\leq} \ve,
\end{equation}
which by \eqref{e:distrhoj} implies that
\begin{equation}\label{e:limsupdrwj}
d_{1}(\omega_{j},\cF)=\ve>0 \;\; \mbox{and} \;\; 
\limsup_{j\rightarrow\infty} d_{r}(\omega_{j},\cF)\leq \ve\mbox{ for }r>1.
\end{equation}

For $r\geq 1$, let $\mu_{j,r}\in \cF$ be such that $F_{\tau r}(\mu_{j,r})=1$ and 
\[
F_{\tau r}\ps{\frac{\omega_{j}}{F_{\tau r}(\omega_{j})},\mu_{j,r}}
<\frac{3}{2}d_{\tau r}(\omega_{j},\cF).
\]
By \eqref{e:limsupdrwj}, for $j$ large enough,
\begin{equation}\label{eq:s3Fr2e}
F_{r}\ps{\frac{\omega_{j}}{F_{\tau r}(\omega_{j})},\mu_{j,r}}
\leq F_{\tau r}\ps{\frac{\omega_{j}}{F_{\tau r}(\omega_{j})},\mu_{j,r}}
<\frac{3}{2}d_{\tau r} (\omega_{j},\cF)< 2\ve.
\end{equation}
Since $\cF$ has compact basis, by \Proposition{compact} with $\lambda=2$, there is $\tau>1$ depending only on $\cF$ so that \eqn{compact} holds for $\cM=\cF$.
Thus, if $\ve<1/8$, by the triangle inequality for $F_{r}$ and \eqref{eq:s3Fr2e},
\begin{equation}\label{e:F-e}
\frac{F_{r}(\omega_{j})}{F_{\tau r}(\omega_{j})}
\geq  F_{r}(\mu_{j,r})-2\ve
\geq \frac{1}{2} F_{\tau r}(\mu_{j,r})-2\ve 
=\frac{1}{2}-2\ve>\frac{1}{4}.
\end{equation}
Hence, for any $r\geq 1$,
\[ F_{\tau r}(\omega_{j})\leq 4F_{r}(\omega_{j}).\]
Set $\mu_{j}= \omega_{j}/F_{1}(\omega_{j})$. Then iterating the above inequality and letting $j\rightarrow\infty$, we get that for all $\ell\in \bN$,
\[
\limsup_{j\rightarrow\infty} F_{\tau^{\ell}}(\mu_{j})\leq 4^{\ell}.\]
This implies that we can pass to a subsequence so that $\mu_{j}$ converges weakly to a measure $\mu\in \Tan(\omega,\xi)$. In particular, for $r\geq 1$, since $F_{1}(\mu_{j})=1$, we may compute
\[
d_{1}(\mu,\cF)=\lim_{j\rightarrow\infty} d_{1}(\mu_{j},\cF)=\lim_{j\rightarrow\infty}  d_{1}(\omega_{j},\cF)\stackrel{\eqref{e:limsupdrwj}}{=}\ve,
\]
\[ 
d_{r}(\mu,\cF)=\lim_{j\rightarrow\infty} d_{r}(\mu_{j},\cF)=\lim_{j\rightarrow\infty}  d_{r}(\omega_{j},\cF)\stackrel{\eqref{e:limsupdrwj}}{\leq}\ve,
\]
and 
\begin{equation}\label{eq:sec3iterationFell}
\tau^{\ell}\mu(B(0,\tau^{\ell}))\leq F_{2\tau^{\ell} }(\mu)\leq 4^{\ell} F_{2}(\mu)\mbox{ for all }\ell\in \bN.
\end{equation}
Since $\tau >1$, for any $r \geq 1$, there exists $\ell>0$ such that $\tau^{\ell-1} < r \leq\tau^\ell$. If $\tau \in (1,4)$, \eqref{eq:sec3iterationFell} implies 
$$ \tau^\ell \,\mu(B(0, \tau^\ell)) \leq \tau^\alpha r^{\alpha} \mu(\overline{B(0,2)}),$$
where $\alpha = \frac{1}{\log_4 \tau} \in (1, \infty)$ and we used that $ 4^\ell = \tau^{\ell \alpha}$. Therefore,
$$\mu(B(0,r))\leq \tau^{\alpha-\ell}\, r^{\alpha} \,\mu(\overline{B(0,2)}),$$
and notice that  $\tau^{\alpha-\ell} \leq 1$ whenever $\tau^\ell \geq 4$, i.e., the constant is independent of $\tau$. In the case that $1 \leq r \leq \tau^\ell < 4$, we simply use that $B(0,r)  \subset B(0,4)$ to conclude that 
$$\mu(B(0,r))\leq  \mu({B(0,4)}).$$
If $\tau \geq 4$, then \eqref{eq:sec3iterationFell} trivially gives
$$\tau^ \ell \mu(B(0, \tau^\ell)) \leq 4^\ell \mu(\overline{B(0,2)}) \leq \tau^ \ell\, \mu(\overline{B(0,2)}),$$
which can only be true if $r\leq \tau^\ell \leq 2$. Thus, $B(0,r)  \subset B(0,2)$ and (3) readily follows.
\end{proof}

\begin{corollary}\label{c:pstep}
Let $\cF$ be a $d$-cone with compact basis. There is $\beta>0$ so that the following holds. Suppose $\mu$ is a Radon measure in $\bR^{n+1}$ so that 
\begin{enumerate}
\item $\Tan(\mu,\xi)\cap \cF\neq\varnothing$ and
\item $\Tan(\mu,\xi)\backslash \cF\neq\varnothing$. 
%\item $0\in \supp \nu$ for all $\nu\in \Tan(\mu,\xi)$. 
\end{enumerate}
Then there is $r_{0}>0$ so that for any $\ve>0$ sufficiently small, the conclusion of Lemma \ref{l:pstep} holds. 
%we may find $\nu\in \Tan(\mu,\xi)\backslash \cF$ so that $d_{r_{0}}(\nu,\cF)=\ve$, $d_{r}(\nu,\cF)\leq \ve$ for all $r>r_{0}$, and $\nu(B(0,r))\leq Cr^{\alpha}$ for all $r\geq r_{0}$.
\end{corollary}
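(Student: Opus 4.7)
The plan is to reduce the corollary to a direct application of \Lemma{pstep} by producing a distinguished scale $r_0$ at which the rescalings $T_{\xi,r}[\mu]$ stay a definite positive distance from $\cF$ along a subsequence $r\downarrow 0$.

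First I would pick, using hypothesis (2), a tangent measure $\nu\in \Tan(\mu,\xi)\setminus\cF$. Since a compact basis is in particular closed, \Lemma{closed} tells us $\cF$ is relatively closed among nonzero Radon measures. Hence by \Lemma{dM}, the fact that $\nu\notin\cF$ forces the existence of some $r_0>0$ with $F_{r_0}(\nu)>0$ and $d_{r_0}(\nu,\cF)>0$; set $\ve_0 := d_{r_0}(\nu,\cF)$. This is the scale that will be fed into \Lemma{pstep}.

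Next I would transfer this positivity back to the rescaled copies of $\mu$. By definition of tangent measure, there are sequences $c_i>0$ and $r_i\downarrow 0$ so that $c_iT_{\xi,r_i}[\mu]\rightharpoonup \nu$ weakly. The distance $d_{r_0}(\cdot,\cF)$ is invariant under multiplication by a positive scalar (the normalization $\mu/F_{r_0}(\mu)$ absorbs $c_i$), so $d_{r_0}(T_{\xi,r_i}[\mu],\cF)=d_{r_0}(c_iT_{\xi,r_i}[\mu],\cF)$. Combining this with part (8) of \Lemma{preiss} and the condition $F_{r_0}(\nu)>0$ gives
\[
\lim_{i\to\infty} d_{r_0}\bigl(T_{\xi,r_i}[\mu],\cF\bigr) \;=\; d_{r_0}(\nu,\cF) \;=\; \ve_0,
\]
and therefore $\limsup_{r\to 0} d_{r_0}(T_{\xi,r}[\mu],\cF) \geq \ve_0 > 0$. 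I should also record that $\xi\in \supp\mu$, which follows from $\Tan(\mu,\xi)\neq \varnothing$ (any nonzero weak limit of $c_iT_{\xi,r_i}[\mu]$ forces mass in every neighbourhood of $\xi$).

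With $r_0$ and $\ve_0$ in hand, hypothesis (1) provides the remaining ingredient $\Tan(\mu,\xi)\cap\cF\neq\varnothing$, so we are in exactly the setting of \Lemma{pstep} (applied to $\omega=\mu$). Its conclusion then holds for every $\ve>0$ sufficiently small (in particular $\ve<\min\{\ve_0,1/8\}$, which is where the proof of \Lemma{pstep} required smallness), with the constant $\beta>0$ depending only on $\cF$. I do not expect any real obstacle here; the entire content of the corollary is the observation that hypothesis (2), combined with the closedness of $\cF$ (a free consequence of compactness of the basis), produces a quantitative gap $\ve_0>0$ at a fixed radius, which is precisely what \Lemma{pstep} takes as input.
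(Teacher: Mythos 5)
Your proposal is correct and follows essentially the same path as the paper: pick $\nu\in\Tan(\mu,\xi)\setminus\cF$, invoke \Lemma{dM} (via closedness of a compact basis) to get $r_0$ with $d_{r_0}(\nu,\cF)>0$, transfer this to $d_{r_0}(T_{\xi,r_j}[\mu],\cF)$ via \Lemma{preiss}(8), and then feed this $r_0$ and $\ve_0$ into \Lemma{pstep}. The only cosmetic difference is that the paper takes $\ve_0 = d_{r_0}(\nu,\cF)/2$ and argues for $j$ large, whereas you compute the exact limit to get the $\limsup$ bound; these are interchangeable.
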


\begin{proof}
Let $\nu\in \Tan(\mu,\xi)\backslash \cF$. By \Lemma{dM}, there exists $r_{0}>0$ so that $F_{r_{0}}(\nu)>0$ and $d_{r_{0}}(\nu,\cF)>0$. Let $c_{j}>0$ and $r_{j}\downarrow 0$ be so that $c_{j}T_{\xi,r_{j}}[\mu]\rightarrow \nu$. Then, for $j$ large enough, $d_{r_{0}}(T_{\xi,r_{j}}[\mu],\cF)>d_{r_{0}}(\nu,\cF)/2>0$. The corollary now follows from \Lemma{pstep} with $\ve_0=d_{r_{0}}(\nu,\cF)/2$.
\end{proof}

\begin{proof}[Proof of \Lemma{newpreiss}]
If $\Tan(\eta,x)\backslash \cF\neq \varnothing$, then, by Corollary \ref{c:pstep}, we may find $\mu\in \Tan(\eta,x) \setminus \cF$ and $\ve,r_{0}>0$ so that $d_{r_{0}}(\mu,\cF)=\ve$ and $d_{r}(\mu,\cF)\leq \ve$ for all $r>r_{0}$. By assumption, this implies $\mu\in \cF$, which is a contradiction. Thus, $\Tan(\eta,x)\subset \cF$.
\end{proof}

\section{Elliptic Measures}

\subsection{Uniformly elliptic operators in divergence form}

Let $A$ be a real matrix that satisfies \eqref{eqelliptic1} and \eqref{eqelliptic2}.
%such that $a_{ij} \in L^\infty(\Omega;\R)$, which also satisfies the following {\it ellipticity} condition: for some $\Lambda>0$,\begin{equation*}\Lambda^{-1}|\xi|^2\leq \langle A(x) \xi,\xi\rangle \quad \textup{and}\quad \langle A(x) \xi,\eta \rangle  \leq\Lambda |\xi| |\eta|, \, \mbox{ for all $\xi, \eta \in\R^{n+1}$ and a.e. $x\in\Omega$.}\end{equation*} 
%We do not assume here $A$ to be either locally Lipschitz. %or symmetric.
We consider the second order elliptic operator $L = -\dv A \nabla$ and we say that a function $u \in W^{1,2}_{loc}(\om)$ is a {\it weak solution} of the equation $L u=0$ in $\Omega$ (or just {\it $L$-harmonic}) if 
\begin{equation}\label{eq:solution}
%\label{e:elliptic}
\int A \nabla u \nabla \vphi=0, \;\; \mbox{ 
for all $\vphi \in C^\infty_0(\om)$}.
\end{equation} 
We also say that  $u \in W^{1,2}_{loc}(\om)$ is a {\it supersolution} (resp. \textit{subsolution}) for $ L$ in  $\Omega$ or just {\it $L$-superharmonic} (resp. {\it $L$-subharmonic}) if $\int A \nabla u \nabla \vphi \geq 0$   (resp. $\int A \nabla u \nabla \vphi \leq 0$) for all non-negative $\vphi \in C^\infty_0(\om)$.

\vv
Following  \cite[Section 9]{HKM}, from now on we make the convention that if $\om$ is unbounded, then the point at infinity always belongs to its boundary. So, all the topological notions are understood with respect to the compactified space $\overline{\R}^{n+1}=\R^{n+1} \cup \{\infty\}$. Moreover, the functions $f \in C(E)$, for $E \subset \overline{\R}^{n+1}$ are assumed to be continuous and real-valued. Therefore, all functions in $C(\d \om)$ are bounded even if $\Omega$ is unbounded.

\subsection{Regularity of the domain and Dirichlet problem}

We say that a point $x_0 \in \d \om\setminus \{\infty\}$  is {\it Sobolev $L$-regular} if, for each function $\vphi \in W^{1,2}(\Omega)\cap C(\overline \Omega)$, the $L$-harmonic function $h$ in $\om$ with $h-\vphi \in W^{1,2}_0(\om)$ satisfies
$$\lim_{x \to x_0} h(x)=\vphi(x_0) .$$

\vv
 \begin{theorem}[Theorem 6.27 in \cite{HKM}] \label{teoreg1}If for $x_0 \in \partial \Omega\setminus \{\infty\}$ it holds that
  $$\int_0^1 \frac{\textup{cap}(B(x_0,r) \cap \Omega^c, B(x_0,2r))}{\textup{cap}(B(x_0,r),B(x_0,2r))}\, \frac{dr}{r}=+\infty,$$
 then $x_0$ is Sobolev $L$-regular. Here ${\rm cap}(\cdot, \cdot)$ stands for the variational $2$--capacity of the condenser $(\cdot, \cdot)$ (see e.g. \cite[p. 27]{HKM}).
 \end{theorem}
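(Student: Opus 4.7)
The plan is to reduce the Sobolev $L$-regularity of $x_0$ to the construction of a weak barrier at $x_0$, and then build this barrier out of $L$-capacitary potentials on dyadic annuli, using only the ellipticity bounds \eqref{eqelliptic1}--\eqref{eqelliptic2}.

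First, I would carry out the reduction. By linearity, a suitable comparison/Perron argument (available because $L = -\dv A\nabla$ is uniformly elliptic and $W^{1,2}$-solvability holds) shows that $x_0$ is Sobolev $L$-regular provided one can exhibit a \emph{weak barrier} at $x_0$: a nonnegative $L$-superharmonic function $w$ on $\om \cap B(x_0,r_0)$ with $\liminf_{x \to y} w(x) \geq 1$ for every $y \in \d\om \cap B(x_0,r_0)\setminus\{x_0\}$ and $\lim_{x \to x_0} w(x) = 0$. Applying this to $\varphi - \varphi(x_0)$ (and to its negative) gives the continuous attainment of the boundary data at $x_0$ by the $W^{1,2}$-solution.

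Next, I would construct the barrier. Set $r_k = 2^{-k} r_0$ and $E_k = \overline{B(x_0,r_k)} \setminus \om$. For each $k$, let $u_k \in W^{1,2}(B(x_0,2r_k))$ be the $L$-capacitary potential of the condenser $(E_k, B(x_0,2r_k))$, i.e.\ the unique minimizer of $\int A\nabla u \cdot \nabla u$ among functions with $u \geq 1$ q.e.\ on $E_k$ and $u \equiv 0$ outside $B(x_0,2r_k)$; by uniform ellipticity its $L$-energy is comparable to the variational $2$-capacity $\Cap(E_k,B(x_0,2r_k))$. The plan is to take
\[
w(x) = 1 - \prod_{k \geq 0} (1 - c\, u_k(x))
\]
after a suitable normalization, or equivalently to define $w$ iteratively so that at each scale the $L$-superharmonic mass picked up from $u_k$ accumulates.

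The heart of the proof is the Maz'ya-type exponential decay estimate: using the Caccioppoli inequality for $L$-subsolutions on the annulus $B(x_0,2r_k)\setminus B(x_0,r_{k+1})$ together with the extremal property of $u_k$, one shows the oscillation decay
\[
\sup_{B(x_0,r_{k+1})\cap \om} (1-w) \leq \ps{1 - c\,\frac{\Cap(E_k,B(x_0,2r_k))}{\Cap(B(x_0,r_k),B(x_0,2r_k))}} \sup_{B(x_0,2r_k)\cap \om}(1-w),
\]
where the ellipticity constants absorb the difference between $L$-capacity and ordinary $2$-capacity (no symmetry of $A$ is used). Iterating and taking logarithms yields
\[
1 - w(x) \leq C \exp\ps{-c \sum_{k=0}^{j} \frac{\Cap(E_k,B(x_0,2r_k))}{\Cap(B(x_0,r_k),B(x_0,2r_k))}}
\]
for $x$ with $|x-x_0| \leq r_{j+1}$; the divergence of the Wiener integral then forces $w(x)\to 1$ and the dual estimate forces $w \to 0$ in the construction above, establishing the barrier property.

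The main obstacle is the iteration estimate on annuli: uniformly relating the $L$-capacity and the $2$-capacity at every scale without recourse to any Harnack-type tool that would require symmetry of $A$. This is handled by testing Caccioppoli with the capacitary potential itself and comparing energies, which is essentially the Littman--Stampacchia--Weinberger/Maz'ya argument adapted to non-symmetric $A \in \cA$; the remaining steps, including the Perron reduction and the passage to the limit $j\to\infty$, are then standard.
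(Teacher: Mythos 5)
This statement is quoted as Theorem~6.27 of \cite{HKM} and the paper offers no proof of its own, so there is no internal argument to compare against; your sketch is being judged against the source. Your plan --- reduce Sobolev $L$-regularity to the existence of a weak barrier, build the barrier from capacitary potentials on dyadic annuli, and drive the iteration by a Maz'ya-type oscillation-decay estimate whose constants depend only on the ellipticity bounds --- is exactly the classical Littman--Stampacchia--Weinberger/Maz'ya argument in the form it takes in \cite[Ch.~6]{HKM}, and the Caccioppoli-on-annuli step you name as the heart of the proof is indeed the key lemma there.

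Two points in your write-up are not correct as stated, though both are repairable. First, the function $w = 1 - \prod_{k\ge 0}(1-c\,u_k)$ is not $L$-superharmonic: products of (one minus) supersolutions of a second-order divergence-form operator are not in general sub- or supersolutions, so the explicit formula cannot be taken literally. The standard packaging is either to iterate the oscillation-decay inequality directly on $1-R$, where $R$ is the $L_A$-balayage (r\'eduite) of $1$ on $\overline{B(x_0,r_0)}\setminus\Omega$, or to prove the decay for the Perron solution $h$ of the Dirichlet problem itself and conclude by comparison with $\varphi(x_0)\pm\textup{osc}$; either avoids any product. Second, for non-symmetric $A$ the minimizer of $\int A\nabla u\cdot\nabla u$ over the admissible class of the condenser is a solution of $-\div\bigl((A+A^T)\nabla u\bigr)=0$, not of $L_A u=0$, so ``the $L$-capacitary potential'' defined variationally is not $L_A$-harmonic and a fortiori not $L_A$-superharmonic outside $E_k$. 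You should instead define $u_k$ non-variationally as the (Perron) $L_A$-solution in $B(x_0,2r_k)\setminus E_k$ with data $1$ on $E_k$ and $0$ on $\partial B(x_0,2r_k)$; its energy $\int A\nabla u_k\cdot\nabla u_k$ is then comparable to $\Cap(E_k,B(x_0,2r_k))$ by uniform ellipticity and Caccioppoli, which is precisely how the monotone-operator framework of \cite{HKM} handles the lack of symmetry. With these two repairs your outline is the proof.
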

 
%\begin{definition}
We say that a point $x_0 \in \d \om$  is {\it Wiener regular} if, for each function $f \in C(\d \om; \R)$, the $L$-harmonic function $H_f$ constructed by the Perron's method satisfies 
$$\lim_{x \to x_0} H_f(x)=f(x_0) .$$
See \cite[Chapter 9]{HKM}. 
%\end{definition} 
 
 \vv
 \begin{lemma}[Theorem 9.20 in \cite{HKM}]
 Suppose that $x_0 \in \d \om \setminus \{\infty\}$. If $x_0$ is Sobolev $L$-regular then it is also  Wiener regular.
 \end{lemma}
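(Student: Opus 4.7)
The plan is to reduce the case of a general continuous boundary datum to one which is Sobolev-admissible via uniform approximation, apply the Sobolev $L$-regularity hypothesis at $x_{0}$ to the approximants, and then pass to the limit using the stability of Perron solutions under uniform perturbations of the data.

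First I would fix $f\in C(\d\om)$ and, by Tietze's extension theorem followed by standard mollification, produce functions $\vphi_{k}\in C_{c}^{\infty}(\R^{n+1})$ with $\vphi_{k}\to f$ uniformly on $\d\om$. Since $\d\om\setminus\{\infty\}$ is what matters and the $\vphi_{k}$ are smooth and compactly supported, their restrictions lie in $W^{1,2}(\om)\cap C(\overline\om)$, so they are admissible in the Sobolev sense. In particular $\|\vphi_{k}-f\|_{L^{\infty}(\d\om)}\to 0$ and $\vphi_{k}(x_{0})\to f(x_{0})$.

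Next, let $h_{k}$ be the unique weak $L$-harmonic function in $\om$ with $h_{k}-\vphi_{k}\in W_{0}^{1,2}(\om)$. By hypothesis, $\lim_{x\to x_{0}}h_{k}(x)=\vphi_{k}(x_{0})$. The key intermediate step will be to identify $h_{k}$ with the Perron solution $H_{\vphi_{k}}$: this is a standard but non-trivial compatibility statement in the HKM framework (see Chapter 9 of \cite{HKM}), and essentially follows by checking that $h_{k}$, being bounded and continuous at every Sobolev regular boundary point, serves as both an upper and a lower Perron function for the datum $\vphi_{k}|_{\d\om}$, via the usual comparison principles. I expect this compatibility to be the main obstacle, since the two notions of solution are defined by entirely different machinery and one must carefully account for the behavior at $\infty$ in the unbounded case.

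Finally, I would invoke the stability of the Perron solution under uniform data perturbation,
\[
\sup_{x\in\om}|H_{f}(x)-H_{\vphi_{k}}(x)|\leq \|f-\vphi_{k}\|_{L^{\infty}(\d\om)},
\]
which is a direct consequence of the maximum principle applied to the upper and lower Perron classes. Combining this with $H_{\vphi_{k}}=h_{k}$ and $\lim_{x\to x_{0}}h_{k}(x)=\vphi_{k}(x_{0})$, a $3\ve$-argument yields $\lim_{x\to x_{0}}H_{f}(x)=f(x_{0})$, proving Wiener regularity at $x_{0}$.
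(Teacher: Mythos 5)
The paper does not supply its own proof of this statement; it is quoted directly from HKM as Theorem 9.20 and used as black-box input, so there is no in-paper argument to compare yours against. That said, your outline — approximate $f\in C(\d\om)$ uniformly by Sobolev-admissible data, apply the hypothesis at $x_0$ to each approximant, identify variational with Perron solutions, and pass to the limit via the maximum principle — is the standard HKM route, and is essentially correct, but two steps need more care than your sketch gives them.

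The crux, as you note, is the identification $h_k=H_{\vphi_k}$ for $\vphi_k\in W^{1,2}(\om)\cap C(\overline\om)$. This is itself a substantial theorem in HKM (the Wiener resolutivity theory of Chapter 9). However, the justification you sketch — verifying that $h_k$ lies in the upper and lower Perron classes by checking its boundary limits at Sobolev-regular points — does not work from the hypothesis you have: you are only handed Sobolev regularity at the single point $x_0$, and at this stage of the theory nothing is known about any other boundary point. The actual HKM argument avoids boundary regularity entirely, using monotone approximations by $W^{1,2}\cap C(\overline\om)$ data and comparison in $W^{1,2}_0(\om)$. Citing that resolutivity theorem rather than attempting to re-derive it by class membership is the right move and closes the gap. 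Separately, under the paper's convention $\infty\in\d\om$ when $\om$ is unbounded; compactly supported $\vphi_k$ then cannot converge to $f$ uniformly on $\d_\infty\om$ unless $f(\infty)=0$. You should first subtract a constant (harmless, since it shifts every solution by the same constant), or carry out the Tietze extension and mollification on the compactification $\overline{\R}^{n+1}$, before running the $3\ve$ argument.
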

 
The aforementioned results form \cite{HKM} are only stated for $\Omega$ bounded but in fact they hold for unbounded domains (see discussion in \cite{AGMT17}). Moreover, $\infty$ is a Wiener regular point for each unbounded $\Omega \subset \R^{n+1}$, if $n \geq 2$ (see e.g. Theorem 9.22 in \cite{HKM}).
 
We say that $\Omega$ is Sobolev $L$-regular (resp.\ Wiener regular) if all the points in $\partial\Omega \setminus \{\infty\}$ are Sobolev $L$-regular  (resp.\ Wiener regular).

\begin{definition}\label{CDC}
A domain $\Omega \subset \R^{n+1}$ is called {\it regular} if every point of $\d_\infty\Omega$ is regular (i.e., if the classical Dirichlet problem is solvable in $\Omega$ for the elliptic operator $\mathcal{L}$). For $K\subset \d\Omega$, we say that $\Omega$ has the {\it capacity density condition (CDC)} if, for all $x\in \d\Omega$ and $0<r<\diam \d\Omega$, 
\[\textup{cap}({B}(x,r) \cap \Omega^c, B(x,2r)) \gtrsim r^{n-1}.\] 
\end{definition}

Note that if $n \geq 2$, by Wiener's criterion, domains satisfying the CDC are both Wiener regular and $L$-Sobolev regular.
\vv

Let $\Omega \subset \R^{n+1}$ be Wiener regular and $x\in\Omega$. If $f \in C(\partial \Omega)$, then the map $f \mapsto \overline{H}_f(x)$ is a bounded linear functional on $C(\partial \Omega)$. Therefore, by Riesz representation theorem and the maximum principle, there exists a probability measure $\hm^{x}$ on $\partial \Omega$ (associated to $L$ and the point $x \in \Omega$) defined on Borel subsets of $\d\Omega$ so that
$$ \overline{H}_f(x) =\int_{\partial \Omega} f \, d\hm^{x}, \;\; \mbox{for all $x \in \Omega$.}$$ 
% It follows from \cite[Theorem 11.1]{HKM} that $\hm^{x}(E)=\hm(E, \Omega ;  L)(x)$. Moreover,  
% $$\omega^{x}(\d \Omega)=\overline H_1(x) =1.$$
We call $\omega^x$ the {\it elliptic measure} or {\it  $L$-harmonic measure} associated to $L$ and $x$.\vv

\subsection{Green function and PDE estimates}

\begin{lemma}\label{phiwG}
Let $\Omega\subset\R^{n+1}$ be an open, connected set so that $\partial\Omega$ is Sobolev $L$-regular. 
There exists a Green function $G:\Omega\times \Omega\setminus\{(x,y):x=y\}\to \R$ associated with $L$ which
satisfies the following.
For $0<a<1$, there are are positive constants $C$ and $c$ depending on $a$, $n$ and $\Lambda$ such that for all $x,y\in\Omega$ with $x\neq y$, it holds:
$$0\leq G(x,y)\leq C\,|x-y|^{1-n}$$

\vspace{-4mm}
$$G(x,y)\geq c\,|x-y|^{1-n} \quad \mbox{ if $|x-y|\leq a\,\delta_\Omega(x)$,}$$

\vspace{-3mm}
$$G(x,\cdot)\in C(\overline\Omega \setminus \{x\}) \cap W^{1,2}_{loc}(\Omega \setminus \{x\}) 
\quad \mbox{ and } \quad G(x,\cdot)|_{\partial\Omega} \equiv 0,$$

\vspace{-3mm}
$$G(x,y) = G^T(y,x),$$
where $G^T$ is the Green function associated with the operator $L_{A^T}$, and for every $\vphi\in C_c^\infty(\R^{n+1})$,
\begin{equation}\label{eq:identity-elliptic measure}
\int_{\partial\Omega} \vphi\,d\omega^x - \vphi(x) = - \int_\Omega A^T(y)\nabla_y G(x,y)\cdot \nabla\vphi(y)\,dy,
\quad\mbox{ for a.e. $x\in\Omega$.}
\end{equation}
\end{lemma}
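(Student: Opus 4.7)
The statement is the standard existence theorem and pointwise estimates for the Green function of a uniformly elliptic operator in divergence form with non-symmetric coefficients, in the spirit of Littman--Stampacchia--Weinberger and Gr\"uter--Widman in the symmetric case and Hofmann--Kim in the non-symmetric one. My plan is to build $G$ by approximation, developing the Green functions of $L$ and $L^T$ in parallel and reading off each bullet from $L^\infty$-bounds, Harnack, and a duality computation. If $\Omega$ is unbounded I first exhaust it by Sobolev $L$-regular bounded subdomains $\Omega_j\nearrow\Omega$ and pass to the limit at the end. For fixed $x\in\Omega_j$ and small $\lambda>0$, the Lax--Milgram lemma applied to the coercive, bounded bilinear form $(u,\phi)\mapsto\int A^T\nabla u\cdot\nabla\phi$ on $W^{1,2}_0(\Omega_j)$ produces a unique $G_\lambda(x,\cdot)\in W^{1,2}_0(\Omega_j)$ satisfying
\[
\int A^T(y)\,\nabla_y G_\lambda(x,y)\cdot\nabla\phi(y)\,dy=\avint_{B(x,\lambda)}\phi,\qquad \phi\in W^{1,2}_0(\Omega_j),
\]
so that the right-hand side approximates the point evaluation $\phi(x)$.

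The key a priori bound $G_\lambda(x,y)\le C|x-y|^{1-n}$ for $|x-y|>2\lambda$, with $C=C(n,\Lambda)$, is obtained via Stampacchia's test-function-truncation argument (or equivalently Moser iteration on the level sets of $G_\lambda$), combined with Caccioppoli's inequality and the Sobolev embedding. Passing to a weak $W^{1,2}_{\loc}(\Omega\setminus\{x\})$ limit as $\lambda\downarrow 0$ and $j\to\infty$ gives $G(x,\cdot)$, which is $L^T$-harmonic off $x$, inherits the upper bound, and is locally H\"older continuous by Moser's Harnack inequality. The lower bound $G(x,y)\gec|x-y|^{1-n}$ in the interior region $|x-y|\le a\,\delta(x)$ is purely interior: on $B(x,\delta(x))$ one compares $G(x,\cdot)$ with the Dirichlet Green function of $L^T$ on that ball, for which an explicit lower bound comes from Harnack chains and comparison with the fundamental solution of the constant-coefficient operator obtained by freezing $A^T$ at $x$. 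Continuity of $G(x,\cdot)$ up to $\partial\Omega$ with vanishing boundary values then follows from the Sobolev $L^T$-regularity of $\partial\Omega$ (which is the same as Sobolev $L$-regularity, as the capacity criterion is insensitive to transposition) combined with the boundary Moser/De Giorgi estimate.

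The symmetry $G(x,y)=G^T(y,x)$, where $G^T$ is built by the same recipe with $A$ replaced by $A^T$, arises at the approximation level: pairing the defining identities of $G_\lambda(x,\cdot)$ and $G^T_\mu(y,\cdot)$ against each other gives
\[
\avint_{B(y,\mu)}G_\lambda(x,\cdot)=\int A^T\nabla_y G_\lambda(x,\cdot)\cdot\nabla_y G^T_\mu(y,\cdot)\,dy=\avint_{B(x,\lambda)}G^T_\mu(y,\cdot),
\]
and letting $\lambda,\mu\downarrow 0$ and invoking continuity away from the poles yields $G(x,y)=G^T(y,x)$. For the representation formula, fix $\phi\in C_c^\infty(\bR^{n+1})$ and let $h_\phi$ be the $L$-Perron solution in $\Omega$ with boundary data $\phi|_{\partial\Omega}$, so that $h_\phi(x)=\int \phi\,d\omega^x$ by the definition of elliptic measure just recalled. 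The difference $u:=\phi-h_\phi\in W^{1,2}_{\loc}(\Omega)$ satisfies $Lu=L\phi$ weakly in $\Omega$ and vanishes on $\partial\Omega$ in the Sobolev trace sense (again by Sobolev $L$-regularity). Testing the defining equation for $G(x,\cdot)$ against a truncation of $u$ that is flat in a small ball $B(x,\rho)$, and controlling the error near the pole by the upper bound on $G$ together with the smoothness of $\phi$, one obtains in the limit $\rho\downarrow 0$
\[
\phi(x)-\int\phi\,d\omega^x=u(x)=\int A^T(y)\,\nabla_y G(x,y)\cdot\nabla u(y)\,dy=-\int A^T(y)\,\nabla_y G(x,y)\cdot\nabla\phi(y)\,dy+R,
\]
where $R=\int A^T\nabla_y G\cdot\nabla h_\phi=\int A\nabla h_\phi\cdot\nabla_y G=0$ because $h_\phi$ is $L$-harmonic and the truncated $G(x,\cdot)$ is an admissible test function in $W^{1,2}_0(\Omega)$; a sign rearrangement then gives exactly \eqref{eq:identity-elliptic measure}.

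The main obstacle is the parallel bookkeeping for $L$ and $L^T$: both the symmetry identity and the precise placement of $A^T$ in the representation formula are sensitive to which operator is being inverted at each step, and one must be careful with the convention under which $G(x,\cdot)$ is a fundamental solution for $L^T$ rather than for $L$. The truncation near the pole $x$ in the representation identity also requires verifying that the surface flux on $\partial B(x,\rho)$ goes to $0$ as $\rho\downarrow 0$, which relies on the quantitative upper bound $G\lesssim|x-y|^{1-n}$, the boundedness of $A$, and the smoothness of $\phi$.
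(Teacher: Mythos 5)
The paper gives no proof of this lemma: the first four bullets are delegated to Gr\"uter--Widman \cite{GW82} and Hofmann--Kim \cite{HK07}, and identity \eqref{eq:identity-elliptic measure} to Lemma~2.4 of \cite{AGMT17}; your sketch essentially reconstructs the standard construction in those references (Lax--Milgram approximants with averaged source, Stampacchia/Moser $L^\infty$ bounds, pairing of the approximants for the symmetry $G(x,y)=G^T(y,x)$, and testing the Green function against $\varphi-h_\varphi$ for the representation formula), so your route coincides with what the paper relies on. One minor slip worth noting: the interior lower bound cannot be obtained by ``freezing $A^T$ at $x$,'' since the coefficients are only bounded measurable; in \cite{GW82,HK07} one instead reduces to the Green function of $L^T$ on $B(x,\delta_\Omega(x))$ via the maximum principle and then uses an averaging/Harnack argument at a fixed scale, so the Harnack ingredient in your sentence is what actually carries the estimate.
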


In the statement of \eqref{eq:identity-elliptic measure}, one should understand that the integral on right hand side is absolutely convergent for a.e.\ $x\in\Omega$ and a proof of it can be found in Lemma 2.4 in \cite{AGMT17}. The rest were proved in \cite{GW82} and \cite{HK07}.

\vvv

The lemma below is frequently called Bourgain's Lemma, as he proved a similar estimate for harmonic measure in \cite{Bou87}. 

\begin{lemma}[{\cite[Lemma 11.21]{HKM}}]\label{l:bourgain}
Let $\Omega\subset \bR^{n+1}$ be any domain satisfying the CDC condition,  $x_{0}\in \d\Omega$, and $r>0$ so that $\Omega\backslash B(x_{0},2r)\neq\varnothing$. Then 
\begin{equation}\label{e:bourgain}
\hm_{\Omega}^{L,x}(B(x_{0},2r)\geq c >0 \;\; \mbox{ for all }x\in \Omega\cap B(x_{0},r),
\end{equation}
where $c$ depends on $d$ and the constant in the CDC.
\end{lemma}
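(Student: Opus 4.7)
The plan is to construct a non-negative $L_A$-subsolution $u$ on $D := \Omega \cap B(x_0, 2r)$ that is bounded above by $v(x) := \omega^{L,x}_{\Omega}(B(x_0, 2r))$ on $\partial D$ and satisfies $u \gtrsim 1$ on $B(x_0, r) \cap \Omega$. The barrier will be the $L_A$-capacitary potential of the set $K := \overline{B(x_0, r)} \setminus \Omega$ with respect to $B(x_0, 2r)$: namely, $u$ solves $L_A u = 0$ in $B(x_0, 2r) \setminus K$ with $u \equiv 1$ on $K$ and $u \equiv 0$ on $\partial B(x_0, 2r)$, so that $0 \le u \le 1$. Under the CDC, every point of $\partial\Omega \setminus \{\infty\}$ is Wiener regular, so on $\partial D$ one has $v = 1 \geq u$ at points of $\partial\Omega \cap B(x_0, 2r)$, while $u = 0 \leq v$ at points of $\overline\Omega \cap \partial B(x_0, 2r)$. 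Since both $u$ and $v$ are $L_A$-harmonic in $D$, the maximum principle will give $u \leq v$ in $D$, and the conclusion follows once the pointwise lower bound on $u$ is established.

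The heart of the argument is thus the lower bound $u(x) \gtrsim 1$ for $x \in B(x_0, r)\cap \Omega$. I would represent $u$ as a Green potential $u(x) = \int G^A_{B(x_0, 2r)}(x, y) \, d\mu(y)$, where $\mu$ is the $L_A$-capacitary distribution of $K$ in $B(x_0, 2r)$, so that $\|\mu\| = \mathrm{cap}_{L_A}(K, B(x_0, 2r))$. By uniform ellipticity this $L_A$-capacity is comparable to the variational $2$-capacity, so the CDC hypothesis yields $\|\mu\| \gtrsim r^{n-1}$. To convert this into a pointwise estimate on $u$, I would use Lemma \ref{phiwG}: for $x \in B(x_0, r)$ and $y \in K \subset \overline{B(x_0, r)}$, both points are at distance $\geq r$ from $\partial B(x_0, 2r)$, so Harnack chaining together with the Green function lower bound $G^A_{B(x_0,2r)}(x,y) \gtrsim |x-y|^{1-n}$ on compact subsets away from the pole give $G^A_{B(x_0, 2r)}(x, y) \gtrsim r^{1-n}$ (handling the diagonal $y\to x$ via a standard dyadic annular decomposition, which contributes only positively). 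Integrating against $\mu$ produces
\[
u(x) \;\gtrsim\; r^{1-n}\,\|\mu\| \;\gtrsim\; r^{1-n}\cdot r^{n-1} \;=\; 1,
\]
as desired.

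The main technical obstacle is the capacity-to-pointwise conversion in the possibly \emph{non-symmetric} setting: one must ensure that the Green function bounds in Lemma \ref{phiwG}, the duality $G^A(x,y) = G^{A^T}(y,x)$, and the $L_A$-capacitary distribution are mutually compatible. Fortunately uniform ellipticity \eqref{eqelliptic1}--\eqref{eqelliptic2} guarantees that $\mathrm{cap}_{L_A}$ is comparable to $\mathrm{cap}$ (with constants depending only on $\Lambda$ and $n$), which is precisely the form of capacity used in the CDC assumption. Thus the whole argument depends only on ellipticity and the CDC constant, giving the claimed dependence of $c$. The hypothesis $\Omega \setminus B(x_0, 2r) \neq \emptyset$ ensures the capacitary potential $u$ is not identically $1$ and the Green function $G^A_{B(x_0, 2r)}$ is well-defined.
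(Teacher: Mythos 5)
The paper gives no proof of this lemma: it is stated as a direct quotation of \cite[Lemma~11.21]{HKM}, the Wiener/capacity estimate for $\mathcal{A}$-harmonic measures (a framework that, via the monotonicity hypothesis $(\mathcal{A}(x,\xi)-\mathcal{A}(x,\eta))\cdot(\xi-\eta)>0$, does cover possibly non-symmetric linear divergence-form operators). So there is no internal argument to compare against, and your proposal is a reconstruction. As such it is sound: the capacitary-potential barrier for $K=\overline{B(x_0,r)}\setminus\Omega$ in $B(x_0,2r)$, comparison with $v(x)=\omega^{L,x}_\Omega(B(x_0,2r))$ on $D=\Omega\cap B(x_0,2r)$, and the Green-potential lower bound are exactly the ingredients behind the cited HKM estimate.

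Two places where the sketch should be made precise. First, the maximum-principle step involves a mixed boundary: you should record that $\liminf_{z\to\zeta}(v-u)(z)\geq 0$ for every $\zeta\in\partial D$, splitting into $\zeta\in\partial\Omega\cap B(x_0,2r)$ (where $u\leq 1$ and $v\to 1$ by Wiener regularity, which the CDC gives at every boundary point) and $\zeta\in\overline\Omega\cap\partial B(x_0,2r)$ (where $u\to 0\leq v$, using regularity of the sphere). Second, the assertion that $\|\mu\|\approx\operatorname{cap}(K,B(x_0,2r))$ in the non-symmetric case is the step that actually needs a word: the Riesz measure $\mu$ of $u$ (so $-\operatorname{div}(A\nabla u)=\mu$) satisfies
\begin{equation*}
\mu\bigl(\overline K\bigr)\;\geq\;\int u\,d\mu \;=\;\int A\nabla u\cdot\nabla u\;=\;\int A_s\nabla u\cdot\nabla u\;\gtrsim\;\|\nabla u\|_{L^2}^2\;\geq\;\operatorname{cap}_2\bigl(K,B(x_0,2r)\bigr),
\end{equation*}
where the antisymmetric part of $A$ drops out of the quadratic form and the last inequality holds because $u\in W^{1,2}_0(B(x_0,2r))$ with $u\geq 1$ q.e.\ on $K$ is an admissible competitor. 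Only this one-sided comparability is needed, which is fortunate because full comparability of the non-symmetric ``capacity'' to $\operatorname{cap}_2$ is a more delicate claim than you suggest.

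Minor inaccuracy: the hypothesis $\Omega\setminus B(x_0,2r)\neq\varnothing$ is not what makes $G^A_{B(x_0,2r)}$ well-defined (the ball is always regular), nor can it be what prevents $u\equiv 1$ (impossible in any case, since $u$ vanishes on $\partial B(x_0,2r)$). It is simply the nondegeneracy condition that keeps the conclusion from being vacuous.
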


\begin{lemma}
For $\Omega\subset \bR^{n+1}$ and the assumptions of Lemma \ref{phiwG}, if $B$ is centered on $\d\Omega$, then
\begin{equation}\label{G<w}
\inf_{z\in 2B} \omega^{L,z}(4B) G(x,y)r_{B}^{n-1}\lec \omega^{L,y}(4B) \mbox{ for }x\in B\cap \Omega \mbox{ and }Y\in \Omega\backslash 2B.
\end{equation}
In particular, for a CDC domain, we have 
\[
 G(x,y)r_{B}^{n-1}\lec \omega^{L,y}(4B) \mbox{ for }x\in B\cap \Omega \mbox{ and }y\in \Omega\backslash 2B.\]
 
\end{lemma}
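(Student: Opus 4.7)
The proof is a maximum principle comparison. Fix $x \in B \cap \Omega$ and consider the two functions of $y$ on the region $U := \Omega \setminus \overline{2B}$: the Green function $y \mapsto G(x,y)$ and a suitable multiple of the elliptic measure $y \mapsto \omega^{L,y}(4B)$. Since $x \in B \subset 2B$ lies outside $U$, the Green function is harmonic on $U$ (for the appropriate operator, given the conventions of Lemma \ref{phiwG} relating $G$, $G^T$, and the $A^T$ appearing in the divergence identity), while $\omega^{L,y}(4B)$ is harmonic on all of $\Omega$ in the pole variable $y$.

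The crux is to analyze the boundary values on $\partial U = (\partial\Omega \setminus 2B) \cup (\partial(2B) \cap \Omega)$. On $\partial\Omega \setminus 2B$ the Green function vanishes by Lemma \ref{phiwG}, so the target inequality is automatic there. On $\partial(2B) \cap \Omega$ we have $|x-y| \geq r_B$, and the pointwise bound $G(x,y) \leq C|x-y|^{1-n}$ from Lemma \ref{phiwG} yields $G(x,y) \leq C r_B^{1-n}$; meanwhile $\omega^{L,y}(4B) \geq m := \inf_{z \in 2B} \omega^{L,z}(4B)$ by definition. Combining these, on all of $\partial U$,
\[
G(x,y) \, r_B^{n-1} \, m \leq C \, \omega^{L,y}(4B),
\]
and the maximum principle propagates this inequality into $U$, giving the first claim for every $y \in \Omega \setminus 2B$. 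For the CDC case, Bourgain's Lemma (Lemma \ref{l:bourgain}) gives $m \gtrsim 1$, so the simplified estimate follows at once.

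The main technical obstacle is the validity of the maximum principle on the possibly unbounded domain $U$: if $\Omega$ is unbounded one must incorporate the behavior at $\infty$, using that $\infty$ is a Wiener regular boundary point when $n \geq 2$ (as noted in the paper after Theorem 9.20 of \cite{HKM}), or alternatively exhaust $U$ by bounded subdomains $U_k = U \cap B(0,k)$ and pass to the limit, checking that the contribution from $\partial B(0,k)$ vanishes thanks to the decay $G(x,y) \to 0$ as $|y| \to \infty$. A secondary subtlety is identifying the operator under which both $y \mapsto G(x,y)$ and $y \mapsto \omega^{L,y}(4B)$ are simultaneously harmonic on $U$; this amounts to unravelling the relation $G(x,y) = G^T(y,x)$ and the presence of $A^T$ in the representation identity of Lemma \ref{phiwG}.
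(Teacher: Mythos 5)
Your argument is essentially identical to the paper's: both prove \eqref{G<w} by verifying the inequality on $\partial(\Omega\setminus 2B)$ --- via the pointwise bound $G(x,y)\lesssim |x-y|^{1-n} \leq r_B^{1-n}$ on $\partial(2B)\cap\Omega$ together with the vanishing of $G(x,\cdot)$ on $\partial\Omega$ from Lemma~\ref{phiwG} --- and then propagating it inward with the maximum principle \cite[Theorem 11.9]{HKM}, with Bourgain's Lemma~\ref{l:bourgain} supplying $\inf_{z\in 2B}\omega^{L,z}(4B)\gtrsim 1$ in the CDC case. You are somewhat more explicit than the paper's terse proof, in particular in flagging the caveats (behavior at infinity when $\Omega$ is unbounded, and which operator, $L$ or $L^T$, governs the comparison in the pole variable for nonsymmetric $A$), which the paper leaves implicit.
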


\begin{proof}
This was originally shown for harmonic measure in \cite{AHMMMTV16}, but we cover the details here.

Suppose $n>1$. By Bourgain's estimate, $\omega^{L,y}(4B)\gec 1$ for $Y\in 2B\cap \Omega$, and so for $Y\in \Omega\backslash 2B$ and $x\in B\cap \Omega$ 
\[
\inf_{z\in 2B} \omega^{L,z}(4B) G(x,y)r_{B}^{n-1} \lec  \frac{\inf_{z\in 2B} \omega^{L,z}(4B) }{|x-y|^{n-1}} r_{B}^{n-1} \lec \inf_{z\in 2B} \omega^{L,z}(4B)\]
and since $G(x,\cdot)$ vanishes on $\d\Omega$, we thus have that, for some constant $C>0$,
\[
\limsup_{y\rightarrow \xi} C\omega^{L,y}(4B)- \inf_{z\in 2B} \omega^{L,z}(4B) G(x,y)r_{B}^{n-1}\geq 0 \;\; \mbox{ for all }\xi\in \d (\Omega\backslash 2B)\]
and so \eqref{G<w} follows from the maximum principle \cite[Theorem 11.9]{HKM}.  The case $n=1$ is similar.
\end{proof}

By an iteration argument using the previous lemma, one can obtain the following lemma. 

\begin{lemma}\label{holder}
Let $\Omega\subsetneq\R^{n+1}$ be open with the CDC. Let $x\in\partial\Omega$ and $0<r<\diam\Omega$.
Let $u$ be a non-negative $L$-harmonic function in $B(x,4r)\cap \Omega$ and continuous in $B(x,4r)\cap \overline\Omega$
so that $u\equiv 0$ in $\partial\Omega\cap B(x,4r)$. Then extending $u$ by $0$ in $B(x,4r)\setminus \overline\Omega$,
there exists a constant $\alpha>0$ such that
\begin{equation}
\label{holder}
u(y)\leq C \,\left(\frac{\delta_\Omega(y)}r\right)^\alpha \!\sup_{B(x,2r)}u
\quad \mbox{for all $y\in B(x,r)$,}
\end{equation}
where $C$ and $\alpha$ depend on $n$, $\Lambda$ and the CDC constant, and $\delta_{\Omega}(y)=\dist(y,\Omega^{c})$. In particular, $u$ is $\alpha$-H\"older continuous in $B(x,r)$.
\end{lemma}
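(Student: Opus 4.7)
The plan is to establish the Hölder regularity at the boundary via an oscillation decay on dyadic balls centered at boundary points, iterated from scale $r$ down to scale $\delta_\Omega(y)$. This is a classical boundary regularity argument, and the only non-trivial ingredient is Bourgain's lemma (Lemma \ref{l:bourgain}), which supplies the needed non-degeneracy of harmonic measure for CDC domains.

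The core step is the following oscillation decay: there exists $\theta = \theta(n,\Lambda,\textup{CDC}) \in (0,1)$ such that for every $\xi \in \partial\Omega \cap B(x,2r)$ and every $\rho>0$ with $B(\xi,4\rho) \subset B(x,4r)$,
\[
\sup_{B(\xi,\rho) \cap \Omega} u \;\leq\; \theta \sup_{B(\xi,4\rho) \cap \Omega} u .
\]
To prove this, set $M := \sup_{B(\xi,4\rho) \cap \Omega} u$ and $v := M - u$ in $U := B(\xi,4\rho) \cap \Omega$. Then $v$ is non-negative and $L_A$-harmonic in $U$, continuous in $\overline{U}$, satisfies $v \equiv M$ on $\partial\Omega \cap \overline{B(\xi,4\rho)}$, and $v \geq 0$ on $\partial B(\xi,4\rho) \cap \overline\Omega$. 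The truncated set $U$ inherits the CDC (from $\Omega$ at points of $\partial\Omega$, and trivially at points on the sphere where the complement contains the exterior of a ball), hence is Wiener regular, and the representation via $L_A$-harmonic measure yields
\[
v(y) \;\geq\; M\,\omega^{L_A,y}_U\!\left(\partial\Omega \cap B(\xi,2\rho)\right), \qquad y \in U .
\]
Applying Bourgain's lemma to $U$ at $\xi$ (and observing that within $B(\xi,2\rho)$ the boundaries of $U$ and $\Omega$ coincide) gives $\omega^{L_A,y}_U(\partial\Omega \cap B(\xi,2\rho)) \geq c > 0$ for $y \in B(\xi,\rho) \cap U$, whence $u(y) \leq (1-c)M$ on this ball and we may take $\theta := 1-c$.

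To conclude, fix $y \in B(x,r) \cap \Omega$ and set $\delta := \delta_\Omega(y)$. If $\delta \geq r/4$ the inequality is trivial. Otherwise choose $\xi \in \partial\Omega$ with $|y-\xi| = \delta$, so $\xi \in B(x,2r)$, and set $\rho_0 := r/2$, which satisfies $B(\xi,4\rho_0) \subset B(x,4r)$ and $B(\xi,\rho_0) \subset B(x,2r)$. Let $k \in \bN$ be the largest integer with $4^{-k}\rho_0 \geq \delta$, so that $y \in B(\xi,4^{-k}\rho_0)$. Iterating the oscillation decay $k$ times,
\[
u(y) \;\leq\; \sup_{B(\xi,4^{-k}\rho_0) \cap \Omega} u \;\leq\; \theta^k \sup_{B(\xi,\rho_0) \cap \Omega} u \;\leq\; \theta^k \sup_{B(x,2r)} u .
\]
Since $4^k \leq \rho_0/\delta = r/(2\delta)$, setting $\alpha := \log_4(1/\theta) > 0$ gives $\theta^k \leq C(\delta/r)^\alpha$, proving the estimate. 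For $y \in B(x,r) \setminus \overline\Omega$ both sides vanish by the zero extension, so the inequality holds trivially.

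The main obstacle is the first step: verifying that the truncated set $U$ inherits the CDC from $\Omega$ so that Bourgain's lemma applies, and establishing the comparison of the $L_A$-harmonic function $v$ with $M$ times the $L_A$-harmonic measure of $\partial\Omega \cap B(\xi,2\rho)$ in $U$; the latter requires Wiener regularity of $U$, which is itself a consequence of CDC. Once this oscillation decay is in place, the iteration and the extraction of the Hölder exponent $\alpha$ from the contraction factor $\theta$ are routine.
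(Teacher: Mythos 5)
Your proof is essentially the intended one: the paper itself remarks, immediately after stating Bourgain's lemma, that ``by an iteration argument using the previous lemma, one can obtain the following lemma,'' and your oscillation-decay iteration is precisely that argument. The verification that $U=B(\xi,4\rho)\cap\Omega$ inherits the CDC, that $\partial U\cap B(\xi,2\rho)=\partial\Omega\cap B(\xi,2\rho)$, and that the Poisson representation applies to $v=M-u$ via Wiener regularity of $U$, are all correct.

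One small algebraic slip at the end: to bound $\theta^k$ from \emph{above} you need $k$ large, which comes from the \emph{lower} bound on $4^k$ given by maximality of $k$, namely $4^{-(k+1)}\rho_0<\delta$, i.e.\ $4^k>\rho_0/(4\delta)$. You instead invoked the upper bound $4^k\leq\rho_0/\delta$, which only bounds $\theta^k$ from \emph{below}. With the correct inequality, $\theta^k=4^{-\alpha k}<\left(4\delta/\rho_0\right)^\alpha=8^\alpha(\delta/r)^\alpha$, so the stated conclusion holds with $C=8^\alpha$ (after also accounting for the trivial range $\delta\geq r/4$); the error does not affect the result.
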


%Equipped with this lemma, the following lemma has the same proof as \cite[Lemma 4.4]{JK82}. 
%\begin{lemma}\label{boundaryharnack}
%If $\Omega$ is uniform and $u$ is a nonnegative $L$-harmonic function vanishing in $\d\Omega \cap 2B$ where $B$ is a ball is centered on $\d\Omega$, then 
%\[
%\sup_{x\in B\cap \Omega} u \lec u(x_{B}).\]
%\end{lemma}

%{\Rd We will need the following well known change of variables theorem, see for example \cite{Ngu10}.}

The following lemma is  standard but we provide a proof for the sake of completeness. 

\begin{lemma}\label{l:cov}
Let $\Omega \subset \R^{n+1}$ be an open set, and assume that $A$ is an elliptic matrix and $\Phi:\R^{n+1} \to \R^{n+1}$ is a bi-Lipschitz map. Set
$$\widetilde A:=  |\det D_{\Phi}| D_{\Phi^{-1}} (A\!\circ\!\Phi) D^T_{\Phi^{-1}}.$$ 
Then $u$ is a weak solution of $L_{A} u =0$ in $\Phi(\Omega)$ if and only if $\tilde{u}=u\circ \Phi$ is a weak solution of $L_{\widetilde A} \widetilde u =0$ in $\Omega$.
\end{lemma}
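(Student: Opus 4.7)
The plan is to do a direct change of variables in the weak formulation, treating the two implications symmetrically since $\Phi$ is bi-Lipschitz. First I would recall that $u \in W^{1,2}_{\loc}(\Phi(\Omega))$ is a weak solution of $L_A u = 0$ if and only if $\int_{\Phi(\Omega)} A(x)\nabla u(x)\cdot \nabla \varphi(x)\,dx = 0$ for every $\varphi \in C_c^\infty(\Phi(\Omega))$. I would then observe that since $\Phi$ is bi-Lipschitz, the composition $v \mapsto v\circ\Phi$ is an isomorphism of $W^{1,2}_{\loc}(\Phi(\Omega))$ onto $W^{1,2}_{\loc}(\Omega)$, and that the map $\varphi \mapsto \varphi\circ \Phi$ gives a bijection between test functions on $\Phi(\Omega)$ and test functions on $\Omega$ (up to standard approximation by smooth test functions, since $C_c^\infty$ test functions are dense in $W^{1,2}_0$ and we can use Lipschitz compactly supported test functions throughout as the weak formulation extends by density).

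Next, I would perform the change of variables $x = \Phi(y)$. By Rademacher's theorem $\Phi$ is differentiable a.e., and by the chain rule for Sobolev functions under bi-Lipschitz maps one has, a.e.\ on $\Omega$,
\[
\nabla u(\Phi(y)) = \bigl(D\Phi(y)^T\bigr)^{-1}\nabla \tilde u(y) = D_{\Phi^{-1}}(y)^T\nabla \tilde u(y),
\]
where I adopt the convention that $D_{\Phi^{-1}}(y) = D\Phi^{-1}|_{\Phi(y)} = (D\Phi(y))^{-1}$, and similarly $\nabla \varphi(\Phi(y)) = D_{\Phi^{-1}}(y)^T\nabla\tilde\varphi(y)$ for $\tilde\varphi = \varphi\circ\Phi$. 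Substituting and using $dx = |\det D\Phi(y)|\,dy$, I get
\[
\int_{\Phi(\Omega)} A\nabla u\cdot\nabla\varphi\,dx
= \int_{\Omega} \bigl\langle |\det D\Phi|\, D_{\Phi^{-1}}(A\!\circ\!\Phi)D_{\Phi^{-1}}^T\nabla\tilde u,\,\nabla\tilde\varphi\bigr\rangle dy
= \int_\Omega \widetilde A\nabla\tilde u\cdot\nabla\tilde\varphi\,dy,
\]
which is precisely the weak formulation of $L_{\widetilde A}\tilde u=0$ against the test function $\tilde\varphi$. Since $\varphi\leftrightarrow \tilde\varphi$ is a bijection between the classes of admissible test functions, this gives both implications at once.

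The main technical point I expect to need care with is justifying the a.e.\ chain rule for $\nabla(u\circ\Phi)$ when $u\in W^{1,2}_{\loc}(\Phi(\Omega))$ only, since $u$ need not be classically differentiable. This is by now standard: one approximates $u$ by smooth functions in $W^{1,2}_{\loc}$, applies the classical chain rule, and passes to the limit using that $D\Phi,D\Phi^{-1}\in L^\infty$ (bi-Lipschitz) together with the Lusin-type property of $\Phi$ that sends null sets to null sets, so that the composition respects a.e.\ identities. Once this is in place, the computation above is a routine change of variables. Finally I would verify that $\widetilde A$ is uniformly elliptic on $\Omega$: ellipticity of $A$ combined with $\|D\Phi^{\pm 1}\|_\infty<\infty$ and the pointwise bounds on $|\det D\Phi|$ from above and below (which hold a.e.\ since $\Phi^{-1}$ is also Lipschitz) yield the ellipticity constant for $\widetilde A$ depending only on $\Lambda$ and the bi-Lipschitz constant of $\Phi$, so the statement ``$L_{\widetilde A}$ is a uniformly elliptic operator'' is meaningful.
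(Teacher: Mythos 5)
Your proposal is correct and follows essentially the same route as the paper: a direct change of variables $x=\Phi(y)$ in the weak formulation, combined with the chain rule $\grad(u\circ\Phi)=D\Phi^T(\grad u\circ\Phi)$ and the bijection $\vphi\leftrightarrow\vphi\circ\Phi$ between test functions. The extra care you flag (Sobolev chain rule under bi-Lipschitz maps, density, and ellipticity of $\widetilde A$) is reasonable background hygiene but does not alter the argument, which the paper treats as the same one-line computation.
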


\begin{proof}
 Let $\vphi\in C_{c}^{\infty}(\R^{n+1})$ and $\vphi=\psi\circ \Phi$. Then by change of variables and the chain rule
\begin{align*}
&\int_{\Phi(\Omega)}  A \grad u \grad \psi= 
\int_{\Omega}  (A\!\circ\!\Phi) \grad u\!\circ\!\Phi \cdot \grad \psi\!\circ\!\Phi\,|\det D_{\Phi}|\\
& =\int_{\Omega}  (A\!\circ\!\Phi) D^T_{\Phi^{-1}}\grad (u\!\circ\!\Phi) \cdot D^T_{\Phi^{-1}}\grad (\psi\!\circ\!\Phi)\,|\det D_{\Phi}| \\
& = \int_{\Omega} |\det D_{\Phi}| D_{\Phi^{-1}}(A\!\circ\!\Phi) D^T_{\Phi^{-1}}\grad (u\!\circ\!\Phi) \cdot \grad (\psi\!\circ\!\Phi)\\
&= \int_{\Omega} \widetilde A \grad \widetilde u \cdot \grad \vphi.
\end{align*}
The lemma readily follows.
\end{proof}

We will usually apply the above lemma when $\Phi(x)=Sx$ for some matrix $S$, in which case 
\begin{equation}\label{e:matrixform}
\tilde{A} = (\det S) S^{-1} (A\circ S) (S^{-1})^{T}.
\end{equation}

\begin{lemma}\label{l:cov2}
With the same assumptions as \Lemma{cov}, and assuming $\Omega$ is a Wiener regular domain, we have that for any set $E \subset \Phi(\d \Omega)=\d \Phi(\Omega)$ and $x\in \Omega$,
\begin{equation}\label{e:pushf}
 \omega_{\Phi(\Omega)}^{L_{A},\Phi(x)}(E)=\omega_{{\Omega}}^{L_{\widetilde A},x}(\Phi^{-1}(E)).
\end{equation}
\end{lemma}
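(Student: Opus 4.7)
The plan is to identify both elliptic measures via their defining property: for a Wiener regular domain, the elliptic measure is the Riesz representation of the linear functional $f \mapsto H_f(x)$ on $C(\partial\Omega)$, where $H_f$ is the Perron solution. So I want to show that the push-forward $\Phi_{*}\omega_{\Omega}^{L_{\widetilde A},x}$ (defined by $\Phi_{*}\mu(E)=\mu(\Phi^{-1}(E))$) represents this functional on $C(\partial\Phi(\Omega))$ for the operator $L_A$.

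First, I would verify that $\Phi(\Omega)$ is itself Wiener regular, so that $\omega_{\Phi(\Omega)}^{L_A,\Phi(x)}$ is well-defined. This is automatic: the variational $2$-capacity is bi-Lipschitz quasi-invariant (with constants depending only on the Lipschitz constants of $\Phi$ and $\Phi^{-1}$), so the Wiener criterion stated after \Theorem{teoreg1} passes from $\partial\Omega$ at a point $x_0$ to $\partial\Phi(\Omega)$ at the point $\Phi(x_0)$, and vice versa. Since $n\ge 1$ and $\Phi$ is bi-Lipschitz, the point at infinity also remains regular in the unbounded case (either both $\Omega$ and $\Phi(\Omega)$ are bounded or both unbounded).

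Next, given $f\in C(\partial\Phi(\Omega))$, let $u=H_f^{L_A}$ be the Perron solution in $\Phi(\Omega)$, so $u\in C(\overline{\Phi(\Omega)})$ with $u|_{\partial\Phi(\Omega)}=f$ (by Wiener regularity) and $L_A u=0$ weakly in $\Phi(\Omega)$. By \Lemma{cov}, the function $\widetilde u := u\circ\Phi$ is a weak solution of $L_{\widetilde A}\widetilde u =0$ in $\Omega$. Since $\Phi$ is a homeomorphism of $\overline\Omega$ onto $\overline{\Phi(\Omega)}$ restricting to a homeomorphism of the boundaries, $\widetilde u\in C(\overline\Omega)$ with $\widetilde u|_{\partial\Omega}=f\circ\Phi\in C(\partial\Omega)$. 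By Wiener regularity of $\Omega$ and uniqueness of the Dirichlet solution, $\widetilde u = H_{f\circ\Phi}^{L_{\widetilde A}}$.

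Evaluating both representations at $\Phi(x)$ and $x$ respectively, and using the integral representation of harmonic measure, gives
\begin{equation*}
\int_{\partial\Phi(\Omega)} f\, d\omega_{\Phi(\Omega)}^{L_A,\Phi(x)}
= H_f^{L_A}(\Phi(x))
= \widetilde u(x)
= H_{f\circ\Phi}^{L_{\widetilde A}}(x)
= \int_{\partial\Omega} f\circ\Phi\, d\omega_{\Omega}^{L_{\widetilde A},x}
= \int_{\partial\Phi(\Omega)} f\, d\Phi_{*}\omega_{\Omega}^{L_{\widetilde A},x}.
\end{equation*}
Since this holds for every $f\in C(\partial\Phi(\Omega))$, the Riesz representation theorem yields $\omega_{\Phi(\Omega)}^{L_A,\Phi(x)}=\Phi_{*}\omega_{\Omega}^{L_{\widetilde A},x}$ as Borel measures, which is exactly the claimed identity $\omega_{\Phi(\Omega)}^{L_A,\Phi(x)}(E)=\omega_{\Omega}^{L_{\widetilde A},x}(\Phi^{-1}(E))$ for every Borel set $E\subset\partial\Phi(\Omega)$.

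The only real point to check carefully is the preservation of Wiener regularity under $\Phi$, and the fact that the Perron construction is compatible with the pullback (i.e.\ that $\widetilde u$ really is the Perron solution of the pullback data for $L_{\widetilde A}$, rather than merely some weak solution with the right boundary values). Both follow from standard arguments, but the latter step is what makes essential use of the Wiener regularity hypothesis, which guarantees continuous boundary attainment and therefore uniqueness in the Dirichlet problem for continuous data.
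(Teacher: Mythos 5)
Your proof is correct and takes essentially the same route as the paper: both transform the $L_A$-Dirichlet solution in $\Phi(\Omega)$ via $\Phi$ using Lemma \ref{l:cov}, identify the pulled-back function's boundary values via continuity of $\Phi$, and invoke uniqueness in the Dirichlet problem under Wiener regularity before applying Riesz representation. You additionally verify explicitly (via quasi-invariance of $2$-capacity under bi-Lipschitz maps) that $\Phi(\Omega)$ inherits Wiener regularity, a step the paper leaves implicit; the only other difference is cosmetic, namely that the paper works with $\vphi\in C_c^\infty(\R^{n+1})$ restricted to the boundary while you take $f\in C(\d\Phi(\Omega))$ directly.
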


\begin{proof}
Let $\vphi\in C_{c}^{\infty}(\R^{n+1})$. Since the function
\[
v(x)=\int \vphi \,d\omega_{\Phi(\Omega)}^{L,x}\]
 is $L$-harmonic for $x \in \Phi(\Omega)$, by the previous lemma we know that the function
 \[
\widetilde v(x)=\int \vphi \,d\omega_{\Phi(\Omega)}^{L,\Phi(x)}\]
is $L_{\widetilde A}$-harmonic for $x \in \Omega$. If $\xi\in \d\Omega$, then as $x\rightarrow \xi$ in $\Omega$, $\Phi(x)\rightarrow \Phi(\xi)$ in $\Phi(\Omega)$, and so
 \[
\widetilde v(x)=\int \vphi \,d\omega_{\Phi(\Omega)}^{L,\Phi(x)} \rightarrow \vphi(\Phi(\xi)).
\]
Thus, $\widetilde v$ is the $L_{\widetilde A}$-harmonic extension of $(\vphi\circ\Phi)|_{\d \om}$ to $\Omega$, and so
\[
\int_{\d\Phi(\Omega)} \vphi \,d\omega_{\Phi(\Omega)}^{L_A,\Phi(x)}= \int_{\d {\Omega}} \vphi\circ
\Phi\, d\omega_{\om}^{L_{\widetilde A},x},\quad\textup{for all}\,\, x\in \Omega.\]
Since this holds for all such $\vphi$, we get that for any set $E \subset \d \Phi(\Omega) = \Phi(\d \Omega)$,
\[
\omega_{\Phi(\Omega)}^{L_{A},\Phi(x)}(E)=\omega_{{\Omega}}^{L_{\widetilde A},x}(\Phi^{-1}(E)),
\]
which gives the lemma.

\end{proof}

The following lemma will help us relate elliptic harmonic polynomial measures to just harmonic polynomial measures. 

\begin{lemma}\label{l:pushforpol}
	Let $A$ be an elliptic constant matrix, $A_s=(A+A^{T})/2$, and matrix and $S=\sqrt{A_s}$. Let $h\in H_{A}$ and $\widetilde{h}=h\circ S$. Then
	\begin{equation}\label{e:pushforpol}
	\omega_{\widetilde{h}}=(\det S)^{-1}  S^{-1}[\omega_{h}^{A}].
	\end{equation}
\end{lemma}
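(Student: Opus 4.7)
The plan is to unwind the definitions and apply the area formula under the linear change of variables $x = Sy$, which sends $\Sigma_{\widetilde h} = \{h\circ S = 0\} = S^{-1}(\Sigma_h)$ bijectively to $\Sigma_h$ (and $\Omega_{\widetilde h} = S^{-1}(\Omega_h)$).

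First I would check that $\widetilde h \in H$, i.e., that $\widetilde h$ is harmonic in the usual sense and vanishes at the origin. The latter is immediate from $\widetilde h(0) = h(0) = 0$. For the former, the chain rule (using $S^T = S$) gives $\nabla \widetilde h(y) = S\,\nabla h(Sy)$ and $D^2 \widetilde h(y) = S\,D^2 h(Sy)\,S$, so $\Delta \widetilde h(y) = \mathrm{tr}(S^2 D^2 h(Sy)) = \mathrm{tr}(A_s D^2 h(Sy))$. On the other hand, since $h$ is a polynomial, $L_A h = 0$ reads $\mathrm{tr}(A\,D^2 h) = 0$; and because $D^2 h$ is symmetric while the antisymmetric part of $A$ pairs to zero with any symmetric matrix, this forces $\mathrm{tr}(A_s D^2 h) = 0$ as well. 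Hence $\Delta \widetilde h \equiv 0$. Writing $\nu_{\Omega_h} = -\nabla h/|\nabla h|$ on $\Sigma_h$ and similarly for $\widetilde h$, the definitions become
\[
d\omega_h^A = \frac{\nabla h \cdot A\,\nabla h}{|\nabla h|}\,d\sigma_{\Sigma_h}, \qquad d\omega_{\widetilde h} = |\nabla \widetilde h|\,d\sigma_{\Sigma_{\widetilde h}}.
\]

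Next, by the area formula applied to the bi-Lipschitz map $y \mapsto Sy$ restricted to $\Sigma_{\widetilde h}$,
\[
S^{-1}[\omega_h^A](E) = \omega_h^A(S(E)) = \int_E \frac{\nabla h(Sy) \cdot A\,\nabla h(Sy)}{|\nabla h(Sy)|}\,J(y)\,d\sigma_{\Sigma_{\widetilde h}}(y),
\]
where $J(y)$ is the tangential Jacobian of $S$ along $T_y\Sigma_{\widetilde h}$. Substituting $\nabla h(Sy) = S^{-1}\nabla \widetilde h(y)$ and using that only the symmetric part of a matrix contributes to a quadratic form,
\[
\nabla h(Sy)\cdot A\,\nabla h(Sy) = \nabla \widetilde h \cdot (S^{-1} A_s S^{-1})\,\nabla \widetilde h = |\nabla \widetilde h(y)|^2,
\]
since $S^{-1}A_sS^{-1} = I$ by the very definition of $S = \sqrt{A_s}$.

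The remaining ingredient, which I expect to be the only subtle point, is the surface-Jacobian formula. Writing $\nu = \nabla \widetilde h(y)/|\nabla \widetilde h(y)|$ and $\mu$ for the unit normal to $S(T_y\Sigma_{\widetilde h})$, the identity $|\det S| = J(y)\cdot|S\nu\cdot\mu|$ (obtained by expanding the full $(n+1)$-volume along the direction $\mu$) together with $\mu = S^{-T}\nu/|S^{-T}\nu|$ and hence $S\nu \cdot \mu = 1/|S^{-T}\nu|$, yields
\[
J(y) = |\det S|\cdot |S^{-T}\nu| = (\det S)\,\frac{|S^{-1}\nabla \widetilde h(y)|}{|\nabla \widetilde h(y)|},
\]
using $S^T = S$ and $\det S > 0$. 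Plugging back in, the factors of $|S^{-1}\nabla \widetilde h|$ cancel and
\[
S^{-1}[\omega_h^A](E) = (\det S)\int_E |\nabla \widetilde h|\,d\sigma_{\Sigma_{\widetilde h}} = (\det S)\,\omega_{\widetilde h}(E),
\]
which is exactly \eqref{e:pushforpol}.
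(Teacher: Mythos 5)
Your proof is correct, but it takes a genuinely different route from the paper's. The paper works entirely at the level of the weak (distributional) characterization of $\omega_h^A$: it uses the identity $\int\vphi\,d\omega_h^A=\int_{\Omega_h}h\,L_A\psi$ (after first reducing to $A=A_s$ via $L_A=L_{A_s}$), performs the linear change of variables $x=Sy$ inside the volume integral, and reads off the result by integration by parts -- no surface measure, no Jacobian, no normal vectors ever appear. Your argument instead works directly with the surface--measure density $d\omega_h^A=\frac{\nabla h\cdot A\nabla h}{|\nabla h|}\,d\sigma_{\Sigma_h}$ and computes the tangential Jacobian $J(y)=|\det S|\,|S^{-T}\nu|$ of the map $S|_{\Sigma_{\widetilde h}}$ by hand. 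Both are valid; the paper's duality argument is shorter and sidesteps the only delicate point in your approach (the surface Jacobian identity, which you do identify as the subtle step and handle correctly), while yours makes explicit how the boundary measure transforms geometrically and does not require first invoking the co-area-type formula in the other direction to pass between the weak form and the surface-density form. One small bookkeeping point you are implicitly using but should be aware of: the density formula for $\omega_h^A$ holds $\cH^n$-a.e.\ on $\Sigma_h$, i.e.\ on $\d^*\Omega_h$, and the critical set $\{\nabla h=0\}\cap\Sigma_h$ (which is $\cH^n$-null) corresponds under $S$ to $\{\nabla\widetilde h=0\}\cap\Sigma_{\widetilde h}$ since $\nabla\widetilde h(y)=S\nabla h(Sy)$; so the area formula applies on the regular part without loss.
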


\begin{proof}
Note that since $L_A$ has constant coefficients, then $L_{A_s}=L_{A}$ by the fact that for $u \in C^2$
\begin{align*}
L_A u&= \sum_{i,j} a_{ij} \d_i  \d_j u= \frac{1}{2}\sum_{i,j} a_{ij} \d_i  \d_j u + \frac{1}{2}\sum_{i,j}a_{ij} \d_j \d_i u\\
&=  \sum_{i,j}\frac{(a_{ij} + a_{ji})}{2} \d_i \d_j u = L_{A_s} u.	
\end{align*}
 Thus, if $h$ is an $L_{A}$-harmonic function, it is also an $L_{A_s}$-harmonic function. Moreover, for any $\psi\in C_{c}^{\infty}(\R^{n+1})$
\begin{align*}
 \int \psi \,d\omega_{h}^{A_s} = \int_{\Omega_{h}} h\, L_{A_s}(\psi )&=  \int_{\Omega_{h}} h\, L_{A}(\psi )=  \int \psi \,d\omega_{h}^{A}.
\end{align*}
In fact, without loss of generality, we may assume that $A=A_s$. 

Recall now that since $A_s$ is a symmetric, positive definite and invertible matrix with constant real entries, then it has a unique real symmetric positive definite square root $S= \sqrt{A_s}$ which is also invertible. Hence, by Lemma \ref{l:cov} and \eqref{e:matrixform} with $A=A_s$, we have that $\tilde A= (\det S) I$ and $\widetilde{h}$ is $L_{(\det S)I}$-harmonic, and thus just harmonic. 

 Let now $\vphi\in C_{c}^{\infty}(\R^{n+1})$ and $\psi \circ S=\vphi$. By integration by parts and the fact that $S$ is also symmetric, we have that
 	\begin{align*}
	(\det S) \int \vphi \,d\omega_{\widetilde{h}}
	&= (\det S)\int_{\Omega_{\widetilde{h}}} \widetilde{h} \,\Delta \vphi
	=-(\det S) \,\int_{\Omega_{\widetilde{h}}} \grad \widetilde{h}\cdot  \grad \vphi \\
	&= -(\det S) \,\int_{\Omega_{\widetilde{h}}} S^{T}\grad h\circ S \cdot S^{T} \grad \psi\circ S \\
	&=-\int_{S^{-1}(\Omega_{h})} SS^{T}\grad h\circ S \cdot  \grad \psi\circ S \\
	&=-\int_{\Omega_{h}} A_s \grad h \cdot  \grad \psi = \int_{\Omega_{h}} h\, L_{A_s}(\psi )\\
	&=  \int_{\Omega_{h}} h\, L_{A}(\psi )= \int \psi \,d\omega_{h}^{A} 
	=\int \vphi \,dS^{-1}[\omega_{h}^{A}].
	\end{align*}
\end{proof}
%
%As a corollary of  Lemmas \ref{l:cov} and \ref{l:cov2} we have the following.
%\begin{corollary}\label{cor:A(x0)=id}
%Let $\Omega \subset \R^{n+1}$ be an open set, and assume that $A$ is a uniformly elliptic matrix with real entries.  Let $A_s= (A + A^*)/2$  be the symmetric part of $A$ and for a fixed point $y_0 \in \om$ define $S= \sqrt{A_s(y_0)}$. If  
%\[
%\tilde{A}(\cdot) = S^{-1} (A\circ S)(\cdot) S^{-1},
%\]
%then $\tilde{A}$ is uniformly elliptic,  $\tilde A_s(z_0) = Id$ if $z_0 = S^{-1}y_0$ and $u$ is a weak solution of $L_{A} u =0$ in $\Omega$ if and only if $\tilde{u}=u\circ S$ is a weak solution of $L_{\tilde A} \tilde u =0$ in $S^{-1}(\Omega)$ . 
%
%Assuming $\Omega$ is a Wiener regular domain, we have that for any set $E \subset \d \Omega$ and $x\in \Omega$,
%\begin{equation}\label{e:pushf}
% \omega_{\Omega}^{L_{A},x}(E)=\omega_{S^{-1}(\Omega)}^{L_{\widetilde A},S^{-1}x}(S^{-1}(E)).
%\end{equation}
%\end{corollary}

\vv

Let us recall some simple facts from linear algebra which help us understand how the geometry of $\Omega$ is affected by the linear transformation above. Note that $S$ is orthogonally diagonalizable since it is symmetric, which means that it represents a linear transformation with scaling in mutually perpendicular directions. Hence $S^{-1}$ is a special bi-Lipschitz change of variables that takes balls to ellipsoids, where eigenvectors determine directions of semi-axes, eigenvalues determine lengths of semi-axes and its maximum eccentricity is given by $\sqrt{(\lambda_{\max} / {\lambda_{\min}})}$ (where $\lambda_{\max}$  are $\lambda_{\min}$ 
are the maximal and minimal eigenvalues of $S^{-1}$),
which is in turn bounded below by $\sqrt{\Lambda}^{-1}$ and above by $\sqrt{\Lambda}$. In particular, $S^{-1}(\d \Omega)=\d (S^{-1}(\Omega))$, $\Lambda^{-1/2} \leq \|S^{-1} \|\leq   \Lambda^{1/2}$, i.e., $S^{-1}$ distorts distances by at most a constant depending on ellipticity. .

%From now on we will assume that the entries of any uniformly elliptic matrix $A$ satisfy the following: There exists $\theta: [0, \infty] \to [0, \infty]$ which is  non-decreasing and satisfies $\lim_{r \to 0} \theta(r)= \theta(0)=0$, such that $$|a_{i,j}(x)-a_{i,j}(y)| \leq \theta(|x-y|).$$

\subsection{The main blow-up lemma}

We now introduce the main tool of this paper, which is a variant of previous blow-up arguments, first introduced by Kenig and Toro for NTA domains \cite{KT06}, then extended to CDC domains in \cite{AMT16}. Both these cases applied to harmonic measure, but it can be extended to elliptic measures with a $\VMO$ condition on the coefficients.

\begin{lemma}\label{l:azmoto}
	Let $\Omega^+\subset \bR^{n+1}$ be a CDC domain, $K\subset \d\Omega^{+}$ a compact set, $\xi_{j}\in K$ and $L=-\div A \nabla$ be a uniformly elliptic operator in $\Omega^+$ such that 
\begin{equation}\label{e:boundarylimitK2}
	\lim_{r\rightarrow 0} \sup_{\xi\in K} \frac{1}{r^{n+1}} \inf_{C\in \cC}\int_{B(\xi,r)\cap \Omega^+} |A(x)-C|dx=0.
	\end{equation}

	%which is continuous at the boundary $\partial \Omega$. %and $a_{i,j}(\xi)=\delta_{ij}$, where $\delta_{ij}$ stands for Kronecker delta. 
	Let $\omega^{\pm}$ be the elliptic measures for $\Omega^\pm$. and
	$\omega_\infty^+\in \Tan(\omega^+,\xi)$, with
	$c_{j}\geq 0$, and $r_{j}\rightarrow 0$ such that $\omega_{j}^+=c_{j}T_{\xi_{j},r_{j}}[\omega^+]\rightarrow \omega_{\infty}^+$. Let $\Omega_{j}^\pm=T_{\xi_{j},r_{j}}(\Omega^\pm)$. Then there is a subsequence and a closed set $\Sigma\subset \R^{n+1}$ such that 
	\begin{enumerate}[(a)]
		\item $\d\Omega_{j}^+\cap E\rightarrow \Sigma\cap E$ in the Hausdorff metric for any compact set $E$.
		\item $\Sigma^{c}=\Omega_{\infty}^+\cup \Omega_{\infty}^-$ where $\Omega_{\infty}^+$ is a nonempty open set and $\Omega_{\infty}^-$ is also open but possibly empty. Further, they
		satisfy that for any ball $B$ with $\cnj{B}\subset \Omega_{\infty}^\pm$, a neighborhood of $\cnj{B}$ is contained in $\Omega_{j}^\pm$ for all $j$ large enough.
		\item $\supp \omega_{\infty}^+\subset \Sigma$. 
		\item Let $u^+(x)=G_{\Omega^+}(x,x^+)$ on $\Omega^+$ and $u^+(x)=0$ on $(\Omega^+)^{c}$. Set
		\[u_{j}^+(x)=c_{j}\,u^+(xr_{j}+\xi_{j})\,r_{j}^{n-1}.\]
		Then $u_{j}^+$ converges locally uniformly in $\bR^{n+1}$ and in $W_{\textup{loc}}^{1,2}(\R^{n+1})$ to a nonzero function $u_{\infty}^+$ which is continuous in $\R^{n+1}$, vanishes in $(\om^+_\infty)^c$ and satisfies
		\begin{equation}
		u_{\infty}^+(y)\lec  \omega^+_{\infty}({\overline B(x,4r)})\,r^{1-n},
		\label{e:u<wr}
		\end{equation}
		for $x\in \Sigma$, $r>0$, and $y\in B(x,r)\cap \Omega^+_{\infty}$. {Moreover, there is $A^+_{0}$ a constant elliptic matrix so that if $L^+_{0}=-\dv A^+_{0} \nabla$, then 
			\begin{align}\label{e:ibp}
			\int \vphi \,d\omega_{\infty}^{+} = \int_{\R^{n+1}}  u_{\infty}^{+}\, L_{0}^{+} \vphi, \quad \textup{for any}\,\, \vphi \in C^\infty_c( \R^{n+1}).
			\end{align}
		}
	\end{enumerate}
	Suppose now that $\Omega^-=\R^{n+1}\setminus \overline{\Omega^+}$,
	so that $\partial\Omega^+=\partial\Omega^-$ and $\Omega^-$ is also connected and CDC. Define analogously $\omega_{j}^{-}$, $u^-$, $u_j^-$, and $u^-_\infty$. Assume that $A$ is uniformly elliptic in $\Omega^{+}\cup \Omega^{-}$, \eqref{e:boundarylimitK2} holds for $\Omega^{+}\cup \Omega^{-}$ and $\omega_{j}^{-}$ converges weakly to $\omega_{\infty}^{-}=c\omega_{\infty}^{+}$ for some number $c\in (0,\infty)$. %(which happens, for example, if $\xi\in \Gamma$ where $\Gamma$ is as in \Lemma{samew}).
	% Suppose now that $\Omega^-$ is also connected and $\Delta$-regular. Let $E$ and $\Gamma$ be as above and suppose additionally that $\xi\in \Gamma$. Define analogously $u^-$, u_j^-$ and $u^-_\infty$. 
	Then $\Omega_{\infty}^-\neq\varnothing$ and for a suitable subsequence, (d) holds for $u_{j}^-$, $u_{\infty}^-$, and
	$\Omega^-_\infty$. Furthermore, if we set $u_{\infty}= u_{\infty}^+ - c^{-1} u_{\infty}^-$, then
	\begin{enumerate}[(a)]
		\setcounter{enumi}{4}
		\item $u_{\infty}$ extends to a continuous function on $\bR^{n+1}$ which satisfies $L_{0} u_\infty =0$ in $\R^{n+1}$.
		\item $\Sigma=\{u_{\infty}=0\}$, with $u_\infty>0$ on $\Omega_\infty^+$ and $u_\infty<0$ on $\Omega_\infty^-$. Further, $\Sigma$ is a real analytic variety of dimension $n$.
		\item $d\omega_{\infty}^+=-\frac{\partial u_\infty}{\partial \nu_{A_{0}}}\,d\sigma_{\d\Omega_{\infty}^+}$, where $\sigma_{S}$ stands for the surface measure on a surface $S$ and $\frac{\partial}{\partial \nu_{A_{0}}}=\nu \cdot A_{0} \nabla$ is the outward co-normal derivative.
	\end{enumerate}
	\label{l:blowup1}
\end{lemma}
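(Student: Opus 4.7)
The plan is to carry out a blowup argument, extracting convergent subsequences at every stage and invoking the VMO hypothesis on the coefficients to identify the limiting elliptic operator as one with constant coefficients. For part (a), the closed sets $\d\Omega_j^+$ admit a subsequence converging in the local Hausdorff metric to some closed $\Sigma \subset \bR^{n+1}$ by a standard Blaschke-type compactness. Part (c) is immediate from the weak convergence $\omega_j^+ \rightharpoonup \omega_\infty^+$ together with $\supp \omega_j^+ \subset \d\Omega_j^+$ and the Hausdorff convergence of (a). For part (b), I define $\Omega_\infty^\pm$ as the set of points $x$ such that some closed ball $\cnj{B(x,\rho)}$ is eventually contained in $\Omega_j^\pm$; these are disjoint open sets whose union is exactly $\Sigma^c$ by the Hausdorff convergence. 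The nonemptiness of $\Omega_\infty^+$ is then deduced from the positivity of $u_\infty^+$ constructed in (d), since if $\Omega_\infty^+ = \varnothing$ then $u_\infty^+ \equiv 0$, contradicting the weak convergence to the nonzero $\omega_\infty^+$ via the identity \eqref{e:ibp}.

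For the main part (d), I first obtain uniform bounds: Lemma \ref{holder} provides uniform local H\"older regularity of the $u_j^+$, and the rescaled Green function bound \eqref{G<w} produces \eqref{e:u<wr}. These yield local uniform convergence of a subsequence to a continuous function $u_\infty^+ \geq 0$ vanishing on $(\Omega_\infty^+)^c$, and Caccioppoli-type energy estimates upgrade this to weak convergence in $W^{1,2}_{\textup{loc}}(\bR^{n+1})$. To identify the limiting equation, for each $j$ I select a constant matrix $C_j \in \cC$ nearly realizing the infimum in \eqref{e:boundarylimitK2} at scale $r_j$ and base $\xi_j$. Uniform ellipticity confines $C_j$ to a compact subset of $\cC$, so along a further subsequence $C_j \to A_0^+$. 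The rescaled coefficients $A_j(y) := A(\xi_j + r_j y)$ then converge to $A_0^+$ in $L^1_{\textup{loc}}$, and by uniform $L^\infty$ boundedness, in $L^p_{\textup{loc}}$ for every $p < \infty$. Combining with weak $W^{1,2}$ convergence of $u_j^+$, I pass to the limit in $\int A_j \nabla u_j^+ \cdot \nabla \vphi = 0$ to obtain $L_0^+ u_\infty^+ = 0$ in $\Omega_\infty^+$ with $L_0^+ = -\div A_0^+ \nabla$. The identity \eqref{e:ibp} follows by passing to the limit in the rescaled analogue of \eqref{eq:identity-elliptic measure}; the pole term, which after rescaling is evaluated at $(x^+ - \xi_j)/r_j$, disappears from the support of any fixed test function $\vphi$ once $j$ is large enough, since $|x^+ - \xi_j|/r_j \to \infty$.

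For (e)--(g), I apply (d) separately to $\Omega^-$ to produce $u_\infty^-$ and a constant matrix $A_0^-$; the joint VMO hypothesis on $\Omega^+ \cup \Omega^-$ ensures (by making the same choice of $C_j$ from both sides) that $A_0^+ = A_0^- =: A_0$. The assumption $\omega_j^- \rightharpoonup c\, \omega_\infty^+$ then gives, by subtracting the \eqref{e:ibp} identities for $u_\infty^+$ and $c^{-1} u_\infty^-$, that $\int u_\infty \, L_0 \vphi \, dx = 0$ for all $\vphi \in C_c^\infty(\bR^{n+1})$, where $u_\infty := u_\infty^+ - c^{-1} u_\infty^-$. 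This is (e). Since $L_0$ is constant-coefficient elliptic, $u_\infty$ is real analytic; the sign structure in (f) follows since $u_j^\pm \geq 0$ are supported in the disjoint closures $\cnj{\Omega_j^\pm}$, so in the limit $\{u_\infty > 0\} = \Omega_\infty^+$, $\{u_\infty < 0\} = \Omega_\infty^-$, and $\Sigma = \{u_\infty = 0\}$ is an $n$-dimensional real analytic variety. Part (g) is a direct application of Green's identity for the $C^\omega$ function $u_\infty$ on $\Omega_\infty^+$, together with \eqref{e:ibp}.

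The main obstacle is the identification of the limiting operator in (d): one needs the oscillation of $A(\xi_j + r_j \cdot)$ to vanish strongly enough to commute with the merely weak $W^{1,2}$ limit of $u_j^+$. This is precisely what the VMO hypothesis delivers, through the compactness argument selecting convergent approximating constants $C_j \to A_0$; without it, one would be left with a variable-coefficient limit PDE, obstructing the identification of the tangent measure with an elliptic polynomial measure.
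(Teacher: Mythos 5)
Your overall outline is close to the paper's, but there is a genuine circularity between your treatments of (b) and (d). You deduce $\Omega_\infty^+\neq\varnothing$ from the identity \eqref{e:ibp} established in (d), while your construction in (d) --- in particular your passage to the limit in $\int_{\Omega_j^+}A_j\nabla u_j^+\cdot\nabla\vphi$ --- already needs the nondegeneracy statement of (b). Concretely, the near-optimal constant matrices $C_j$ you select only control $\int_{B(0,1)\cap\Omega_j^+}|A_j-C_j|$; a test function $\vphi$ supported in $B(0,M)$ requires control of $\int_{B(0,M)\cap\Omega_j^+}|A_j-A_0^+|$, so you must compare near-optimal constants across scales $r_j, 2r_j, \dots, Mr_j$. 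If $|\Omega_j^+\cap\bB|$ is allowed to degenerate (which is precisely what a failure of (b) would permit), the constants $C_{j,k}$ at different scales $kr_j$ need not have the same limit, since the $L^1$ smallness $\int_{B(\xi,kr_j)\cap\Omega^+}|A-C_{j,k}|=o(r_j^{n+1})$ constrains $C_{j,k}$ only up to an error of size $o(r_j^{n+1})/|B(\xi,kr_j)\cap\Omega^+|$. So your chain ``(d) $\Rightarrow$ (b)'' runs backwards: the single limiting matrix $A_0^+$ is ill-defined until you know $|\Omega_j^+\cap\bB|\gtrsim 1$.

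The paper breaks this circularity by proving (b) directly, before touching the limit operator: assuming no fixed ball $B\subset\Omega_j^+$ for large $j$, it uses the boundary H\"older estimate \eqref{holder} and the Green-function bound to show $u_j^+\to 0$ in $L^2_{\mathrm{loc}}$, and then the crude estimate $\av{\int\vphi\,d\omega_j^+}=\av{\int_{\Omega_j^+}A_j\nabla u_j^+\cdot\nabla\vphi}\lesssim\nrm{A}_\infty\nrm{\nabla\vphi}_\infty\nrm{u_j^+}_{L^2(2M\bB)}$ (Cauchy--Schwarz plus Caccioppoli), which makes no reference whatsoever to any constant-coefficient limit, to contradict $\omega_\infty^+\neq 0$. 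Once (b) is in hand, it supplies $\inf_j\delta_j>0$ for the diagonalization step producing a single matrix $A_0^+$ valid at every scale, after which your argument for \eqref{e:ibp} goes through. To repair your proposal you should interchange the order: establish the corkscrew ball (b) by this contradiction argument first, then use it to prove (d).

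Two secondary remarks. First, you write ``Uniform ellipticity confines $C_j$ to a compact subset of $\cC$''; this is fine because the infimum in \eqref{e:boundarylimitK2} is taken over $\cC$, which has a fixed ellipticity constant, but it does not by itself resolve the cross-scale agreement issue above. Second, your identification of $A_0^+=A_0^-$ when $\Omega^-$ is added (``by making the same choice of $C_j$ from both sides'') is essentially the paper's argument using the hypothesis that \eqref{e:boundarylimitK2} holds over $\Omega^+\cup\Omega^-$, so that part is correct.
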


\begin{proof}
The proof of this lemma can be found in \cite{AMT16} for harmonic measure for the case that $K=\{\xi\}$ (i.e. so that \eqref{e:boundarylimit} holds). The proof for general $K$ is essentially the same in this setting with minor modifications. Here we shall only record the required modifications (some of which are quite substantial) for the $K=\{\xi\}$ case in order for the same proof to work for any elliptic measure as well. In this case, $\xi_{j}=\xi$ for all $j$. We set 

\[
A_j(x):=A(r_j x+ \xi),  \,\, u_j^\pm(x):=c_j r_j^{n-1} u^\pm( r_j x+ \xi)\]
and
\[
  \vphi_j(x) := \vphi\ps{\frac{x-\xi}{r_j}}.
\]
Without loss of generality we can only work with $u^+$ since the results for $u^-$ can be proved analogously. 

Notice now that for $j$ large enough, the pole $x^+ \not \in \supp(\vphi_j)$. In fact, for any ball $B$ centered at the boundary of $\om_j$, we can find $j_0\in \mathbb N$, such that for all $j \geq j_0$, $x^+ \not \in T_{\xi, r_j} (B) $. Moreover, for $x\in B\cap\Omega_j$ and $j$ large enough,
\begin{align}\label{e:uij<w}
u_{j}^+(x) &
=c_{j}\,r_{j}^{n-1}\, u^+(r_{j}x +\xi)\\
&
\stackrel{\eqref{G<w}}{\lec} c_{j}\,r_{j}^{n-1}(r_{j}r_{B})^{1-n}\, \omega^+(4r_{j} B+\xi)
=r_{B}^{1-n}\,\omega_{j}^+(4B)\nonumber.
\end{align}

\noindent {\bf Proof of (b):} We only need to prove the existence of $B \subset \om_j^+$ for large $j \in \mathbb N$. Suppose there is no such ball.  Let $\vphi$ be any continuous compactly supported non-negative function for which $\int \vphi\, d\omega_{\infty}^+\neq 0$, and let $M>0$ be so that $\supp \vphi \subset B(0,M)$. Thus, there must be $x_0 \in B(0,M)\cap \supp \omega_{\infty}^+$. We set 
$$\delta_{j}:=\sup\{\dist(x,(\Omega_{j}^{+})^{c}):x\in B(0,2M)\},$$ 
which goes to zero by assumption. For $x \in B(0, 2M)$ and $j\in \mathbb N$, let $\zeta_{j}(x)\in (\Omega_{j}^{^+})^c$ be closest to $x$ so 
that $|x-\zeta_{j}(x)|\leq \delta_{j} \leq 2M$ (the second inequality holds because $0 \in \d \om_j^+$). It also holds that for all $x\in B(0,2M)$,  $|x-x_{0}|\leq |x|+|x_{0}|< 3M$.

Notice now that for any $j$ big enough,  $u_j^+$ is a solution in $B(0,2M) \cap \om_j^+$ and a subsolution in  $B(0,2M)$. Moreover, if $x\in\Omega_j^+$, then $\zeta_j(x) \in\d\Omega_j^+$. Thus, for $j$ large, by Cauchy-Scwharz, Caccioppoli's inequality in $B(0,M)$ (which also holds for subsolutions) and the fact that $u_j^+$ and $\vphi$ are supported in $\Omega_j^+$ and $B(0,M)$ respectively, 
\begin{align*}
&0<\int \vphi\, d\omega_{j}^+ =\int_{\Omega_{j}^+} A_j \nabla u_{j}^+\cdot\nabla \vphi \lesssim_{\lambda, \Lambda, n, M} \|\nabla \vphi\|_\infty \left(\int_{ B(0, 2M)} |u_j^+|^2 \right)^{1/2}\\
&\stackrel{\eqref{holder}}{\lec} \left( \int_{\Omega_j^+ \cap B(0,2M)} \left(\,\sup_{B(\zeta_{j}(x),2 M)} u_{j}^+\right)^2  \ps{\frac{x-\zeta_{j}(x)}{2M}}^{2\alpha}\,dx \right)^{1/2} \\ 
&\stackrel{\eqn{uij<w}}{\lec} \left( \int_{\Omega_j^+ \cap B(0,2M)} \left[\, \omega_{j}^+(B(\zeta_{j}(x),8 M)) \,(2M)^{1-n}\right]^2 \,dx \right)^{1/2} \ps{\frac{\delta_{j}}{2M}}^{\alpha}\\
&\lec (2M)^{\frac{n+1}{2}}\omega_{j}^+(B(x_{0},13M)) \,(2M)^{1-n}\ps{\frac{\delta_{j}}{2M}}^{\alpha},
\end{align*}
and thus
\begin{align*}
0<\int \vphi \,d\omega_\infty^+
& \lec_{\lambda, \Lambda, n, M,\vphi} \ps{\limsup_{j\rightarrow\infty}  \omega_{j}^+(B(x_{0},13M))}  \lim_{j}\delta_{j}^{\alpha}\\
&\leq \omega^+_{\infty}(\overline B(x_{0},13M)) \cdot 0=0,
\end{align*}
which is a contradiction. Thus, there is $B\subset \Omega_{j}$ for all large $j$ (after passing to a subsequence).

\noindent {\bf Proof of (d):} Arguing as in \cite{AMT16}, there exists $u_\infty^+$ which is continuous in $\R^{n+1}$ and vanishes on $(\om^+_\infty)^c$ such that (after passing to a subsequence) $u_j^+ \to u^+_\infty$ uniformly on compact sets of $\R^{n+1}$. Moreover, it is not hard to see that $u_j^+ \in W^{1.2}(B)$ for large $j$. Indeed,  by \eqref{e:uij<w}, it is clear that 
\begin{equation}\label{eq:sobolev1-uj}
\|u_j^+\|_{L^2(B)}^2 \lesssim r_{B}^{3-n}\,[\omega_{j}^+(4B)]^2,
\end{equation}
while by  Caccioppoli's inequality and \eqref{e:uij<w}, 
\begin{align}
\int_B |\nabla u_j^+|^2 
%&\lesssim  c_j^2 r_j^{2n-(n+1)} (r_j r_{B})^{-2} \int_{\xi+r_j B} |u^+|^2 
& \lec  r_{B}^{-2}\int_B |u_j^+|^2  \lesssim r_{B}^{-2} [r_{B}^{1-n}\,\omega_{j}^+(4B)]^2 r_{B}^{n+1} =r_{B}^{1-n}\,[\omega_{j}^+(4B)]^2. \label{eq:sobolev2-uj}
\end{align}
%Moreover,  $u_j^+ \in L^2(B)$ since $u^+$ vanishes at the center of the ball $r_jB+\xi$. To prove this we may assume, without any loss of generality, that $B$ is centered at $0$. Then
%\begin{align*}
%\int_B |u_j^+|^2 &=  c_j^2 r_j^{2(n-1)} (r_j r_{B})^{-(n+1)} \int_{B(\xi,r_j r_{B})}  |u^+-u^+(\xi)|^2 \notag\\
%&= c_j^2 r_j^{2(n-1)} (r_j r_{B})^{-(n+1)}  \int_{\mathbb S^n} \int_0^{r_j r_{B}} |u^+(\rho \theta+\xi)-u^+(\xi)|^2 \,d\rho d\theta \notag\\
%&= c_j^2 r_j^{2(n-1)} (r_j r_{B})^{-(n+1)} \int_{\mathbb S^n} \int_0^{r_j r_{B}} \left|\int_0^\rho \partial_s u^+(s \theta+\xi) \,ds\right|^2 \,d\rho\, d\theta\notag\\
%&\lesssim  c_j^2 r_j^{2(n-1)} (r_j r_{B})^{-(n+1)}  \int_{\mathbb S^n} \int_0^{r_j r_{B}} \rho \int_0^{r_j r_{B}} |\partial_s u^+(s \theta+\xi)|^2 \,ds \,d\rho\, d\theta\notag\\
%&\lesssim c_j^2 r_j^{2(n-1)} (r_j r_{B})^{-(n-1)}  \int_{\mathbb S^n} \int_0^{r_j r_{B}} |\partial_s u^+(s \theta+\xi)|^2 \,ds \, d\theta\notag\\
%&\leq r_{B}^2 \int_B |\nabla u_j^+|^2 \,dx  \stackrel{\eqref{eq:sobolev-uj}}{\lec} r_{B}^{3-n}\,[\omega_{j}^+(4B)]^2.
%\end{align*}
In view of \eqref{eq:sobolev1-uj} and \eqref{eq:sobolev2-uj} we have that 
\begin{align*}
\limsup_{j \to \infty} \| u_j^+\|_{W^{1,2}(B)} &\lesssim r_{B}^{\frac{1-n}{2}}(1+r_{B})\,  \limsup_{j \to \infty} \omega_{j}^+(4B)\\
&\leq r_{B}^{\frac{1-n}{2}}(1+r_{B})\, \omega_{\infty}^+(\overline{4B})<\infty.
\end{align*}
Therefore, by \cite[Theorem 1.32]{HKM}, $u^+_\infty \in W^{1,2}_{\textup{loc}}(\R^{n+1})$ and there exists a further subsequence of $u_j^+$ that converges weakly to $u^+_\infty$ in $W^{1,2}_{\textup{loc}}(\R^{n+1})$. 
%By Rellich?Kondrachov theorem, the unit ball of $W^{1,2}(B)$ is relatively compact in $L^{2}(B)$ and thus, there exists a convergent subsequence of $u_j$ in $B$. 

Notice that
$$-\int_{\om_j^+}A_j \nabla u_j^+ \cdot \nabla \vphi= \int \vphi \,d\hm_j^+.$$
 Indeed, by a change of variables, and letting $\vphi_{j}=\vphi\circ T_{\xi,r_{j}}$ and $\vphi_{j}=\vphi\circ T_{\xi,r_{j}}$ 
\begin{align*}
\int \vphi\,d\hm_j^+&=c_j \int \vphi_j \,d\hm^+=\int_{\om^+}A \nabla u^+ \cdot \nabla \vphi_j \\
&=c_j r_j^{n}\int_{\om_j^+}A( r_j x +\xi)  \nabla u^+(r_j x+\xi) \cdot \nabla \vphi(x) dx\\
&=\int_{\om_j^+}A_j  \nabla u_j^+ \cdot \nabla \vphi.
\end{align*}

Let $C_{j,k}$ be a constant elliptic matrix so that 
\[
\lim_{j} (kr_{j})^{-1-n}\int_{B(\xi,kr_{j}) \cap \Omega^+} |A-C_{j,k}|=0.\]
By a diagonalization argument and compactness, we may pass to a subsequence so that for each $k$, $C_{j,k}$ converges to a uniformly elliptic matrix $C_{k}$ with constant coefficients. It is not hard to check that we must in fact have that $C_{k}=A^+_{0}$ for some fixed matrix $A^+_{0}$ (using the fact that $\inf \delta_{j}>0$).
Thus, we have
\begin{equation}
\label{e:AMlimbis}
\lim_{j} (Mr_{j})^{-1-n}\int_{B(\xi,Mr_{j})\cap \Omega^+} |A-A^+_{0}|=0 \;\; \mbox{ for all }M\geq 1.
\end{equation}
To see the ellipticity of $A^+_0$ is pretty easy but we show the details for completeness. Note that since $A$ is uniformly elliptic for a.e. $x \in \om^+$, then for $\xi \in \R^{n+1}$,
\begin{align*}
\Lambda^{-1} |\xi|^2 \leq A(x)\xi \cdot \xi = (A(x)-A^+_0)\,\xi \cdot \xi + A^+_0\,\xi \cdot \xi.
\end{align*}
Then, if we take  averages over $B(\xi,Mr_{j}) \cap \Omega$, use the existence of corkscrew balls in $\om_j$ for large $j$ proved in (b) and then take limits as $j \to \infty$, by \eqref{e:AMlimbis} we have
\begin{align*}
\Lambda^{-1} |\xi|^2 \leq  A^+_0\,\xi \cdot \xi.
\end{align*}
The upper bound follows by a similar argument and the proof is omitted. 

 We will now estimate the difference 
\begin{equation}\label{e:diff}
\int_{\om_j^+}A_j  \nabla u_j^+ \cdot \nabla \vphi -\int_{\om_\infty^+}A^+_{0}  \nabla u_\infty^+ \cdot \nabla \vphi,
\end{equation}
for sufficiently large $j$. 

To this end, let $\supp(\vphi) \subset B(0,M)$. Note that
%By continuity of $A$ on $\partial \Omega$, there exists $\delta(\varepsilon)>0$ such that for every $z \in B(\xi, \delta)$, it holds that $|a_{i,j}(z)-a_{i,j}(\xi)|<\varepsilon$. Choose now $j$ so large that $B(\xi, r_j M) \subset B(\xi, \delta)$ and also $\|1_{\om_j^+}-1_{\om_\infty^+}\|_{L^\infty(B(0,M))}< \varepsilon$ and note that 
\begin{align*}
|\eqn{diff}| & \leq \left| \int_{\om_j^+} (A(r_j x+\xi)-A^+_{0})  \nabla u_j^+ \cdot \nabla \vphi \right| \\
& \qquad + \av{\int_{B(0,M)}  (\nabla u_j^+\one_{\Omega_{j}} - \nabla u^+_\infty \one_{\Omega_{\infty}}) \cdot A_{0}^{+,*} \nabla \vphi }  \leq I_{1}+I_{2}.
%\leq 2\varepsilon \|\nabla \vphi\|_{L^\infty} \int_{B(0,M)}|\nabla u_j^+|.
\end{align*}
Since $u_j$ and $u_\infty$ are supported in $\om_j^+$ and $\om_\infty^+$ respectively, by the weak convergence of $\nabla u_j $ to $\nabla u_\infty $ in $L^2(B(0,M))$, we have that $I_{2}\rightarrow 0$. On the other hand, since $A$ and $A^+_0 \in L^\infty(\Omega)$,
\begin{align*}
I_{1}
& \leq  ||\nabla u_j^+ ||_{L^2(B(0,M))} || \nabla \vphi ||_{\infty} \ps{\int_{B(0,M)\cap \Omega_{j}^{+}}|A(r_j x+\xi)-A^+_{0}|^{2}dx}^{1/2}\\
& \stackrel{\eqref{eq:sobolev2-uj}}{\lesssim }_{\Lambda}\, M^{\frac{1-n}{2}} \omega_{\infty}^{+}(\cnj{B(0,4M)})  \ps{\frac{1}{r_{j}^{1+n}}\int_{B(0,Mr_{j})\cap \Omega^+}|A(x)-A^+_{0}|dx}^{1/2}\\
& \stackrel{\eqn{AMlimbis}}{\rightarrow 0 }.
\end{align*}
Thus, combining the above estimates and taking $j\rightarrow \infty$, we infer that
$$-\int_{\om_\infty^+}A^+_{0} \nabla u_\infty^+ \cdot \nabla \vphi= \int \vphi \,d\hm_\infty^+. $$
In particular, $u^+_\infty$ is a continuous weak solution of 
$$L^+_{0} w= - \dv A^+_{0} \nabla w=0 \,\, \textup{in}\,\, \om^+_\infty.$$ Since $L^+_{0}$ is a second order elliptic operator with constant coefficients, $u^+_\infty$ is real analytic in $\om^+_\infty$. Thus, by definition of $u^+_\infty$ and since the gradient of its extension by zero is the extension by zero of the gradient (see Proposition 9.18 in \cite{Br}), we have that 
 $$\int_{\om_\infty^+}A^+_{0} \nabla u_\infty^+ \cdot \nabla \vphi = \int_{\R^{n+1}} A^+_{0} \nabla u_\infty^+ \cdot \nabla \vphi.$$
We now use the divergence theorem along with the fact that $\supp(\nabla \vphi) \subset B(0.M)$ and obtain %it is a strong solution of the equation $L_{0} u^+_\infty =0$.
\begin{align*}
\int \vphi \,d\hm_\infty^+&=-\int_{\R^{n+1}} \dv [u_\infty^+\, A_{0}^{+,*} \nabla \vphi]+ \int_{\R^{n+1}} u_\infty^+\, L^{+,*}_{0} \vphi\\
&= -0 +  \int_{\R^{n+1}} u_\infty^+\, L^{+,*}_{0} \vphi,
\end{align*}
which finishes the proof of (d). The rest of the proof is almost identical since one only uses that $u_\infty$ real analytic in $\R^{n+1}$ and Liouville's theorem for positive solutions of uniformly elliptic equations (see e.g. Corollary 6.11 in \cite{HKM}).

\vv

On may argue similarly in the case of $u_j^-$. Notice that in this case, we will obtain a constant coefficient uniformly elliptic matrix $A_0$ such that \begin{equation}
\label{e:AMlimbis-}
\lim_{j} (Mr_{j})^{-1-n}\int_{B(\xi,Mr_{j})\cap( \Omega^+ \cup \Omega^-)} |A-A_{0}|=0 \;\; \mbox{ for all }M\geq 1.
\end{equation}
\end{proof}
%We may extend $A$ to $\R^{n+1} \setminus [\om^+ \cup \om^-]$ by $A_0$ and thus, we have a globally defined uniformly elliptic matrix such that
%\begin{equation}
%\label{e:AMlimglobal}
%\lim_{j} (Mr_{j})^{-1-n}\int_{B(\xi,M r_{j})} |A-A_{0}|=0 \;\; \mbox{ for all }M\geq 1.
%\end{equation} 
%and Harnack's inequality for solutions of second order uniformly elliptic equations. That is, for any positive solution in $\om$, and $s \in (0, \frac{n+1}{n-1})$,$$\sup_B u \lesssim \left(\avint_B u^s \right)^{1/s} \lesssim \inf_{4B} u,$$ for every ball $B$ such that $4B \subset \Omega$. 
%\vvv
%We now state an almost monotonicity formula for uniformly elliptic equations which coefficients satisfy the double Dini condition with respect to the point $0$, i.e., 
%$$|A(x)-A(0)| \leq \theta(|x|),$$ 
%for all $x \in \R^{n+1}$, where
%\begin{equation}\label{eq:Dini}
%\int_0^1 r \int_0^r \frac{\theta(\rho)}{\rho} \,d\rho \,dr= -\int_0^1 \theta(r) \log r \,\frac{dr}{r} <\infty.
%\end{equation}
%
% \begin{theorem}[Theorem III, \cite{MP}]
% Suppose we have  two continuous functions $u^\pm$ in the unit ball $\bB$ that satisfy
% $$u^\pm \geq 0, \quad 	L u^\pm \geq -1, \quad u^+ \cdot u^- =0 \,\, \textup{in}\,\, \bB.$$
% Then, there exists $r_\theta \in (0,1)$ such that the functional 
% $$\gamma(r):= r^{-4} \int_{r \bB} \frac{|\nabla u^+|^2}{|x|^{n-1}}\, dx \int_{r \bB} \frac{|\nabla u^-|^2}{|x|^{n-1}}\, dx$$
% satisfies
% $$\gamma(r) \leq C_\theta \left(1+ \|u^+\|^2_{L^2(\bB)} + \|u^-\|^2_{L^2(\bB)} \right)^2, \quad \textup{for all} \,\, r \leq r_\theta.$$
% \end{theorem}

\vvv 

Now we prove a slightly weaker version of this result in the next two lemmas. Again, this is based on the details in the proof of \cite[Lemma 5.3]{AMTV16}, but with some adjustments for elliptic measure.

 \begin{lemma}\label{l:azmotovo}
Let $\Omega \subset \bR^{n+1}$ be a domain. 
Let $\xi_{j}\in \d\Omega$ and $L=-\div A \nabla$ be a uniformly elliptic operator in $\Omega$ such that \eqref{e:boundarylimitK} holds with $K=\{\xi_{j}\}$ and, if $\omega=\omega_{\Omega}^{L_{A},x_{0}}$ is its $L_{A}$-harmonic measure with pole at $x_{0}\in \Omega$, there is $r_{j}\rightarrow 0$ and $c_{j}>0$ so that 
\[
\omega_{j}:= c_{j}T_{\xi_{j},r_{j}}[\omega] \rightarrow \omega_{\infty} %\in \Tan(\omega,\xi),\]
\]
\begin{equation}\label{e:om-density}
\liminf_{j} \frac{|\Omega\cap B(\xi_{j},r_{j})|}{r_{j}^{n+1}}>0,
\end{equation}
and 
\begin{equation}\label{CDCatxi}
\omega^{z}(B(\xi_{j},r_{j}/2))\gec 1 \mbox{ for all }j \mbox{ and }z\in B(\xi_{j},r_j)\cap \Omega.
\end{equation}
Then there is a subsequence such that the following hold:
 If $u(x)=G_{\Omega^{}}(x,x_{0})$ on $\Omega$ and $u(x)=0$ on $\Omega^{c}$, and 
\[u_{j}(x)=c_{j}\,u(xr_{j}+\xi_{j})\,r_{j}^{n-1},\]
then $u_{j}$ converges in $L^{2}_{loc}(\frac{1}{2}\bB)$ to a nonzero function $u_{\infty}$ which is $L_{A_{0}}$-harmonic in $\{x:u_{\infty}>0\}\cap \frac{1}{2}\bB$,  for constant uniformly elliptic matrix $A_{0}$, and  such that 
\begin{equation}\label{e:ufinbound}
||u_{\infty}||_{L^{2}(\frac{1}{2}\bB)} \lec \omega_{\infty}(\cnj{B(0,2)}),
\end{equation} 
and for any $\vphi \in C^\infty_c( \frac{1}{2}\bB)$,
 \begin{align}\label{e:ibp}
 \int \vphi \,d\omega_{\infty} = \int_{\R^{n+1}}  u_{\infty}\, L_{A_{0}}\vphi.
 \end{align}

 If $\xi=\xi_{j}$ and $A$ is continuous at $\xi$, then $A_{0}$ is just the value of $A$ at $\xi$.
\end{lemma}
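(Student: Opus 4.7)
The plan is to follow the strategy of \Lemma{azmoto} (the two-sided version) but adapt it to the weaker hypotheses available here: only the local Bourgain-type estimate \eqref{CDCatxi} at $\xi_{j}$ (in place of a global CDC on $\Omega$) and the density condition \eqref{e:om-density}. The argument proceeds in three stages: uniform $W^{1,2}$ bounds and $L^{2}_{\textup{loc}}$ compactness for the rescaled Green functions $u_{j}$; extraction of a constant elliptic limit matrix $A_{0}$ via the $\VMO$ condition; and passing to the limit in the weak formulation of the PDE to produce \eqref{e:ibp}.

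For the first stage, I would apply \eqref{G<w} with the ball $B=B(\xi_{j},r_{j}/2)$, so that $2B=B(\xi_{j},r_{j})$ and $4B=B(\xi_{j},2r_{j})$. Hypothesis \eqref{CDCatxi} gives $\omega^{z}(B(\xi_{j},r_{j}/2))\gec 1$ for $z\in 2B\cap\Omega$, hence $\inf_{z\in 2B}\omega^{z}(4B)\gec 1$; plugging this in yields the pointwise bound
\[
u_{j}(x)=c_{j}r_{j}^{n-1}G(xr_{j}+\xi_{j},x_{0})\lec c_{j}\,\omega(B(\xi_{j},2r_{j}))=\omega_{j}(\overline{B(0,2)})
\]
for $x\in\tfrac12\bB\cap\Omega_{j}$. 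Since $\omega_{j}\rightharpoonup \omega_{\infty}$, this is eventually controlled by $\omega_{\infty}(\overline{B(0,2)})$. Extending $u_{j}$ by $0$ outside $\Omega_{j}$ produces a nonnegative subsolution of $L_{A_{j}}$, so Caccioppoli's inequality gives a uniform $W^{1,2}(\tfrac12\bB)$ bound. Rellich--Kondrachov compactness then extracts a subsequence with $u_{j}\to u_{\infty}$ in $L^{2}_{\textup{loc}}(\tfrac12\bB)$ and $\nabla u_{j}\rightharpoonup \nabla u_{\infty}$ weakly in $L^{2}_{\textup{loc}}$; the bound \eqref{e:ufinbound} is inherited from the pointwise bound.

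For the second stage, I would extract $A_{0}$ by diagonalization as in \Lemma{azmoto}: for each integer $k$ the $\VMO$ hypothesis supplies near-optimal constant elliptic matrices $C_{j,k}$ with
\[
\lim_{j\to\infty}(kr_{j})^{-(n+1)}\int_{B(\xi_{j},kr_{j})\cap\Omega}|A-C_{j,k}|\,dy=0.
\]
Compactness of the uniformly elliptic constant matrices lets us pass to a subsequence with $C_{j,k}\to C_{k}$ for each $k$. Comparing $C_{j,k}$ and $C_{j,k'}$ on $B(\xi_{j},r_{j})\cap\Omega$, whose Lebesgue measure is $\gec r_{j}^{n+1}$ by \eqref{e:om-density}, forces $C_{k}$ to be independent of $k$, yielding a single uniformly elliptic $A_{0}$ with the analogue of \eqref{e:AMlimbis}. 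If $\xi_{j}\equiv\xi$ and $A$ is continuous at $\xi$, the choice $C_{j,k}\equiv A(\xi)$ realizes the $\VMO$ limit directly, so $A_{0}=A(\xi)$. Starting then from $\int\vphi\,d\omega_{j}=-\int A_{j}\nabla u_{j}\cdot\nabla\vphi$ for $\vphi\in C^{\infty}_{c}(\tfrac12\bB)$, I would split the right-hand side into $-\int(A_{j}-A_{0})\nabla u_{j}\cdot\nabla\vphi$, which vanishes by Cauchy--Schwarz using the uniform $L^{2}$ bound on $\nabla u_{j}$ together with the $L^{1}$ smallness of $|A_{j}-A_{0}|$ from $\VMO$ (interpolated with $|A_{j}-A_{0}|\lec\Lambda$), and $-\int A_{0}\nabla u_{j}\cdot\nabla\vphi$, which converges to $-\int A_{0}\nabla u_{\infty}\cdot\nabla\vphi$ by weak $L^{2}$ convergence. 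Since $A_{0}$ is constant, integration by parts transfers both derivatives onto $\vphi$, giving \eqref{e:ibp}.

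The main obstacle will be showing that $u_{\infty}\not\equiv 0$, since only $L^{2}_{\textup{loc}}$ convergence is guaranteed. The key observation is that if $u_{\infty}\equiv 0$, then \eqref{e:ibp} forces $\omega_{\infty}|_{\tfrac12\bB}\equiv 0$; to rule this out the plan is to combine \eqref{CDCatxi} with the upper semicontinuity of $\omega_{\infty}$ on closed sets,
\[
\omega_{\infty}(\overline{B(0,1/2)})\geq \limsup_{j}c_{j}\omega(B(\xi_{j},r_{j}/2)),
\]
using the fact that in the intended applications $c_{j}$ is normalized so that $c_{j}\omega(B(\xi_{j},r_{j}/2))\gec 1$ uniformly. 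Finally, for $L_{A_{0}}$-harmonicity of $u_{\infty}$ on $\{u_{\infty}>0\}\cap\tfrac12\bB$: since $\supp\omega_{j}\subset\d\Omega_{j}=\{u_{j}=0\}$ and $u_{j}\to u_{\infty}$ in $L^{2}_{\textup{loc}}$, a standard support-passage argument gives $\omega_{\infty}(\{u_{\infty}>0\}\cap\tfrac12\bB)=0$, so that \eqref{e:ibp} restricted to test functions supported in $\{u_{\infty}>0\}\cap\tfrac12\bB$ reduces to $\int u_{\infty}L_{A_{0}}\vphi=0$; constant-coefficient elliptic regularity then gives real-analyticity of $u_{\infty}$ on this open set.
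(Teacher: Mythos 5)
Your proposal follows essentially the same approach as the paper's proof: the uniform pointwise bound on $u_{j}$ via \eqref{G<w} and \eqref{CDCatxi} on the ball $B(\xi_{j},r_{j}/2)$, Caccioppoli for the subsolution obtained by extending $u_j$ by zero plus Rellich--Kondrachov to extract $u_{\infty}$ in $L^{2}(\tfrac12\bB)$ together with weak convergence of gradients, a diagonalization argument using the $\VMO$ hypothesis and \eqref{e:om-density} to produce the constant matrix $A_{0}$, and passage to the limit in $\int\vphi\,d\omega_{j}=\int A_{j}\nabla u_{j}\cdot\nabla\vphi$. The only noticeable difference is cosmetic: you handle the nonvanishing of $u_{\infty}$ by invoking a normalization of $c_{j}$ and upper semicontinuity on the closed ball, whereas the paper normalizes so that $\omega_{\infty}(\tfrac14\bB)=1$ and lets nonvanishing follow from \eqref{e:ibp}; both arguments are equivalent and rely on the same implicit positivity of $\omega_\infty$ near the origin.
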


\begin{proof}Recall that we denote $\bB=B(0,1)$. Again, to simplify notation, we'll just prove the case when $\xi_{j}=\xi\in \d\Omega$. 

%Let $\omega_{\infty}\in \Tan(\omega_{1},\xi)$,
%$c_{j}\geq 0$, and $r_{j}\rightarrow 0$ be such that $\omega^{j}_{1}=c_{j}T_{\xi,r_{j}}[\omega^{+}]\rightarrow \omega_{\infty}$. %{\color{blue} 
%As $\omega_{\infty}\neq 0$, there is $R>0$ so that $\omega_{\infty}(B(0,R))\neq 0$. 
By \eqref{CDCatxi}, without loss of generality, we can scale the $c_{j}$ so that %we will assume $R=1/4$, and we can pick $c_{j}$ so that
\begin{equation}\label{e:w>0}
\omega_{\infty}\bigl(\tfrac{1}{4}\bB\bigr)=1.
\end{equation}
%}

Let $\Omega_j=T_{\xi,r_{j}}(\Omega)$.  
%Let $u_{}(x)=G_{\Omega_{1}}(x,p_1)$ on $\Omega_{1}$ and $u_{1}(x)=0$ on $(\Omega_{1})^{c}$ (since we are assuming Wiener regularity, this is continuous). Set
%\[u_{j}^{+}(x)=c_{j}\,u_{1}(xr_{j}+\xi)\,r_{j}^{n-1}.\]
%Define $u_{2}$ and $u^{j}_{2}$ similarly.
%
%Without loss of generality, by passing to a subsequence we may assume that 
%\begin{equation}\label{e:om+nondeg}
%\cH^{n+1}(B(\xi,r_{j})\backslash \Omega_{1})\geq \frac{r_{j}^{n+1}}{2}.
%\end{equation}
%Thus, for $z\in B(\xi,r_{j})$,
%\[
%\omega_{\Omega_{1}}^{z}(B(\xi,\delta^{-1}r_{j}))
%\gec  \frac{\cH^{n+1}(B(\xi,r_{j})\backslash \Omega_{1}))}{r^{n+1}}
%\gec 1.\]
By \eqref{CDCatxi} and \eqref{G<w},
\begin{equation}\label{e:Green-lowerbound2}
 \omega(B(\xi,2r_{j}))\gtrsim r_{j}^{n-1}\, u(x)\quad\mbox{
 for all $x\in B(\xi,r_{j})\cap\Omega_1$,}
 \end{equation}
 and so, 
 \begin{equation}\label{e:Green-lowerbound3}
 \omega_{j}(2\bB)\gtrsim \, u_{j}(x)\quad\mbox{
 for all $x\in \bB\cap\Omega_1^j$,}
 \end{equation}

By Caccioppoli's inequality for $L$-subharmonic functions and the uniform boundedness of $u$ in $\bB$, we deduce that, for $i=1,2$,
\begin{align*}
\limsup_{j\rightarrow\infty}
\|\nabla u_{j}\|_{L^{2}(\frac12 \bB)}
&\lesssim \limsup_{j\rightarrow\infty} \|u_{j}\|_{L^2(\bB)}\\
&\lesssim \limsup_{j\rightarrow\infty}  \omega_{j}(2\bB) 
\leq \omega_{\infty}(\cnj{2\bB}).
\end{align*}

%See (3.7) of \cite{KPT09} for a similar argument.
By the Rellich-Kondrachov theorem, the unit ball of the Sobolev space $W^{1,2}(\frac12 \bB)$ is relatively
compact in $L^2(\frac12 \bB)$, and thus there exists a subsequence of the functions $u_{j}$ which
converges {\em strongly} in $L^2(\frac12 \bB)$ to another function $u_{\infty}\in L^2(\frac12 \bB)$. This and the above inequality imply \eqref{e:ufinbound}.

% Let $C_{j,k}$ be a constant elliptic matrix so that 
%\[
%\lim_{j} (kr_{j})^{1-n}\int_{B(\xi,kr_{j})} |A(x)-C_{j,k}|^{2}\,dx=0.\]
%By a diagonalization argument and compactness, we may pass to a subsequence so that for each $k$, $C_{j,k}$ converges to an elliptic matrix $C_{k}$. Notice now, that since each $C_k$ is constant matrix, we must in fact have that there exists a matrix $C$ so that $C_{k}=C$ for any $k$. Thus, we have
By the same diagonalization argument as in the proof of the previous lemma (although using \eqref{e:om-density} instead of $\inf \delta_j>0$ that we used in the previous lemma), we can pass to a subsequence so that, for some uniformly elliptic matrix $A_0$ with constant coefficients,
\begin{equation}
\label{e:AMlim}
\lim_{j} (Mr_{j})^{-1-n}\int_{B(\xi,Mr_{j}) \cap \om} |A(x)-A_0|=0, \;\; \mbox{ for all }M\geq 1.
\end{equation}
It easy to check that
\begin{equation*}
\int \vphi\,  d\omega_{j} = \int A_j \nabla u_{j} \cdot \nabla \vphi \,dx,
\end{equation*}
for any $C^\infty$ function $\vphi$ compactly supported in $\frac{1}{2}\bB$. Then passing to a limit, it follows that
\begin{equation}\label{eq302}
\int \vphi\,  d\omega_{\infty} = \int  A_{0} \nabla u_\infty\cdot \nabla \vphi \,dx,\,\,\, \text{for any}\,\, \vphi \in C^\infty_c(\tfrac{1}{2}\bB).
\end{equation}

\end{proof}

\begin{theorem}\label{azmotovo2}
Let $\Omega^{\pm} \subset \bR^{n+1}$ be disjoint domains. 
Let $\xi_{j}\in \d\Omega^{+}\cap \d\Omega^{-}$ and $L=-\div A \nabla$ be a uniformly elliptic operator in $\om^+ \cup \om^-$ such that \eqn{boundarylimit} holds at $\xi$ with respect to both $\Omega^{+} \cup \Omega^{-}$. If $\omega^{\pm}=\omega_{\Omega^{\pm}}^{L_{A},x^\pm}$ is the $L_{A}$-harmonic measure with pole at $x^{\pm}\in \Omega^{\pm}$, and if there is $r_{j}\rightarrow 0$ and $c_{j}>0$ so that 
\[
\omega_{j}^{+}:= c_{j}T_{\xi_{j},r_{j}}[\omega^{+}] \rightarrow \omega_{\infty} %\in %\Tan(\omega^{+},\xi)\]
\]
and 
\[
\omega_{j}^{-}:= c_{j}T_{\xi_{j},r_{j}}[\omega^{-}] \rightarrow c\,\omega_{\infty} 
%\in \Tan(\omega^{-},\xi), 
\]
for some constant  $c>0$, then there is a subsequence such that the following hold. If $u^{\pm}(x)=G_{\Omega^{\pm}}(x,x^{\pm})$ on $\Omega^{\pm}$,  $u(x)=0$ on $(\Omega^{\pm})^{c}$ and 
\[u_{j}^{\pm}(x)=c_{j}\,u^{\pm}(xr_{j}+\xi_{j})\,r_{j}^{n-1},\]
then $u_{j}:=u_{j}^{+}-c^{-1}u_{j}^{-}$ converges in $L^{2}(\tfrac{1}{2} \bB)$ to a nonzero function $u_{\infty}$, which is $L_{A_{0}}$-harmonic in $\tfrac{1}{2} \bB$ for some constant uniformly elliptic matrix $A_{0}$, and moreover,
\begin{equation}\label{zerozet}
\tfrac{1}{2}\bB\cap \supp \omega_{\infty}=\{u_{\infty}=0\}\cap \tfrac{1}{2} \bB
\end{equation}
and \eqref{e:ufinbound} and \eqref{e:ibp} hold. If $\xi_{j}=\xi$ and  $A$ is continuous at $\xi$, then $A_{0}$ is just the value of $A$ at $\xi$.

By applying this result to the sequences $c_{j} T_{\xi_{j},ar_{j}}[\omega^{\pm}]$ for all $a>0$, we see that $u_{\infty}$ extends to a $L_{A_{0}}$-harmonic function on $\mathbb{R}^{n+1}$ so that for $r>0$,
\begin{equation}\label{e:ufinbound2}
||u_{\infty}||_{L^{2}(B(0,r))} \lec r^{1-n}\,\omega_{\infty}(\cnj{B(0,4r)}),
\end{equation} 
and for any $\vphi \in C^\infty_c(\R^{n+1})$,
 \begin{align}\label{e:ibp2}
 \int \vphi \,d\omega_{\infty} = \int_{\R^{n+1}}  u_{\infty}\, L_{A_{0}}\vphi.
 \end{align}

\end{theorem}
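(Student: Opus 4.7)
The plan is to apply \Lemma{azmotovo} to the two one-sided problems $(\omega^+,\Omega^+)$ and $(\omega^-,\Omega^-)$ separately and then combine the limits. First I would extract a common subsequence along which the conclusions of \Lemma{azmotovo} hold simultaneously for both sides with the given $\xi_j$, $r_j$, $c_j$. This produces functions $u_\infty^\pm \in L^2(\tfrac12\bB)$ with $u_j^\pm \to u_\infty^\pm$ in $L^2_{\loc}(\tfrac12\bB)$, together with constant-coefficient uniformly elliptic matrices $A_0^\pm$ satisfying, for every $\vphi \in C_c^\infty(\tfrac12\bB)$,
\[ \int \vphi\, d\omega_\infty = \int u_\infty^+ L_{A_0^+}\vphi, \qquad c\int \vphi\, d\omega_\infty = \int u_\infty^- L_{A_0^-}\vphi, \]
along with the $L^2$-bound \eqref{e:ufinbound} for each $u_\infty^\pm$. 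The crucial observation is that $A_0^+$ and $A_0^-$ coincide: in the proof of \Lemma{azmotovo}, each matrix is produced as an $L^1$-limit of $A$ on $B(\xi, Mr_j)\cap\Omega^\pm$, and because $L_A\in\VMO(\Omega^+\cup\Omega^-,\xi)$ the same diagonalization over the union forces a single limit matrix $A_0 := A_0^+ = A_0^-$.

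Setting $u_\infty := u_\infty^+ - c^{-1} u_\infty^-$ immediately gives $u_j\to u_\infty$ in $L^2(\tfrac12\bB)$, and the two identities above combine to
\[ \int u_\infty\,L_{A_0}\vphi \;=\; \int \vphi\,d\omega_\infty - c^{-1}\!\cdot c\int \vphi\,d\omega_\infty \;=\; 0 \qquad \text{for all } \vphi\in C_c^\infty(\tfrac12\bB), \]
so $L_{A_0} u_\infty = 0$ weakly in $\tfrac12\bB$, whence $u_\infty$ is real-analytic there. The bounds \eqref{e:ufinbound}--\eqref{e:ibp} then follow from the one-sided versions by the triangle inequality and linearity. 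The nontrivial part is the zero-set identity \eqref{zerozet}. For $\supp\omega_\infty\cap\tfrac12\bB\subseteq\{u_\infty=0\}$, I would use the disjointness of the supports of $u_j^\pm$: if $u_\infty(x_0)\neq 0$, nonnegativity of $u_\infty^\pm$ together with $u_\infty=u_\infty^+-c^{-1}u_\infty^-$ forces one of $u_\infty^\pm$ to be strictly positive near $x_0$ and the other to vanish there, which by the $L^2$-convergence means that for large $j$ a fixed neighborhood of $x_0$ sits inside one $\Omega_j^{\pm}$ and is bounded away from $\partial\Omega_j^{\pm}$, so $\supp\omega_j^\pm$ avoids it and $\omega_\infty$ places no mass there. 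For the reverse inclusion, I would argue by contradiction: if $u_\infty(x_0)=0$ but $\omega_\infty(U)=0$ on some neighborhood $U$ of $x_0$, then both $u_\infty^\pm$ are nonnegative $L_{A_0}$-harmonic on $U$, and the strong minimum principle together with the disjoint-support structure forces $u_\infty\equiv 0$ on $U$, hence everywhere in $\tfrac12\bB$ by unique continuation, contradicting $\omega_\infty\neq 0$.

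For the global extension I would apply the same argument to the dilated sequences $c_j T_{\xi_j,ar_j}[\omega^\pm] = T_{0,a}[\omega_j^\pm] \rightharpoonup T_{0,a}[\omega_\infty]$ for every $a>0$, noting that the matrix produced is still $A_0$ because VMO at $\xi$ gives the same limit along every sequence of scales $ar_j\to 0$. Each application upgrades $L_{A_0}$-harmonicity to the ball $\tfrac a2\bB$, and the local pieces agree on overlaps because of the compatibility of the Green-function rescalings built into the definition of $u_j^\pm$. The bounds \eqref{e:ufinbound2}--\eqref{e:ibp2} then follow from \eqref{e:ufinbound}--\eqref{e:ibp} applied at scale $ar_j$ together with the behavior of $F_r$ and $T_{0,a}$. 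The main obstacle is the delicate direction of the zero-set identity \eqref{zerozet}: ruling out that $\omega_\infty$ charges points off the nodal set of $u_\infty$ while only having $L^2$ (not pointwise) control on $u_j$ requires combining the nonnegativity and disjoint supports of $u_j^\pm$ on $\Omega_j^\pm$ with the real-analyticity of $u_\infty$ and a unique continuation argument, and is the step most sensitive to the absence of CDC or other regularity on $\Omega^\pm$.
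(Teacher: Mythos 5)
Your overall approach—apply \Lemma{azmotovo} to each side and combine—is the one the paper follows, and you correctly identify the key point that the VMO condition on $\Omega^+\cup\Omega^-$ forces a single limit matrix $A_0$. But you skip what is in fact the substantive work of the argument: \Lemma{azmotovo} has two non-negotiable hypotheses, \eqref{e:om-density} and \eqref{CDCatxi}, and neither is assumed in Theorem \ref{azmotovo2}. You cannot simply ``extract a common subsequence along which the conclusions of \Lemma{azmotovo} hold simultaneously'' without first verifying them for both $\Omega^+$ and $\Omega^-$. For $\Omega^+$, disjointness of $\Omega^\pm$ pins down (after swapping signs if necessary) that $|B(\xi,r_j/8)\setminus\Omega^+|\geq |B(\xi,r_j/8)|/2$, which yields the Bourgain-type bound \eqref{CDCatxi}. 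For $\Omega^-$, however, there is \emph{a priori} no reason for $\Omega^{-,c}$ to have large measure in $B(\xi,r_j)$. The paper closes this gap with a genuinely quantitative argument: starting from $\omega_\infty\bigl(\tfrac14\bB\bigr)=1$, it uses the identity $\int\vphi\,d\omega_\infty=\int A_0\nabla u_\infty^+\cdot\nabla\vphi$, Caccioppoli, the $L^\infty$ bound \eqref{e:Green-lowerbound4} on $u_j^+$, and a level-set decomposition to derive the non-degeneracy $|B(\xi,r_j)\cap\Omega^+|\gtrsim r_j^{n+1}\,\omega_\infty(\overline{2\bB})^{-2}$; only then does Bourgain apply to $\Omega^-$, and only then is it legitimate to invoke \Lemma{azmotovo} on the minus side. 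This step is the heart of the proof and cannot be omitted.

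A secondary gap: in your argument for $\supp\omega_\infty\cap\tfrac12\bB\subseteq\{u_\infty=0\}$ you pass from the $L^2$-convergence $u_j^+\to u_\infty^+$ to the claim that a fixed neighborhood of $x_0$ lies inside $\Omega_j^+$ for large $j$. This needs locally uniform (not merely $L^2$) convergence, which in turn requires the interior PDE estimates (Harnack/De Giorgi–Nash–Moser for the subsolutions $u_j^+$) that are built into the paper's compactness argument; you should say where this upgrade comes from. Your reverse inclusion via real-analyticity of $u_\infty$ and unique continuation is fine once $u_\infty$ has been shown $L_{A_0}$-harmonic in $\tfrac12\bB$, and your global extension by dilation scales is the same device the statement itself suggests.
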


\begin{proof}
	The proof is mostly the same as the proof of \cite[Lemma 5.3]{AMTV16}, but we provided some of the details here to show the differences. Again, we assume $\xi_{j}=\xi$. %By a change of variables, we may assume that $A_{0}=I$. 
	Note that since $\Omega^+$ and $\Omega^-$ are disjoint, we may assume without loss of generality that 
\[
|B(\xi,r_{j}/8)\backslash \Omega^+|\geq \frac{|B(\xi,r_{j}/8)|}{2}\]
and so Bourgain's estimate implies 
\[
\omega^{+,z}(B(\xi, r_{j}/2))\gec 1 \mbox{ for all }z\in B(\xi,r_{j})).\]
Hence, the conclusions of Lemma \ref{l:azmotovo} apply to $\omega=\omega^{+}$, $\Omega=\Omega^{+}$ and $u=u^{+}$. In particular,  \eqref{e:Green-lowerbound3} in our scenario is 
 \begin{equation}\label{e:Green-lowerbound4}
\omega_{j}^{+}(2 \bB)\gtrsim \, u_{j}^{+}(x)\quad\mbox{
 for all $x\in \bB\cap\Omega_1^j$.}
 \end{equation}

Again, by rescaling, we can assume that $\omega_{\infty}(\frac{1}{4} \bB)=1$.

Observe now that for any non-negative $\vphi \in C^\infty_c( \frac{1}{2}\bB)$ with $\phi = 1$ in $ \frac{1}{4}\bB$, by Cauchy-Schwartz and Caccioppoli's inequality (since $u_j^\pm$ is positive and $L$-harmonic in $\bB \cap \Omega_j^\pm$ and zero in $\bB \setminus \Omega_j^\pm$) we have that
\begin{align*}
1 = &\omega_{\infty}(\frac{1}{4} \bB)
 \leq \int \vphi \,d\omega_{\infty}
=\int A_{0} \nabla u_{\infty}^{+} \cdot \nabla \vphi  \,dx\\
& =\lim_{j} \int_{\Omega_{j}^{+}} A_j \nabla u_{j}^{+} \cdot \nabla \vphi \, dx\\
& \leq \|A\|_{L^\infty} \|\nabla \vphi \|_{L^{\infty}(\bB)} \lim_{j} \int_{\Omega_{j}^{+} \cap \frac{1}{2} \bB} |\nabla u_{j}^{+}|\\
& \lec \|A\|_{L^\infty} \|\nabla \vphi \|_{L^{\infty}(\bB)}  \lim_{j} \left(\int_{\Omega_{j}^{+} \cap  \bB} |u_{j}^{+}|^2 \right)^{1/2}\\
& \lesssim \lim_{j} \ps{\int_{\bB\cap \Omega_{j}^{+} \cap \{u_{j}^{+}>t\}} |u_{j}^{+}|^2 \, dx
+ \int_{\bB\cap \Omega_{j}^{+} \cap \{u_{j}^{+}\leq t\}} |u_{j}^{+}|^2 \,dx}^{1/2}\\
& \lesssim\liminf_{j} \ps {| \{x\in \bB\cap \Omega_{j}^{+} : u_{j}^{+}>t\} |^{1/2} \cdot ||u_{j}^{+}||_{L^{\infty}(\bB \cap \Omega_{j}^{+})}}
+ t  \\
& \stackrel{\eqn{Green-lowerbound4}}{\lec} \liminf_{j} \ps{|\{x\in \bB\cap \Omega_{j}^{+} : u_{j}^{+}>t\}|^{1/2}\, \omega_{\infty}\left(\cnj{2\bB}\right)
+ t },
\end{align*}
and so, for $t$ small enough,
\[
|\bB\cap \Omega_{j}^{+}|\geq |\{x\in \bB\cap \Omega_{j}^{+}: u_{j}^{+}(x)>t\}|\gec \omega_{\infty}(\cnj{2\bB})^{-2}.\]
In particular,
\begin{equation}\label{e:om-nondeg}
|B(\xi,r_{j})\backslash \Omega^{-}|\geq |B(\xi,r_{j})\cap \Omega^+|\gec r_{j}^{n+1}\omega_{\infty}(\cnj{2\bB})^{-2}.
\end{equation}

Thus, by the same arguments as earlier in proving \eqn{Green-lowerbound3}, we have that for $j$ large,
\begin{equation}\label{e:Green-lowerbound4}
 \omega_{j}^{-}(B(\xi,2r_j))\gtrsim \, u^{-}_{j}(x)\,\omega_{\infty}(\cnj{2\bB})^{-2}, \quad\mbox{
 for all $x\in B(\xi,r_j)\cap\Omega^-$.}
 \end{equation}
 
Thus, we can apply Lemma \ref{l:azmotovo} and can pass to a subsequence so that $u_{j}^{-}$ converges in $L^{2}(\frac{1}{2}\bB)$ to a function $u_{\infty}^{-}$. Hence, $u_{j}^{+}-c^{-1}u_{j}^{-}\rightarrow u_{\infty}^{+}-c^{-1}u_{\infty}^{-}=:u_{\infty}$ and
 \begin{equation}\label{e:intphu-}
 c\int \vphi \, d\omega_{\infty} = \int L_{A_{0}^*}\vphi  \,u_{\infty}^{-}\,dx, \quad \textup{for any}\,\, \phi \in C^\infty_c(\tfrac{1}{2}\bB).
\end{equation}
In particular, we can show that $u_{\infty}$ is $L_{A_0}$-harmonic in $\tfrac{1}{2}\bB$, and the rest of the proof is exactly as in \cite{AMTV16} starting from equation (5.15).

\end{proof}

\vvv

\section{Harmonic Polynomial Measures}
\subsection{Preliminaries}
In this section, we review and collect some lemmas that will help us work with the quantities $\omega_{h}^{A}$.

\begin{lemma}\label{l:dilating}
Let $h\in H_{A}$ and $r>0$. Then
\begin{equation}
\label{e:trw}
T_{0,r}[\omega_{h}^{A}]= r^{n-1} \omega_{h\circ T_{0,r}^{-1}}^{A}
\end{equation}
and 
\begin{equation}\label{e:htr}
F_{r}(\omega_{h}^{A}) = r^{n} F_{1}(\omega_{h\circ T_{0,r}^{-1}}^{A}).
\end{equation}
\end{lemma}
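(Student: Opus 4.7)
The plan is to reduce both identities to an elementary change-of-variables computation on the density and the surface measure, then derive the second identity from the first by applying part (3) of \Lemma{preiss}. Set $\tilde h := h\circ T_{0,r}^{-1}$, so $\tilde h(x)=h(rx)$. By the chain rule $\tilde h$ is also $L_A$-harmonic (since $A$ has constant coefficients), and
\[
\Sigma_{\tilde h}=T_{0,r}(\Sigma_h),\qquad \Omega_{\tilde h}=T_{0,r}(\Omega_h),\qquad \nabla\tilde h(T_{0,r}(y))=r\,\nabla h(y).
\]
Moreover the outward normal is invariant under a positive dilation, so $\nu_{\Omega_{\tilde h}}(T_{0,r}(y))=\nu_{\Omega_h}(y)$. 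Writing the densities of the two elliptic harmonic polynomial measures as
\[
\rho_h(y)=-\nu_{\Omega_h}(y)\cdot A\nabla h(y),\qquad \rho_{\tilde h}(x)=-\nu_{\Omega_{\tilde h}}(x)\cdot A\nabla\tilde h(x),
\]
the previous identities give $\rho_{\tilde h}\circ T_{0,r}=r\,\rho_h$ on $\Sigma_h$.

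For the first identity I would next use the standard fact that the map $T_{0,r}$ scales $n$-dimensional Hausdorff measure on a hypersurface by $r^{-n}$, so
\[
\sigma_{\Sigma_{\tilde h}}=r^{-n}\,T_{0,r}[\sigma_{\Sigma_h}].
\]
Testing against $\varphi\in C_c(\R^{n+1})$ and changing variables via $T_{0,r}$ then yields
\[
\int\varphi\,d\omega_{\tilde h}^A=r^{-n}\!\int(\varphi\circ T_{0,r})(\rho_{\tilde h}\circ T_{0,r})\,d\sigma_{\Sigma_h}=r^{1-n}\!\int(\varphi\circ T_{0,r})\,d\omega_h^A,
\]
and since $\int(\varphi\circ T_{0,r})\,d\omega_h^A=\int\varphi\,d T_{0,r}[\omega_h^A]$ by \Lemma{preiss}(2), rearranging gives exactly
$T_{0,r}[\omega_h^A]=r^{n-1}\omega_{\tilde h}^A$, which is \eqref{e:trw}.

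For the second identity there is nothing further to compute. Part (3) of \Lemma{preiss} applied with $\xi=0$ gives $F_r(\omega_h^A)=r\,F_1(T_{0,r}[\omega_h^A])$, and substituting the first identity together with the homogeneity $F_1(c\mu)=cF_1(\mu)$ yields $F_r(\omega_h^A)=r\cdot r^{n-1}F_1(\omega_{\tilde h}^A)=r^n F_1(\omega_{\tilde h}^A)$, which is \eqref{e:htr}.

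I do not expect any serious obstacle. The only bookkeeping is keeping track of two powers of $r$: one from $\nabla\tilde h=r\,\nabla h\circ T_{0,r}^{-1}$, and one (namely $r^{-n}$) from the Jacobian of $T_{0,r}$ restricted to the $n$-dimensional surface $\Sigma_h$; their product is the $r^{1-n}$ appearing in \eqref{e:trw}. One should also verify that the orientation convention is consistent so that the outward normals at corresponding points agree rather than flip sign, but this is automatic since $T_{0,r}$ is an orientation-preserving positive dilation.
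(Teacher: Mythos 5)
Your argument is correct, and it arrives at the same identities, but by a genuinely different route from the paper. The paper first reduces to the case $A=I$ via \Lemma{pushforpol} (writing $\omega_h^A$ as a push-forward of an ordinary harmonic $\omega_{\tilde h}$ under $\sqrt{A_s}$), and then, for harmonic $h$, works entirely through the distributional pairing
$\int \vphi\,d\omega_h = \int_{\Omega_h} h\,\Delta\vphi$:
after precomposing $\vphi$ with $T_{0,r}$, one factor $r^{-2}$ comes from $\Delta(\vphi\circ T_{0,r})$ and a factor $r^{n+1}$ from the Jacobian of the change of variables, giving $r^{n-1}$. That route never touches the surface measure $\sigma_{\Sigma_h}$, the reduced boundary, or the normal field, so there is less bookkeeping. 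You instead keep the explicit representation $d\omega_h^A=-\nu_{\Omega_h}\!\cdot A\nabla h\,d\sigma_{\Sigma_h}$, and track three separate scalings: $\nabla\tilde h\circ T_{0,r}=r\nabla h$ (one power of $r$), invariance of the outward normal under positive dilation, and the $r^{-n}$ scaling of $n$-Hausdorff measure on the dilated surface; their product is the same $r^{1-n}$. This has the minor advantage of handling general constant $A$ directly, without passing through \Lemma{pushforpol}, and it makes the geometric origin of each power of $r$ transparent; the cost is that you must implicitly invoke $\cH^n(\d\Omega_h\setminus\d^*\Omega_h)=0$ (noted in the paper's setup) so that the density is $\sigma_{\Sigma_h}$--a.e.\ defined and your change of variables on the reduced boundary is legitimate. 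For the second identity \eqref{e:htr} both you and the paper argue identically from \Lemma{preiss}(3) plus \eqref{e:trw}. No gap; just a more hands-on, geometric version of the paper's distributional argument.
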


\begin{proof}
By \Lemma{pushforpol}, it suffices to prove this in the case that $h\in H$. 
Note that if $h$ is a harmonic function and $\vphi\in C_{c}^{\infty}(\bR^{n+1})$, then
\begin{align*}
\int \vphi\, d T_{0,r}[\omega_{h}] = \int \vphi\circ T_{0,r}& \,d\omega_{h}\\
=\int h\,\Delta(\vphi\circ T_{0,r}) & \,dx=r^{-2} \int h\, \Delta\vphi \circ T_{0,r}  \,dx\\
 =r^{n-1}\int &h \circ T_{0,r}^{-1} \,\Delta \vphi\, dx = r^{n-1} \int \vphi \,d \omega_{h\circ T_{0,r}^{-1}},
\end{align*}
and so \eqn{trw} follows. Moreover, by Lemma \ref{preiss} (3),
\begin{equation}
F_{r}(\omega_{h})
=rF_{1}(T_{0,r}[\omega_{h}])
\stackrel{\eqn{trw}}{=} r^{n} F_{1}(\omega_{h\circ T_{0,r}^{-1}}).
\end{equation}
\end{proof}

\begin{lemma}\label{l:rn-1+k}
Let $h\in F_{A}(k)$ and $r>0$. Then
\begin{equation}
\label{e:rn-1+k}
F_{r}(\omega_{h}^{A})=r^{n+k}F_{1}(\omega_{h}^{A}).
\end{equation}
\end{lemma}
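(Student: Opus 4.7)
The plan is to apply the previous Lemma~\ref{l:dilating} and exploit the homogeneity of $h$. By \eqref{e:htr}, for any $h \in H_A$ and $r>0$ we already have
\[
F_{r}(\omega_{h}^{A}) = r^{n} F_{1}(\omega_{h\circ T_{0,r}^{-1}}^{A}),
\]
so the only thing left is to identify $\omega_{h\circ T_{0,r}^{-1}}^{A}$ when $h\in F_A(k)$.

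Since $T_{0,r}^{-1}(x)=rx$ and $h$ is homogeneous of degree $k$, I would first observe that $h\circ T_{0,r}^{-1}(x)=h(rx)=r^{k}h(x)$. Thus $h\circ T_{0,r}^{-1}=r^{k}h$. Next I would note that scaling $h$ by a positive constant does not change the zero set or the sign of $h$, so $\Sigma_{r^{k}h}=\Sigma_{h}$, $\Omega_{r^{k}h}=\Omega_{h}$, and $\nu_{\Omega_{r^{k}h}}=\nu_{\Omega_{h}}$ on the reduced boundary. Since $A\nabla(r^{k}h)=r^{k}A\nabla h$, the definition
\[
\omega_{g}^{A}=-\nu_{\Omega_{g}}\cdot A\nabla g\,d\sigma_{\Sigma_{g}}
\]
gives $\omega_{r^{k}h}^{A}=r^{k}\omega_{h}^{A}$.

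Combining these two facts yields
\[
F_{r}(\omega_{h}^{A})=r^{n}F_{1}(\omega_{r^{k}h}^{A})=r^{n}\cdot r^{k}F_{1}(\omega_{h}^{A})=r^{n+k}F_{1}(\omega_{h}^{A}),
\]
which is \eqref{e:rn-1+k}. There is no serious obstacle; the proof is essentially a two-line application of Lemma~\ref{l:dilating} together with Euler-type homogeneity of $h$. The only thing worth being careful about is confirming that the pushforward formulas from Lemma~\ref{l:dilating} are stated for general $h\in H_A$ (not just harmonic $h$), which is exactly what Lemma~\ref{l:pushforpol} was arranged to give us.
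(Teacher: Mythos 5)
Your proof is correct and follows essentially the same route as the paper: apply \eqref{e:htr}, use homogeneity to write $h\circ T_{0,r}^{-1}=r^{k}h$, and use linearity of $\omega_h^A$ in $h$ (equivalently $F_{1}(\omega_{r^{k}h}^{A})=r^{k}F_{1}(\omega_{h}^{A})$). The explicit justification you give for $\omega_{r^k h}^A = r^k \omega_h^A$ is a detail the paper compresses but is the same observation.
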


\begin{proof}
Note that since $h$ is homogeneous of degree $k$, 
\[
h\circ T_{0,r}^{-1}(x)=h(rx)=r^{k}h(x),\]
and thus, by \eqn{htr},
\[
F_{r}(\omega_{h}^{A})
=r^{n} F_{1}(\omega_{h\circ T_{0,r}^{-1}}^{A})
=r^{n} F_{1}(\omega_{r^{k} h}^{A})
=r^{n+k} F_{1}(\omega_{ h}^{A}).\]
\end{proof}

The following is an immediate consequence of \Lemma{dilating}

\begin{lemma}[Lemma 4.1 \cite{Bad11}] \label{l:Pdcone} $\cF_{A}(k)$, $\cP_{A}(k)$, and $\mathscr{H}_{A}$ are $d$-cones. Hence, so are $\cF_{\cS}(k), \cP_{\cS}(k)$, and $\mathscr{H}_{\cS}$ for any $\cS\subset \cC$.
\end{lemma}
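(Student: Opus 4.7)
The plan is to reduce everything to the transformation law in Lemma \ref{l:dilating}, combined with the elementary observation that $\omega_h^A$ is positively homogeneous in $h$: for any $\lambda>0$, since $\{h>0\}=\{\lambda h>0\}$ and $\nabla(\lambda h)=\lambda\nabla h$, the definition $d\omega_h^A=-\nu_{\Omega_h}\cdot A\nabla h\,d\sigma_{\Sigma_h}$ gives $\omega_{\lambda h}^A=\lambda\,\omega_h^A$. Thus any positive scalar multiple of $\omega_h^A$ is again of the form $\omega_{h'}^A$ with $h'=\lambda h$ in the same class ($H_A$, $P_A(k)$, or $F_A(k)$, since each is preserved under multiplication by positive scalars).

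For the dilation, fix $r>0$ and $h\in H_A$. Lemma \ref{l:dilating} gives $T_{0,r}[\omega_h^A]=r^{n-1}\omega_{h\circ T_{0,r}^{-1}}^A$, so to show each family is a $d$-cone it suffices to check that the map $h\mapsto h\circ T_{0,r}^{-1}$ preserves the relevant class of functions. For $H_A$: since $L_A$ has constant coefficients, dilations commute with $L_A$ (up to a positive multiplicative factor on the equation), hence $h\circ T_{0,r}^{-1}$ is $L_A$-harmonic whenever $h$ is, and clearly $(h\circ T_{0,r}^{-1})(0)=h(0)=0$, so $h\circ T_{0,r}^{-1}\in H_A$. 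For $P_A(k)$: if $h$ is a polynomial of degree $k$, then $h\circ T_{0,r}^{-1}(x)=h(rx)$ is again a polynomial of the same degree $k$ (the leading homogeneous part is multiplied by $r^k\neq 0$), still $L_A$-harmonic and vanishing at $0$. For $F_A(k)$: if $h$ is homogeneous of degree $k$, then $h\circ T_{0,r}^{-1}=r^k h$, which is again homogeneous of degree $k$ and $L_A$-harmonic.

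Combining the two observations, for any $c>0$, $r>0$, and $\mu=\omega_h^A$ in one of the three families,
\[
cT_{0,r}[\omega_h^A]=c\,r^{n-1}\omega_{h\circ T_{0,r}^{-1}}^A=\omega_{c\,r^{n-1}(h\circ T_{0,r}^{-1})}^A,
\]
and the function $c\,r^{n-1}(h\circ T_{0,r}^{-1})$ lies in the same class as $h$. Hence $\mathscr{H}_A$, $\cP_A(k)$, and $\cF_A(k)$ are all $d$-cones. The statement for $\mathscr{H}_{\cS}$, $\cP_{\cS}(k)$, and $\cF_{\cS}(k)$ with $\cS\subset\cC$ follows immediately, since these are unions over $A\in\cS$ of the already-proven $d$-cones, and a union of $d$-cones (over the same group of dilations and scalings, which does not touch the parameter $A$) is again a $d$-cone.

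I do not anticipate any real obstacle: the only subtlety is the reliance on the elliptic-to-harmonic reduction performed in Lemma \ref{l:pushforpol} inside the proof of Lemma \ref{l:dilating}, but this is already in place, so the argument is essentially bookkeeping.
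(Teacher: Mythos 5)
Your proof is correct and fills in exactly the computation the paper has in mind: the paper gives no explicit proof, remarking only that the lemma is an "immediate consequence of Lemma \ref{l:dilating}", and your argument — the identity $cT_{0,r}[\omega_h^A]=\omega_{cr^{n-1}(h\circ T_{0,r}^{-1})}^A$ combined with the observation that $H_A$, $P_A(k)$, $F_A(k)$ are stable under positive scaling and dilation — is precisely that consequence.
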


\begin{lemma}\label{l:uc}
Let $A_{j}\in\cC$ converge to a matrix $A\in\cC$ and let $h_{j}\in H_{A_{j}}$ converge uniformly on compact subsets to some $h\in H_{A}$. Then $\omega_{h_{j}}^{A_{j}}\rightarrow \omega_{h}^{A}$ weakly.
\end{lemma}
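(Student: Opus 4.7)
\textbf{Proof plan for Lemma \ref{l:uc}.}

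The plan is to reduce the weak convergence, which a priori involves surface measures on the varying zero sets $\Sigma_{h_j}$, to a domain integral on $\mathbb{R}^{n+1}$ via integration by parts, where the convergence becomes transparent. First I will derive the identity
\[
\int \psi \, d\omega_h^A \;=\; \int_{\mathbb{R}^{n+1}} h^+ \,\dv(A^T\nabla\psi) \, dx \qquad \text{for all } \psi \in C_c^{\infty}(\mathbb{R}^{n+1}),
\]
where $h^+ = \max(h,0) = h\,\mathds{1}_{\Omega_h}$. This follows from applying the divergence theorem to $F = \psi A \nabla h$ on $\Omega_h$: since $\dv(A\nabla h) = 0$ (as $A$ has constant entries and $h$ is $L_A$-harmonic), one obtains $\int_{\Omega_h} \nabla\psi \cdot A\nabla h \, dx = \int_{\Sigma_h} \psi (A\nabla h)\cdot \nu \, d\sigma$; then applying the divergence theorem to $F = h A^T \nabla \psi$ and using $h|_{\Sigma_h} = 0$ together with $\dv(A^T\nabla h)=0$, transfers the derivative onto $\psi$, yielding the identity above (up to signs that match those built into the definition $d\omega_h^A = -\nu\cdot A\nabla h\,d\sigma_{\Sigma_h}$). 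Here we use that $\Sigma_h$ is a real analytic variety and $\Omega_h$ has locally finite perimeter, as explained in the paper.

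Next, I apply this identity to both sides. Fix $\psi \in C_c^{\infty}(\mathbb{R}^{n+1})$ with $\supp \psi \subset K$. Since each $A_j$ is a constant matrix,
\[
\dv(A_j^T \nabla \psi)(x) = \sum_{i,k}(A_j)_{ki}\,\partial_i\partial_k \psi(x) \;\longrightarrow\; \dv(A^T\nabla\psi)(x)
\]
uniformly on $K$ as $A_j \to A$. Likewise, from $h_j \to h$ uniformly on compacta we immediately get $h_j^+ \to h^+$ uniformly on $K$. Therefore, by uniform convergence of the product on the compact set $K$,
\[
\int \psi \, d\omega_{h_j}^{A_j} = \int h_j^+ \,\dv(A_j^T\nabla\psi)\, dx \;\longrightarrow\; \int h^+ \,\dv(A^T\nabla\psi)\, dx = \int \psi\, d\omega_h^A.
\]
Observe that this step avoids any need to control $\nabla h_j$ or the geometry of $\Sigma_{h_j}$ — the IBP identity has completely absorbed those issues into a clean integral of $h_j^+$ against a smooth kernel.

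Finally, I extend from $C_c^{\infty}$ to $C_c(\mathbb{R}^{n+1})$ by a standard density argument, which requires local uniform bounds on the masses $\omega_{h_j}^{A_j}$. To obtain these, pick a nonnegative $\varphi \in C_c^\infty(\mathbb{R}^{n+1})$ with $\varphi \geq \mathds{1}_K$; then since $\omega_{h_j}^{A_j}$ are nonnegative measures, $\omega_{h_j}^{A_j}(K) \leq \int \varphi \, d\omega_{h_j}^{A_j}$, and the right-hand side converges to $\int \varphi\, d\omega_h^A < \infty$ by the previous step, so $\sup_j \omega_{h_j}^{A_j}(K) < \infty$. Approximating any $\psi\in C_c(\mathbb{R}^{n+1})$ supported in $K$ uniformly by $\psi_k \in C_c^\infty$ supported in a slightly larger compact set then yields $\int\psi\,d\omega_{h_j}^{A_j}\to\int\psi\,d\omega_h^A$.

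The main obstacle is really only in the first step — verifying that the distributional identity for $\omega_h^A$ is indeed the divergence-theorem identity I use, with the correct sign. This is routine given the regularity of $\Sigma_h$ for $h \in H_A$ (a real analytic variety with $\mathcal H^n(\partial \Omega_h \setminus \partial^* \Omega_h)=0$, as recalled in the paper), but it is the one place where the intrinsic structure of $\omega_h^A$ plays a role; after that, the convergence is a soft dominated-convergence argument.
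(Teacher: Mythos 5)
Your proof is correct, and it takes a genuinely different (and cleaner) route than the paper's. The paper's argument first restricts to the case $A_j = A = I$: it shows $\one_{\Omega_{h_j}}\to\one_{\Omega_h}$ a.e. (from uniform convergence of $h_j$ and the fact that $\partial\Omega_h$ is Lebesgue-null), and applies dominated convergence to $\int_{\Omega_{h_j}} h_j\,\Delta\varphi\,dx\to\int_{\Omega_h} h\,\Delta\varphi\,dx$; it then handles general $A_j, A\in\cC$ by the change-of-variables reduction of Lemma \ref{l:pushforpol}, writing $\tilde h_j = h_j\circ S_j$ with $S_j = \sqrt{A_{j,s}}$, invoking continuity of the matrix square root, and pushing the harmonic-case conclusion forward. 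Your proof skips both steps: you establish the identity $\int\psi\,d\omega_h^A = \int h^+\,\dv(A^T\nabla\psi)\,dx$ directly for constant $A$, and then observe that since $t\mapsto t^+$ is $1$-Lipschitz, $h_j^+\to h^+$ \emph{uniformly} on compacta, while $\dv(A_j^T\nabla\psi)\to\dv(A^T\nabla\psi)$ uniformly because the $A_j$ are constant matrices converging to $A$. This gives the convergence on $C_c^{\infty}$ test functions in one stroke, with no case split, no dominated convergence, and no linear change of variables. Both arguments rest on the same integration-by-parts identity (the paper uses it implicitly in writing $\int\varphi\,d\omega_{h}=\int_{\Omega_h}h\,\Delta\varphi\,dx$); what your argument buys is that uniform convergence of positive parts replaces the a.e.-plus-dominated-convergence step and that the general constant matrix is handled head-on. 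Your explicit local mass bound to pass from $C_c^\infty$ to $C_c$ in the definition of weak convergence is also a point that the paper leaves unstated.
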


\begin{proof}
First we will deal with the case that $A_{j}=A=I$ for all $j$. 

We first claim that, since $h$ and $h_{j}$ are harmonic, $\one_{\Omega_{h_{j}}}\rightarrow \one_{\Omega_h}$ a.e.. Indeed, if $\one_{\Omega_{h}}(x)=1$, then $h(x)>0$, and by uniform convergence, $h_{j}(x)>0$ for all large $j$, and so $\one_{\Omega_{h_j}}(x)=1$ for all large $j$; similarly, if $\one_{\Omega}(x)=0$, then either $x\in \d\Omega_{h}$ (which has measure zero) or $h_{j}(x)<0$ for all large $j$, in which case $\one_{\Omega_{h_{j}}}(x)=0$ for all large $j$. Thus, $\one_{\Omega_{h_{j}}}\rightarrow \one_{\Omega_{h}}$ pointwise everywhere in $(\d\Omega_{h})^{c}$ and thus a.e. in $\R^{n+1}$.  In particular, $h_{j}\one_{\Omega_{j}}\rightarrow h\one_{\Omega}$ a.e.. Hence, for $\vphi\in C_{c}^{\infty}(\bR^{n+1})$, by the dominated convergence theorem,
\[
\lim_{j\rightarrow\infty} \int \vphi \,d\omega_{h_{j}}
 =\lim_{j\rightarrow\infty}  \int_{\Omega_{h_{j}}}h_{j} \, \Delta \vphi 
 =\int_{\Omega_{h}} h\,\Delta \vphi 
=\int \vphi\, d\omega_{h},\]
which implies $\omega_{h_{j}} \rightharpoonup \omega_{h}$ as $j \to \infty$.

Now we handle the general case. Let ${A}_{j,s}=(A_{j}+A_{j}^{T})/2$, and $S_{j}=\sqrt{{A_{j,s}}}$, and define ${A_s}$ and $S$ similarly. Let $\tilde A_j$ and $\tilde A$ be defined as in \eqref{e:matrixform}, and let $\tilde{h}=h\circ S$ and $\tilde{h}_{j}=h_{j}\circ S_{j}$. Since $\sqrt{\cdot}$ is continuous on the set of real symmetric matrices, $\tilde{h}_{j}\rightarrow \tilde{h}$ uniformly on compact subsets and both are harmonic. Thus, $\omega_{\tilde{h}_{j}}\warrow \omega_{\tilde{h}}$, and so
\[
\lim_{j\rightarrow\infty} \omega_{h_{j}}^{A}
\stackrel{\eqn{pushforpol}}{=}\lim_{j\rightarrow\infty}(\det S_{j}) S_j[\omega_{\tilde{h}_{j}}]
=(\det S) S[\omega_{\tilde{h}}] \stackrel{\eqn{pushforpol}}{=} \omega_{h}^{A}.\]

\end{proof}

%
%
%\begin{lemma}\label{l:hdcone}
%The set $\cH$ is a $d$-cone.
%\end{lemma}
%
%\begin{proof}
%If $h\in H$, then the Taylor series $h_{k}$ for $h$ converges uniformly on compact subsets to $h$ and $h_{k}\in P_{k}$, and so $\omega_{h_{k}}\in \cP(k)$. By \Lemma{Pdcone}, $T_{0,r}[\omega_{h_{k}}]\in \cP(k)$ as well for every $k$. Since $h_{j}\circ T_{0,r}^{-1}\rightarrow h\circ T_{0,r}^{-1}$ uniformly as well, by \Lemma{uc}, $\omega_{h_{j}\circ T_{0,r}^{-1}}\rightarrow \omega_{h\circ T_{0,r}^{-1}}$. By \eqn{trw}, this implies 
%\end{proof}

\begin{lemma}\label{l:hbound}
If $A\in\cC$ and $h\in P_{A}(k)$ for some $k \in \bN$, then
\begin{equation}\label{e:hbound}
||h||_{L^{\infty}(\bB)}\lec_{k,\Lambda} F_{1}(\omega_{h}^{A}).
\end{equation}
\end{lemma}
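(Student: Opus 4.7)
The strategy is to view $N(h) := F_1(\omega_h^A) = \int (1-|z|)_+ \, d\omega_h^A$ as a positively homogeneous, continuous functional on a finite-dimensional vector space, and then apply compactness. Let $V_k = \{p : p \text{ is a polynomial of degree} \leq k \text{ on } \bR^{n+1} \text{ with } p(0) = 0\}$, and set $V_k^A = V_k \cap \ker L_A$, so that $P_A(k) \subseteq V_k^A$. It suffices to bound $\|h\|_{L^\infty(\bB)}$ in terms of $N(h)$ for $h \in V_k^A$, uniformly over $\Lambda$-elliptic $A \in \cC$.

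First I would verify that $N$ is positively $1$-homogeneous: for $t \neq 0$, $\Sigma_{th} = \Sigma_h$ with the same surface measure, and a sign chase using $\Omega_{th} = \Omega_{\sign(t)\, h}$ gives $-\nu_{\Omega_{th}} \cdot A\nabla(th) = |t|\, (-\nu_{\Omega_h} \cdot A \nabla h)$, so $\omega_{th}^A = |t|\,\omega_h^A$. Continuity of $(A,h) \mapsto N(h)$, jointly in $A \in \cC$ and $h \in V_k$, follows from \Lemma{uc}: if $A_j \to A$ in $\cC$ and $h_j \to h$ in $V_k$ (hence uniformly on compact sets, since $V_k$ is finite-dimensional), then $\omega_{h_j}^{A_j} \rightharpoonup \omega_h^A$, and since $(1-|z|)_+ \in C_c(\bR^{n+1})$, $N(h_j) \to N(h)$.

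The main step is strict positivity: for $A \in \cC$ uniformly $\Lambda$-elliptic and $h \in V_k^A \setminus \{0\}$, I claim $N(h) > 0$. Since $h$ is a nonzero $L_A$-harmonic polynomial vanishing at the origin, the strong maximum principle and unique continuation for constant-coefficient uniformly elliptic operators force $h$ to change sign in every neighborhood of $0$ (otherwise $0$ is a local extremum, forcing $h$ to be locally constant, hence identically zero). Thus both $\{h > 0\}$ and $\{h < 0\}$ meet $B(0, 1/2)$, and $\Sigma_h \cap B(0, 1/2)$ separates these two nonempty open subsets, so $\cH^n(\Sigma_h \cap B(0, 1/2)) > 0$ by the relative isoperimetric inequality. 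On the reduced boundary $\d^* \Omega_h$, which carries full $\cH^n$-mass on $\Sigma_h$ by the remark in the excerpt, we have $\nu_{\Omega_h} = -\nabla h / |\nabla h|$ and hence
\[
-\nu_{\Omega_h} \cdot A\nabla h = \frac{A\nabla h \cdot \nabla h}{|\nabla h|} \geq \Lambda^{-1}|\nabla h| > 0
\]
by the ellipticity of $A$. Consequently $\omega_h^A(B(0,1/2)) > 0$, and since $(1-|z|)_+ \geq 1/2$ on $B(0, 1/2)$, $N(h) \geq \tfrac{1}{2}\,\omega_h^A(B(0,1/2)) > 0$.

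Given positivity, the conclusion is routine. The set
\[
\cS_{k,\Lambda} = \{(A, h) : A \in \cC \text{ is } \Lambda\text{-elliptic},\ h \in V_k^A,\ \|h\|_{L^\infty(\bB)} = 1\}
\]
is compact: it is a closed subset (the condition $L_A h = 0$ passes to joint limits since the coefficients of $L_A h$ are bilinear in those of $A$ and $h$) of the compact product of $\Lambda$-elliptic matrices with the unit sphere of the finite-dimensional space $V_k$. By continuity and strict positivity of $N$ on $\cS_{k,\Lambda}$, $c_{k,\Lambda} := \min_{\cS_{k,\Lambda}} N > 0$; positive homogeneity then gives $F_1(\omega_h^A) \geq c_{k,\Lambda}\, \|h\|_{L^\infty(\bB)}$ for every $h \in V_k^A$, and in particular for $h \in P_A(k)$. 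The main obstacle in this plan is the positivity step, which relies on $L_A$-specific analytic input (maximum principle plus unique continuation) to guarantee that $\Sigma_h$ has positive $n$-Hausdorff measure in a fixed neighborhood of the origin; the homogeneity and compactness arguments are standard once this is secured.
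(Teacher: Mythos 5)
Your route is genuinely different from the paper's: you argue directly that the $1$-homogeneous functional $N(h)=F_1(\omega_h^A)$ is strictly positive on every nonzero $L_A$-harmonic polynomial vanishing at the origin, and then extract the estimate from compactness of the unit sphere in a finite-dimensional space. The paper instead argues by contradiction via a normal-families argument, shows the limit $h$ would have $0\notin\supp\omega_h^A$, and derives a contradiction by combining Lojasiewicz's stratification theorem with the bound on the critical set from \cite{CNV15}. These are the same idea dressed differently: both boil down to ruling out that a nonzero $L_A$-harmonic polynomial vanishing at $0$ has an associated measure with no mass near $0$.

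There is, however, a genuine gap in your positivity step. After the relative isoperimetric inequality you have $\cH^n\bigl(\d^*\Omega_h\cap B(0,\tfrac12)\bigr)>0$, but the identity $\nu_{\Omega_h}=-\nabla h/|\nabla h|$ is only valid at reduced-boundary points where $\nabla h\neq 0$; at reduced-boundary points with $\nabla h=0$, the measure-theoretic normal is still defined, but the density $-\nu_{\Omega_h}\cdot A\nabla h$ vanishes there. So positivity of $\omega_h^A(B(0,\tfrac12))$ does not follow from your isoperimetric bound alone: you additionally need that $\cH^n\bigl(\d^*\Omega_h\cap\{\nabla h=0\}\bigr)=0$. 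This is exactly the role played in the paper's proof by the result of \cite{CNV15}, which gives $\dim\{\nabla h=0\}\leq n-1$. Without this input (or an equivalent, e.g., observing that an $n$-dimensional piece of $\Sigma_h$ on which the full Cauchy data $h=0$, $\nabla h=0$ vanish would force $h\equiv 0$ by Holmgren/unique continuation for constant-coefficient elliptic operators), your conclusion that ``consequently $\omega_h^A(B(0,1/2))>0$'' does not follow. The rest of the argument — $1$-homogeneity of $N$, joint continuity via Lemma~\ref{l:uc}, and compactness of the normalized pairs $(A,h)$ — is sound and correctly set up.
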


\begin{proof}
Suppose instead that there exist $A_{j}\in\cC$  and $h_{j}\in P_{A_{j}}(k)$ for which $||h_{j}||_{L^{\infty}(\bB)} >j F_{1}(\omega_{h_{j}}^{A_{j}})$. Without loss of generality, we may assume $||h_{j}||_{L^{\infty}(\bB)} =1$, and thus $F_{1}(\omega_{h_{j}}^{A_{j}})\rightarrow 0$. Using Cauchy estimates,  $ \{h_{j}\}_{j=1}^\infty$ forms a normal family in $\bB$ and thus, we can pass to a subsequence so that $h_{j}$ converges uniformly on compact subsets of $\bB$ and so that $A_{j}$ converges to some $A\in\cC$. Since all $h_{j}$ are polynomials of order $k$, we know that the coefficients of $h_{j}$ converge, which, in turn, implies that $h_{j}$ converges to some function $h\in \cP_{\cC}(k)$ uniformly on compact subsets of $\bR^{n+1}$. By \Lemma{uc}, $\omega_{h_{j}}^{A_{j}}\rightarrow \omega_{h}^{A}$. In particular, 
\[F_{1}(\omega_{h}^{A})=\lim_{j\rightarrow\infty} F_{1}(\omega_{h_{j}}^{A_{j}})=0.\]
Thus, $0\not\in \supp \omega_{h}$. We will now show that in fact $0\in \supp \omega_{h}^{A}$ in order to get a contradiction. 

First, by \Lemma{pushforpol}, we can assume without loss of generality that $A=I$ and $\omega_{h}^{A}=\omega_{h}$. Secondly, notice that as $h_{j}\in \cP_{\cC}(k)$, $h\in \cP(k)$ and so $h(0)=0$. By Lojasiewicz's structure theorem for real analytic varieties (see e.g. \cite[Theorem 6.3.3,  p.168]{KP}), if $U$ is a small enough neighborhood of a point $0\in \Sigma_{h}$, we have that 
$$U \cap \Sigma_{h}= V^{n} \cup V^{n-1} \cup \dots \cup V^0,$$
where $V^0$ is either the empty set or the singleton $\{0\}$ and for each $i\in \{1, \dots, n\}$, we may write $V^i$ as a finite, disjoint union $V^i = \bigcup_{j=1}^{N_k} \Gamma^i_j$, of $i$-dimensional real analytic submanifolds. Further, for each $1 \leq i \leq n-1$, 
$$U \cap \overline {V^i} \supset V^{i-1} \cup \dots \cup V^0.$$
Moreover,  for $1 \leq k \leq n$ and $1 \leq j \leq N_k$, $U \cap \d \Gamma^i_j$ is a union of sets of the form $\Gamma^\ell_m$, for $1 \leq \ell <i$ and $1 \leq m \leq N_\ell$ and possibly $V^0$. 

By the main result in \cite{CNV15}, $\dim \{\grad h=0\}\leq n-1$, and thus $V^{n}\cap\{\grad h=0\}$ is a closed set of relatively empty interior in $V^{n}$, so in particular, 
\[\cnj{V^{n}\backslash \{\grad h=0\}}\cap U=\cnj{V^{n}}\cap U=\Sigma_{h}\cap U\ni 0.\]
For $\zeta\in U\cap V^{n}\backslash  \{\grad h=0\}$, the derivative of $h$  at $\zeta$ tangent to $V^{n}$ is always zero, as $h$ is zero on $V^{n}$, which forces $\grad h$ to be perpendicular to $V^{n}$. Since the normal derivative is nonzero, 
\[
U\cap V^{n}\backslash  \{\grad h=0\}\subset \ck{\zeta\in U\cap V^{n}:\frac{\partial h}{\partial \nu}\neq 0}\subset U\cap V^{n}\cap \supp \omega_{h}.\]
Thus, $0\in U\cap \cnj{V^{n}\backslash \{\grad h=0\}}\subset \supp\omega_{h}$, which gives us the contradiction and concludes the proof.\\

\end{proof}

\subsection{Proof of Proposition \ref{p:pcompact}}

\Proposition{pcompact} will follow from the following more general result.

\begin{lemma}\label{l:ccompactbasis}
	Let $\cS\subset\cC$ be closed (hence compact). Then  $P_{\cS}(k)$ and $\cF_{\cS}(k)$ have compact basis. 
\end{lemma}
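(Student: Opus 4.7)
The plan is to take an arbitrary sequence in the basis and extract a weakly convergent subsequence whose limit still lies in the basis, thereby establishing (sequential) compactness.

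First, let $\{\omega_{h_j}^{A_j}\}$ be a sequence with $A_j\in \cS$, $h_j\in F_{A_j}(k)$ (resp.\ $h_j\in P_{A_j}(k)$) and $F_1(\omega_{h_j}^{A_j})=1$. Since $\cS$ is compact, I would pass to a subsequence (not relabeled) so that $A_j\to A\in\cS$. The key input is \Lemma{hbound}: it gives
\[
\|h_j\|_{L^\infty(\bB)}\lesssim_{k,\Lambda} F_1(\omega_{h_j}^{A_j})=1,
\]
so the $h_j$ are polynomials of degree at most $k$, uniformly bounded on $\bB$. Since all norms are equivalent on the finite-dimensional space of polynomials of degree at most $k$, the coefficients are uniformly bounded, and I may further pass to a subsequence so that the coefficients converge. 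Equivalently, $h_j\to h$ uniformly on compact subsets of $\R^{n+1}$, where $h$ is a polynomial of degree at most $k$ with $h(0)=\lim h_j(0)=0$. In the case of $\cF_\cS(k)$, homogeneity of degree $k$ is preserved under this limit (possibly allowing $h\equiv 0$, which we will rule out below).

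Next, because $A_j\to A$ and $h_j\to h$ in $C^\infty_{\mathrm{loc}}$ (polynomials of bounded degree), we get $L_A h = \lim_{j\to\infty} L_{A_j} h_j = 0$, so $h\in H_A$, and in fact $h\in P_A(k)$ in the $\cP_\cS$ case, and $h\in F_A(k)$ in the $\cF_\cS$ case (provided $h\not\equiv 0$). By \Lemma{uc},
\[
\omega_{h_j}^{A_j} \warrow \omega_h^A.
\]
Since the function $(1-|z|)_+$ is continuous and compactly supported, weak convergence yields
\[
F_1(\omega_h^A) = \lim_{j\to\infty} F_1(\omega_{h_j}^{A_j}) = 1.
\]
In particular $\omega_h^A$ is a nonzero Radon measure, which forces $h\not\equiv 0$; so in the $\cF_\cS$ case $h$ is genuinely homogeneous of degree $k$, and in the $\cP_\cS$ case $h\in P_A(k)\setminus\{0\}$. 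Either way, $\omega_h^A$ lies in the basis of the corresponding cone, completing the argument.

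The only delicate step is the passage to a limit of the $h_j$, which ultimately rests on \Lemma{hbound}---this is what makes the argument genuinely require the ``harmonic polynomial'' structure rather than working for general members of $H_A$. The rest (compactness of $\cS$, continuity of $F_1$ under weak convergence, and the stability of the relation $\omega_{h_j}^{A_j}\warrow \omega_h^A$ from \Lemma{uc}) is standard once that uniform bound is in hand.
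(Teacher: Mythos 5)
Your proof is correct and follows essentially the same route as the paper: both extract a convergent subsequence of coefficient matrices from compact $\cS$, use Lemma \ref{l:hbound} (with Cauchy estimates / equivalence of norms on the finite-dimensional space of polynomials of degree $\le k$) to bound and pass to a locally uniform limit of the $h_j$, and invoke Lemma \ref{l:uc} to get weak convergence of $\omega_{h_j}^{A_j}$ to $\omega_h^A$. You additionally spell out that $L_A h=0$ in the limit and that $F_1(\omega_h^A)=1$ forces $h\not\equiv 0$, details the paper leaves implicit.
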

\begin{proof}
Let $h_{j}\in P_{A_{j}}(k)$ with $A_{j}\in \cS$ and assume $\cF(\omega_{h_{j}}^{A_{j}})=1$. Then by \eqn{hbound} and Cauchy estimates, we can  bound each coefficient of the polynomials $h_{j}$ uniformly, and then pass to a subsequence so that $A_{j}\rightarrow A\in \cS$ and $h_{j}$ converges on compact subsets of $\bR^{n+1}$ to a function $h\in P_{A}(k)\subset P_{\cS}(k)$. By Lemma \ref{l:uc}, we have that $\omega_{h_{j}}\rightarrow \omega_{h}$, which implies that $\cP_{\cS}(k)$ has compact basis. The proof for $\cF_{\cS}(k)$ is similar. 
\end{proof}

As a corollary, we show  the following stronger version of \eqn{hbound}.

\begin{corollary} For $h\in P_{\cC}(k)$ and $r>0$,
\begin{equation}\label{e:hbound2}
||h||_{L^{\infty}(r\bB)}\approx_{k} r^{-n}F_{r}(\omega_{h}).
\end{equation}
\end{corollary}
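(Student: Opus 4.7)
The strategy is to reduce to the case $r=1$ by scaling, observe that the upper bound is essentially \eqn{hbound}, and obtain the lower bound by a compactness argument parallel to the one used in the proof of \Lemma{hbound}.

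\textbf{Step 1 (scaling reduction).} Let $h\in P_{A}(k)$ for some $A\in\cC$ and set $\tilde h:=h\circ T_{0,r}^{-1}$, so $\tilde h(x)=h(rx)$. Since $L_A$ has constant coefficients, $\tilde h$ is again an $L_A$-harmonic polynomial of degree $k$ vanishing at the origin, hence $\tilde h\in P_A(k)$. Clearly
\[
\|\tilde h\|_{L^{\infty}(\bB)}=\|h\|_{L^{\infty}(r\bB)},
\]
and by \eqref{e:htr},
\[
F_{1}(\omega_{\tilde h}^{A})=r^{-n}F_{r}(\omega_{h}^{A}).
\]
Thus it suffices to establish
\[
\|h\|_{L^{\infty}(\bB)}\approx_{k} F_{1}(\omega_{h}^{A})\qquad\text{for every }h\in P_{\cC}(k).
\]

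\textbf{Step 2 (upper bound).} The bound $\|h\|_{L^{\infty}(\bB)}\lesssim_{k}F_{1}(\omega_{h}^{A})$ is precisely the content of \eqn{hbound} in \Lemma{hbound}, so no new argument is required.

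\textbf{Step 3 (lower bound via compactness).} For the reverse inequality $F_{1}(\omega_{h}^{A})\lesssim_{k}\|h\|_{L^{\infty}(\bB)}$, I argue by contradiction. Assume there is a sequence $h_{j}\in P_{A_{j}}(k)$ with $A_{j}\in\cC$ for which
\[
\frac{F_{1}(\omega_{h_{j}}^{A_{j}})}{\|h_{j}\|_{L^{\infty}(\bB)}}\longrightarrow\infty.
\]
After rescaling (using that $\omega_{ch}^{A}=c\,\omega_{h}^{A}$ for $c>0$) I may assume $\|h_{j}\|_{L^{\infty}(\bB)}=1$, so $F_{1}(\omega_{h_{j}}^{A_{j}})\to\infty$. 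Since the $h_{j}$ are polynomials of degree $k$ with a uniform $L^{\infty}(\bB)$ bound, Cauchy estimates give a uniform bound on their coefficients. The set $\cC$ of constant matrices with fixed ellipticity $\Lambda$ is compact, so passing to a subsequence I obtain $A_{j}\to A\in\cC$ and $h_{j}\to h$ uniformly on compact subsets of $\bR^{n+1}$, where $h$ is a polynomial of degree at most $k$ vanishing at $0$, $L_A$-harmonic, and with $\|h\|_{L^{\infty}(\bB)}=1$ (in particular $h\not\equiv 0$). By \Lemma{uc}, $\omega_{h_{j}}^{A_{j}}\rightharpoonup \omega_{h}^{A}$ weakly, hence
\[
F_{1}(\omega_{h_{j}}^{A_{j}})\longrightarrow F_{1}(\omega_{h}^{A})<\infty,
\]
contradicting $F_{1}(\omega_{h_{j}}^{A_{j}})\to\infty$.

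\textbf{Where the work lies.} The argument is essentially routine given the machinery already in place; the only point that deserves care is ensuring the limit object $h$ is nonzero so that $\omega_{h}^{A}$ is a genuine measure and $F_{1}(\omega_{h}^{A})$ is finite — this is guaranteed by the normalization $\|h_{j}\|_{L^{\infty}(\bB)}=1$, which passes to the limit via the locally uniform convergence.
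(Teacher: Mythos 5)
Your proof is correct, but it takes a genuinely different route from the paper's. After the same scaling reduction to $r=1$ and the same use of \eqn{hbound} for the direction $\|h\|_{L^{\infty}(\bB)}\lesssim_k F_1(\omega_h^A)$, you obtain the reverse inequality $F_1(\omega_h^A)\lesssim_k\|h\|_{L^{\infty}(\bB)}$ by a normal-families contradiction: normalize $\|h_j\|_{L^{\infty}(\bB)}=1$, pass to a locally uniform limit $h$ (nonzero because the normalization survives the limit), invoke \Lemma{uc} to get $\omega_{h_j}^{A_j}\rightharpoonup\omega_h^A$, and conclude $F_1(\omega_{h_j}^{A_j})\to F_1(\omega_h^A)<\infty$, contradicting divergence. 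The paper instead argues directly: pick a bump function $\vphi$ with $\one_{\frac12\bB}\le\vphi\le\one_{\bB}$, use the compact-basis property of $\cP_{\cC}(k)$ (via \Proposition{compact} and \eqn{compact}) to get $F_1(\omega_h)\lesssim F_{1/2}(\omega_h)$, then bound $F_{1/2}(\omega_h)\le\int\vphi\,d\omega_h=\int_{\Omega_h}h\,\Delta\vphi\lesssim\|h\|_{L^{\infty}(\bB)}$ by integration by parts. Both arguments ultimately rest on the compactness proved in Lemma \ref{l:ccompactbasis}, but they use it differently: the paper packages the compactness into the doubling-type estimate $F_1\lesssim F_{1/2}$ and then makes a one-line quantitative estimate, whereas your version re-runs the extraction-of-a-convergent-subsequence argument that underlies Lemma \ref{l:ccompactbasis}. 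Your route works and is perhaps more transparent conceptually; the paper's is slightly more economical since it reuses \eqn{compact} as a black box rather than re-deriving a compactness statement, and it makes the dependence of the constant on the bump function explicit.
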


\begin{proof}
Let $h\in P_{\cC}(k)$ and $\vphi\in C_{c}^{\infty}(\bR^{n+1})$ be such that $\one_{\frac{1}{2}\bB}\leq \vphi \leq \one_{\bB}$. Since $\cP_{\cC}(k)$ has compact basis by Lemma \ref{l:ccompactbasis}, we can estimate
\begin{align*}
F_{1}(\omega_{h})
& \stackrel{\eqn{compact}}{\lec}
F_{1/2}(\omega_{h})
\leq \int \vphi d\omega_{h}
=\int_{\Omega_{h}}h\, \Delta \vphi 
\leq ||\Delta\vphi||_{\infty} \int_{\bB} |h|
\\ & \lec ||h||_{L^{\infty}(\bB)}
 \stackrel{\eqn{hbound}}{\lec} F_{1}(\omega_{h}).
\end{align*}
For $r\neq 1$, by the previous inequalities we have
\[
F_{r}(\omega_{h})
\stackrel{\eqn{htr}}{=}
r^{n} F_{1}(\omega_{{h}\circ T_{0,r}^{-1}})
\approx r^{n} ||h\circ T_{0,r}^{-1}||_{L^{\infty}(\bB)}
\approx r^{n} ||h||_{L^{\infty}(r\bB)}.\]
\end{proof}

\subsection{Proof of \Proposition{taninf}}

%In \cite{Bad11}, \Lemma{KPT} could not be used directly with $\cF=\cF(k)$ and $\cM=\cP(k)$ as it was not known whether $\cP(k)$ had compact basis. Instead, Badger used the fact that harmonic measure for NTA domains is doubling, which implies that the tangent measures at each point form compact cones. In the absence of the doubling property, however, one cannot use this. 

\begin{lemma}\label{l:taylor}
Let $h\in H_{A}$ and
\[
h(x)=\sum_{j=m}^{\infty}\sum_{|\alpha|=j}\frac{D^{\alpha} h(0)}{\alpha!}x^{\alpha}
=\sum_{j=m}^{\infty} h_{j}(x)\]
be its Taylor series, where $m>0$, which converges uniformly to $h$ on compact subsets of $\bR^{n+1}$. Then $\Tan(\omega_{h}^{A},0)=\{c\,\omega_{h_{m}}^{A}:c>0\}$.
\end{lemma}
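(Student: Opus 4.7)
The plan is to show both inclusions by producing $\omega_{h_m}^A$ as an explicit blowup limit, and then showing that every other tangent measure is forced to be a positive scalar multiple of it.

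First, I would set $\widetilde{h}_r(x) := r^{-m} h(rx) = r^{-m}(h\circ T_{0,r}^{-1})(x)$ and use the Taylor expansion together with the homogeneity of each $h_j$ to write
\[
\widetilde{h}_r(x) = \sum_{j=m}^{\infty} r^{j-m} h_j(x),
\]
so that $\widetilde{h}_r \to h_m$ uniformly on compact subsets of $\bR^{n+1}$ as $r\downarrow 0$. Since $L_A$ has constant coefficients and decreases polynomial degree by $2$, the equation $L_A h=0$ forces each homogeneous piece $h_j$ to satisfy $L_A h_j=0$; in particular $h_m\in F_A(m)$, and each $\widetilde{h}_r\in H_A$ as well. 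Applying \Lemma{uc} with the constant sequence $A_j\equiv A$, we conclude that $\omega_{\widetilde{h}_r}^A \warrow \omega_{h_m}^A$ as $r\downarrow 0$.

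Next I would translate this into a statement about $T_{0,r}[\omega_h^A]$. Because the association $h\mapsto \omega_h^A$ is positively homogeneous of degree $1$ in $h$, we have $\omega_{\widetilde{h}_r}^A = r^{-m}\omega_{h\circ T_{0,r}^{-1}}^A$, and by \eqn{trw} this gives
\[
r^{1-n-m}\, T_{0,r}[\omega_h^A] = \omega_{\widetilde{h}_r}^A \warrow \omega_{h_m}^A .
\]
This shows $\omega_{h_m}^A\in\Tan(\omega_h^A,0)$; since $\Tan(\omega_h^A,0)$ is closed under multiplication by positive constants (replace $c_i$ by $c\,c_i$ in the defining sequence), the inclusion $\{c\,\omega_{h_m}^A:c>0\}\subset \Tan(\omega_h^A,0)$ follows. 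Note that $\omega_{h_m}^A$ is a genuine nonzero Radon measure: $h_m$ is a nonzero homogeneous $L_A$-harmonic polynomial vanishing at $0$, so $\nabla h_m\not\equiv 0$ on $\Sigma_{h_m}$, and the argument at the end of the proof of \Lemma{hbound} (Lojasiewicz plus \cite{CNV15}) yields $0\in\supp\omega_{h_m}^A$.

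For the reverse inclusion, let $\nu\in\Tan(\omega_h^A,0)$ arise as $\nu_i := c_i T_{0,r_i}[\omega_h^A]\warrow \nu$ with $c_i>0$ and $r_i\downarrow 0$. Factor
\[
\nu_i = a_i\,\mu_i, \qquad a_i := c_i\, r_i^{n+m-1}, \qquad \mu_i := r_i^{1-n-m} T_{0,r_i}[\omega_h^A],
\]
so $\mu_i\warrow \omega_{h_m}^A$ by what was proved above. Choosing any $\varphi\in C_c(\bR^{n+1})$ with $\int\varphi\, d\omega_{h_m}^A\neq 0$ (which exists by nonvanishing of $\omega_{h_m}^A$), the relation $\int\varphi\, d\nu_i = a_i\int\varphi\, d\mu_i$ forces $a_i\to c := (\int\varphi\, d\nu)/(\int\varphi\, d\omega_{h_m}^A)\in[0,\infty)$. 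Then $\nu = c\,\omega_{h_m}^A$ by testing against arbitrary $\psi\in C_c$. Since $\nu$ is nonzero by definition of tangent measure, $c>0$.

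The only subtle point I expect is making sure $\omega_{h_m}^A$ is a nonzero Radon measure so that the normalization argument in the reverse inclusion is legitimate; this is not automatic from the definition but follows from the analytic-variety description of $\Sigma_{h_m}$ already exploited in the proof of \Lemma{hbound}. Everything else is routine manipulation of \Lemma{dilating} and \Lemma{uc}.
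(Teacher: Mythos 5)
Your proof follows essentially the same route as the paper's: show $r^{1-n-m}T_{0,r}[\omega_h^A]=\omega_{r^{-m}h\circ T_{0,r}^{-1}}^A\warrow\omega_{h_m}^A$ via \Lemma{uc} (after checking each homogeneous piece $h_j$ is $L_A$-harmonic), then deduce that every tangent measure at $0$ is a positive multiple of $\omega_{h_m}^A$. The two proofs differ only in where they invest detail: the paper spells out the uniform convergence $r^{-m}h\circ T_{0,r}^{-1}\to h_m$ quantitatively via Cauchy estimates (a step you assert but leave to be filled in), while you flesh out the normalization argument for the reverse inclusion and correctly flag the need to know $\omega_{h_m}^A$ is a nonzero measure, which the paper only alludes to.
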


\begin{proof}
For notational convenience, we will just consider the case $A=I$, the general case is identical. Note that as $r\rightarrow 0$, $r^{-m} h\circ T_{0,r}^{-1}\rightarrow h_{m}$ uniformly on compact subsets of $\bR^{n+1}$. Indeed, fix $R>0$. Then the series
\[
r^{-m}\sum_{j=m}^{\infty}\sum_{|\alpha|=j}\frac{D^{\alpha} h(0)}{\alpha!}(rx)^{\alpha} 
=\sum_{j=m}^{\infty}\sum_{|\alpha|=j}\frac{D^{\alpha} h(0)}{\alpha!}x^{\alpha} r^{|\alpha|-m}
\]
converges uniformly to $r^{-m} h\circ T_{0,r}^{-1}$ on compact subsets of  $B(0,R)$, provided $r$ is small enough. In fact, by Cauchy estimates,
$$|D^{\alpha}h(0)|\lec_{n}|\alpha|^{|\alpha|},$$ 
and since there exists a constant $C>1$ such that $\frac{k^{k}}{k!}\lec C^{k}$, then, for $x\in B(0,R)$ and $r\in(0,\frac{1}{CR})$, we have that
\begin{multline*} 
\av{r^{-m} h\circ T_{0,r}^{-1}(x)-h_{m}(x)}
 \leq \sum_{j=m+1}^{\infty}\sum_{|\alpha|=j}\av{\frac{D^{\alpha} h(0)}{\alpha!}}R^{|\alpha|}r^{|\alpha|-m} \\
 \lec_{n,m} \sum_{j=m+1}^{\infty} C^{j}R^{j}r^{j-m}
\lec r^{-m} (CRr)^{m+1}= (CR)^{m+1} r
\stackrel{r \downarrow 0}{\longrightarrow} 0.
\end{multline*}
%
%one can show that $\av{\sum_{|\alpha|=j}\frac{D^{\alpha} h(0)}{\alpha!}}$ are bounded in $j$. Hence, for $x\in B(0,R)$,
%\[
%\av{r^{-m} h\circ T_{0,r}^{-1}(x)-h_{m}(x)}\\
%\leq \sum_{j=m+1}^{\infty}\sum_{|\alpha|=j}\av{\frac{D^{\alpha} h(0)}{\alpha!}}R^{|\alpha|}r^{|\alpha|-m} 
%\rightarrow 0 
%\]
%as $r\rightarrow 0$ since $|\alpha|>m$ in this sum. 

Let now
\[\nu_{r}:=r^{-m-n+1}\,T_{0,r}[\omega_{h}]\stackrel{\eqn{trw}}{=} r^{-m}\,\omega_{h\circ T_{0,r}^{-1}}=\omega_{r^{-m} h\circ T_{0,r}^{-1}}.\] 
By \Lemma{uc}, $\nu_{r}\rightharpoonup \omega_{h_{m}}\in \cF(m)$. In particular, every tangent measure of $\omega_{h}$ at zero must be a multiple of this one.
\end{proof}

We now state an interesting consequence of these results: that if a portion of tangent measures of an arbirary Radon measure are harmonic polynomials, then they are all homogeneous polynomials. 

\begin{lemma}
Let $\omega$ be a Radon measure, $\xi\in \supp \omega$, and $k$ be the minimal integer such that $\Tan(\omega,\xi)\cap \cP(k)\neq\varnothing$, then $\Tan(\omega,\xi)\cap \cP(k)\subset \cF(k)$. 
\label{l:mink}
\end{lemma}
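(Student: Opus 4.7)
The plan is to take a tangent measure in $\Tan(\omega,\xi)\cap\cP(k)$, isolate the lowest degree component of the associated harmonic polynomial via its Taylor expansion at the origin, and then promote this lower-degree homogeneous piece to a tangent measure of $\omega$ at $\xi$ via a diagonal argument. Minimality of $k$ will then force the lowest degree component to actually be of degree $k$, which means the polynomial is homogeneous.

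More concretely, let $\nu\in\Tan(\omega,\xi)\cap \cP(k)$, so $\nu=\omega_{h}$ for some $h\in P(k)$. Since $h(0)=0$ and $h$ is a polynomial of degree $k$, we may write its Taylor expansion as
\[
h=\sum_{j=m}^{k} h_{j},
\]
where each $h_{j}$ is a homogeneous harmonic polynomial of degree $j$, and $h_{m}\neq 0$ with $1\leq m\leq k$. By \Lemma{taylor} applied with $A=I$, we have $\Tan(\omega_{h},0)=\{c\,\omega_{h_{m}}:c>0\}$, so in particular $\omega_{h_{m}}\in \Tan(\omega_{h},0)$.

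Next, I would upgrade $\omega_{h_{m}}$ to a tangent measure of $\omega$ at $\xi$. Choose $c_{i}>0$ and $r_{i}\downarrow 0$ so that $\mu_{i}:=c_{i}T_{\xi,r_{i}}[\omega]\rightharpoonup \omega_{h}$, and $d_{j}>0$, $s_{j}\downarrow 0$ so that $d_{j}T_{0,s_{j}}[\omega_{h}]\rightharpoonup \omega_{h_{m}}$. Using the identity $T_{0,s}[T_{\xi,r}[\omega]]=T_{\xi,rs}[\omega]$ and the fact that $\mu_{i}\rightharpoonup \omega_{h}$ implies $d_{j}T_{0,s_{j}}[\mu_{i}]\rightharpoonup d_{j}T_{0,s_{j}}[\omega_{h}]$ as $i\to\infty$ for each fixed $j$ (which follows from Lemma \ref{preiss}(5) since convergence of the test-function integrals is preserved under the pushforward $T_{0,s_{j}}$), a standard diagonal argument gives an index sequence $i(j)\to\infty$ with
\[
c_{i(j)}d_{j}\,T_{\xi,\,r_{i(j)}s_{j}}[\omega]\;\rightharpoonup\;\omega_{h_{m}}.
\]
Since $r_{i(j)}s_{j}\downarrow 0$, this shows $\omega_{h_{m}}\in\Tan(\omega,\xi)$.

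Now $h_{m}\in F(m)\subset P(m)$, so $\omega_{h_{m}}\in \Tan(\omega,\xi)\cap \cP(m)$. By the minimality of $k$ in the hypothesis we must have $m\geq k$, while obviously $m\leq k$ by the definition of $h$. Hence $m=k$, the Taylor expansion collapses to its top-degree piece $h=h_{k}\in F(k)$, and $\nu=\omega_{h}\in \cF(k)$, as desired. The main subtlety here is the upgrade in the second paragraph: one wants to invoke a ``tangent measures of tangent measures are tangent measures'' type statement like \Theorem{ttt}, but that result only holds at $\omega$-a.e.\ point, whereas we need it at the specific point $\xi$. The diagonal argument sidesteps this by producing the tangent measure directly from the two convergent sequences.
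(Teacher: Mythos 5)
Your proof is correct and follows the paper's own strategy: start from $\omega_h\in\Tan(\omega,\xi)\cap\cP(k)$, expand $h=\sum_{j=m}^k h_j$ into homogeneous pieces, invoke \Lemma{taylor} to obtain $\Tan(\omega_h,0)=\{c\,\omega_{h_m}:c>0\}$, transfer $\omega_{h_m}$ into $\Tan(\omega,\xi)$, and conclude $m=k$ by minimality, so $h=h_k\in F(k)$. The one place you improve on the paper is the transfer step. The paper justifies $\Tan(\omega_h,0)\subset\Tan(\omega,\xi)$ by citing \Theorem{ttt}, but as stated that theorem gives the inclusion only for $\omega$-almost every base point, whereas Lemma \ref{l:mink} is asserted for an arbitrary $\xi\in\supp\omega$. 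You correctly flag this and observe that the special case actually needed here --- blowing $\omega_h$ up at the origin rather than at a general point $y\in\supp\omega_h$ --- requires no a.e.\ hypothesis: the diagonal argument built from the identity $T_{0,s_j}\circ T_{\xi,r_i}=T_{\xi,r_is_j}$ gives the inclusion at every $\xi$. So the content is identical but your writeup is more self-contained at that one step.
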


We follow the proof in \cite[Lemma 5.9]{Bad11}, which originally supposed that $\omega$ was harmonic measure for an NTA domain. 

\begin{proof}
If $k=1$, then $\cP(1)=\cF(1)$. Now suppose $k>1$ and there is  $h\in P(k)$ non-homogeneous such that $\omega_{h}\in \Tan(\omega,\xi)\cap \cP(k)$. Since $h\in \cP(k)$, we may write
\[
h(x)=\sum_{j=m}^{k}\sum_{|\alpha|=j}\frac{D^{\alpha} h(0)}{\alpha!}x^{\alpha}
=\sum_{j=m}^{k} h_{m}(x),\]
where $m<k$ since $h\in \cP(k)$ is not homogeneous. By Lemma \ref{l:taylor}, $\Tan(\omega_{h},0)=\{c \omega_{h_{m}}:c>0\}\subset \cF({m})$, and since $\Tan(\omega_{h},0)\subset \Tan(\omega,\xi)$ by \Theorem{ttt}, $\Tan(\omega,\xi)\cap \cF({m})\neq\varnothing$, contradicting the minimality of $k$. Thus, $\Tan(\omega,\xi)\cap \cP(k)\subset \cF(k)$. 
\end{proof}

We will also need the following result. 

\begin{lemma}[\cite{Bad11} Lemma 4.7] 
\label{l:bad}
Suppose $h\in P(m)$ for some $m$. There exist $\ve=\ve(n,m,k)>0$ and $r_{0}>0$ so that if $d_{r}(\omega_{h},\cF(k))<\ve$ for all $r\geq r_{0}$, then $m=k$.
\end{lemma}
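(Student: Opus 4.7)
The plan is to analyze the large-scale behavior of $\omega_h$ by blowing down and reducing to a statement about the measure of the top-degree homogeneous part of $h$. Write $h = h_m + h_{m-1} + \cdots + h_1$ where each $h_j$ is homogeneous of degree $j$ and $h_m \neq 0$. By Lemma \ref{l:dilating}, $T_{0,r}[\omega_h] = r^{n-1}\,\omega_{h\circ T_{0,r}^{-1}}$, and scaling homogeneity of $\omega$ in its polynomial argument gives
\[
\nu_r := r^{1-n-m}\,T_{0,r}[\omega_h] = \omega_{r^{-m} h(r\cdot)}.
\]
The polynomial $r^{-m} h(rx) = h_m(x) + \sum_{j<m} r^{j-m} h_j(x)$ converges uniformly on compact subsets of $\bR^{n+1}$ to $h_m$ as $r \to \infty$, so Lemma \ref{l:uc} (with $A_j = A = I$) yields $\nu_r \rightharpoonup \omega_{h_m}$ weakly.

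Next, since $d_r$ is invariant under positive rescaling of the measure, Lemma \ref{preiss}(7) gives $d_r(\omega_h,\cF(k)) = d_1(T_{0,r}[\omega_h],\cF(k)) = d_1(\nu_r,\cF(k))$. The limit $\omega_{h_m}$ lies in $\cF(m)$ and is nonzero with $0\in\supp\omega_{h_m}$ (a fact implicit in the proof of Lemma \ref{l:hbound} via the structure theorem for analytic varieties, or deducible from Proposition \ref{p:compact} once one knows that $\cF(m)$ has compact basis), so $F_1(\omega_{h_m}) > 0$. Lemma \ref{preiss}(8) then gives
\[
\lim_{r\to\infty} d_r(\omega_h,\cF(k)) = d_1(\omega_{h_m},\cF(k)).
\]

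The heart of the proof is a uniform positive lower bound on $d_1(\omega,\cF(k))$ for $\omega$ in the basis of $\cF(m)$ when $m \neq k$. First I will observe that $\cF(m) \cap \cF(k) = \varnothing$: if $\omega_{h_m} = \omega_g$ for some $g \in F(k)$, then Lemma \ref{l:rn-1+k} gives $r^{n+m} F_1(\omega_{h_m}) = F_r(\omega_{h_m}) = F_r(\omega_g) = r^{n+k} F_1(\omega_{h_m})$ for all $r > 0$, forcing $m = k$. By Lemma \ref{l:ccompactbasis}, $\cF(m)$ and $\cF(k)$ both have compact basis; in particular, Lemma \ref{l:dM} (applied to the closed cone $\cF(k)$) gives $d_1(\omega,\cF(k)) > 0$ for each $\omega$ in the basis of $\cF(m)$. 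Since $\omega \mapsto d_1(\omega,\cF(k))$ is continuous on measures with $F_1 > 0$ (Lemma \ref{preiss}(8)) and the basis of $\cF(m)$ is compact, the infimum
\[
\ve_0 := \inf\bigl\{d_1(\omega,\cF(k)) : \omega \in \cF(m),\; F_1(\omega)=1\bigr\}
\]
is attained and strictly positive, with $\ve_0$ depending only on $n, m, k$.

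Setting $\ve := \ve_0/2$, I conclude: if $m = k$ the implication is vacuous, while if $m \neq k$, scale-invariance of $d_1$ combined with the previous paragraph gives $\lim_{r\to\infty} d_r(\omega_h,\cF(k)) = d_1(\omega_{h_m},\cF(k)) \geq \ve_0$, so choosing $r_0 = r_0(h)$ large enough makes $d_r(\omega_h,\cF(k)) > \ve$ for all $r \geq r_0$; hence the hypothesis of the lemma fails and the implication holds vacuously. The main technical subtlety I anticipate is exactly the uniformity of the lower bound $\ve_0$ across the basis of $\cF(m)$, which requires both the compactness of that basis and the continuity of the distance function $d_1(\cdot,\cF(k))$; once Lemma \ref{l:ccompactbasis} is in hand the rest is essentially bookkeeping.
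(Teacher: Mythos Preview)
The paper does not supply its own proof of this lemma; it is quoted directly from \cite{Bad11}. Your argument is correct and self-contained using the tools developed in the present paper (in particular \Lemma{ccompactbasis}, which postdates \cite{Bad11}), so there is nothing to compare it against here.

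One small gap worth patching: you invoke \Lemma{dM} to conclude $d_1(\omega,\cF(k))>0$ for each $\omega$ in the basis of $\cF(m)$, but \Lemma{dM} only yields $d_r(\omega,\cF(k))>0$ for \emph{some} $r>0$. To get positivity at $r=1$ specifically, observe that for $\omega\in\cF(m)$ one has $T_{0,r}[\omega]=r^{n-1+m}\omega$ (by \eqn{trw} and homogeneity), so $d_r(\omega,\cF(k))=d_1(T_{0,r}[\omega],\cF(k))=d_1(\omega,\cF(k))$ is independent of $r$; hence $d_r>0$ for some $r$ forces $d_1>0$. With this line added, your compactness-plus-continuity argument for the uniform lower bound $\ve_0$ goes through cleanly.
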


\begin{proof}[Proof of \Proposition{taninf}]
Suppose $\Tan(\omega,\xi)\subset \cP(k)$. Let $m$ be the minimal integer for which $\Tan(\omega,\xi)\cap \cP(m)\neq\varnothing$, so $m\leq k$. Then, by Lemma \ref{l:mink}, $\Tan(\omega,\xi)\cap \cP(m)\subset \cF(m)$. In particular, $\Tan(\omega,\xi)\cap \cF(m)\neq\varnothing$. Since, by \Proposition{pcompact}, $\cP(k)$ has compact basis, we can use \Lemma{bad} and \Lemma{newpreiss} to conclude $\Tan(\omega,\xi)\subset \cF(m)$.
\end{proof}

\section{Proof of Theorem \ref{newKPT}}

\begin{lemma}\label{logw}
	Let $\cS\subset \cC$ be closed and  $\omega=\omega_{\Omega}^{A,x}$ be an $L_{A}$-harmonic measure where $A\in \cA$ and $L_A \in \VMO(\om, \xi)$ at $\xi\in \supp \omega$. Also assume we have $\Tan(\omega,\xi)\subset\mathscr{H}_{\cS}$. Let $k$ be the smallest integer for which $\Tan(\omega,\xi)\cap \cF_{\cS}(k)\neq \varnothing$. Then $\Tan(\omega,\xi)\subset \cF_{\cS}(k)$. In particular,
	\begin{equation}\label{pointwisedim}
	\lim_{r\rightarrow 0} \frac{\log \omega(B(\xi,r))}{\log r} = n+k-1.
	\end{equation}
\end{lemma}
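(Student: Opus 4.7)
The plan is to use the connectivity lemma \Lemma{newpreiss} to deduce $\Tan(\omega,\xi)\subset \cF_\cS(k)$, then extract the dimension from the scaling of homogeneous elliptic-harmonic polynomial measures given by \Lemma{rn-1+k}.

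First I would verify the hypotheses of \Lemma{newpreiss} with $\cF=\cF_\cS(k)$ (compact basis by \Lemma{ccompactbasis}) and $\cM=\mathscr{H}_\cS$. Both $\Tan(\omega,\xi)\subset\mathscr{H}_\cS$ and $\Tan(\omega,\xi)\cap\cF_\cS(k)\neq\varnothing$ are among the assumed hypotheses. The remaining condition --- that every $\mu=\omega_h^A\in\mathscr{H}_\cS$ with $d_r(\mu,\cF_\cS(k))<\varepsilon$ for all $r\geq r_0$ necessarily lies in $\cF_\cS(k)$ --- is the main technical obstacle. I would establish it by using \Lemma{pushforpol} together with the uniform bi-Lipschitz nature of the maps $S_A:=\sqrt{(A+A^T)/2}$ (uniform since $\cS$ is compact) to reduce to the purely harmonic setting, and then combining the polynomial version \Lemma{bad} with the Taylor-expansion result \Lemma{taylor}; the latter identifies the tangent measure of $\omega_h$ at the origin as $c\,\omega_{h_m}$ for the lowest non-trivial homogeneous piece $h_m$, forcing $h$ itself to reduce to a homogeneous polynomial of the correct degree.

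Granting \Lemma{newpreiss}, we obtain $\Tan(\omega,\xi)\subset\cF_\cS(k)$; as a closed subset of the compact basis of $\cF_\cS(k)$, the basis of $\Tan(\omega,\xi)$ is itself compact, so \Theorem{compactdouble} yields that $\omega$ is pointwise doubling at $\xi$. Define $\tilde\mu_r:=T_{\xi,r}[\omega]/F_1(T_{\xi,r}[\omega])$; this is well-defined because $\xi\in\supp\omega$ forces $F_1(T_{\xi,r}[\omega])=F_r(\omega)/r>0$. Pointwise doubling bounds $F_s(\tilde\mu_r)=F_{sr}(\omega)/F_r(\omega)$ uniformly in $r$ for each $s>0$, so any sequence $r_j\to 0$ admits a subsequence along which $\tilde\mu_{r_j}$ converges weakly to some $\tilde\nu\in\Tan(\omega,\xi)\subset\cF_\cS(k)$ with $F_1(\tilde\nu)=1$. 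By \Lemma{rn-1+k}, every such limit satisfies $F_s(\tilde\nu)=s^{n+k}$; since this common value is independent of the subsequence, the full limit exists and
\[
\lim_{r\to 0}\frac{F_{sr}(\omega)}{F_r(\omega)}=s^{n+k}\qquad\text{for every }s>0.
\]

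Taking $s=2$ yields $\log F_{2r}(\omega)-\log F_r(\omega)\to (n+k)\log 2$ as $r\to 0$, which via a Cesaro telescoping argument in the variable $t=-\log_2 r$ gives $\log F_r(\omega)/\log r\to n+k$. Combining this with the elementary sandwich
\[
\tfrac{r}{2}\,\omega(B(\xi,r/2))\;\leq\;F_r(\omega)\;\leq\;r\,\omega(B(\xi,r))
\]
and the facts $\log(r/2)/\log r\to 1$ and $\log r<0$ for small $r$, both the $\liminf$ and $\limsup$ of $\log\omega(B(\xi,r))/\log r$ get squeezed to the common value $n+k-1$, which is exactly \eqref{pointwisedim}.
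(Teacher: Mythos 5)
Your high-level plan is sound and the dimension computation in the third and fourth paragraphs is correct and essentially interchangeable with the paper's. The genuine gap is in the second paragraph, where you verify the third hypothesis of \Lemma{newpreiss} for $\cM=\mathscr{H}_{\cS}$. You propose to handle the key step---that $\mu=\omega_{h}^{A}\in\mathscr{H}_{\cS}$ with $d_{r}(\mu,\cF_{\cS}(k))\leq\ve$ for all $r\geq r_{0}$ must lie in $\cF_{\cS}(k)$---by applying \Lemma{bad} after reducing to the harmonic case via \Lemma{pushforpol}. But \Lemma{bad} only applies when $h$ is \emph{a priori} a polynomial: it takes $h\in P(m)$ as hypothesis and concludes $m=k$. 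For a general $h\in H_{A}$ (an arbitrary $L_{A}$-harmonic function vanishing at the origin), \Lemma{bad} says nothing, and \Lemma{taylor} only identifies the tangent measure of $\omega_{h}^{A}$ at $0$ with the lowest homogeneous piece $h_{m}$---it does not by itself force $h$ to be a polynomial, much less a homogeneous one.

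The missing step is precisely the paper's growth argument: the $\ve$-closeness of $\nu=\omega_{h}^{A}$ to $\cF_{\cS}(k)$ at all scales $r\geq r_{0}$, combined with \Lemma{rn-1+k}, gives $2^{\ell(n+k-\beta)}\leq F_{2^{\ell}r}(\nu)/F_{r}(\nu)\leq 2^{\ell(n+k+\beta)}$ with $\beta\to 0$ as $\ve\to 0$; then the bound $\|h\|_{L^{\infty}(2^{\ell}\bB)}\lesssim 2^{\ell(1-n)}\omega_{h}(B(0,2^{\ell+1}))$ from \eqref{e:ufinbound2} converts this into a polynomial growth bound on $h$, and Cauchy estimates kill all coefficients of degree $>k$. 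Only after this can one invoke \Lemma{taylor} and minimality of $k$ to conclude homogeneity of degree exactly $k$. Crucially, \eqref{e:ufinbound2} is a consequence of $\nu$ being a tangent measure of $\omega$ (it comes from the Green function blow-up in Theorem \ref{azmotovo2}), and is not established for arbitrary elements of $\mathscr{H}_{\cS}$; this is why the paper works directly with $\nu\in\Tan(\omega,\xi)$ obtained from Corollary \ref{c:pstep} rather than verifying the abstract hypothesis of \Lemma{newpreiss} for all of $\mathscr{H}_{\cS}$. Your outline would be repaired by (i) taking $\cM=\Tan(\omega,\xi)$ itself (which is a $d$-cone) or using Corollary \ref{c:pstep} directly, and (ii) inserting the growth-estimate-plus-Cauchy argument to show $h$ is a polynomial of degree at most $k$ before invoking \Lemma{taylor} and minimality.
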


\begin{proof}
	If $\Tan(\omega,\xi)\not\subset \cF_{\cS}(k)$, then by \Corollary{pstep}, there is $r_{0}>0$ so that for any $\ve>0$ small we may find $\nu\in \Tan(\omega,\xi)\backslash \cF_{\cS}(k)$ so that $d_{r_{0}}(\nu,\cF_{\cS}(k))=\ve $ and $d_{r}(\nu,\cF_{\cS}(k))\leq \ve $ for all $r\geq r_{0}$. Without loss of generality, we can assume  $r_{0}=1$. For each $r>1$, choose $\mu_{r}\in \cF_{\cS}(k)$ such that $F_{r}(\mu_{r})=1$ and 
	\[
	F_{r}\ps{\frac{\nu}{F_{r}(\nu)},\mu_{r}}<2\ve.\]
		Then for $r\geq 1$,
	\begin{align*}
	\frac{F_{r}(\nu)}{F_{2r}(\nu)}
	& =\int(r-|x|)_{+}d\frac{\nu}{F_{2r}(\nu)}
	< 2\ve + \int(r-|x|)_{+}d\mu_{2r}
	=2\ve + F_{r}(\mu_{2r})\\
	& \stackrel{\eqn{rn-1+k}}{=} 2\ve  + 2^{-n-k} F_{2r}(\mu_{2r})
	= 2\ve  + 2^{-n-k}
	=2^{-n-k+\beta},
	\end{align*}
	for some $\beta>0$ that goes to zero as $\ve\rightarrow 0$. Similarly,
	\[\frac{F_{r}(\nu)}{F_{2r}(\nu)}\geq 2^{-n-k-\beta}.\]
	Hence, for $\ell\in \bN$,
	\begin{equation}\label{e:+b}
	2^{\ell(n+k-\beta)} \leq \frac{F_{2^{\ell} r}(\nu)}{F_{r}(\nu)} \leq 2^{\ell(n+k+\beta)}.\end{equation}
	Note that $\nu=\omega_{h}^{A}$ for some $h\in \mathscr{H}_{A}$ by Lemma \ref{azmotovo2} and $A\in \cS$, and so
	\begin{align}
	||h||_{L^{\infty}(2^{\ell}\bB)}
	& \stackrel{\eqn{ufinbound2}}{\lec} 2^{\ell(1-n)} \omega_{h}(B(0,2^{\ell+1})) 
	\leq 2^{-\ell n-1}  F_{2^{\ell+2}}(\omega_{h})   \notag \\
	&  \stackrel{\eqn{+b}}{\leq} 2^{\ell(k+\beta)-1}F_{2^{2}}(\omega_{h}).
	\label{e:hrb<}
	\end{align}
	Let $\alpha$ be a multi-index of length $|\alpha|>k$. Then we can pick $\ve>0$ small enough so that $\beta$ is so small  that  $|\alpha|-k-\beta > 0$ holds. Thus,
	%\begin{align*}
	%|\d^{\alpha}h(0)|
	%& \lec 2^{-\ell |\alpha|} ||h||_{L^{\infty}(2^{\ell}\bB)}
	%\stackrel{\eqn{u<wr}}{\lec} 2^{\ell(-|\alpha|+1-n)} \omega_{h}(B(0,2^{\ell+1})) \\ 
	%& \leq 2^{\ell(-|\alpha|-n)-1}  F_{2^{\ell+2}}(\omega_{h})  
	% \stackrel{\eqn{+b}}{\leq} 2^{\ell(-|\alpha|-1+k+\beta)-1}F_{2^{2}}(\omega_{h})
	%\rightarrow 0
	%\end{align*}
	by Cauchy estimates,
	\begin{align*}
	|\d^{\alpha}h(0)|
	& \lec_\alpha 2^{-\ell |\alpha|} ||h||_{L^{\infty}(2^{\ell}\bB)} 
	\stackrel{\eqn{hrb<}}{\lec} 2^{-\ell(|\alpha|-k-\beta)}F_{2^{2}}(\omega_{h}) 
	\rightarrow 0
	\end{align*}
	as $\ell\rightarrow\infty$, and so $h \in \cP_{A}(k)$.
	
%	 Suppose $h=\sum_{j=1}^{k+1}h_{j}$ where $h_{j}\in \cF_{A}(j)$ and $h_{k+1}\neq 0$. Then for $r>1$ large, if $\ell$ is the integer such that $2^{\ell-1}\leq r<2^{\ell}$,
%	
%	\begin{align*}
%	r^{k+1} ||h_{k+1}||_{L^{\infty}(\bB)}
%	& =||h_{k+1}||_{L^{\infty}(r\bB)}
%	\approx ||h||_{L^{\infty}(r\bB)}
%	\leq ||h||_{L^{\infty}(2^{\ell}\bB)}\\
%	& \stackrel{\eqn{hrb<}}{\lec} 2^{\ell(k+\beta)}F_{2^{2}}(\omega_{h})
%	\lec_{k,\beta} r^{k+\beta}F_{2^{2}}(\omega_{h})
%	\end{align*}
%	which is impossible for $r$ large since $\beta<1$. Thus, we must have $h_{k+1}=0$, and so $\omega_{h}\in \cP_{A}(k)$. 
%	
		 Suppose $h=\sum_{j=1}^{k}h_{j}$. If $\omega_{h}\not\in \cF_{A}(k)$, then there exists $j<k$ such that $h_{j}\neq 0$, and by \Lemma{taylor}, we infer that $\Tan(\omega_{h}^{A},0)$ contains an element of $\cF_{A}(j)$. Since $\omega_{h}^{A}\in \Tan(\omega,\xi)$, we know that $\Tan(\omega_{h}^{A},0)\subset  \Tan(\omega,\xi)$ and thus, $ \Tan(\omega,\xi)\cap \cF_{A}(j)\neq\varnothing$. Hence $ \Tan(\omega,\xi)\cap \cF_{\cS}(j)\neq\varnothing$,  contradicting the minimality of $k$. This proves $\Tan(\omega,\xi)\subset \cF_{\cS}(k)$. 
	
	For the final equality, note that $\Tan(\omega,\xi)\subset \cF_{\cS}(k)$ and so $\Tan(\omega,\xi)$ has compact basis. In particular, by Lemma \ref{l:pstep},
	\[
	\lim_{r\rightarrow 0} d_{1}(T_{\xi,r}[\omega],\cF_{\cS}(k))= 0.\]
%\mih{I just added that this follows from Lemma 3.10. The proof in ``Indeed... contradiction" is unnecessary and should be removed.}\jonas{cool}Indeed, if there was  $r_{j}\downarrow 0$ so that $d_{1}(T_{\xi,r_{j}}[\omega],\cF_{\cS}(k))\geq \ve >0$, then we could pass to a subsequence so that $T_{\xi,r_{j}}[\omega]\rightharpoonup \omega'\in \cF_{\cS}(k)$. Since $\cF_{\cS}(k)$ is compact, $F_{1}(\omega')>0$, and so Lemma \ref{preiss} (8) implies 
%	\[
%	\ve \leq d_{1}(T_{\xi,r_{j}}[\omega],\cF_{\cS}(k))\rightarrow d_{1}(\omega',\cF_{\cS}(k))=0,\]
%	a contradiction. 
	
	Thus, for $\ve>0$, there is $r_{0}>0$ such that for each $r\leq r_{0}$, there exists $\mu_{r}\in \cF_{\cS}(k)$ so that $F_{1}(\mu_{r})=1$ and
	\[
	F_{1}\ps{\frac{T_{\xi,r}[\omega]}{F_{1}(T_{\xi,r}[\omega])},\mu_{r}}<\ve .
	\]
	Setting $\nu_{r}=r^{-1}T_{\xi,r}^{-1}[\mu_{r}]$, this gives $F_{r}(\nu_{r})=1$ and 
	\[
	F_{r}\ps{\frac{\omega}{F_{r}(\omega)},\nu_{r}}<\ve.
	\]
	By the same arguments as earlier, we can show that there exists $\gamma>0$, which goes to zero as $\ve\rightarrow 0$, so that for all $\ell\geq 0$ and $r< 2^{-\ell-1}r_0$,
	\jonas{yeah}
	\begin{equation}
	2^{\ell(n+k-\gamma)} \leq \frac{F_{2^{\ell} r}(\omega)}{F_{r}(\omega)} \leq 2^{\ell(n+k+\gamma)}.
	\end{equation}
	Hence, if we set $d=n+k-1$, we get 
	\begin{align*}
	\omega(B(\xi,2^{\ell}r))
	& =T_{\xi,r}[\omega](B(0,2^{\ell}))
	\leq 2^{-\ell}F_{2^{\ell+1}}(T_{\xi,r}[\omega])\\
	& \leq  2^{(\ell+1)(n+k+\gamma)-\ell}F_{1}(T_{\xi,r}[\omega])\\
	& \leq 2^{\ell (d+\gamma) +n+k+\gamma }  T_{\xi,r}[\omega](B(0,1))\\
	& = 2^{\ell (d+\gamma) +n+k+\gamma }  \omega(B(\xi,r)).
	\end{align*}
	Similarly,
	\begin{align*}
	\omega(B(\xi,r)) 
	& =T_{\xi,r}[\omega] (B(0,1))
	\leq F_{2}(T_{\xi,r}[\omega])\\
	& \leq 2^{-(\ell-1)(n+k-\gamma)} F_{2^{\ell}}(T_{\xi,r}[\omega])\\
	& \leq 2^{-(\ell-1)(n+k-\gamma)+\ell}  \omega(B(\xi,2^{\ell}r))\\
	& = 2^{-\ell (d-\gamma)+n+k-\gamma}\omega(B(\xi,2^{\ell}r)).
	\end{align*}
	For $r<r_{0}/2$, let $\ell\in \bN$ be so that $2^{-\ell-1}r_{0}\leq r\leq 2^{-\ell}r_{0}$. Then
	\begin{align*}
	\omega(B(\xi,r)) \leq \omega(B(\xi,2^{-\ell}r_{0}))
	& \leq 2^{-\ell (d-\gamma)+n+k-\gamma}\omega(B(\xi,r_{0}))\\
	& \leq 2^{1+(n+k-\gamma)} r^{d-\gamma}  \omega(B(\xi,r_{0})).
	\end{align*}
	Hence, recalling that these logs are negative, we conclude
	\[
	\liminf_{r\rightarrow 0} \frac{\log \omega(B(\xi,r))}{\log r} 
	\geq \liminf_{r\rightarrow 0} \frac{\log \ps{2^{1+(n+k-\gamma)}\omega (B(\xi,r_{0}))}}{\log r} + d-\gamma
	= d-\gamma.
	\]
	A similar estimate gives 
	\[
	\limsup_{r\rightarrow 0} \frac{\log \omega(B(\xi,r))}{\log r} \leq d+\gamma.
	\]
	If we let $\gamma\rightarrow 0$, then  \eqref{pointwisedim} follows.

\end{proof}

%\begin{remark}
%	Notice that we may assume that we only have two different uniformly elliptic matrices defined on each in $\Omega_i$, $i=1,2$ which agree just on $\d \Omega_1 \cap\d\Omega_2$. Indeed, we then define a new matrix $A\in \cA$ that agrees with $A_i$ in $\Omega_i \cup [\d \Omega_1 \cap\d\Omega_2]$, for $i=1,2$ and the identity matrix everywhere else. 
%\end{remark}

\begin{proof}[Proof of Theorem \ref{newKPT}]

%\begin{lemma}\label{l:mainlem}
%Let $\Omega_{1},\Omega_{2}\subset \bR^{n+1}$ be disjoint domains and let $\omega_{i}$ be the $L$-harmonic measure in $\Omega_i$. Suppose there is $E\subset \d\Omega_{1}\cap \d\Omega_{2}$ upon which we have $\omega_{1}|_{E}\ll \omega_{2}|_{E}\ll \omega_{1}|_{E}$. Fix $\ve<1/100$ and let $E_{m}$ be the set of $\xi\in E$ such that for all $0<r<1/m$ and $i=1,2$ we have 
%\begin{equation}\label{e:widub}
%\omega_{i}(B(\xi,2r))\leq m\, \omega_{i}(B(\xi,r)),
%\end{equation} 
%\begin{equation}\label{e:ominondeg}
%\cH^{n+1}(B(\xi,r)\cap \Omega_{i})\geq  \frac1m\,r^{n+1},
%\end{equation}
%and
%\begin{equation}\label{e:betasmall}
%\beta_{\omega_{i},i}(\xi,r)<\ve \frac{\omega_{i}(\xi,r)}{r^{n}}
%\end{equation}
%Then 
%\begin{equation}\label{eq:EMM}
%\omega_i \biggl(E\setminus \bigcup_{m\geq1} E_m\biggr)=0.
%\end{equation}
%\end{lemma}

We set 
\begin{align*} 
E^{*}&=\ck{\xi\in E: \lim_{r\rightarrow 0} \frac{\omega^{+}(E\cap B(\xi,r))}{\omega^{+}(B(\xi,r))}= \lim_{r\rightarrow 0} \frac{\omega^{-}(EE\cap B(\xi,r))}{\omega^{-}(B(\xi,r))}=1}\, \textup{and}\\
E^{**}&= \{\xi\in E^*: \eqref{e:boundarylimit} \mbox{ holds}\}. 
\end{align*}
Notice that by \cite[Corollary 2.14 (1)]{Mattila} and because $\omega_{1}$ and $\omega_{2}$ are mutually absolutely continuous on $E$,
\[\omega^{+}(E\backslash E^{**})= \omega^{-}(E\backslash E^{**})=0.\]
Also, set 
\begin{multline*}
\Lambda_{1} =\left\{\xi\in E^{**}\!\!: 0<h(\xi):=\frac{d\omega^{-}}{d\omega^{+}}(\xi)=\lim_{r\rightarrow 0} \frac{\omega^{-}( B(\xi,r))}{\omega^{+}( B(\xi,r))} \right. \\
=\left. \lim_{r\rightarrow 0} \frac{\omega^{-}(E\cap B(\xi,r))}{\omega^{+}(E\cap B(\xi,r))}<\infty\right\}\end{multline*}
%\[
%\Lambda_{2}=\ck{\xi\in E: \lim_{r\rightarrow 0} \frac{\omega^{-}(E\cap B(\xi,r))}{\omega^{+}(E\cap B(\xi,r))}=\infty}\]
%\[
%\Lambda_{3}=\ck{\xi\in E: \lim_{r\rightarrow 0} \frac{\omega^{-}(E\cap B(\xi,r))}{\omega^{+}(E\cap B(\xi,r))}=0}\]
%\[ \Lambda_{4}=\ck{\xi\in E:\lim_{r\rightarrow 0} \frac{\omega^{-}(E\cap B(\xi,r))}{\omega^{+}(E\cap B(\xi,r))} \mbox{ does not exist}}\]
and 
\[
\Gamma = \ck{\xi\in \Lambda_{1}: \xi \mbox{ is a Lebesgue point for $h$ with respect to }\omega^{+}}.
\]
Again, by Lebesgue differentiation for measures (see \cite[Corollary 2.14 (2) and Remark 2.15 (3)]{Mattila}), $\Gamma$ has full measure in $E^{**}$ and hence in $E$.
%\vv
%
%To prove \eqref{eq:EMM}, it suffices to show that for $\omega_{1}$-almost every $\xi\in \Gamma$, we have 
%\begin{equation}\label{e:limdub}
%\limsup_{r\rightarrow 0} \frac{\omega^{+}(B(\xi,2r))}{\omega^{+}(B(\xi,r))}<\infty,
%\end{equation}
%\begin{equation}\label{e:limcomp}
%\liminf_{r\rightarrow 0} \min_{i=1,2} \frac{\cH^{n+1}(B(\xi,r)\cap \Omega_{i})}{r^{n+1}}>0,
%\end{equation}
%and
%\begin{equation}\label{e:limbeta}
%\lim_{r\rightarrow 0} \beta_{\omega_{1},1}(\xi,r) \frac{r^{n}}{\omega^{+}(B(\xi,r))}=0.
%\end{equation}
%We then use some standard measure theory to find our desired sets $E_m$.

%
%\begin{equation}
%\label{e:zerolambdas}
%\omega^{+}(\Lambda_{2})=\omega^{-}(\Lambda_{3})=\omega^{+}(\Lambda_{4})=\omega^{-}(\Lambda_{4})=0
%\end{equation}

%The following is proven in \cite[Lemma 5.8]{AMT}. There we assume a capacity density condition, but the assumption is not used in the proof. 
Next, we record a lemma which was proven in \cite{AMT16} in the case of the harmonic functions in domains that satisfy the CDC condition, but its proof goes through unchanged  for $L$-harmonic functions in general domains.

\begin{lemma}\label{l:samew}
	Let $\xi\in \Gamma$, $c_{j}\geq 0$,  and $r_{j}\rightarrow 0$ be so that $\omega_{j}^{+}=c_{j}T_{\xi,r_{j}}[\omega^{+}]\rightarrow \omega_{\infty}$. Then $\omega_{j}^{-}=c_{j}T_{\xi,r_{j}}[\omega^{-}]\rightarrow h(\xi)\omega_{\infty}$. 
\end{lemma}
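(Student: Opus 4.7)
The plan is to show weak convergence of $\omega_j^-$ by testing against continuous compactly supported functions. Fix $\varphi\in C_c(\R^{n+1})$ with $\supp\varphi\subset\overline{B(0,R)}$; the goal is to show
$$\int\varphi\,d\omega_j^-\longrightarrow h(\xi)\int\varphi\,d\omega_\infty.$$
Writing $\int\varphi\,d\omega_j^- = c_j\int_{B(\xi,Rr_j)}(\varphi\circ T_{\xi,r_j})\,d\omega^-$, I would split the integration domain into $B(\xi,Rr_j)\cap E$ and $B(\xi,Rr_j)\setminus E$. On the former, the Radon-Nikodym relation $d\omega^-|_E = h\,d\omega^+|_E$ lets me rewrite the integral with respect to $\omega^+$ and then replace the density $h$ by the constant $h(\xi)$, incurring a Lebesgue-point error. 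The resulting main term is essentially $h(\xi)\int\varphi\,d\omega_j^+$, which tends to $h(\xi)\int\varphi\,d\omega_\infty$ by the hypothesis on $\omega_j^+$.

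The three quantitative ingredients that drive the argument are: (i) boundedness of $c_j\omega^\pm(B(\xi,Rr_j))$ as $j\to\infty$; (ii) the asymptotic density properties from the definition of $E^*$, giving $\omega^\pm(B(\xi,Rr_j)\setminus E)/\omega^\pm(B(\xi,Rr_j))\to 0$, which combined with (i) makes both tail terms over $B(\xi,Rr_j)\setminus E$ vanish; and (iii) the Lebesgue-point property defining $\Gamma$,
$$\frac{1}{\omega^+(B(\xi,Rr_j))}\int_{B(\xi,Rr_j)}|h-h(\xi)|\,d\omega^+\longrightarrow 0,$$
which together with (i) eliminates the error from replacing $h$ by $h(\xi)$. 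Assembling these gives $\int\varphi\,d\omega_j^- = h(\xi)\int\varphi\,d\omega_j^+ + o(1)$, and the convergence of $\omega_j^+$ finishes the proof.

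The principal obstacle is establishing the boundedness in (i), since $c_j$ is defined through the rescaling of $\omega^+$ and no weak-convergence hypothesis is directly available for $c_jT_{\xi,r_j}[\omega^-]$. This is where $\xi\in\Lambda_1$ is essential: the definition gives $\lim_{r\to 0}\omega^-(B(\xi,r))/\omega^+(B(\xi,r)) = h(\xi)<\infty$, so for all $j$ large enough, $\omega^-(B(\xi,Rr_j))\leq (h(\xi)+1)\,\omega^+(B(\xi,Rr_j))$; then weak convergence $c_jT_{\xi,r_j}[\omega^+]\rightharpoonup \omega_\infty$ together with upper semicontinuity on the closed ball $\overline{B(0,R)}$ yields $\limsup_j c_j\omega^+(B(\xi,Rr_j))\leq \omega_\infty(\overline{B(0,R)})<\infty$, whence $c_j\omega^-(B(\xi,Rr_j))$ is likewise bounded. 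Once (i) is in place, the Lebesgue-point hypothesis built into $\Gamma$ and the density hypothesis built into $E^*$ do the rest in routine fashion.
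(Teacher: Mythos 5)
Your proof is correct and is the standard argument for this kind of blow-up compatibility result; the paper itself does not reproduce a proof here but simply defers to \cite{AMT16}, remarking that the argument carries over verbatim to the elliptic setting. What you give---splitting over $E$ and $E^{c}$, killing the $E^{c}$ tails via the $E^{*}$ density condition, killing the Lebesgue-point error via the definition of $\Gamma$, and using $\xi\in\Lambda_{1}$ together with $\limsup_{j}\omega_{j}^{+}\bigl(\overline{B(0,R)}\bigr)\le\omega_{\infty}\bigl(\overline{B(0,R)}\bigr)$ to control $c_{j}\omega^{\pm}(B(\xi,Rr_{j}))$---is precisely that argument.
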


\vv
We define
\[
\cF:=\{c\cH^{n}|_{V}: c>0, \;\; V\mbox{ a $d$-dimensional plane containing the origin}\}.\]
It is not hard to show that $\cF$ has compact basis.

%\begin{lemma}\label{l:capf}
%For this proof we will let $\bB=B(0,1)$. Let $\xi\in \d\Omega_{1}\cap \d\Omega_{2}$ be such that there is $h(\xi)\in (0,\infty)$ so that if $r_{j}\downarrow 0$ and $c_{j}>0$ are such that $c_{j} T_{\xi,r_{j}}[\omega^{+}]\rightarrow \omega_{\infty}\in \Tan(\omega_{1},\xi)$, then $c_{j} T_{\xi,r_{j}}[\omega^{-}]\rightarrow h(\xi)\omega_{\infty}$. (This happens for  $\omega_{i}$-a.e. $\xi\in \Gamma$, for example.) Then $\Tan(\omega_{1},\xi)\cap \cF\neq\varnothing$. 
%\end{lemma}

\begin{lemma}\label{l:capf}
	For $\omega^{+}$-a.e.\ $\xi\in\Gamma$, $$\Tan(\omega^{+},\xi)\cap \cF\neq\varnothing.$$ 
\end{lemma}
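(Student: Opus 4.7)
The plan is to reduce to Lemma~\ref{logw} in three stages. At any $\xi \in \Gamma$, I will (a) apply the two-phase blow-up lemma to show every tangent of $\omega^+$ at $\xi$ is an $L_{A_0}$-harmonic entire-function measure for some $A_0$ in a compact set $\cS$; (b) extract a homogeneous-polynomial tangent by a Taylor expansion at the origin; and (c) use a Lebesgue-decomposition argument to show $\omega^+|_\Gamma$ is mutually singular with $(n+1)$-dimensional Lebesgue measure, which combined with the dimension formula from Lemma~\ref{logw} forces the extracted homogeneous tangent to have degree $1$ and hence to be flat.

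For stage (a), fix $\xi \in \Gamma$ and pick any $r_j \downarrow 0$ and $c_j > 0$ with $\omega_j^+ := c_j T_{\xi, r_j}[\omega^+] \rightharpoonup \omega_\infty \ne 0$; such a sequence exists by \Lemma{nicetan}. Then \Lemma{samew} gives $\omega_j^- \rightharpoonup h(\xi)\,\omega_\infty$ with $h(\xi) \in (0, \infty)$, and since the VMO condition~\eqref{e:boundarylimit} holds at $\xi \in E^{**}$, Theorem~\ref{azmotovo2} applies and produces $A_0$ in the compact set $\cS := \{A \in \cC: \Lambda^{-1} I \leq A \leq \Lambda I\}$ together with an entire $L_{A_0}$-harmonic $u_\infty$ satisfying $\omega_\infty = \omega_{u_\infty}^{A_0}$. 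Thus $\Tan(\omega^+, \xi) \subset \mathscr{H}_{\cS}$. For stage (b), write the Taylor expansion $u_\infty = \sum_{j \geq m} h_j$ at the origin with $h_m \not\equiv 0$ (valid by constant-coefficient elliptic real analyticity). \Lemma{taylor} gives $\Tan(\omega_\infty, 0) = \{c\,\omega_{h_m}^{A_0}: c > 0\} \subset \cF_{\cS}(m)$, and by \Theorem{ttt} these lie in $\Tan(\omega^+, \xi)$, so the minimal $k \geq 1$ with $\Tan(\omega^+, \xi) \cap \cF_{\cS}(k) \ne \varnothing$ is well defined. Lemma~\ref{logw} then yields
\[
\lim_{r \to 0} \frac{\log \omega^+(B(\xi, r))}{\log r} = n + k - 1.
\]

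For stage (c), decompose $\omega^+ = \omega_{ac}^+ + \omega_s^+$ with respect to $(n+1)$-dimensional Lebesgue measure $|\cdot|$. At any Lebesgue-density point $\xi \in \Gamma$ of $\omega_{ac}^+$ with $f(\xi) := d\omega_{ac}^+/d|\cdot|(\xi) > 0$, the scaling $c_j := |B(\xi, r_j)|^{-1}$ yields $\omega_j^+ \rightharpoonup f(\xi)\,|\cdot|$ and, by \Lemma{samew}, $\omega_j^- \rightharpoonup h(\xi)f(\xi)\,|\cdot|$. Applying Theorem~\ref{azmotovo2} to this sequence would give $f(\xi)\,|\cdot| = \omega_{u_\infty}^{A_0}$ on $\tfrac12\bB$ with $\supp \omega_\infty = \{u_\infty = 0\} \cap \tfrac12\bB$, which is impossible because $f(\xi)\,|\cdot|$ has full support while the real-analytic $u_\infty \not\equiv 0$ cannot vanish identically on $\tfrac12\bB$. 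Hence $\omega_{ac}^+|_\Gamma = 0$, so $\omega^+|_\Gamma$ is singular with respect to $|\cdot|$ and $\limsup_{r \to 0} \omega^+(B(\xi, r))/r^{n+1} = \infty$ for $\omega^+$-a.e.\ $\xi \in \Gamma$. Comparing with the dimension formula from stage (b) gives $\omega^+(B(\xi,r))/r^{n+1} \approx r^{k-2}$, which forces $k < 2$, hence $k = 1$; and because for $h \in F_{A_0}(1)$ the measure $\omega_h^{A_0}$ is a positive constant multiple of $\cH^n$ on a hyperplane through the origin, $\cF_{\cS}(1) \subset \cF$, which completes the proof. The main obstacle is stage (c), where one must verify rigorously that the Lebesgue-normalized rescaling converges weakly to $f(\xi)\,|\cdot|$ as a bona fide tangent measure and that the hypotheses of Theorem~\ref{azmotovo2} and \Lemma{samew} truly apply with this normalization; this is a measure-theoretic computation which requires care because $\Gamma$ may a priori carry positive $(n+1)$-Lebesgue measure.
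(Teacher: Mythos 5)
Stages (a) and (b) are sound: Lemma \ref{l:samew} together with Theorem \ref{azmotovo2} puts every tangent of $\omega^+$ at $\xi\in\Gamma$ in $\mathscr{H}_{\cS}$, and Lemma \ref{l:taylor} with Theorem \ref{t:ttt} produces a homogeneous-polynomial tangent, so the minimal $k$ in Lemma \ref{logw} is well defined. Stage (c), however, has a real gap. The singularity of $\omega^+|_\Gamma$ with respect to $(n+1)$-Lebesgue measure (which you establish correctly via the blow-up contradiction) gives $\lim_{r\to 0}\omega^+(B(\xi,r))/r^{n+1}=\infty$ for $\omega^+$-a.e.\ $\xi\in\Gamma$. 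But the dimension formula from Lemma \ref{logw} only says $\log\omega^+(B(\xi,r))/\log r\to n+k-1$, i.e.\ $\omega^+(B(\xi,r))=r^{n+k-1+o(1)}$; this controls $\omega^+(B(\xi,r))/r^{n+1}$ only up to an $r^{\pm o(1)}$ factor. For $k=2$ the ratio is $r^{o(1)}$, which is fully compatible with the ratio blowing up, so your argument only excludes $k\ge 3$ and leaves $k=2$ open. This matters: for $h\in F_{A_0}(2)$ (e.g.\ $h(x,y)=xy$) the measure $\omega_h^{A_0}$ is supported on a genuine cone, not a hyperplane, so $\cF_{\cS}(2)\cap\cF=\varnothing$; if $k=2$ the conclusion $\Tan(\omega^+,\xi)\cap\cF\neq\varnothing$ would fail.

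The paper avoids this entirely by a shorter, different route: it takes one tangent $\omega_\infty\in\Tan(\omega^+,\xi)$, uses Theorem \ref{azmotovo2} to write $\omega_\infty|_{\frac12\bB}=-c_n(\nu_{\Omega_\infty^+}\!\cdot A_0\nabla u_\infty)\,d\cH^n|_{\partial^*\Omega_\infty^+}$, notes that $\{u_\infty=0\}$ is a nonzero real-analytic $n$-variety and hence $n$-rectifiable, so $\Tan(\omega_\infty,y)\subset\cF$ for $\cH^n$-a.e.\ $y$ in its support, and then invokes Theorem \ref{t:ttt} to place those flat tangents inside $\Tan(\omega^+,\xi)$. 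No Lebesgue decomposition or dimension estimate is required. You can repair your proof by replacing the dimension comparison in stage (c) with this rectifiability step applied to the $\omega_\infty$ already constructed in stage (a).
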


\begin{proof}
	
	We can pick $\xi\in \Gamma$ so that $\Tan(\omega^{+},\xi)\neq \varnothing$, let $\omega_{\infty}\in \Tan(\omega^{+},\xi)$, so there is $c_{j}>0$ and $r_{j}\downarrow 0$ so that $c_{j}T_{\xi,r_{j}}[\omega^{+}]\rightarrow \omega_{\infty}$. By Lemma \ref{l:samew}, we also have $c_{j}T_{\xi,r_{j}}[\omega^{-}]\rightarrow h(\xi)\omega_{\infty}$. By Lemma \ref{azmotovo2}, \eqref{zerozet} holds.

	In particular, $\frac{1}{2}\bB\cap \supp \omega_{\infty}$ is a smooth real analytic variety, and arguing as in \cite{AMTV16}, for example, one deduces that
	$$d\omega_\infty|_{\frac{1}{2}\bB}= -c_n(\nu_{\Omega_{\infty}^{+}} \!\cdot\! A_{0}\nabla u_{\infty})\, d\cH^n|_{
		\d^*\Omega_{\infty}^{+}\cap \frac{1}{2}\bB},$$
	where $A_{0}$ is the matrix from Lemma \ref{azmotovo2}, $\d^*\Omega_{\infty}^{+}$ is the reduced boundary of $\Omega_{\infty}^{+}=
	\{u_{\infty}>0\}$ and $\nu_{\Omega_{\infty}^{+}}$ is the measure theoretic outer unit normal.
	Hence,  
	$\omega_{\infty}$ is absolutely continuous with respect to surface measure of $\d\Omega_{\infty}^{+}$ in $\frac{1}{2}\bB$. Thus, since the tangent measure at $\cH^n$-almost every point of $\d\Omega_{\infty}^{+}$ is contained in $\cF$,  we can take another tangent measure of $\omega_{\infty}$ that is in $\cF$ and apply Theorem \ref{t:ttt} to conclude the proof.
\end{proof}
\vv

By Lemma \ref{logw}, we also have that $\dim \omega^{+}|_{E}=n$. It remains to show that, if $\Omega^{\pm}$ both have the CDC, then 
$\lim_{r\rightarrow 0}\Theta_{\d\Omega^{+}}^{\cF}(\xi,r)=0$ for $\omega^{+}$-a.e. $\xi\in E$. But this follows almost immediately because, for almost every $\xi\in \Gamma$ and any $r_{j}\downarrow 0$, we may pass to a subsequence so that, by Lemma \ref{l:azmoto} (a) and (f), $\lim_{j\rightarrow \infty}\Theta_{\d\Omega^{+}}^{\cF}(\xi,r_{j})=0$. This finishes the proof.

\end{proof}

\section{$\BMO$, $\VMO$, and vanishing $A_{\infty}$}\label{s:vmo}

In this section, we will prove some estimates relating the logarithm of a Radon-Nikodym derivative to the mutual absolute continuity continuity properties of two measures. We will apply them to the specific case of harmonic measure, but we will prove them for general measures.

\begin{definition}
Let $\mu$ be a non-negative measure on a metric space $X$. We say that a function $f \in L^1_{loc}(\mu)$ is of {\it bounded mean oscillation} and write $f \in \BMO(\mu)$, if there exists a constant $C>0$ such that
\begin{equation}\label{def:BMO}
\sup_{r \in (0, \infty)} \sup_{x \in \supp{\mu}} \,\,\avint_{B(x,r)} | f - f_{B(x,r)} | d\mu \leq C,
\end{equation}
where $f_{A}:= \avint_A f \,d\mu := \mu(A)^{-1} \int_A f\,d\mu$, for any $A \subset X$ with $\mu(A)>0$. 
We define the space of {\it vanishing mean oscillation} $\VMO(\mu)$ to be the closure in the $\BMO(\mu)$ norm of the set of bounded uniformly continuous functions defined on $X$. Equivalently, we say $f \in \VMO(\mu)$ if $f \in L^1_{loc}(\mu)$ and 
\begin{equation}\label{def:VMO}
 \lim_{r \to 0} \sup_{x \in \supp{\mu}} \,\,\avint_{B(x,r)} | f - f_{B(x,r)} | d\mu =0,
\end{equation}
\end{definition}
\begin{definition}
For two measures $\mu$ and $\nu$ on a metric space $X$, we will say $\nu\in A_{\infty}(\mu)$ if $\mu\ll\nu$ there is $K=K(\mu,\nu)$ so that for any ball $B$ centered on the support of $\mu$,
\begin{equation}\label{e:K}
\avint_{B} \frac{d\nu}{d\mu}d\mu\, \exp\ps{-\avint_{B}\log \frac{d\nu}{d\mu}d\mu}\leq K(\mu,\nu). 
\end{equation}
We will say $\nu\in A_{\infty}'(\mu)$ if there are $\ve,\delta\in (0,1)$ so that for all $B\subseteq X$ and $E\subseteq B$,
\begin{equation}
\label{e:afinclassic}
\frac{\mu(E)}{\mu(B)} <\delta \mbox{ implies } \frac{\nu(E)}{\nu(B)}<\ve.
\end{equation}
We will say $\nu\in VA_{\infty}(\mu)$ (or {\it vanishing $A_{\infty}$ with respect to $\mu$}) if 
\begin{equation}
\lim_{r\rightarrow 0}\sup_{\xi\in \supp \mu}\,\, \avint_{B} \frac{d\nu}{d\mu}d\mu\,  \exp\ps{-\avint_{B}\log \frac{d\nu}{d\mu}d\mu}=1
\end{equation}
and $\nu\in VA_{\infty}'(\mu)$ if for all $r>0$ there is $\ve_{r}\in (0,1)$ so that $\lim_{r\rightarrow 0} \ve_{r}= 0$ and $\delta_{r}>0$ so that for all balls $B\subset X$ with $r_{B}<r$ and $E\subset B$,
\begin{equation}\label{e:xafinB}
\frac{\mu(E)}{\mu(B)}<\delta_{r} \;\;\; \Rightarrow \;\;\; \frac{\nu(E)}{\nu(B)}<\ve_{r}.
\end{equation}
\end{definition}

In the case that $X=\bR^{n+1}$ and $\mu$ is equal to the $(n+1)$-dimensional Lebesgue measure, $A_{\infty}$ equivalence is the same as $A_{\infty}'$-equivalence, and this is from Reimann and Rychener \cite{RR75}, although it was also shown later by Hru\v{s}\v{c}ev in \cite{Hru84} and Garc\'{i}a-Cuerva and Rubio de Francia in \cite{GCRF}.

We recall a notion introduced by Korey \cite{Kor98}.

\begin{definition}
A probability space $(X,\mu)$ is {\it halving} if every subset $E\subset X$ of positive measure has a subset $F\subset E$ so that $\mu(F)=\mu(E)/2$.
\end{definition}

We will first focus on proving the following after a series of other lemmas.

\begin{lemma}\label{l:vmolemma}
Let $(X,\mu)$ be a metric measure space, $\nu\ll \mu$, and $f=\frac{d\nu}{d\mu}$. 
\begin{enumerate}
\item If $\nu\in A_{\infty}'(\mu)$ and $\log f \in \BMO(\mu)$, then $\nu \in A_{\infty}(\mu)$. If $X$ is also halving, then $\nu \in A_{\infty}(\mu)$ implies $\nu\in A_{\infty}'(\mu)$ and $\log f \in \BMO(\mu)$. 
\item If $\nu\in VA_{\infty}'(\mu)$ and $\log f \in \VMO(\mu)$, then $\nu \in VA_{\infty}(\mu)$. If $X$ is also halving, then $\nu \in VA_{\infty}(\mu)$ implies $\nu\in VA_{\infty}'(\mu)$ and $\log f \in \VMO(\mu)$. 
\end{enumerate}
\end{lemma}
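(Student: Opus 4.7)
The plan rests on an algebraic identity: writing $g = \log f$ and $g_B = \avint_B g \, d\mu$,
$$\avint_B f \, d\mu \cdot \exp\bigl(-\avint_B \log f \, d\mu\bigr) = \avint_B e^{g(x) - g_B} \, d\mu(x).$$
Jensen gives the lower bound $\avint_B e^{g - g_B} d\mu \geq 1$, so $A_\infty(\mu)$ is precisely a uniform upper bound on $\avint_B e^{g - g_B} d\mu$, and $VA_\infty(\mu)$ is the statement that this quantity tends to $1$ uniformly as $r_B \to 0$. The whole proof then reduces to relating $\avint_B e^{g-g_B}d\mu$ to $BMO$ norms of $g$ and to the $A_\infty'$ constants.

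For the easy direction $A_\infty \Rightarrow BMO$ (no halving needed), the pointwise inequality $x_+ \leq e^x$ yields $\avint_B (g - g_B)_+ d\mu \leq \avint_B e^{g - g_B} d\mu \leq K$; since $\avint_B(g - g_B)d\mu = 0$ we get $\avint_B |g - g_B| d\mu \leq 2K$, so $g \in BMO(\mu)$. For the vanishing analogue, I would combine $\avint_B(g-g_B)d\mu=0$ with the Taylor estimate $e^x - 1 - x \geq \tfrac{1}{2}x_+^2$ to deduce
$$\avint_B (g - g_B)_+^2 \, d\mu \leq 2\bigl(\avint_B e^{g - g_B} d\mu - 1\bigr),$$
so Cauchy--Schwarz gives $\avint_B |g - g_B| d\mu \leq 2\sqrt{2(K_r - 1)}$, where $K_r$ is the supremum of the $A_\infty$-quantity over balls of radius at most $r$; this proves $VA_\infty(\mu) \Rightarrow VMO(\mu)$.

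For $A_\infty' + BMO \Rightarrow A_\infty$ the plan is to apply the layer-cake formula
$$\avint_B e^{g - g_B} d\mu = 1 + \int_0^\infty e^s \, \frac{\mu(\{x \in B : g - g_B > s\})}{\mu(B)} \, ds$$
and derive exponential decay of the level sets with rate strictly greater than $1$. The BMO hypothesis gives Chebyshev-type control at scale $\|g\|_{BMO}$; iterating the absolute continuity inequality \eqref{e:afinclassic} of $A_\infty'$ on dyadic level sets of $f=e^g$ boosts this into exponential decay, making the integral convergent and uniformly bounded. For the reverse $A_\infty \Rightarrow A_\infty'$ under halving, given $E \subset B$ with $\mu(E)/\mu(B) < \delta$, Jensen gives $\int_B f \, d\mu \geq \mu(B) e^{g_B}$, so
$$\frac{\nu(E)}{\nu(B)} \leq \avint_B e^{g - g_B} \mathbf{1}_E \, d\mu \leq \Bigl(\avint_B e^{p(g - g_B)} d\mu\Bigr)^{1/p} \Bigl(\frac{\mu(E)}{\mu(B)}\Bigr)^{1 - 1/p}$$
by H\"older, and the first factor is bounded for some $p > 1$ via a John--Nirenberg/reverse-H\"older estimate for the $BMO$ function $g$; the halving hypothesis replaces doubling in the underlying Calder\'on--Zygmund stopping-time construction, which on a halving space can be carried out by splitting sets of positive $\mu$-measure in half iteratively.

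The vanishing statement (2) is obtained by tracking constants in the above arguments: as $r_B \to 0$, the $BMO$ norm of $g$ restricted to small balls shrinks by $VMO$, the $A_\infty'$ parameters improve uniformly by $VA_\infty'$, and so the John--Nirenberg exponent $p$ may be taken arbitrarily large, forcing both $\avint_B e^{g-g_B}d\mu \to 1$ and $\nu(E)/\nu(B) \to 0$ as $\mu(E)/\mu(B) \to 0$ uniformly in the scale. The principal technical obstacle throughout is establishing the John--Nirenberg inequality in the potentially non-doubling space $(X, \mu)$: the halving assumption is inserted precisely to enable the required Calder\'on--Zygmund decomposition without assuming any doubling of $\mu$, while in the forward direction the $A_\infty'$ condition itself furnishes the missing structural input needed to iterate toward exponential decay.
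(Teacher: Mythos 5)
Your elementary observations for the $A_\infty\Rightarrow\BMO$ and $VA_\infty\Rightarrow\VMO$ implications are correct and, in fact, cleaner than the paper's route: the pointwise bound $x_+\le e^x$, together with $\avint_B(g-g_B)\,d\mu=0$, directly gives $\avint_B|g-g_B|\,d\mu\le 2K$ with no covering lemma and no halving hypothesis at all, and the refinement $e^x-1-x\ge\tfrac12 x_+^2$ plus Cauchy--Schwarz gives $\avint_B|g-g_B|\,d\mu\le 2\sqrt{2(K_B-1)}$, which handles the vanishing case. By contrast the paper funnels this through Korey's \Theorem{kor}, which uses halving to get the sharper $\log 2K$ bound; your version suffices for the lemma as stated.

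The genuine gap is in the remaining two implications. For $A_\infty\Rightarrow A_\infty'$ you invoke a reverse-H\"older inequality $\avint_B e^{p(g-g_B)}\,d\mu\lesssim 1$ for some $p>1$, which is a John--Nirenberg statement, and you assert that halving ``replaces doubling in the underlying Calder\'on--Zygmund stopping-time construction.'' This does not hold. Halving only says that any set of positive $\mu$-measure can be split into two halves of equal $\mu$-measure; the resulting pieces carry no metric structure, and both the $\BMO$ and the $A_\infty'$ hypotheses are stated only on balls. A Calder\'on--Zygmund selection needs the stopping regions to again be balls (or cubes) with controlled radii so that the hypotheses can be reapplied, and that requires geometric doubling, not the purely measure-theoretic halving property. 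The paper avoids this entirely: Hru\v{s}\v{c}ev's argument (\Theorem{hru}) compares the $\mu$- and $\nu$-masses of $F$ and $B\setminus F$ directly through Jensen's inequality and an entropy-type algebraic identity, with no covering lemma, and works on any measure space. Likewise, your layer-cake-plus-iteration plan for $A_\infty'+\BMO\Rightarrow A_\infty$ is stuck at the same place: the ``boost'' to exponential decay requires re-running the $\BMO$ and $A_\infty'$ inputs on a sublevel set $\{g-g_B>ks_0\}$, which is not a ball, so there is no legitimate way to iterate. It is also unnecessary: \Lemma{afincafin} uses exactly the ingredients you name---Chebyshev from $\BMO$ to produce a small-$\mu$-measure exceptional set $F$, then $A_\infty'$ to control $\nu(F)$---but with a single split $B=F\cup G$, $G=\{|g-g_B|<\eta/\delta\}$. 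Bounding $f\le e^{g_B+\eta/\delta}$ pointwise on $G$ gives $\nu(B)\le e^{g_B+\eta/\delta}\mu(B)+\ve\,\nu(B)$ in one step, hence $\avint_B f\,d\mu\;e^{-g_B}\le e^{\eta/\delta}/(1-\ve)$; sending $\eta,\ve\to 0$ (\Corollary{afincafin}) then yields the $VA_\infty$ statement. You should replace the layer-cake iteration with this single-split argument and drop the John--Nirenberg step.
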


%Showing classic $A_{\infty}$ implies $A_{\infty}$ usually requires some doubling condition on $\mu$, but in the other direction we have the following estimate.
The first implication of the second half of (1) of the lemma follows from the following theorem.
\begin{theorem}[Theorem 1, \cite{Hru84}]\label{t:hru}
Suppose $\nu\ll \mu$, $B$ is a ball centered on $\supp \mu$, and 
\[
\avint_{B} \frac{d\nu}{d\mu}d\mu  \exp\ps{-\avint_{B}\log \frac{d\nu}{d\mu}d\mu}\leq C.\]
Then there are $\ve,\delta>0$ so that, for any $F\subset B\cap \supp \mu$, 
\begin{equation}\label{e:oldafin}
\frac{\mu(F)}{\mu(B)} <\delta \mbox{ implies } \frac{\nu(F)}{\nu(B)}<\ve . 
\end{equation}
Moreover, there is $\delta>0$ so that 
\begin{equation}\label{e:hru}
\frac{\mu(F)}{\mu(B)}<\delta \mbox{ implies }  \frac{\nu(F)}{\nu(B)}< 2(C-1).
\end{equation}
In particular, if $\nu\in A_{\infty}(\mu)$, then $\nu\in A_{\infty}'(\nu)$, and if $\nu\in VA_{\infty}(\mu)$, then $\nu\in VA_{\infty}'(\mu)$. 
\end{theorem}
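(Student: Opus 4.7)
The plan is to reduce to a clean normalization and then exploit the elementary inequality $e^g-1-g\geq 0$. First, replace $f:=d\nu/d\mu$ by $\tilde f := f\cdot\exp(-\avint_{B}\log f\,d\mu)$: since this is multiplication by a positive constant, it preserves both $\nu(F)/\nu(B)=\int_F f\,d\mu/\int_B f\,d\mu$ and the quantity on the left of the hypothesis, while arranging $\avint_B\log\tilde f\,d\mu=0$. The hypothesis then reads $\avint_B \tilde f\,d\mu\leq C$, and Jensen gives $\avint_B \tilde f = C_0 \in [1,C]$. Writing $g=\log\tilde f$, the resulting key identity is $\int_B (e^g-1-g)\,d\mu = (C_0-1)\mu(B)$.

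The workhorse observation is that on $\{g\geq M\}$, $e^g$ is dominated by $e^g-1-g$ up to a factor approaching $1$: the ratio $(1+g)e^{-g}$ is decreasing on $(0,\infty)$, so $e^g - 1 - g \geq (1-\theta_M)e^g$ with $\theta_M := (1+M)e^{-M}\to 0$ as $M\to\infty$. Hence $\int_{B\cap\{g\geq M\}}\tilde f\,d\mu \leq (1-\theta_M)^{-1}(C_0-1)\mu(B)$. Splitting $\int_F \tilde f$ according to whether $g<M$ or $g\geq M$ and using $\tilde f\leq e^M$ on the former gives
\[
\int_F \tilde f\,d\mu \leq e^M\delta\,\mu(B) + (1-\theta_M)^{-1}(C_0-1)\mu(B)
\]
whenever $\mu(F)<\delta\,\mu(B)$.

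To prove \eqref{e:hru}, I would divide by $\int_B\tilde f \geq \mu(B)$ and use $C_0\leq C$: since $(1-\theta_M)^{-1}(C-1)\searrow C-1 < 2(C-1)$ as $M\to\infty$, fix $M$ so this term is at most $2(C-1)-\eta$ for some $\eta>0$, then pick $\delta$ with $e^M\delta<\eta$. To prove \eqref{e:oldafin} for general $C$, divide instead by $\int_B\tilde f = C_0\mu(B)$ and use the monotonicity $(C_0-1)/C_0 \leq (C-1)/C$, yielding $\nu(F)/\nu(B) \leq e^M\delta/C_0 + (1-\theta_M)^{-1}(C-1)/C$; since $(C-1)/C<1$ there is enough slack to pick $M$ and then $\delta$ making the bound a fixed $\ve<1$ depending only on $C$. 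The two ``in particular'' clauses are then immediate: $\nu\in A_\infty(\mu)$ supplies the hypothesis on every ball with a single constant $C=K$, so \eqref{e:oldafin} yields $A_\infty'(\mu)$; and for $\nu\in VA_\infty(\mu)$, given $\eta>0$ the hypothesis holds with $C=1+\eta/2$ on all sufficiently small balls, and since $2(C-1)=\eta$, \eqref{e:hru} delivers $VA_\infty'(\mu)$.

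The main obstacle I anticipate is producing the sharp $2(C-1)$ coefficient in \eqref{e:hru}: it is crucial that the constant \emph{vanishes} as $C\to 1$, since this is precisely what drives the $VA_\infty$ implication, and this forces a careful balance between the cut-off $M$ and the threshold $\delta$. The rest of the argument reduces to elementary calculus around the function $(1+g)e^{-g}$ and the identity $\int_B(e^g-1-g)\,d\mu=(C_0-1)\mu(B)$.
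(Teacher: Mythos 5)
Your proof is correct and follows a genuinely different route from the one in the paper, which reproduces Hru\v{s}\v{c}ev's original argument almost verbatim. The paper splits $B=E\sqcup F$, sets $t=\nu(E)/\nu(F)$, applies Jensen separately on $E$ and $F$ to convert averages of $\log f$ into logs of averages, and then wrestles the resulting scalar inequality $\log C \geq \phi(\delta)+\log(1+t)+\tfrac{\mu(E)}{\mu(B)}\log\tfrac{1}{t}$ into a lower bound on $t$, giving $\nu(F)/\nu(B)=1/(1+t)\leq 1/(1+c)$; the sharp constant $2(C-1)$ falls out of a second reading of the same inequality. You instead normalize to $\avint_B\log\tilde f\,d\mu=0$, use the exact identity $\int_B(e^g-1-g)\,d\mu=(C_0-1)\mu(B)$, and control the mass of $F$ by a good/bad decomposition at height $M$, where the bad part is handled by the pointwise bound $e^g-1-g\geq(1-\theta_M)e^g$ on $\{g\geq M\}$ coming from the monotonicity of $(1+g)e^{-g}$. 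Your method is arguably more transparent for tracking how the constant degenerates to $0$ as $C\to 1$ (which is the content of the $VA_\infty\Rightarrow VA_\infty'$ implication), since the $(C_0-1)\mu(B)$ on the right of the identity makes the asymptotic sharpness manifest; Hru\v{s}\v{c}ev's argument hides this in the choice of $\alpha$ with $C^{(1+\alpha)/(1-\alpha)}-1=2(C-1)$. The trade-off is that your approach requires a two-parameter optimization ($M$ and then $\delta$), while the paper's is a single scalar estimate. Both give the statement; neither subsumes the other.
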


\begin{proof}
We follow the proof from \cite[Theorem 1]{Hru84}, since he proves \eqref{e:oldafin} but not \eqref{e:hru}. Let $\delta\in (0,1)$ to be chosen later, $F\subseteq B$ and suppose $\mu(F)=\delta \mu(B) $, we will pick $\delta$ later. Let $f=\frac{d\nu}{d\mu}$, $E=B\backslash F$, and set
\[
t=\frac{\nu(E)}{\nu(F)}.\]
Let $g_{B}=\avint_{B} fd\mu$. Then
\begin{equation}\label{e:hru1}
\log C
\geq (\log f^{-1} )_{B}+\log f_{B}
= \frac{\mu(E)}{\mu(B)} (\log f^{-1})_{E}+\frac{\mu(F)}{\mu(B)}(\log f^{-1})_{F}+\log f_{B}.
\end{equation}
By Jensen's inequality, for any set $S$
\[
(\log f^{-1})_{S}=-(\log f)_{S}\geq - \log f_{S}\]
and applying this to $S=E,F$, we have 
\begin{align*}
\log C
& \geq -\frac{\mu(E)}{\mu(B)} \log f_{E} - \frac{\mu(F)}{\mu(B)} \log f_{F}+\log f_{B}\\
& \geq -\frac{\mu(E)}{\mu(B)} \log f_{E} - \frac{\mu(F)}{\mu(B)} \log f_{E} + \frac{\mu(F)}{\mu(B)} \log \frac{\mu(F)}{\mu(E)} + \frac{\mu(F)}{\mu(B)} \log t   \\
& \qquad  +\log f_{B}\\
& = -\log f_{E} + \frac{\mu(F)}{\mu(B)} \log \frac{\mu(F)}{\mu(E)} + \frac{\mu(F)}{\mu(B)} \log t   +\log f_{B}.\\
\end{align*}
Now observe that
\[
-\log f_{E} =  \log \ps{ \frac{\mu(E)}{\mu(B)} \frac{\mu(B)}{\nu(B)} \frac{\nu(B)}{\nu(E)}}
=\log \frac{\mu(E)}{\mu(B)} - \log f_{B}  +\log\ps{1+\frac{1}{t}}\]
and so we have 
\begin{align*}
\log C
& \geq\log \frac{\mu(E)}{\mu(B)} +  \log \ps{1+\frac{1}{t}} + \frac{\mu(F)}{\mu(B)} \log \frac{\mu(F)}{\mu(E)} +  \frac{\mu(F)}{\mu(B)} \log t  \\
& =  \frac{\mu(F)}{\mu(B)} \log \frac{\mu(F)}{\mu(B)} + \frac{\mu(E)}{\mu(B)}\log \frac{\mu(E)}{\mu(B)}+  \log (1+t) + \frac{\mu(E)}{\mu(B)} \log \frac{1}{t} \\
& =\underbrace{\delta \log \delta + (1-\delta)\log (1-\delta) }_{=:\phi(\delta)} +\log (1+t) +\frac{\mu(E)}{\mu(B)} \log \frac{1}{t}. \\
\end{align*}

Note that $\lim_{\delta\rightarrow 0}\phi(\delta)=0$. Let $\alpha>0$ and pick $\delta>0$ so that $|\phi(\delta)|<\alpha \log C$. Then
\begin{equation}\label{e:logeq}
(1+\alpha)\log C 
\geq  \log (1+t) + \frac{\mu(E)}{\mu(B)}\log\frac{1}{t}.
\end{equation}
We restrict $\delta$ further so that $\delta<\alpha$. If $t>1$, then $\frac{\mu(E)}{\mu(B)}\log\frac{1}{t}\geq \log\frac{1}{t}$; otherwise, $\frac{\mu(E)}{\mu(B)}\log\frac{1}{t}\geq (1-\alpha)\log\frac{1}{t}$ since $\frac{\mu(E)}{\mu(B)}=1-\delta>1-\alpha$. Thus, in any case, we have
\begin{equation}
\frac{1+\alpha}{1-\alpha}\log C > \log\frac{1}{t}.
\end{equation}
This implies $t\geq c=C^{-(1+\alpha)/(1-\alpha)}$, and so 
\[
\nu(F)
=\frac{\nu(F)}{1+t}  + \frac{t\nu(F)}{1+t} 
=\frac{\nu(F)+\nu(E)}{1+t}
=\frac{\nu(B)}{1+t} \leq \frac{\nu(B)}{1+c} .\]
This proves \eqref{e:oldafin} with $\ve= (1+c)^{-1}$. To prove \eqref{e:hru}, we go back to \eqref{e:logeq} with the same bound on $\delta$. Then, since $t\geq c$,
\begin{align*}
(1+\alpha)\log C 
& \geq  \log (1+t) + \frac{\mu(E)}{\mu(B)}\log\frac{1}{t}
= \log\ps{1+\frac{1}{t}} +  \frac{\mu(F)}{\mu(B)}\log t \\
& \geq \log\ps{1+\frac{1}{t}}-\delta \frac{1+\alpha}{1-\alpha} \log C.
\end{align*}
Since $\delta<\alpha$, this implies 
\begin{align*}
 \log\ps{1+\frac{1}{t}} & < \ps{1+\alpha +\delta \frac{1+\alpha}{1-\alpha}} \log C
 =(1+\alpha)\ps{1+\frac{\delta}{1-\alpha}}\log C \\
 & <\frac{1+\alpha}{1-\alpha} \log C,
 \end{align*}
and so
\[
C^{(1+\alpha)/(1-\alpha)} -1> \frac{1}{t}.\]
We now pick $\alpha$ so that $C^{(1+\alpha)/(1-\alpha)} -1=2(C-1)$, and we are done.

%
%
%
%If $C<3/2$ and $t\geq 1/(C-1)$, then $\log (C-1)<0$, so picking $\delta<-\frac{\log C}{2\log (C-1)}$, we get 
%\[
%\frac{1}{2} \log C \geq   \delta \log (C-1)+ \log \ps{1+\frac{1}{t}}
%> -\frac{1}{2}\log C + \log \ps{1+\frac{1}{t}}\]
%and so 
%\[
%\log C > \ps{1+\frac{1}{t}}\]
%hence
%\[
%C>e^{1+\frac{1}{t}} \geq 1+\frac{1}{t}
%\]
%\[
%\frac{1}{4} \log C \geq \log\ps{1+\frac{1}{t}}\]
%thus
%\[
%C\geq \ps{1+\frac{1}{t}}^{4} \geq 1+\frac{4}{t}.\]
%Therefore, we have 
%\[
%\frac{\nu(F)}{\nu(B)}\leq \frac{1}{t} \leq \frac{C-1}{4}.\]
\end{proof}

%For the case of $\mu$ equal to Lebesgue measure, this is \cite[Theorem 1]{Hru84} (in particular, look at equations (5) and (7)), but the proof for general $\mu$ is exactly the same. 

Showing that $VA_{\infty}$ implies the logarithm of the density is $\VMO$ was shown by Korey.

\begin{theorem}[Theorem 4 and Section 3.5 \cite{Kor98}] \label{t:kor}
There is a universal constant $c>0$ so that the following holds. Let $(X,\mu)$ be a halving probability space, and suppose that
\begin{equation}
\ps{\int_{X} \exp gd\mu}/ \exp\ps{\int_{X} gd\mu }\leq K.
\end{equation}
Then
\begin{equation}
\int_{X} \av{g-\int_X gd\mu}d\mu\leq \log 2K
\end{equation}
and as $K\rightarrow 1$,
\begin{equation}\label{eq:korKto1}
\int_{X} \av{g-\int_X gd\mu}d\mu\leq c\sqrt{K-1}
\end{equation}
\end{theorem}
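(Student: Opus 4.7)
The plan is to normalize and then use the halving hypothesis to reduce the integral inequality to a one-dimensional estimate on the decreasing rearrangement of $g$. Both the quantity $K$ and the integral $\int_X|g-\bar g|\,d\mu$ are invariant under the shift $g\mapsto g+c$, so I first assume without loss of generality that $\bar g:=\int_X g\,d\mu=0$; the hypothesis then reads $\int_X e^g\,d\mu\leq K$.

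For the first inequality I would exploit halving to build a non-increasing rearrangement $g^{*}\colon[0,1]\to\bR$ that is equimeasurable with $g$ with respect to Lebesgue measure---this is precisely where the halving assumption enters, since it guarantees the existence of sub-level sets of every prescribed measure, so the standard construction of a rearrangement goes through. Then $\int_{0}^{1} g^{*}\,dt=0$, $\int_{0}^{1} e^{g^{*}}\,dt\leq K$, and $\int_{0}^{1}|g^{*}|\,dt=\int_X|g-\bar g|\,d\mu$. Let $\alpha\in[0,1]$ be the point where $g^{*}$ changes sign and set $A:=\int_{0}^{\alpha} g^{*}\,dt$, so that $\int_{\alpha}^{1} g^{*}\,dt=-A$ and $\int_{0}^{1}|g^{*}|\,dt=2A$. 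Applying Jensen's inequality separately on $[0,\alpha]$ and $[\alpha,1]$ yields
\[
K\;\geq\;\int_{0}^{1} e^{g^{*}}\,dt\;\geq\;\alpha\,e^{A/\alpha}+(1-\alpha)\,e^{-A/(1-\alpha)}.
\]
In the balanced case $\alpha=1/2$ the right-hand side collapses to $\cosh(2A)$, and inverting gives $2A\leq\operatorname{arccosh}(K)\leq\log(2K)$; the halving property is used at this point to reduce to, or dominate, the balanced split---either by choosing the level of the rearrangement so that the two contributions equalize, or by a convexity argument on the admissible rearrangements.

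For the $K\to 1$ regime, I would expand
\[
K-1\;=\;\int_X\ps{e^g-1-g}\,d\mu,
\]
(using $\bar g=0$, so $\int g\,d\mu=0$) and exploit the pointwise lower bound $e^x-1-x\geq c(M)\,x^2$ valid on any interval $|x|\leq M$. On the truncated set $\{|g|\leq M\}$, Cauchy--Schwarz gives
\[
\int_{\{|g|\leq M\}}|g|\,d\mu\;\leq\;\ps{\int g^{2}\,d\mu}^{1/2}\;\leq\;\ps{\frac{K-1}{c(M)}}^{1/2},
\]
while on $\{|g|>M\}$, Chebyshev's inequality against $e^g$, combined with the a priori $L^{1}$ bound $\int|g|\,d\mu\lesssim \log(2K)$ just established, handles the tail. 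Choosing $M=M(K)\to\infty$ suitably slowly as $K\to 1$ makes both contributions $O(\sqrt{K-1})$.

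The main obstacle is the sharp balancing step in the first inequality. The naive Jensen estimate on the rearrangement gives an $\alpha$-dependent bound whose worst case (as $\alpha\to 1$) is asymptotically larger than $\log(2K)$, so one cannot simply apply Jensen on $[0,\alpha]$ alone; the halving hypothesis is essential to legitimize the reduction to the balanced split $\alpha=1/2$, for example via a symmetrization argument or a convex optimization over admissible two-level rearrangements with prescribed mean and prescribed $\int e^{g}\,d\mu$. A secondary, more technical obstacle in the $\sqrt{K-1}$ step is to control the tail of $g$ without losing the square root, which is why the polynomial bound must be derived first and then fed into the Chebyshev argument of the second stage through an appropriate choice of the cutoff level $M=M(K)$.
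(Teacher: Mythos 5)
The paper cites this result from \cite{Kor98} and does not prove it, so there is no in-paper argument to compare against; I review your sketch on its own terms. Both gaps you flag are genuine, not cosmetic, and neither can be closed in the way you suggest.

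For the $\log 2K$ bound, you evaluate Jensen at the sign-change point $\alpha$ of the mean-centred rearrangement and correctly note that the bound degrades to $2\log K$ as $\alpha\to 1$, which exceeds $\log 2K$ once $K>2$. Your hope that halving would ``legitimize the reduction to the balanced split $\alpha=1/2$'' cannot work: the level at which $g-\int_X g\,d\mu$ changes sign has nothing to do with the measure $1/2$, and once you have passed to the rearrangement the halving hypothesis has already been spent. What halving actually supplies is a \emph{median}-based split. Since $(X,\mu)$ is halving, one may choose $T$ with $\{g>m\}\subseteq T\subseteq\{g\geq m\}$ and $\mu(T)=\tfrac12$, $m$ a median of $g$. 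Writing $\bar g=\int_X g\,d\mu$,
\[
\int_X|g-m|\,d\mu=\int_T(g-m)\,d\mu-\int_{T^c}(g-m)\,d\mu=2\int_T g\,d\mu-\bar g=\avint_T g\,d\mu-\bar g,
\]
and Jensen on $T$ (with the normalized measure $2\mu|_T$) gives $\avint_T g\,d\mu\leq\log\avint_T e^g\,d\mu\leq\log\bigl(2\int_X e^g\,d\mu\bigr)$, while the hypothesis gives $\bar g\geq\log\int_X e^g\,d\mu-\log K$; subtracting yields $\int_X|g-m|\,d\mu\leq\log 2K$. (Passing from the median-centred to the mean-centred oscillation costs at most a factor of $2$; it is worth checking Korey's precise normalization before insisting on the exact constant in the statement.) This median split is what halving is for, and it is a different decomposition from the one you built.

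For the $\sqrt{K-1}$ bound, truncating at $M$ and applying $e^x-1-x\geq c(M)x^2$ on $[-M,M]$ loses a $\sqrt{M}$ because $c(M)\sim 1/M$ (the minimum of $(e^x-1-x)/x^2$ over $[-M,M]$ is attained at $x=-M$), and, as you observe, the Chebyshev tail estimate then forces $M\sim\log\tfrac1{K-1}$, giving only $\sqrt{(K-1)\log\tfrac1{K-1}}$. The remedy is to work one-sidedly: with $\bar g=0$ one has $\int_X|g|\,d\mu=2\int_X g^+\,d\mu$, and $e^x-1-x\geq\tfrac12 x^2$ holds for \emph{all} $x\geq0$ with no cutoff, so
\[
K-1\geq\int_X(e^g-1-g)\,d\mu\geq\int_{\{g>0\}}(e^g-1-g)\,d\mu\geq\tfrac12\int_{\{g>0\}}g^2\,d\mu,
\]
whence by Cauchy--Schwarz $\int_X g^+\,d\mu\leq\bigl(\int_{\{g>0\}}g^2\,d\mu\bigr)^{1/2}\leq\sqrt{2(K-1)}$ and $\int_X|g|\,d\mu\leq2\sqrt2\,\sqrt{K-1}$. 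No truncation, no tail, and (notice) no use of halving is needed in this regime.
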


\begin{lemma}\label{l:afincafin}
let $(X,\mu)$ be a metric probability space and suppose $\nu\ll\mu$. Let $\ve,\delta\in (0,1)$ be so that for any $E\subset X$,
\begin{equation}\label{e:xafin}
\mu(E)<\delta\mu(X) \;\;\; \Rightarrow \;\;\; \nu(E)<\ve \nu(X).
\end{equation}
Set $f=\frac{d\nu}{d\mu}$ and assume
\begin{equation}\label{e:f<eta}
\avint_{X}\av{\log f - \avint_{X} \log f d\mu} d\mu <\eta.
\end{equation}
Then
\begin{equation}\label{e:e^c+eta/d}
1\leq \avint_{X} fd\mu \exp\ps{-\avint_{X}\log fd\mu} \leq \frac{e^{\eta/\delta}}{1-\ve}. 
\end{equation}
\end{lemma}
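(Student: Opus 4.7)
The lower bound is immediate from Jensen's inequality applied to the concave function $\log$: since $\mu(X)$ is a probability measure (or after normalizing), $\avint_X \log f\, d\mu \leq \log \avint_X f\, d\mu$, which rearranges to the claimed lower bound $1 \leq \avint_X f\, d\mu \cdot \exp(-\avint_X \log f\, d\mu)$.

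For the upper bound, the strategy is to use the $\VMO$-type control \eqref{e:f<eta} to get a distributional estimate on $\log f$ around its average, combine this with the $A_\infty'$-type assumption \eqref{e:xafin} to isolate where $f$ can be large, and then estimate $f$ pointwise on the good set. Concretely, let $m = \avint_X \log f\, d\mu$ and $g = \log f - m$, so that $\avint_X g\, d\mu = 0$ and $\avint_X |g|\, d\mu < \eta$. For the level set $E_t = \{g > t\}$ with $t > 0$, Markov's inequality gives
\[
\mu(E_t) \leq \frac{1}{t}\int_X g^+\, d\mu \leq \frac{\eta\, \mu(X)}{2t},
\]
where I used $\int g^+ = \int g^- = \tfrac12 \int |g|$ coming from $\int g\, d\mu = 0$.

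Now choose $t_0 = \eta/\delta$. Then $\mu(E_{t_0}) \leq \tfrac12 \delta \mu(X) < \delta \mu(X)$, so the assumption \eqref{e:xafin} yields $\nu(E_{t_0}) < \varepsilon \nu(X)$, i.e., $\int_{E_{t_0}} f\, d\mu < \varepsilon \int_X f\, d\mu$, and hence
\[
(1-\varepsilon)\int_X f\, d\mu \;\leq\; \int_{X \setminus E_{t_0}} f\, d\mu.
\]
On $X \setminus E_{t_0}$ we have $g \leq t_0$, so $f = e^{g+m} \leq e^{t_0 + m}$, giving $\int_{X \setminus E_{t_0}} f\, d\mu \leq e^{t_0 + m}\, \mu(X)$. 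Dividing by $(1-\varepsilon)\mu(X)$ and multiplying by $e^{-m}$ yields the claimed upper bound $e^{\eta/\delta}/(1-\varepsilon)$.

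The only step requiring a small amount of care is the bookkeeping that converts the $L^1$ oscillation bound \eqref{e:f<eta} into a strict measure bound $\mu(E_{t_0}) < \delta\mu(X)$ so that the hypothesis \eqref{e:xafin} applies; using the factor $\tfrac12$ coming from $\int g^+ = \tfrac12 \int |g|$ is what makes the choice $t_0 = \eta/\delta$ succeed rather than merely $t_0 = \eta/(2\delta)$. No other obstacle arises.
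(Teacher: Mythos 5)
Your argument is correct and essentially identical to the paper's: both apply Chebyshev/Markov to the oscillation bound \eqref{e:f<eta} to show a bad set has $\mu$-measure below $\delta$ (the paper uses the two-sided set $\{|\log f - c| \geq \eta/\delta\}$, you the one-sided $\{\log f - m > \eta/\delta\}$, a cosmetic difference since only the upper tail is used to bound $f$), feed that into \eqref{e:xafin}, and bound $f \leq e^{m+\eta/\delta}$ on the complement. One small remark: the factor $\tfrac12$ from the mean-zero identity $\int g^+ = \tfrac12\int|g|$ that you highlight as the key bookkeeping step is not actually needed, since the crude bound $\int g^+ \leq \int |g| < \eta\mu(X)$ already yields $\mu(E_{t_0}) < \delta\mu(X)$ strictly with the same $t_0 = \eta/\delta$ (the strictness of \eqref{e:f<eta} is what delivers the strict inequality).
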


\begin{proof}
Without loss of generality, we may assume $\mu(X)=\nu(X)=1$. Let $\ve>0$ and pick $\delta$ so that \eqn{xafin} holds.

%Since $\log f\in BMO$, there is $r_{0}>0$ so that if $B=B(\xi,r)$ and $r<r_{0}$,

Let $c=\int_{X}\log f\,d\mu$ and
\begin{equation}\label{e:gdef}
G=\{|\log f-c|< \rho:=\eta\delta^{-1}\}, \;\;\; F=G^{c}.\end{equation}
Then, by Chebysev's inequality and \eqn{f<eta}, we infer that $\mu(F)<\delta$, which, in turn, by \eqn{xafin}, implies
\begin{equation}\label{e:f-}
\nu(F)<\ve.
\end{equation}
Moreover, on the set $G$,
\[
\frac{\eta}{\delta}>|\log f-c|\]
and so
\begin{equation}\label{e:ong}
f\leq e^{c+\eta/\delta} \mbox{ on } G.
\end{equation}
%\max\{f,f^{-1}\}\leq \min\{e^{c}, e^{-c}\}\, e^{\eta/\delta}\mbox{ on }G.\end{equation}
%=\av{\log\frac{f}{e^{c}}}>\frac{1}{2} \max\ck{\av{\frac{f}{e^{c}}-1},\av{\frac{e^{c}}{f}-1}}. \]
Then, 
\begin{align*}
1=\frac{\nu(X)}{\mu(X)}=\int_{X} fd\mu
& \stackrel{\eqn{ong}}{\leq} \ps{\int_{G} e^{c+\eta/\delta}d\mu+\int_{F}fd\mu}\\
& \leq e^{c+\eta/\delta}+\nu(F)
\stackrel{\eqn{f-}}{<}e^{c+\eta/\delta}+ \ve.
\end{align*}
Thus,
\[
(1-\ve)\int_{X} fd\mu  = 1-\ve < e^{c+\eta/\delta}\]
and so
\[
\int_{X}fd\mu < \frac{e^{c+\eta/\delta}}{1-\ve}.\]
This and Jensen's inequality imply
\begin{equation}\label{e:ec}
1\leq e^{-c}\int_{X}fd\mu<e^{-c} \frac{1}{1-\ve}e^{c+\eta/\delta}=\frac{1}{1-\ve}e^{\eta/\delta}.
\end{equation}

\end{proof}

\begin{corollary}\label{c:afincafin}
let $(X,\mu)$ be a metric measure space. Set $f=\frac{d\nu}{d\mu}$ and assume that for some sequence of balls $B_{j}$ in $X$,
\begin{equation}\label{e:f<etaseq}
\lim_{j} \avint_{B_{j}}\av{\log f - \avint_{B_{j}} \log f d\mu} d\mu =0.
\end{equation}
and for all $\ve>0$ there is $\delta>0$ so that for $j$ sufficiently large,
\begin{equation}\label{e:afinsequence}
\frac{\mu(E)}{\mu(B_{j})} <\delta \mbox{ implies } \frac{\nu(E)}{\nu(B_{j})} <\ve.
\end{equation}
Then
\begin{equation}\label{e:lmsupe}
\lim_{j\rightarrow \infty} \avint_{B_{j}} fd\mu \exp\ps{-\avint_{B_{j}}\log fd\mu} =1. 
\end{equation}
In particular, if $\log f\in \VMO(d\mu)$ and $\nu\in VA_{\infty}'(\mu)$, then $\nu\in VA_{\infty}(\mu)$.
\end{corollary}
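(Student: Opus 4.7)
The plan is to reduce \Corollary{afincafin} directly to \Lemma{afincafin} by a normalization argument on each ball $B_j$. For each $j$, I would introduce the probability measures $\mu_j := \mu|_{B_j}/\mu(B_j)$ and $\nu_j := \nu|_{B_j}/\nu(B_j)$ on the ball $B_j$, whose Radon--Nikodym derivative is $f_j := d\nu_j/d\mu_j = f \cdot \mu(B_j)/\nu(B_j)$. The first routine arithmetic to verify is that the quantity of interest is invariant under this rescaling: because $\log f_j = \log f + \log(\mu(B_j)/\nu(B_j))$ and $\avint_{B_j} f\,d\mu = \nu(B_j)/\mu(B_j)$, the multiplicative constant and its logarithm cancel in the identity
\[
\int_{B_j} f_j\,d\mu_j \cdot \exp\!\left(-\int_{B_j}\log f_j\,d\mu_j\right) = \avint_{B_j} f\,d\mu \cdot \exp\!\left(-\avint_{B_j}\log f\,d\mu\right).
\]
Similarly, the mean oscillation of $\log f_j$ on $(B_j,\mu_j)$ equals $\eta_j := \avint_{B_j}\av{\log f - \avint_{B_j}\log f\,d\mu}\,d\mu$, which tends to $0$ by \eqn{f<etaseq}.

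Next, fix $\varepsilon \in (0,1)$ and invoke \eqn{afinsequence} to produce $\delta = \delta(\varepsilon) > 0$ such that, for $j$ large, $\mu(E)/\mu(B_j) < \delta$ implies $\nu(E)/\nu(B_j) < \varepsilon$ for every $E \subset B_j$. Translated to the probability space $(B_j,\mu_j)$, this is precisely hypothesis \eqn{xafin} of \Lemma{afincafin} applied to $\mu_j,\nu_j$. Applying that lemma with parameter $\eta = \eta_j$ therefore yields, for all sufficiently large $j$,
\[
1 \leq \avint_{B_j} f\,d\mu \cdot \exp\!\left(-\avint_{B_j}\log f\,d\mu\right) \leq \frac{e^{\eta_j/\delta}}{1-\varepsilon}.
\]
Because $\delta = \delta(\varepsilon)$ is fixed while $\eta_j \to 0$, the right-hand side converges to $1/(1-\varepsilon)$. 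Taking $\limsup_{j\to\infty}$ first and then letting $\varepsilon \to 0$ gives \eqn{lmsupe}; the lower bound by $1$ is just Jensen's inequality and forces the $\limsup$ to be an honest limit equal to $1$.

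Finally, the ``in particular'' assertion follows by unpacking the definitions of $\VMO(d\mu)$ and $VA_\infty'(\mu)$: given any sequence of balls $B_j$ centered on $\supp\mu$ with $r_{B_j} \to 0$, the VMO hypothesis supplies \eqn{f<etaseq} uniformly in the center, while the $VA_\infty'$ hypothesis, via \eqn{xafinB}, furnishes, for each $\varepsilon$, a single $\delta_r$ (depending only on the uniform radius bound $r$) that works for all such balls, hence \eqn{afinsequence}. No single step should be a serious obstacle; the only care required is the bookkeeping to confirm that the algebraic cancellations in the normalization are correct and that one chooses $\delta$ \emph{before} sending $j\to\infty$ so that the bound $e^{\eta_j/\delta}\to 1$ is valid.
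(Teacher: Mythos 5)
Your proof is correct and follows essentially the same route as the paper's: invoke Lemma \ref{l:afincafin} on each normalized ball $(B_j,\mu_j)$, get the upper bound $e^{\eta/\delta}/(1-\ve)$ for the relative entropy expression, and let the oscillation parameter and then $\ve$ go to zero. The only cosmetic difference is that you plug $\eta = \eta_j$ directly into the lemma and observe $\eta_j/\delta \to 0$, whereas the paper's proof fixes an auxiliary threshold $\eta$ (so that $\eta_j < \eta$ for $j$ large), takes $\limsup_j$, and then sends $\eta\to 0$ before $\ve\to 0$; your version eliminates one limiting parameter and is a touch cleaner, and your explicit verification of the normalization invariance makes the reduction to the probability-space lemma fully legible.
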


\begin{proof}
Let $\ve,\eta>0$ and let $\delta>0$ be so that \eqref{e:afinsequence} holds for $j$ large enough. Then \eqn{f<eta} holds (with $B_{j}$ in place of $X$ and $\mu|_{B_{j}}$ in place of $\mu$). Then \eqn{e^c+eta/d} must hold. In particular,
\[
\limsup_{j\rightarrow \infty}  \avint_{B_{j}} fd\mu \exp\ps{-\avint_{B_{j}}\log fd\mu}\leq \frac{e^{\eta/\delta}}{1-\ve}.\]
As $\ve$ and $\delta$ did not depend on $\eta$, we can send $\eta\rightarrow 0$, and then $\ve\rightarrow 0$ since $\delta$ now vanishes from the inequality, and then we obtain \eqn{lmsupe}. 
\end{proof}

\begin{proof}[Proof of \Lemma{vmolemma}]
The secolnd halves of (1) and (2)  follow from Theorems \ref{t:hru} and \ref{t:kor}. The first half of (1) follows from Lemma \ref{l:afincafin}, and the first half of (2) is from  \Corollary{afincafin}.
\end{proof}

\begin{lemma}\label{l:whalving}
Let $\Omega\subset \bR^{n+1}$ be any connected domain and $\omega=\omega_{\Omega}^{L_{A},x}$ where $A\in \cA(\Omega)$. Then $\omega$ is halving.
\end{lemma}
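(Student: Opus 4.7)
The plan is to reduce the halving property to non-atomicity, and then prove non-atomicity via a removable-singularity argument for bounded $L$-harmonic functions. For the second reduction I would invoke the classical theorem of Sierpi\'nski: any non-atomic finite positive Borel measure $\mu$ on a measurable space has the property that for every measurable set $E$ the range $\{\mu(F):F\subseteq E\text{ measurable}\}$ equals the interval $[0,\mu(E)]$; applying this to $\omega$ would furnish the desired $F\subset E$ with $\omega(F)=\omega(E)/2$, so the real content of the lemma is to show that $\omega$ has no point masses.

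To prove $\omega$ is non-atomic, I would argue by contradiction: suppose $\omega^{L_A,x}(\{y\})>0$ for some $y\in\d\Omega$ and some (equivalently, by Harnack, every) $x\in\Omega$, and set $u(x):=\omega^{L_A,x}(\{y\})$. Approximate $\chi_{\{y\}}$ from above by a decreasing sequence $\phi_k\in C(\d\Omega)$, for instance $\phi_k(z):=(1-k\,|z-y|)_{+}$; each Perron solution $H_{\phi_k}$ is $L$-harmonic in $\Omega$ and $H_{\phi_k}\searrow u$ pointwise by dominated convergence, so by the Harnack principle for monotone limits $u$ is itself $L$-harmonic and bounded by $1$, and by the strong maximum principle $u>0$ throughout the connected domain $\Omega$. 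For every Wiener-regular $z\in\d\Omega\setminus\{y\}$, picking $k$ so large that $\phi_k(z)=0$ gives $0\le u(x)\le H_{\phi_k}(x)\to 0$ as $x\to z$; since by Kellogg's theorem the set of irregular boundary points is $\omega$-null, we conclude that $u$ has boundary limit $0$ at quasi-every point of $\d\Omega\setminus\{y\}$.

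The remaining step is a removable-singularity argument. Single points have zero $L$-capacity in $\bR^{n+1}$ (for $n+1\ge 2$, by comparison of $L$-capacity with classical Newtonian capacity using the uniform ellipticity of $A$), so a bounded $L$-harmonic function whose only possible non-zero boundary values are concentrated on the polar set $\{y\}$ must vanish identically. Concretely, one can subtract $\varepsilon g$ where $g$ is a positive $L$-superharmonic barrier blowing up at $y$ (built from a local comparison to a constant-coefficient operator and truncation), apply the standard maximum principle to $u-\varepsilon g$ on $\Omega$, and let $\varepsilon\downarrow 0$ to conclude $u\le 0$. Combined with $u\ge 0$ this forces $u\equiv 0$, contradicting $u>0$. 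The main technical obstacle is precisely this removability step: we must know that points are polar for divergence-form operators with merely measurable coefficients and construct a suitable superharmonic barrier near $y$; once this is in hand, non-atomicity and then Sierpi\'nski's theorem close the argument.
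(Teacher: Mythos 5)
Your proposal is correct but takes a genuinely different route. The paper proves halving directly: if $E$ with $\omega(E)>0$ had no halving subset, then for every direction $v\in\bS^{n}$ the map $t\mapsto\omega(\{x\cdot v\geq t\}\cap E)$ would have to be discontinuous (otherwise the intermediate value theorem would produce a halving subset), yielding uncountably many hyperplanes $V_{v}$ with $\omega(V_{v}\cap E)>0$; pairwise intersections of such hyperplanes are $(n-1)$-dimensional affine subspaces, hence polar, hence $\omega$-null, so one extracts infinitely many disjoint pieces of $E$ each with measure bounded below, contradicting $\omega(E)\leq 1$. Your route --- reduce halving to non-atomicity via the Sierpi\'{n}ski intermediate-value theorem for non-atomic finite measures, then kill atoms --- is cleaner at the top level and isolates what is essentially the same ingredient: polar sets are $\omega$-null. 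The paper applies that ingredient to $(n-1)$-planes; you apply it to singletons.

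That said, your execution of the non-atomicity step is much heavier than it needs to be, and the ``main technical obstacle'' you flag (building an $L$-superharmonic barrier to prove removability of a point singularity) is unnecessary. Inside the very proof of this lemma the paper already cites the chain: sets of $2$-capacity zero are polar \cite[Theorem 10.1]{HKM}, and polar sets have $L_{A}$-harmonic measure zero \cite[Theorem 11.15]{HKM}. A singleton in $\bR^{n+1}$ with $n+1\geq 2$ has $2$-capacity zero (it lies inside an $(n-1)$-plane, or apply \cite[Theorem 2.27]{HKM} directly), so $\omega^{L_{A},x}(\{y\})=0$ at once, with no Perron approximations, Kellogg-type exceptional sets, or hand-built barriers. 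You are in effect re-proving a removable-singularity theorem that you could simply cite; replacing your second and third paragraphs with this one-line observation closes the argument cleanly.
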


\begin{proof}
Suppose there is $E\subset \d\Omega$ with $\omega(E)>0$ that is not halving. For $t\in \bR$ and $v\in \bS^{n-1}$, let $H_{t,v}=\{x\in \bR^{n+1}: x\cdot v \geq t\}$. Then $t\mapsto \omega(H_{t,v}\cap E)$ is not continuous for any $v\in \bS^{n}$, and so there is $t_{v}$ so that $\omega(\d H_{t_{v},v}\cap E)>0$. Let $V_{v}=\d H_{t_{v},v}$, which is an $n$-dimensional plane. Since $\bS^{n}$ is uncountable, there is $\ve>0$ so that $\omega( V_{v}\cap E)>\ve>0$ for all $v$ in some uncountable set $A\subset \bS^{n}$. Let $A'\subset A$ be countable. Note that for any $u,v\in A'$ distinct, $V_{u}\cap V_{v}$ is an $(n-1)$-dimensional subspace. This implies $V_{u}\cap V_{v}$ has $2$-capacity zero \cite[Theorem 2.27]{HKM}, hence is a polar set for $\omega$ \cite[Theorem 10.1]{HKM} and polar sets have $L_{A}$-harmonic measure zero \cite[Theorem 11.15]{HKM}. Thus, if we set
\[
W_{u}:=V_{u}\backslash \bigcup_{v\in A' \atop v\neq u}V_{v},\]
 we have that $\omega(W_{u}\cap E)=\omega(V_{u}\cap E)\geq \ve$ and $W_{u}$ are mutually disjoint. But since $A'$ is infinite, this implies $\omega(E)=\infty$, which is a contradiction.
\end{proof}

\begin{lemma}\label{l:harmvmolemma}
Let $\Omega^{+}\subset \bR^{n+1}$ be a connected domain with connected complement $\Omega^{-}=\ext(\Omega^{+})$ and let $L_A$ be a uniformly elliptic operator with real coefficients. If $\omega^{\pm}$ denote the $L_A$-harmonic measures of  $\Omega^{\pm}$ with fixed poles $x^\pm \in \Omega^\pm$, then $\omega^{-}\in A_{\infty}(\omega^{+})$ if and only if $\omega^{-}\in A_{\infty}'(\omega^{+})$ and $\log \frac{d\omega^{-}}{d\omega^{+}} \in \BMO(d\omega^{+})$. Moreover, $\omega^{-}\in VA_{\infty}(\omega^{+})$ if and only if $\omega^{-}\in VA_{\infty}'(\omega^{+})$ and $\log \frac{d\omega^{-}}{d\omega^{+}} \in \VMO(d\omega^{+})$.
\end{lemma}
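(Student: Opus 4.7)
The statement is essentially a specialization of the abstract equivalence in \Lemma{vmolemma} to the setting of elliptic harmonic measure, so the plan is simply to verify that the halving hypothesis needed in \Lemma{vmolemma} is satisfied in this PDE context, and then invoke that lemma with $\mu=\omega^{+}$ and $\nu=\omega^{-}$.

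First I would recall that \Lemma{vmolemma} already gives one direction for free, with no halving assumption: the conjunction $\omega^{-}\in A'_{\infty}(\omega^{+})$ together with $\log f\in \BMO(d\omega^{+})$ (resp.\ the vanishing versions) implies $\omega^{-}\in A_{\infty}(\omega^{+})$ (resp.\ $VA_{\infty}(\omega^{+})$), by Lemma \ref{l:afincafin} and Corollary \ref{c:afincafin} applied on each ball $B$ centered on $\supp\omega^{+}$. So the only thing to establish is the reverse direction.

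For the reverse direction, note that \Lemma{vmolemma} provides it whenever the underlying probability space is halving. Since $\Omega^{+}$ is connected, \Lemma{whalving} applies to $\omega=\omega^{+}$ and shows that every subset of $\supp\omega^{+}$ of positive $\omega^{+}$-measure contains a subset of exactly half its measure. This halving property is trivially inherited by the restriction to any Borel set of positive measure; in particular, for every ball $B$ centered on $\supp\omega^{+}$ with $\omega^{+}(B)>0$, the normalized space $(B\cap\supp\omega^{+},\omega^{+}|_{B}/\omega^{+}(B))$ is a halving probability space. Consequently, applying Theorem \ref{t:hru} and Theorem \ref{t:kor} ball-by-ball to the density $f=\tfrac{d\omega^{-}}{d\omega^{+}}$, we obtain that $\omega^{-}\in A_{\infty}(\omega^{+})$ implies both $\omega^{-}\in A'_{\infty}(\omega^{+})$ and $\log f\in \BMO(d\omega^{+})$, with \eqref{eq:korKto1} giving the quantitative decay of the oscillation needed for the $\VMO$ version.

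The only real content beyond quoting previous lemmas is thus checking halving, and this was already the point of \Lemma{whalving}, where the polar-set argument used the connectedness of $\Omega$. Since our hypotheses assume that both $\Omega^{+}$ and its complement $\Omega^{-}$ are connected, the lemma applies to each of $\omega^{\pm}$ (though we only need it for $\omega^{+}$). I do not expect any serious obstacle here; the main subtlety is simply to verify that each ball inherits halving from the ambient measure, which is immediate from the definition. Combining both directions, we get the stated equivalences in both the $A_{\infty}$ and $VA_{\infty}$ versions.
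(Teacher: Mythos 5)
Your proposal is correct and takes essentially the same route as the paper: the paper's proof of this lemma is a one-line appeal to Lemma \ref{l:vmolemma} together with the halving property of elliptic harmonic measure from Lemma \ref{l:whalving}, and your argument simply unfolds that by noting which direction needs halving, that $\omega^{+}$ is halving by connectedness of $\Omega^{+}$, and that the halving property passes to restrictions to balls so Theorems \ref{t:hru} and \ref{t:kor} apply ball-by-ball.
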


%If $\omega^{+}$ is doubling, then $\log f\in VMO$ implies $\omega^{-}\in A_{\infty}'(\omega^{+})$ and $A_{\infty}'(\omega^{+})=A_{\infty}(\omega^{+})$ by the John-Nirenberg inequality, and so we have the following corollary.
%
%\begin{lemma}\label{l:harmvmolemma2}
%Let $\Omega^{+}\subset \bR^{n+1}$ be a connected domain with connected complement $\Omega^{-}=\ext(\Omega^{+})$ and let $\omega^{\pm}$ denote their harmonic measures. Assume $\omega^{+}$ is doubling. Then $\omega^{-}\in A_{\infty}(\omega^{+})$ if and only if $\log \frac{d\omega^{-}}{d\omega^{+}} \in BMO(d\omega^{+})$. Moreover, $\omega^{-}\in VA_{\infty}(\omega^{+})$ if and only if $\log \frac{d\omega^{-}}{d\omega^{+}} \in VMO(d\omega^{+})$. 
%\end{lemma}

\begin{proof}
This follows from Lemmas \ref{l:vmolemma} and \ref{l:whalving}.
\end{proof}

\section{Proofs of  Theorems \ref{t:main} and \ref{t:main2}}

\begin{lemma}\label{l:vmo1}
Let $\omega^{\pm}$ be two halving Radon measures with equal supports 
%such that $\omega^{-}\in VA_{\infty}'(\omega^{+})$, 
and set $f=\log \frac{d\omega^{-}}{d\omega^{+}}$. Suppose there are $r_{j}\downarrow 0$ and $\xi_{j}\in \d\Omega^{+}$ so that $\omega_{j}^{+}=T_{\xi_{j},r_{j}}[\omega^{+}]/\omega(B(\xi_{j},r_{j}))$ converges weakly to some measure $\omega$ with $\omega(\bB)>0$. Further assume that for all $M>0$
\begin{equation}\label{e:xivmo}
\lim_{j} \avint_{B(\xi_{j},Mr_{j})}f \, d\omega^{+} \exp\ps{- \avint_{B(\xi_{j},Mr_{j})} \log f \,d\omega^{+}}=1.
\end{equation}
Then $\omega_{j}^{-}\rightharpoonup \omega$ as well.
\end{lemma}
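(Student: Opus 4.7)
The plan is to show that on each ball $B(\xi_j, M r_j)$ the density $f = d\omega^-/d\omega^+$ is effectively a single multiplicative constant, and then reduce the weak convergence of $\omega_j^-$ to the assumed weak convergence of $\omega_j^+$. I would fix $\phi \in C_c(\bR^{n+1})$ with $\supp\phi \subset B(0,M)$ (picking $M$ so $\omega(\partial B(0,M))=0$) and normalize $\omega_j^- = T_{\xi_j,r_j}[\omega^-]/\omega^-(B(\xi_j,r_j))$, and write
\[
\int \phi\,d\omega_j^- \,=\, \frac{c_j(M)\,\omega^+(B(\xi_j,r_j))}{\omega^-(B(\xi_j,r_j))}\int \phi\,d\omega_j^+ \,+\, \frac{1}{\omega^-(B(\xi_j,r_j))}\int \phi\!\circ\! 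T_{\xi_j,r_j}\,(f-c_j(M))\,d\omega^+,
\]
where $c_j(M) := \exp\bigl(\avint_{B(\xi_j,Mr_j)} \log f\, d\omega^+\bigr)$ is the geometric mean of $f$ at scale $M$. The goal then becomes showing the prefactor in the main term tends to $1$ and the error tends to $0$.

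For the main term I would invoke \eqref{e:xivmo} at scale $M=1$, which reads $\omega^-(B(\xi_j,r_j))/\omega^+(B(\xi_j,r_j)) = c_j(1)(1+o(1))$, reducing the problem to showing $c_j(M)/c_j(1)\to 1$. Applying Korey's Theorem \ref{t:kor} (the halving hypothesis is what makes this available) on the probability space obtained by restricting and renormalizing $\omega^+$ on $B(\xi_j, Mr_j)$ converts \eqref{e:xivmo} into $\avint_{B(\xi_j,Mr_j)} |\log f - \log c_j(M)|\,d\omega^+ \to 0$ via the sharp bound \eqref{eq:korKto1}. A triangle inequality together with $\omega^+(B(\xi_j,Mr_j))/\omega^+(B(\xi_j,r_j)) = \omega_j^+(B(0,M)) \to \omega(B(0,M)) < \infty$ then yields $|\log c_j(M)-\log c_j(1)|\to 0$, and the main term therefore tends to $\int \phi\, d\omega$.

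For the error term, a crude bound gives
\[
\text{error} \,\leq\, \|\phi\|_\infty\cdot\frac{\omega^+(B(\xi_j,Mr_j))}{\omega^-(B(\xi_j,r_j))}\cdot c_j(M)\cdot \avint_{B(\xi_j,Mr_j)}\!\!|f/c_j(M)-1|\,d\omega^+,
\]
and the first two factors combine, via the same computations as above, into a quantity converging to $\omega(B(0,M))$. The hard part of the proof is therefore to upgrade the logarithmic control on $f/c_j(M)$ from Korey to an $L^1$ statement $\avint_{B(\xi_j,Mr_j)} |f/c_j(M)-1|\,d\omega^+ \to 0$. I plan to do this by splitting $B(\xi_j,Mr_j) = G_j\sqcup F_j$ with $G_j=\{|\log(f/c_j(M))|<\rho\}$ for a small fixed $\rho$: on $G_j$ one has $|f/c_j(M)-1|\leq e^\rho-1 \lesssim \rho$; Chebyshev plus Korey gives $\omega^+(F_j)/\omega^+(B(\xi_j,Mr_j))\to 0$; combining with $\avint f/c_j(M)\,d\omega^+\to 1$ and the trivial lower bound $\avint_{G_j} f/c_j(M)\,d\omega^+ \geq e^{-\rho}(1-o(1))$ forces $\avint_{F_j} f/c_j(M)\,d\omega^+\leq \rho + o(1)$. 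Adding the two pieces and letting first $j\to\infty$ and then $\rho\to 0$ closes the argument and hence the proof of the lemma.
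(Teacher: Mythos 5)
Your proof is correct and follows essentially the same strategy as the paper's: factor $f$ around its geometric mean $c_j(M)=\exp(c_{MB_j})$, use Korey's Theorem \ref{t:kor} to convert the hypothesis \eqref{e:xivmo} into smallness of $\avint_{MB_j}\lvert\log f-c_{MB_j}\rvert\,d\omega^+$, split into a good set where $f/c_j(M)$ is multiplicatively close to $1$ and a small bad set via Chebyshev, and use $\omega_j^+(B(0,M))$ bounded to compare scales $r_j$ and $Mr_j$. The only deviation is your handling of the $\omega^-$-mass on the bad set: where the paper cites Theorem \ref{t:hru} (Hru\v{s}\v{c}ev's quantitative $A_\infty$ estimate) to get $\omega^-(F_{MB_j})\lesssim\varepsilon\,\omega^-(MB_j)$, you derive the same bound directly by subtracting the good-set contribution (which is at least $e^{-\rho}(1-o(1))$) from $\avint_{MB_j} f/c_j(M)\,d\omega^+\to 1$; this is a valid in-line reproof of the needed case of that lemma and makes the argument a bit more self-contained, but the underlying mechanism is the same.
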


The proof is similar to that of \cite[Theorem 4.4]{KT06}, though using the techniques of the previous section, we no longer require the doubling assumption.

\begin{proof}
Let $B_{j}=B(\xi_{j},r_{j})$ and for a ball $B$ set $c_{B}= \avint_{B} \log f$. 
By assumption, for each $M>0$,
\begin{equation}
e^{-c_{MB_{j}}}\frac{\omega^{-}(MB_{j})}{\omega^{+}(MB_{j})}\rightarrow 1\mbox{ as }j\rightarrow \infty.
\label{e:elim}
\end{equation}

Let $\vphi\in C_{c}^{\infty}(\bR^{n+1})$ with support in $B(0,M)$ for some $M>0$ and let $\vphi_{j}=\vphi\circ T_{\xi_{j},r_{j}}$. Then $\supp \vphi_{j}\subset MB_{j}$. Let $\ve>0$. By \eqn{elim}, for $j$ large enough, we have that
\begin{equation}\label{e:ebound}
0\leq e^{-c_{B_{j}}}\frac{\omega^{-}(B_{j})}{\omega^{+}(B_{j})} -1<\ve \;\;\; \mbox{ and }\;\;\; 0\leq e^{-c_{MB_{j}}}\frac{\omega^{-}(MB_{j})}{\omega^{+}(MB_{j})} -1<\ve.
\end{equation}
Let now $\eta= c \sqrt{1-\ve}$, where $c$ is the constant in \eqref{eq:korKto1}. For $j$ large enough, \Theorem{kor} %\Lemma{whalving}, 
and \eqn{elim} imply
\begin{equation}\label{e:f-cb<eta}
\avint_{B_j}|\log f-c_{B_j}| \,d\omega^{+}<\eta  \;\;\; \mbox{ and }\;\;\; \avint_{MB_j}|\log f-c_{MB_j}|\,d\omega^{+}<\eta.
\end{equation}
Note that $\ve$ is independent of $\eta$. For fixed $\delta>0$ and for a ball $B$, we set
\[
G_{B}=\{\xi\in B\cap \d\Omega^{+}: |\log f(\xi)-c_{B}|\leq \eta/\delta\}, \;\;\; F_{B}=B\backslash G_{B}.\]
Then, Chebyshev's inequality and \eqn{f-cb<eta} imply
\begin{equation}\label{e:w+d}
\omega^{+}(F_{B_j})<\delta \, \omega^{+}({B_j}) \quad\textup{and}\quad \omega^{+}(F_{MB_j}) <\delta\,  \omega^{+}({MB_j}),
\end{equation}
and for $\delta>0$ small enough and $j$ large enough, Theorem \ref{t:hru} and \eqref{e:elim} imply
\begin{equation}\label{e:w-e}
\omega^{-}(F_{B_j})<\ve\, \omega^{-}({B_j}) \quad\textup{and}\quad  \omega^{-}(F_{MB_j}) <\ve \, \omega^{-}({MB_j}).
\end{equation}
Let $C=2\frac{\omega(\cnj{M\bB})}{\omega(\bB)}$. Since $\omega(\bB)>0$, we know
\[
\limsup_{j\rightarrow\infty}
\frac{\omega^{+}(MB_{j})}{\omega^+(B_{j})}
=\limsup_{j\rightarrow\infty}\frac{\omega_{j}^{+}(M\bB)}{\omega_{j}^{+}(\bB)}
\leq \frac{\omega(\cnj{M\bB})}{\omega(\bB)}=C/2,\]
and so for $j$ large enough,
\begin{equation}\label{e:2b+}
\omega^{+}(MB_{j})\leq C\omega^{+}(B_{j}).
\end{equation}

Also, note that for $j$ large enough,
\begin{align}
|c_{B_{j}}-c_{MB_{j}}|& =\av{\avint_{B_{j}}(c_{B_{j}}-c_{MB_{j}})} \, d\omega^{+}\notag \\
&\leq \avint_{B_{j}}|c_{B_{j}}-\log f| \, d\omega^{+}+\avint_{B_{j}}|\log f-c_{MB_{j}}| \,d\omega^{+} \notag \\
& \stackrel{\eqn{f-cb<eta}}{<}\eta + \frac{\omega^{+}(MB_{j})}{\omega^{+}(B_{j})} \avint_{MB_{j}}|\log f-c_{MB_{j}}| \,d\omega^{+}\notag\\
&\stackrel{\eqn{f-cb<eta} \atop \eqn{2b+}}{<}(1+C)\eta.\label{e:cb-cmb}
\end{align}
Hence,
\begin{align}
\omega^{-}(MB_{j})
& \stackrel{\eqn{ebound}}{\leq} \omega^{+}(MB_{j})(1+\ve)e^{c_{MB_{j}}}\stackrel{\eqn{2b+} \atop \eqn{cb-cmb}}{<} C\omega^{+}(B_{j})(1+\ve)e^{c_{B_{j}}+(1+C)\eta} \notag \\
& \stackrel{\eqn{ebound}}{\leq} C\omega^{-}(B_{j})(1+\ve)e^{(1+C)\eta} \leq 2Ce^{(1+C)}\omega^{-}(B_{j})\notag\\
& \lec_{C} \omega^{-}(B_{j}).
\label{e:2b-}
\end{align}
Then
\begin{align*}
\int  & \vphi d\omega_{j}^{-}  -\int \vphi d\omega_{j}^{+}
 =\frac{1}{\omega^{-}(B_{j})}\int_{MB_{j}}\vphi_{j} d\omega^{-} -\frac{1}{\omega^{+}(B_{j})}\int_{MB_{j}}\vphi_{j} d\omega^{+}   \\
& =\underbrace{ \frac{1}{\omega^{-}(B_{j})}  \int_{MB_{j}\cap F_{MB_{j}}} \vphi_{j} f d\omega^{+}}_{=:I_{1}}\\
& \qquad + \underbrace{ \frac{1}{\omega^{-}(B_{j})} \int_{MB_{j}\cap G_{MB_{j}}}  \ps{ f -e^{c_{MB_{j}}} } \vphi_{j}  d\omega^{+} }_{=:I_{2}} \\
&  \qquad -\underbrace{ \frac{e^{c_{MB_{j}}} }{\omega^{-}(B_{j})} \int_{MB_{j}\cap F_{MB_{j}}} \vphi_{j}  d\omega^{+}  }_{=:I_{3}}
\\
& \qquad + \underbrace{\frac{e^{c_{MB_{j}}} }{\omega^{-}(B_{j})} \int_{MB_{j}}\vphi_{j} d\omega^{+}-\frac{1}{\omega^{+}(B_{j})}\int_{MB_{j}}\vphi_{j} d\omega^{+} }_{=:I_{4}} \\
& = I_{1}+I_{2}-I_{3}+I_{4}.
\end{align*}

We will estimate each of these terms separately, with the understanding that $j$ is large enough (depending on  $M$ and $\eta$). 
\begin{align*}
|I_{1}|
&  \leq  \frac{||\vphi||_{\infty}}{\omega^{-}(B_{j})}  \int_{{MB_{j}}} \one_{F_{MB_{j}}}  f d\omega^{+}\\
& =
  \frac{||\vphi||_{\infty}\omega^{-}(F_{MB_{j}})}{\omega^{-}(B_{j})} 
 = \frac{\omega^{-}(MB_{j})} {\omega^{-}(B_{j})}  \frac{||\vphi||_{\infty}{\omega^{-}(F_{MB_{j}})}}{\omega^{-}(MB_{j})} 
\stackrel{\eqn{w-e} \atop \eqn{2b-}}{\lec}_{C,M, ||\vphi||_{\infty}} \ve.
\end{align*}

Next, for points in $G_{MB_{j}}$,
\[
e^{-\eta/\delta}e^{c_{MB_{j}}} \leq  f\leq e^{\eta/\delta}e^{c_{MB_{j}}}
\]
and so
\[
e^{c_{MB_{j}}}(e^{-\eta/\delta}-1) \leq f-e^{c_{MB_{j}}} \leq e^{c_{MB_{j}}}(e^{\eta/\delta}-1).
\]
Thus, for $\eta>0$ small enough (i.e., for $j$ large enough), we can make 
\[
|f-e^{c_{MB_{j}}}| < \delta e^{c_{MB_{j}}}\;\; \mbox{ on }G_{MB_{j}}.
\]
Therefore,
\begin{align*}
|I_{2}| & \leq \frac{\delta e^{c_{MB_{j}}}||\vphi||_{\infty}}{\omega^{-}(B_{j})}\omega^{+}(G_{MB_{j}})
\leq  \frac{\delta e^{c_{MB_{j}}}||\vphi||_{\infty}}{\omega^{-}(B_{j})}\omega^{+}(MB_{j})
\\
& =e^{c_{MB_{j}}} \frac{\omega^{+}(MB_{j})}{\omega^{-}(MB_{j})} \frac{\delta ||\vphi||_{\infty}\omega^{-}(MB_{j})}{\omega^{-}(B_{j})}
\stackrel{\eqn{2b-} \atop \eqn{ebound}}{ \lec}_{||\vphi||_{\infty},C,M} \delta .
\end{align*}

\begin{align*}
|I_{3}|
& \leq \frac{e^{c_{MB_{j}}}||\vphi||_{\infty} }{\omega^{-}(B_{j})}  \omega^{+}(F_{MB_{j}})
\stackrel{\eqn{w+d}}{<}\delta \frac{e^{c_{MB_{j}}}||\vphi||_{\infty} }{\omega^{-}(B_{j})} \omega^{+}(MB_{j})\\
& =\delta \frac{e^{c_{MB_{j}}}||\vphi||_{\infty} \omega^{-}(MB_{j}) }{\omega^{-}(B_{j})} \frac{\omega^{+}(MB_{j})}{ \omega^{-}(MB_{j})}
\stackrel{\eqn{ebound}\atop\eqn{2b-}}\lec_{C,M, ||\vphi||_{\infty}} \delta.
\end{align*}

Finally,
\[
|I_{4}|
\leq\ps{e^{c_{MB_{j}}}\frac{\omega^{+}(B_{j})}{\omega^{-}(B_{j})}-1}\frac{\omega^{+}(MB_{j})}{\omega^{+}(B_{j})} \avint_{MB_{j}} \vphi_{j} d\omega^{+}
\stackrel{\eqn{ebound} \atop \eqn{2b+}}{\lec}_{C,||\vphi||_{\infty},M} \ve.
\]
Since these estimates hold for all $j$ large enough, we can conclude
\[\limsup_{j\rightarrow\infty} \av{\int   \vphi d\omega_{j}^{-}  -\int \vphi_{j} d\omega_{j}^{+}}
\lec_{C,M, ||\vphi||_{\infty}}\ve+\delta.\]
Now send $\delta$ to zero since it only had to be small enough depending on $\ve$. Finally, $\ve$ was arbitrarily chosen, which implies that the above limit is zero. Since this holds for all $\vphi$, we get that $\omega_{j}^{\pm}$ have the same weak limit.

\end{proof}

%\begin{lemma}\label{l:vmo}
%Let $\Omega^{+}$ and $\Omega^{-}=\ext(\Omega^{+})$ be two connected $\Delta$-regular domains. Suppose  that $\omega^{\pm}$ are mutually $A_{\infty}$-equivalent elliptic harmonic measures for which 
%
%and \eqref{e:vmo} holds at $\xi\in \d\Omega^{+}$. If $\Tan(\omega^{+},\xi)\neq\varnothing$, then $\Tan(\omega^{+},\xi)\subset \cH$.
%\end{lemma}

\begin{proof}[Proof of \Theorem{main}]
Let $\omega\in \Tan(\omega^{+},\xi)$. We claim that $\omega\in \mathscr{H}_{\cC}$. By \Lemma{nicetan}, $\omega=c\,T_{0,r}(\mu)$ for some constants $c,r>0$ and some measure $\mu$ of the form $\mu=\lim_{j\rightarrow 0} T_{\xi,r_{j}}[\omega^{+}]/\omega^{+}(B(\xi,r_{j}))$ for some $r_{j}\downarrow 0$ where $\mu(\bB)>0$. By \Lemma{vmo1}, $\mu=\lim_{j\rightarrow 0} T_{\xi,r_{j}}[\omega^{-}]/\omega^{-}(B(\xi,r_{j}))$ as well. By Lemma \ref{azmotovo2} (or \Lemma{azmoto}(g) if $\Omega^{\pm}$ have the CDC), $\mu\in \mathscr{H}_{\cC}$, and since $\mathscr{H}_{\cC}$ is a $d$-cone by \Lemma{Pdcone}, we also have that $\omega\in \mathscr{H}_{\cC}$, which proves the claim.

Hence, $\omega=\omega_{u}$ for some $u\in H_{A}$ and some $A\in \cC$. By \Lemma{taylor}, for some $k>0$,
 \[\Tan(\omega_{u},0)=\{c\omega_{u_{k}}:c>0\}\subset \cF_{A}(k)\subset \cF_{\cC}(k),\]
and since $\Tan(\omega_{u},0)\subset \Tan(\omega^{+},\xi)$ by \Theorem{ttt}, we now know that $\Tan(\omega^{+},\xi)\cap \cF_{\cC}(k)\neq\varnothing$ as well. By Lemma \ref{logw}, $\Tan(\omega^{+},\xi)\subset  \cF_{\cC}(k)$. The proof that $\Theta_{\d\Omega^{+}}^{\cF_{\Sigma,\cC}(k)}(\xi,r)\rightarrow 0$ if $\Omega^{\pm}$ have the CDC is similar to the proof of Theorem \ref{newKPT}. 

%
% If $\Tan(\omega^{+},\xi)\not\subset  \cF_{\cA}(k)$,  \Corollary{pstep} implies we can find $\nu\in \Tan(\omega^{+},\xi)$ so that, for any $\ve>0$, there is some $r_{0}>0$ so that
%\begin{equation}\label{e:drcontra}
%d_{r_{0}}(\nu,\cF_{\cA}(k))=\ve>0, \;\;\; d_{r}(\nu,\cF_{\cA}(k))\leq \ve \mbox{ for }r\geq r_{0},
%\end{equation}
%and $\nu(B(0,r))\leq Cr^{\beta}$ for all $r\geq r_{0}$ for some $r_{0}>0$ and some constants $C,\beta$ depending only on $k$ and $n$. Again, $\nu=\omega_{h}$ for some $h\in H$. For a multiindex $\alpha$, we have by the Cauchy estimates and \eqn{u<wr} that, for $\ell\in \bN$, 
%\begin{align}
%|\d_{\alpha} h(0)|(2^{\ell} )^{|\alpha|}
%& \lec \sup_{B(0,2^{\ell})} |h|
%\stackrel{\eqn{u<wr}}{\lec} \omega_{h}(B(0,2\delta_0^{-1}2^{\ell}))(2^{\ell} )^{1-n} \notag \\
%& \leq C (2\delta_0^{-1}2^{\ell})^{\beta}  2^{\ell(1-n)}
%\lesssim 2^{\ell(1-n+\beta)}\label{e:similar}
%\end{align}
%Thus, if $|\alpha|>1-n+\beta$, letting $\ell\rightarrow\infty$ implies $|\partial_{\alpha} h(0)|=0$. Thus, $h$ is a polynomial of degree at most $m=\ceil{1-n+\beta}$. The important point is that $m$ does not depend on $\ve$, and hence for $\ve$ small enough (depending on $n,m,$ and $k$, but since $\beta$ depends on $k$ and $n$, $\ve$ really only depends on $n$ and $k$), by \eqn{drcontra} and \Lemma{bad}, $\nu\in \cF_{\cA}(m)$. The only way this is possible 

\end{proof}

\begin{proof}[Proof of Theorem \ref{t:main2}]

Let $K$ be any compact subset of $\d\Omega^{+}$. Suppose there was a sequence of radii $r_{j}\downarrow 0$ and $\xi_{j}\in K$ so that 
\begin{equation}\label{e:vmo-far}
d_{1}(T_{\xi_{j},r_{j}}[\omega^{+}],\mathscr{P}_{\cC}(d))\geq \ve >0
\end{equation}
where $d$ will be chosen later, but it will depend only on $n$ and the doubling constant of $\omega^{+}$. 

Since $\omega^{+}$ is doubling, we may pass to a subsequence so that $\omega_{j}^{+}:=T_{\xi_{j},r_{j}}[\omega^{+}]/\omega^{+}(B(\xi_j,r_j))$ converges weakly to some measure $\omega$.  

If $f= \frac{d\omega^{-}}{d\omega^{+}}$ satisfies $\log f\in \VMO(\omega^{-})$, then doubling also implies that $\omega^{-}\in VA_{\infty}'(\omega^{+})$. Indeed, if $\omega^{+}$ is doubling, then the John-Nirenberg theorem holds, and the $\VMO$ condition tells us that on small enough balls, $f$ is a traditional $A_{p}$-weight (c.f. \cite[Chapter 6.2]{BAF}). This easily implies $fd\omega^{+}=d\omega^{-}\in VA_{\infty}'(\omega^{+})$. Thus, by Corollary \ref{c:afincafin}, we know $\omega^{-}\in VA_{\infty}(\omega^{+})$ that \eqref{e:xivmo} holds for every $M>0$. By Lemma \ref{l:vmo1}, $\omega_{j}^{-}\rightharpoonup \omega$ as well. Thus, we can pass to a subsequence so that the conclusions of Lemma \ref{azmotovo2} hold. In particular, $\omega=\omega_{h}$ for some $L_{0}$-harmonic function $h$, where $L_{0}$ is a uniformly elliptic operator with constant coefficients, and also, for any $\vphi\in C_{c}^{\infty}(\bR^{n+1})$, \eqref{e:ibp} holds. 

Now we apply the same standard trick from \cite{KT06}. Notice that  since $\omega^{+}$ is doubling, so is $\omega_{h}$, which combined with Cauchy estimates, implies that there exists $\beta>0$ such that for any $\ell\in \bN$ and any multi-index $\alpha$,
	\begin{align}
|\d_{\alpha} h(0)| &  \lec 2^{-|\alpha| \ell}  ||h||_{L^{\infty}(2^{\ell}\bB)}
 \stackrel{\eqref{e:ufinbound2}}{\lec} 2^{\ell(-|\alpha|+1-n)} \omega_{h}(B(0,2^{\ell+1})) \\
& \lec 2^{\ell(-|\alpha|+1-n+\beta)} \omega_{h}(B(0,2)).
\end{align}
Hence,  if $|\alpha|>1-n+\beta$, letting $\ell\rightarrow \infty$ gives $|\d_{\alpha}h(0)|=0$, which implies $h$ is a polynomial of degree at most $1-n+\beta$. Setting $d=\ceil{1-n+\beta}$ gives a contradiction to \eqref{e:vmo-far}.  The proof of \eqref{klimsup} is similar to the proof of Theorem \ref{newKPT}, where we use instead Lemma \ref{l:azmoto} instead of \ref{azmotovo2}.
	\end{proof}

\section{Proof of Theorem \ref{t:newABHM}}

All elliptic operators in this section will be assumed to satisfy \eqref{eqelliptic1} and \eqref{eqelliptic2}. We will require a few lemmas about elliptic measures in uniform domains as well as some new notation.

\begin{definition} \label{d:uniform}
	Let $\Omega\subseteq \bR^{n+1}$.
	\begin{itemize}
		\item We say $\Omega$ 
	satisfies the {\it corkscrew condition} if for some uniform constant $c>0$ and every ball $B$ centered on $\d\Omega$ with $0<r_{B}<\diam(\partial\Omega)$, there is a ball $B(x_{B},cr_{B})\subseteq \Omega\cap B$. The point $x_{B}$ is called
	a {\it corkscrew point relative to} $B$.
	\item We say $\Omega$ satisfies the {\it Harnack chain condition} if there is a uniform constant $C$ such that
	for every $\rho >0,\, \Lambda\geq 1$, and every pair of points
	$x,y \in \Omega$ with $\delta(x),\,\delta(y) \geq\rho$ and $|x-y|<\Lambda\,\rho$, there is a chain of
	open balls
	$B_1,\dots,B_N \subset \Omega$, $N\leq C(\Lambda)$,
	with $x\in B_1,\, y \in B_N,$ $B_k\cap B_{k+1}\neq \emptyset$
	and $C^{-1}\diam (B_k) \leq \dist (B_k,\partial\Omega)\leq C\diam (B_k).$  The chain of balls is called
	a {\it Harnack chain}.
%	\item We say a domain $\Omega$ is {\it $b$-John} if there is $x_{0}\in \Omega$ so that for any $\in \Omega$, there is a path $\gamma$ connecting $x$ to $x_{0}$ so that 
%	\[
%	\delta(z)\geq b |z-x| \mbox{ for all }z\in \gamma.
%	\]
	\end{itemize}
\end{definition}
%
%\begin{remark}
%	It is well known that uniform domains are John domains quantitatively.
%\end{remark}
\begin{definition}%[\bf 1-sided NTA]\label{def1.1nta}
	If $\Omega$ satisfies both the corkscrew and the Harnack chain conditions, then we say that
	$\Omega$ is a {\it uniform domain}.%{\it 1-sided NTA domain}, also known in the literature as.
\end{definition}\label{def1.1nta} 
\begin{theorem}
	Let $\Omega\subset \bR^{n+1}$ be a uniform domain with the CDC and $u$ a nonnegative $L_{A}$-elliptic function vanishing on $2B\cap \d\Omega$ where $B$ is a ball with $r_{B}<\diam \d\Omega$ and $A\in \cA(\Omega)$. Then 
	\begin{equation}
	\label{bharnack}
	\sup_{x\in B\cap \Omega} u(x)\lec u(x_{B}).
	\end{equation}
\end{theorem}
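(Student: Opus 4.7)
The plan is to establish this Carleson-type boundary maximum estimate via a dichotomy-and-iteration argument combining Bourgain's estimate (Lemma \ref{l:bourgain}), the boundary H\"older decay (Lemma \ref{holder}), and the Harnack chain condition built into the uniformity of $\Omega$. The result and strategy are standard in the theory of elliptic measures on uniform CDC domains (compare the Carleson estimate of Caffarelli--Fabes--Mortola--Salsa on NTA domains and its extension to the CDC setting by Aikawa and Hofmann--Martell).

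Fix a small $\eta\in (0,1)$ to be chosen later, set $M:=\sup_{B\cap\Omega}u$, and pick $y^\ast\in\overline{B\cap\Omega}$ with $u(y^\ast)$ close to $M$. In the \emph{interior case} $\delta(y^\ast)\geq \eta\,r_B$, both $y^\ast$ and $x_B$ lie at distance $\gtrsim \eta\,r_B$ from $\partial\Omega$ with $|y^\ast-x_B|\leq 2r_B$, so the Harnack chain condition supplies a chain of $N=N(\eta)$ admissible balls joining them, and iterating the interior Harnack inequality for $L_A$-harmonic functions yields $u(y^\ast)\leq C(\eta)\,u(x_B)$. In the \emph{boundary case} $\delta(y^\ast)<\eta\,r_B$, pick $\xi\in\partial\Omega$ realising $|\xi-y^\ast|=\delta(y^\ast)$, so that $\xi\in(1+\eta)B\subseteq 2B$; applying Lemma \ref{holder} at $\xi$ with radius $r_B/8$ (whence $B(\xi,r_B/2)\cap\partial\Omega\subseteq 2B\cap\partial\Omega$, where $u$ vanishes) gives
\[
u(y^\ast)\leq C\Bigl(\frac{\delta(y^\ast)}{r_B}\Bigr)^{\alpha}\!\!\sup_{B(\xi,r_B/4)\cap\Omega}u\leq C'\eta^{\alpha}\sup_{2B\cap\Omega}u.
\]

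To remove the $\sup_{2B\cap\Omega}u$ on the right and close the estimate, I would iterate over the nested family $B_k=B(x_0,(1+2^{-k})r_B)$ with $M_k:=\sup_{B_k\cap\Omega}u$, running the same dichotomy at each scale but selecting the H\"older-decay radius to be of order $2^{-k}r_B$, so that the auxiliary ball used in Lemma \ref{holder} is contained in $B_k$. This yields a recursion of the form $M_{k+1}\leq C(\eta,k)\,u(x_B)+C\eta^{\alpha}M_k$; choosing $\eta$ so small that $C\eta^{\alpha}<1/2$ and summing the resulting geometric series gives $M=\lim_{k\to\infty}M_k\leq C\,u(x_B)$, with $C$ depending only on $n$, $\Lambda$ and the uniform/CDC constants of $\Omega$.

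The main obstacle is calibrating the dichotomy: the Harnack-chain constant $C(\eta,k)$ degrades as $\eta\to 0$ (its chain length is $\sim\log(1/\eta)$), while the H\"older-decay factor $\eta^{\alpha}$ improves. The strict positivity of the exponent $\alpha$ in Lemma \ref{holder}, which is precisely what the CDC buys us, is what makes the balance possible and lets the recursion close. The possible non-symmetry of $A$ causes no difficulty, as Lemma \ref{l:bourgain}, Lemma \ref{holder}, the Harnack chain condition, and the interior Harnack inequality all apply to general uniformly elliptic operators in divergence form.
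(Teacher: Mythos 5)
You have correctly identified the two ingredients --- boundary H\"older decay from the CDC (Lemma \ref{holder}) and interior Harnack chains from uniformity --- and the near-boundary/interior dichotomy is the right first step; these are exactly the tools in the Jerison--Kenig argument (\cite[Section 4]{JK82}) that the paper invokes for this lemma. However, the nested-ball iteration you use to close the estimate does not converge, and this is a genuine gap rather than a tuning issue. In the recursion $M_{k+1}\le C(\eta,k)\,u(x_B)+C\eta^{\alpha}M_k$, the interior case at stage $k$ needs a Harnack chain from a point at distance $\gtrsim\eta\,2^{-k}r_B$ from $\partial\Omega$ up to $x_B$, where $\delta(x_B)\approx r_B$; that chain has length of order $k+\log(1/\eta)$, not $\log(1/\eta)$, so $C(\eta,k)$ grows exponentially in $k$ uniformly in $\eta$. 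Unrolling,
\[
M_K\le\theta^{K}M_0+u(x_B)\sum_{j=0}^{K-1}\theta^{j}\,C(\eta,K-1-j),\qquad\theta=C\eta^{\alpha},
\]
and the sum is $\gtrsim C(\eta,K-1)\to\infty$: the absorption factor $\theta^{K}$ cannot compensate the divergent error series, and no choice of $\eta$ affects the growth in $k$. (Keeping the threshold at $\eta\,r_B$ for all $k$ fails too: the H\"older bound with radius $\sim 2^{-k}r_B$ gives a small factor only when $\delta(y^{*})\ll 2^{-k}r_B$, which eventually contradicts a fixed threshold.)

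The Jerison--Kenig proof closes by contradiction over a sequence of \emph{points}, not suprema over nested balls. If $u(X_0)\geq N\,u(x_B)$ with $N$ large, the Harnack step forces $\delta(X_0)\lesssim N^{-c}r_B$, and the H\"older step applied at the nearest boundary point with radius a \emph{fixed multiple of} $\delta(X_0)$ produces $X_1$ with $u(X_1)\geq 2u(X_0)$ and $|X_1-X_0|\lesssim\delta(X_0)$. Iterating yields $u(X_j)\geq 2^{j}N\,u(x_B)$ and $|X_{j+1}-X_j|\lesssim(2^{j}N)^{-c}r_B$; the displacements decay geometrically because the \emph{values} are large, not because of a prescribed dyadic scale, so for $N$ large the sequence stays in $\tfrac32B$ and $u(X_j)\to\infty$ contradicts boundedness of $u$ on $\overline{\tfrac32B}\cap\overline\Omega$. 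The missing idea in your proposal is precisely this: the H\"older-decay radius must track $\delta$ at the near-maximizing point rather than the gap $2^{-k}r_B$ between consecutive balls, which is what restores the convergence.
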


This was originally shown in section 4 of \cite{JK82} for NTA domains, but the proof only uses the H\"older continuity of $u$ at the boundary and the fact that NTA domains are uniform, and so the proof of the above result is exactly the same. 

\begin{theorem}
	Let $\Omega\subset \bR^{n+1}$ be a uniform domain with the CDC and $L_{A}$ an elliptic operator satisfying  \eqref{eqelliptic1} and \eqref{eqelliptic2}. Then, for all $B$ centered on $\d\Omega$,
	\begin{equation}\label{wGuniform}
	\omega^{L_{A},x}(B)\approx r_{B}^{n-1} G_{\Omega}(x,x_{B}) \mbox{ for all }x\in \Omega\backslash 2B.
	\end{equation}
\end{theorem}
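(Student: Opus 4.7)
The plan is to adapt the classical Caffarelli--Fabes--Mortola--Salsa argument (as refined by Jerison--Kenig in \cite{JK82}) to the present uniform CDC setting. I set
\[
u(x):=\omega^{L_A,x}(B)\quad\text{and}\quad v(x):=r_B^{n-1}G_\Omega(x,x_B),
\]
both non-negative $L_A$-harmonic in $\Omega\setminus\{x_B\}$; the function $u$ vanishes continuously on $\partial\Omega\setminus\overline B$, while $v$ vanishes on the whole of $\partial\Omega$. I would choose $x_B$ to be a corkscrew point of $(1/4)B$, so that $B(x_B,cr_B)\subset B\cap\Omega$ for a universal $c>0$. Lemma \ref{phiwG} then gives $v\approx 1$ on $\partial B(x_B,cr_B)$, while Lemma \ref{l:bourgain} combined with the interior Harnack inequality gives $u\approx 1$ on the same sphere.

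For the lower bound $v\lesssim u$ in $\Omega\setminus 2B$, I would apply the maximum principle (\cite[Theorem 11.9]{HKM}) to $Cu-v$ on $\Omega\setminus\overline{B(x_B,cr_B)}$: on $\partial\Omega$ one has $v=0\le Cu$, and on $\partial B(x_B,cr_B)$ one has $v\lesssim 1\lesssim Cu$ for $C$ large. Since $\Omega\setminus 2B\subset\Omega\setminus\overline{B(x_B,cr_B)}$, this yields $v\le Cu$ throughout $\Omega\setminus 2B$; morally, this direction is essentially already contained in the paper's \eqref{G<w}.

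For the upper bound $u\lesssim v$ in $\Omega\setminus 2B$, a direct application of the maximum principle on $\Omega\setminus\overline{2B}$ is blocked by the fact that $u$ need not be comparable to $v$ on the inner boundary $\partial(2B)\cap\Omega$, since $v(z)$ can vanish as $z\to\partial\Omega$. The crucial step will therefore be the pointwise estimate
\[
u(z)\lesssim v(z)\qquad\text{for every }z\in\partial(2B)\cap\Omega,\qquad (\star)
\]
and I would obtain it as follows. Using the uniform-domain condition, I would produce an interior point $y_0\in\Omega\setminus\overline{3B}$ with $|y_0-x_B|\approx r_B$ and $\delta_\Omega(y_0)\approx r_B$; at such a point Lemma \ref{phiwG} and a Harnack chain from $x_B$ give $u(y_0)\approx v(y_0)\approx 1$. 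The comparison $(\star)$ will then follow by invoking the boundary Harnack principle for the pair $(u,v)$, both of which are non-negative $L_A$-harmonic in $3B\cap\Omega$ and vanish on $\partial\Omega\cap 3B\setminus\overline B$. Once $(\star)$ is available, the maximum principle applied to $C'v-u$ on $\Omega\setminus\overline{2B}$ (noting that $u\to 0\le C'v$ on $\partial\Omega\setminus 2B$) will conclude $u\lesssim v$ in $\Omega\setminus 2B$.

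The main obstacle is the justification of the boundary Harnack principle used in $(\star)$. In NTA domains this is the classical Jerison--Kenig result \cite{JK82}; in the uniform CDC generality of the theorem it follows by analogous arguments based on Carleson's estimate \eqref{bharnack} and the Harnack chain condition, but the derivation is technical rather than short. An alternative route avoiding an explicit appeal to the boundary Harnack principle would be to combine \eqref{G<w} with the doubling of $\omega^{L_A}$ at scales comparable to $r_B$, which is itself a consequence of Carleson's estimate in the present setting.
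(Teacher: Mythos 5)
The paper gives no proof here: it simply cites Aikawa--Hirata \cite{AH08} and asserts that an inspection of their argument (for harmonic measure in John/uniform domains) carries through to the elliptic case. So the question is whether your sketch would actually close, not whether it matches a proof in the text. Your framework is the right one and is indeed what Aikawa--Hirata implement: compare $u(\cdot)=\omega^{L_A,\cdot}(B)$ with $v(\cdot)=r_B^{n-1}G_\Omega(\cdot,x_B)$ in the CFMS/Jerison--Kenig style. The lower bound $v\lesssim u$ on $\Omega\setminus 2B$ is routine and is already \eqref{G<w} in the paper.

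The genuine gap is in the proof of the upper bound via $(\star)$. You defer to the \emph{boundary Harnack principle} for the pair $(u,v)$ in a uniform domain satisfying only the CDC, and you assert that this ``follows by analogous arguments'' to \cite{JK82}. That is not accurate: the Jerison--Kenig proof relies essentially on the two-sided corkscrew structure of NTA domains (for the geometric localization lemma and for the Harnack-chain bookkeeping at comparable scales); in a uniform domain with only interior corkscrews and the weaker CDC, those steps break down. The boundary Harnack principle in this generality is itself a substantial theorem of Aikawa (2001), proved by a genuinely different ``box'' decomposition argument, and the comparison \eqref{wGuniform} is what Aikawa--Hirata derive from / alongside it. There is also a circularity concern: the BHP is most commonly obtained as a \emph{consequence} of \eqref{wGuniform}, so using it to prove \eqref{wGuniform} must be done with care. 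The same issue afflicts your proposed ``alternative route'': doubling of $\omega^{L_A}$ at the relevant scales in uniform+CDC domains does not follow from Carleson's estimate \eqref{bharnack} alone --- establishing it is precisely the main point of \cite{AH08}, and the usual derivation goes through \eqref{wGuniform}. In short, your sketch reduces the theorem to a statement (BHP, or doubling) that is essentially of equal depth, without supplying its proof; the paper itself avoids this by citing \cite{AH08} outright. A self-contained argument would need to reproduce one of Aikawa's chain/box iterations, verifying at each step that only interior Harnack, Caccioppoli, the H\"older estimate \eqref{holder}, Bourgain's estimate \eqref{e:bourgain}, and the maximum principle from \cite{HKM} are used, since those are the tools available for $L_A$-harmonic functions.
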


This follows from the work of Aikawa and Hirata \cite{AH08}. Their proof is originally for harmonic measures, but an inspection of the proof shows that it carries through for elliptic measure as  well. 

\begin{theorem}
Let $\Omega\subset \bR^{n+1}$ be a uniform domain with the CDC. If  $L_{A}$ is an elliptic operator satisfying  \eqref{eqelliptic1} and \eqref{eqelliptic2}, $B$ is a ball centered on $\d\Omega$ and $E\subset B\cap \d\Omega$ is Borel, then 
\begin{equation}\label{markov}
\omega_{\Omega}^{L_{A},x_{B}}(E)\approx \frac{\omega_{\Omega}^{L_{A},x}(E)}{\omega_{\Omega}^{L_{A},x}(B)}.
\end{equation}
\end{theorem}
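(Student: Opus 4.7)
The plan is to establish this change-of-pole identity by a boundary Harnack comparison between the two nonnegative $L_A$-harmonic functions $u_E(y):=\omega_\Omega^{L_A,y}(E)$ and $u_B(y):=\omega_\Omega^{L_A,y}(B)$, both of which vanish in the Wiener-regular sense on $\d\Omega\setminus\overline{B}$ since $E\subseteq B\cap\d\Omega$. I will treat the representative case $x\in\Omega\setminus 2B$; the remaining regime $x\in\Omega\cap 2B$ with $\delta_\Omega(x)\gec r_B$ then follows from Harnack's inequality applied to $u_E$ and $u_B$ combined with the previous case, since $x$ and $x_B$ are linked by a Harnack chain of bounded length in the uniform domain $\Omega$. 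A preliminary fact I would use is $u_B(x_B)\approx 1$: the upper bound is trivial, while for the lower bound, picking a corkscrew point $x_B^\ast$ for the concentric subball $\tfrac14 B$ and applying Bourgain's lemma (Lemma~\ref{l:bourgain}) gives $\omega^{L_A,x_B^\ast}(B)\gec 1$, and Harnack's inequality along a chain between $x_B^\ast$ and $x_B$ (both at distance $\gec r_B$ from the boundary) transfers the estimate to $u_B(x_B)$.

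The heart of the argument is the boundary Harnack principle (BHP): for $x\in\Omega\setminus 2B$,
\[
\frac{u_E(x)}{u_B(x)}\,\approx\,\frac{u_E(x_B)}{u_B(x_B)}.
\]
Because $u_E$ and $u_B$ vanish on $\d\Omega\setminus\overline B$, this is the BHP for a pair of nonnegative $L_A$-solutions simultaneously vanishing on a relatively open portion of $\d\Omega$, which for uniform CDC domains fits the Aikawa--Hirata framework already invoked for \eqref{wGuniform}: locally, near any $\xi\in\d\Omega\setminus\overline B$ one picks a small ball $B'$ around $\xi$ with $2B'\cap\overline B=\varnothing$, so both functions vanish on $2B'\cap\d\Omega$ and the Carleson-type estimate \eqref{bharnack} together with \eqref{wGuniform} compares $u_E/u_B$ in $B'\cap\Omega$. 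These local comparisons are then chained via Harnack chains in the connected set $\Omega\setminus 2B$ to reach an arbitrary $x$ from a corkscrew point $x_{2B}$, and Harnack once more links $x_{2B}$ to $x_B$. Combined with $u_B(x_B)\approx 1$, this gives $u_E(x)/u_B(x)\approx u_E(x_B)=\omega^{L_A,x_B}(E)$, yielding \eqref{markov}.

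The main obstacle will be executing the BHP rigorously when $x$ approaches $\d(2B)$, since $u_E,u_B$ fail to vanish on all of $\d\Omega\cap 2B$ and \eqref{bharnack} cannot be applied in a single ball around a generic point of $\d(2B)\cap\d\Omega$; the chaining of local comparisons across $\Omega\setminus 2B$ must be arranged so that the constants remain uniform. An alternative more self-contained route I would consider is to bypass the BHP and prove directly that $u_E(x)\approx \omega^{L_A,x_B}(E)\cdot r_B^{\,n-1}G_\Omega(x,x_B)$ for $x\in\Omega\setminus 2B$, by applying the maximum principle to $u_E - C\,\omega^{L_A,x_B}(E)\cdot r_B^{\,n-1}G_\Omega(\cdot,x_B)$ in $\Omega\setminus 2B$: the boundary values on $\d\Omega\setminus 2B$ vanish for both terms, while those on $\Omega\cap\d(2B)$ are controlled via \eqref{bharnack} (for the upper bound on $u_E$) and Harnack-chain lower bounds on $G_\Omega(\cdot,x_B)$; \eqref{wGuniform} then identifies $r_B^{\,n-1}G_\Omega(x,x_B)\approx\omega^{L_A,x}(B)$, finishing \eqref{markov}.
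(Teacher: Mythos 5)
Your plan takes a genuinely different route from the paper's. The paper follows \cite[Lemma 4.11]{JK82}: one builds a localizing uniform CDC sub-domain $\Omega_B$ with $\Omega\cap B\subset\Omega_B\subset\Omega\cap 2B$ (using the sub-domain construction of \cite[Lemma 4.10]{JK82}, with \cite[Lemma 2.26]{AAM16} supplying the CDC for the new domain in place of Jones' NTA localization theorem), compares $\omega_\Omega^{L_A,x_B}$ with $\omega_{\Omega_B}^{L_A,x_B}$ on $B\cap\d\Omega$, and then propagates the comparison to a general pole via \eqref{bharnack} and \eqref{wGuniform}. No boundary Harnack principle for an arbitrary pair of positive solutions appears anywhere in that scheme.

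That is precisely the gap in your main route. The two ingredients actually available, \eqref{bharnack} and \eqref{wGuniform}, are a Carleson estimate and a CFMS-type Green-function/harmonic-measure comparison; neither is a two-sided BHP for a general pair $u,v$ of positive $L_A$-solutions vanishing on a boundary portion, and the Aikawa--Hirata paper cited for \eqref{wGuniform} gives doubling/CFMS estimates, not the BHP. Your sentence ``the Carleson-type estimate \eqref{bharnack} together with \eqref{wGuniform} compares $u_E/u_B$ in $B'\cap\Omega$'' is asserting, not deriving, a local BHP, which is a nontrivial theorem whose proof in this generality usually \emph{uses} a change-of-pole or doubling lemma of the kind you are trying to prove; and even granting it locally, your chaining through $\Omega\setminus 2B$ to reach a point $x$ far from $B$ in an unbounded domain does not obviously close with a uniform constant. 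Your second, maximum-principle route is the one worth pursuing, and it actually dissolves the ``obstacle'' you flagged: every $\zeta\in\d\Omega$ within $\tfrac14 r_B$ of $\d(2B)$ satisfies $\dist(\zeta,B)\gec r_B$, so $2B(\zeta,\tfrac14 r_B)\cap\overline B=\varnothing$ and \eqref{bharnack} plus one bounded interior Harnack step gives $\sup_{\Omega\cap\d(2B)} u_E\lec\omega_\Omega^{L_A,x_B}(E)$. The genuine difficulty in that route is instead the reverse inequality: ``Harnack-chain lower bounds on $G_\Omega(\cdot,x_B)$'' are false pointwise near $\d\Omega\cap\overline{2B}$, where $G_\Omega(\cdot,x_B)$ vanishes. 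Matching the decay of $u_E$ and $G_\Omega(\cdot,x_B)$ there, or equivalently showing that $\omega^x_{\Omega\setminus 2B}$ concentrates a definite proportion of its mass on the interior part of $\Omega\cap\d(2B)$, is exactly the content the paper extracts from the sub-domain $\Omega_B$, and your sketch leaves it unaddressed.
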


{ Again, this is \cite[Lemma 4.11]{JK82}, and since the previous two lemmas are available, the proof is exactly the same for elliptic  measures modulo the proof of \cite[Lemma 4.10]{JK82}. The latter can also be proved  as in \cite{JK82} to build a sub-uniform domain, and then showing as in \cite[Lemma 2.26]{AAM16} hat the resulting domain is also CDC (all of this instead of a geometric localization theorem due to Jones, which only works for NTA domains).}
\begin{lemma}\label{harmblow}
	Let $\Omega\subset \bR^{n+1}$ be a uniform domain with the CDC and $L_{A}$ an elliptic operator satisfying  \eqref{eqelliptic1} and \eqref{eqelliptic2}, and also  \eqref{e:boundarylimit} at $\xi$. If $\xi\in \d\Omega$ and $\omega_{j} =\omega^{L_{A},x_{0}}(B(\xi,r_{j}))^{-1}T_{\xi,r_{j}}(\omega^{L_{A},x_{0}})$ converges weakly to a tangent measure $\omega_{\infty}\in \Tan(\omega^{L_{A},x_{0}},\xi)$. Then there is a uniform domain $\Omega_{\infty}$ and a constant matrix $A_{0}\in \cC$ such that, for each $x\in \Omega_{\infty}$, $\omega_{\Omega_{j}}^{x}\warrow\omega_{\Omega_{\infty}}^{x}$ and, for all balls $B'\subset B$ centered on $\d\Omega_{\infty}$, if $x_{B}$ is a corkscrew point in $\Omega_{\infty}\cap B$,
	\begin{equation}\label{limratio}
	\omega_{\Omega_{\infty}}^{L_{A_{0}},x_{B}}(B') \approx \frac{\omega_{\infty}(B')}{\omega_{\infty}(B)}.
	\end{equation}
\end{lemma}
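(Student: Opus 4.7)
The plan is to apply the blow-up \Lemma{azmoto}, then combine its conclusions with the scale-invariant change-of-variables formula \Lemma{cov2} and the Markov-type estimate \eqref{markov} available in uniform CDC domains. First, \Lemma{azmoto} produces $\Omega_\infty := \Omega_\infty^+$ and a constant coefficient uniformly elliptic matrix $A_0\in\cC$. Because uniformity and CDC are invariant under translation and dilation, the rescaled domains $\Omega_j := T_{\xi,r_j}(\Omega)$ are uniform CDC with the same constants as $\Omega$; combined with the Hausdorff convergence $\partial\Omega_j \to \partial\Omega_\infty$ from part (a) of that lemma, the existence of interior corkscrew balls in the limit from part (b), and the persistence of Harnack chains (which only uses scale-invariant quantitative connectivity), $\Omega_\infty$ inherits both the corkscrew and Harnack-chain conditions and the CDC, hence is itself a uniform CDC domain.

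Next, I would establish the key identity at each finite scale. Setting $\Phi_j(x):=r_jx+\xi$ and $A_j(y):=A(r_jy+\xi)$, an application of \Lemma{cov2} to the bi-Lipschitz map $\Phi_j$ gives, after noting that the rescaling of coefficients by the positive constant $r_j^{n-1}$ from \eqref{e:matrixform} leaves the elliptic measure unchanged,
\begin{equation*}
\omega_{\Omega_j}^{L_{A_j},y}(F) = \omega_\Omega^{L_A,\Phi_j(y)}(\Phi_j(F)), \qquad y\in\Omega_j,\ F\subset\partial\Omega_j.
\end{equation*}
Since $\Phi_j(B)$ is a ball centered on $\partial\Omega$ and $\Phi_j(x_B)$ is a corkscrew of $\Phi_j(B)\cap\Omega$, applying the Markov estimate \eqref{markov} in $\Omega$ with this ball and the subset $\Phi_j(B')\subset\Phi_j(B)$, together with $\omega_j = c_j T_{\xi,r_j}[\omega^{L_A,x_0}]$, rearranges to
\begin{equation}\label{e:pfmarkov}
\frac{\omega_j(B')}{\omega_j(B)} \approx \omega_{\Omega_j}^{L_{A_j},x_B}(B').
\end{equation}

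I would then pass to the limit in \eqref{e:pfmarkov}. For the weak convergence $\omega_{\Omega_j}^{L_{A_j},x}\rightharpoonup\omega_{\Omega_\infty}^{L_{A_0},x}$ for $x\in\Omega_\infty$ (which lies in $\Omega_j$ for $j$ large by \Lemma{azmoto}(b)), fix $\vphi\in C_c(\bR^{n+1})$ and set $v_j(y):=\int\vphi\,d\omega_{\Omega_j}^{L_{A_j},y}$. Each $v_j$ is $L_{A_j}$-harmonic in $\Omega_j$ and bounded by $\|\vphi\|_\infty$. Caccioppoli's inequality, De Giorgi--Nash--Moser interior H\"older estimates, and the $L^1_{\loc}$-convergence $A_j\to A_0$ (from the $\VMO$ hypothesis at $\xi$) yield, along a subsequence, locally uniform convergence on compact subsets of $\Omega_\infty$ and weak convergence in $W^{1,2}_{\loc}(\Omega_\infty)$ to an $L_{A_0}$-harmonic function $v_\infty$. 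The uniform boundary H\"older estimate \Lemma{holder} (applicable because the $\Omega_j$ share CDC constants) combined with the Hausdorff convergence of the boundaries identifies $v_\infty$ as the unique $L_{A_0}$-harmonic extension of $\vphi|_{\partial\Omega_\infty}$ (uniqueness by Wiener regularity of $\Omega_\infty$), removing the subsequence. Taking $j\to\infty$ in \eqref{e:pfmarkov}, using $\omega_j\rightharpoonup\omega_\infty$ with $B$ chosen so that $\omega_\infty(\partial B)=0$ (valid for a.e. radius, and \eqref{limratio} is stable under small perturbations of $B$), gives \eqref{limratio}.

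The main obstacle is the simultaneous convergence of elliptic measures under varying domains \emph{and} varying coefficients: the CDC hypothesis is the crucial ingredient, as it furnishes a boundary H\"older estimate uniform in $j$, which is what allows us to identify the boundary values of $v_\infty$ and hence recognize the weak limit as the $L_{A_0}$-harmonic measure of $\Omega_\infty$. The $\VMO$ hypothesis at $\xi$ is the analogous ingredient controlling the coefficients.
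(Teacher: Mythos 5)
Your proof follows essentially the same route as the paper's: apply \Lemma{azmoto} to extract $\Omega_\infty$ and $A_0$, identify $\omega_j(B')/\omega_j(B)$ with $\omega_{\Omega_j}^{L_{A_j},x_B}(B')$ via the scale-invariant Markov estimate \eqref{markov} combined with the change-of-variables identity, and then pass to the limit using the uniform doubling of the $\omega_j$. The one place you add detail the paper leaves implicit is the justification of $\omega_{\Omega_j}^x\rightharpoonup\omega_{\Omega_\infty}^x$ via interior elliptic estimates and the CDC-uniform boundary H\"older bound, which the paper states as part of the conclusion but does not spell out.
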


This was originally shown in \cite{AM15} for harmonic measure.  In our situation, the proof is much shorter, so we provide it here.

\begin{proof}
	By Lemma \ref{l:azmoto}, there is $A_{0}\in \cC$ so that we can pass to a subsequence so that $u_{j}(x) = c_{j} u(xr_{j}+\xi)r_{j}^{n-1}$ converges uniformly in $\bR^{n+1}$ to a nonzero $L_{A_{0}}$-elliptic function $u_{\infty}$ and also so that, if $\Omega_{j}= T_{\xi,r_{j}}(\Omega)$, then $\d\Omega_{j}$ converges in the Hausdorff metric on compact subsets. Let $\Omega_{\infty}=\{u_{\infty}>0\}$. 
	
	\Claim $\Omega_{\infty}$ is uniform. If $x,y\in \Omega_{\infty}$ with $\dist(\{x,y\},\d\Omega) \geq \ve |x-y|$, then they are contained in $\Omega_{j}$ and $\dist(\{x,y\},\d\Omega_{j}) \geq \frac{\ve}{2} |x-y|$ for sufficiently large $j$. Since the $\Omega_{j}$ are uniform, for each $j$ we can find a Harnack chain of length $N=N(\ve)$ contained in $\Omega_{j}$. By passing to a subsequence, we can assume the length of this chain is constant and their centers and radii are converging, and hence the chain converges to a Harnack chain in $\Omega_{\infty}$ of length no more than $N$. A similar proof shows that $\Omega_{\infty}$ is a corkscrew domain. Hence, $\Omega_{\infty}$ is uniform.
	
	Suppose  $B'\subset \bB$ are centered on $\d\Omega_{\infty}$. Let 
	\[
	\omega^{T_{\xi,r_{j}}(x)}_{\Omega_{j}}= T_{\xi,r_{j}}[\omega^{L_{A},x}].\]
	If $x_{j}= T_{\xi,r_{j}}(x_{0})$, then	
	\begin{align*}
		 \omega^{x_{B}}_{\Omega_{j}}(B')
		 \approx \frac{\omega^{x_{j}}_{\Omega_{j}}(B')}{\omega^{x_{j}}_{\Omega_{j}}(B)}
		= \frac{\omega^{x_{j}}_{\Omega_{j}}(\bB)}{\omega^{x_{j}}_{\Omega_{j}}(B)} \frac{\omega^{x_{j}}_{\Omega_{j}}(B')}{\omega^{x_{j}}_{\Omega_{j}}(\bB)}
		=\frac{\omega_{j}(B')}{\omega_{j}(B)}.
	\end{align*}
	Since $\omega_{j}$ and $\omega_{\Omega_{j}}$ are doubling measures, we have 
	\[
	\omega_{\Omega_{\infty}}^{x_{B}}(B')
	\leq \liminf_{j\rightarrow\infty} \omega^{x_{B}}_{\Omega_{j}}(B')
	\lec  \limsup_{j\rightarrow\infty}\frac{\omega_{j}(B')}{\omega_{j}(B)}
	\leq  \frac{\omega_{\infty}(\cnj{B'})}{\omega_{\infty}(B)}
	\lec  \frac{\omega_{\infty}({B'})}{\omega_{\infty}(B)}.
	\]
	A similar estimate gives the reverse inequality, and hence proves \eqref{limratio}.
\end{proof}

We will use the following criterion for uniform rectifiability due to Hofmann, Martell, and Uriarte-Tuero. 

\begin{theorem}\label{HMUT}
	Let $\Omega\subset \bR^{n+1}$ be a uniform domain with $n$-regular boundary and let $\omega_{\om}$ be the harmonic measure defined in $\om$. Suppose there is $q>1$ so that, for any balls $B'\subset B$ centered on $\d\Omega$, if $k_{B}=\frac{d\omega_{\Omega}^{x_{B}}}{d\cH^{n}|_{\d\Omega}}$, then 
		\[
		\ps{\avint_{B'\cap \d\Omega}k_{B}^{q}\,d\cH^{n}}^{\frac{1}{q}}\lec 
		\avint_{B'\cap \d\Omega} k_{B}\, d\cH^{n}.\]
		Then $\d\Omega_{\infty}$ is uniformly rectifiable.
\end{theorem}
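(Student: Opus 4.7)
The plan is to show that $\d\Omega$ is uniformly rectifiable by producing, at every surface ball, a ``big piece'' of a uniformly rectifiable set. The reverse H\"older condition on $k_{B}$ is equivalent to saying that harmonic measure $\omega_{\Omega}$ lies in $A_{\infty}(\cH^{n}|_{\d\Omega})$, and this is the input I would exploit. (I also read the conclusion as referring to $\d\Omega$; writing $\d\Omega_{\infty}$ appears to be a typo inherited from the neighbouring blow-up notation.)

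First I would observe that because $\Omega$ is uniform with $n$-ADR boundary, each surface ball $\Delta=B\cap\d\Omega$ has an associated corkscrew point $x_{B}$, and by the change-of-pole formula \eqref{markov} the $A_{\infty}$ hypothesis is intrinsic: it does not depend on the particular pole chosen. I would then set up a dyadic Christ--David system on $\d\Omega$ and perform a corona decomposition of the cubes $Q\in\cD(\d\Omega)$. The stopping time would be triggered by large oscillation of $k_{B}$ (or equivalently of $\log k_{B}$) on the subcube relative to its parent. The self-improving nature of the reverse H\"older inequality, together with the $A_{\infty}$ reformulation, produces a Carleson packing estimate
\[
\sum_{Q\subseteq R,\ Q\text{ stopping}}\sigma(Q)\lec \sigma(R),
\]
for every cube $R\in\cD(\d\Omega)$, so the collection of stopping regions is a Carleson family.

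Next, for each stopping region $\sR\subset\cD(\d\Omega)$ I would build the associated sawtooth subdomain $\Omega_{\sR}\subset\Omega$ from the Whitney cubes attached to cubes in $\sR$. Two things have to be checked: first, that $\Omega_{\sR}$ is itself a chord-arc domain, that is uniform and with $n$-ADR boundary; and second, that $\d\Omega_{\sR}$ contains a big piece of $\d\Omega$ (namely, the top portion coming from the maximal cubes of $\sR$). Uniformity of $\Omega_{\sR}$ is inherited from $\Omega$ through a standard Harnack chain argument in the sawtooth, while ADR of $\d\Omega_{\sR}$ follows from the stopping rule plus the ADR of $\d\Omega$. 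On such chord-arc domains, classical work of David--Jerison shows $\omega_{\Omega_{\sR}}\in A_{\infty}(\cH^{n}|_{\d\Omega_{\sR}})$, and then a theorem of Semmes (or of David--Semmes) gives that $\d\Omega_{\sR}$ is uniformly rectifiable.

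Finally I would combine: the Carleson packing of stopping regions together with the big pieces of UR on each $\d\Omega_{\sR}$ yields, at every surface ball $\Delta$ of $\d\Omega$, a subset of measure $\gec\sigma(\Delta)$ lying in a uniformly rectifiable set with uniform constants. By the David--Semmes characterization, $\d\Omega$ is itself uniformly rectifiable. The main obstacle in executing this outline is the geometric verification that the sawtooth subdomains $\Omega_{\sR}$ are chord-arc, in particular the Ahlfors regularity of $\d\Omega_{\sR}$: one must carefully choose the stopping parameters so that the $A_{\infty}$ packing estimate survives the combinatorial procedure, and show that the new boundary added by the sawtooth construction is controlled by the top cubes in $\sR$ through the uniformity and CDC hypotheses on $\Omega$.
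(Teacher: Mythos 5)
This theorem is not proved in the paper; it is cited (without giving a bibliographic reference) as an external result of Hofmann, Martell and Uriarte-Tuero, so there is no ``paper's own proof'' to compare against. You are right that ``$\d\Omega_\infty$'' in the conclusion should read ``$\d\Omega$'' --- that is an artifact carried over from the surrounding blow-up notation.

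Your corona/sawtooth outline captures the correct coarse skeleton, but there is a genuine gap at the central step. When you form the sawtooth subdomain $\Omega_{\sR}$ of a \emph{uniform} domain with ADR boundary, what you get is (by the technical results of \cite{HM14}, Appendix A, and similar) a \emph{1-sided} chord-arc domain: it has interior corkscrews, Harnack chains, and an ADR boundary, but it need not have exterior corkscrews. David--Jerison and Semmes both require the exterior accessibility inherent in the 2-sided NTA setting, so neither theorem applies to $\Omega_{\sR}$ as constructed. Worse, the assertion you want to extract --- that $\d\Omega_{\sR}$ is uniformly rectifiable because $\omega_{\Omega_{\sR}}\in A_\infty(\cH^n|_{\d\Omega_{\sR}})$ on the 1-sided chord-arc domain $\Omega_{\sR}$ --- is precisely an instance of Theorem~\ref{HMUT} itself, so invoking it on the sawtooth regions is circular. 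The missing exterior accessibility cannot be manufactured from the hypotheses: establishing it for $\Omega$ (equivalently, that $\Omega$ is 2-sided NTA) requires the present theorem followed by the main result of \cite{AHMNT}. The actual Hofmann--Martell--Uriarte-Tuero argument instead replaces ``apply David--Jerison to the sawtooth'' by a genuinely different device: one uses the $A_\infty$ hypothesis to show, via an extrapolation-of-Carleson-measures scheme, that the Green function admits good affine approximations at most scales and locations, and from that one builds a bilateral corona decomposition of $\d\Omega$ by Lipschitz graphs directly --- never passing through any rectifiability statement for the sawtooth boundaries. Your outline would need to be rebuilt around that kind of PDE-to-geometry transfer rather than around the D--J/Semmes chain.
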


%\begin{lemma}
%		Let $\Omega\subset \bR^{n+1}$ be a uniform domain with the CDC and so that $\d\Omega$ has locally finite $\cH^{n}$-measure. Let $E\subset \d\Omega$ be a set where $\omega^{L_A,x_{0}}\ll \cH^{n}$, $L_A \in \VMO(\om, \xi)$ satisfies \eqref{eqelliptic1} and \eqref{eqelliptic2}, and $\theta_{*}^{n}(\cH^{n}|_{\d\Omega},\xi)>0$ at $\cH^{n}$-a.e. $\xi\in E$. Then $E$ may be covered up to $\omega$-measure zero by Lipschitz graphs. 
%\end{lemma}

Now we prove Theorem \ref{t:newABHM}. Assume $\omega(E)>0$. Then we may find a subset $E'$ of full measure where $\omega$ and $\cH^{n}$ are mutually absolutely continuous. For almost every $\xi\in E'$, we also have 
\begin{equation}\label{e:lowden}
0<\theta^{n}_{*}(\cH^{n}|_{\d\Omega},\xi)\leq \theta^{n,*}(\cH^{n}|_{\d\Omega},\xi)<\infty.
\end{equation} 
By \cite[Theorem 14.7]{Mattila}, for almost every $\xi\in E'$, $\Tan(\cH^{n}|_{\d\Omega},\xi)$ consists of Ahlfors-David $n$-regular measures. By  \cite[Lemma 14.5]{Mattila} and \cite[Lemma 14.6]{Mattila}, for a.e. $\xi\in E'$,
\[
\Tan(\cH^{n}|_{\d\Omega},\xi) = \Tan(\cH^{n}|_{E'},\xi)=\Tan(\omega,\xi)
\]
and $\Tan(\omega,\xi)$ consists only of Ahlfors-David $n$-regular measures. Let $E''\subset E'$ be the set of points where this holds. 

By the Besicovitch decomposition theorem, we can split $E''$ into two sets $F_{1}$ and $F_{2}$ where $F_{1}$ is $n$-rectifiable and $F_{2}$ is purely $n$-unrectifiable. Suppose $\cH^{n}(F_{2})>0$.  Let $\xi\in F_{2}$ be a point of density of $F_{2}$ with respect to $\cH^{n}$. 

Let $r_{j}\downarrow 0$ be so that $\omega_{j} := \omega^{L_{A},x_{0}}(B(\xi,r_{j}))^{-1}T_{\xi,r_{j}}(\omega^{L_{A},x_{0}})$ converges weakly to some Ahlfors-David $n$-regular measure $\omega_{\infty}\in \Tan(\omega,\xi)$.  By Lemma \ref{harmblow}, we may find a uniform domain $\Omega_{\infty}$ so that $\supp \omega_{\infty}= \d\Omega_{\infty}$ and, for any balls $B'\subset B$ centered on $\d\Omega$, 
\[
\omega_{\Omega_{\infty}}^{L_{A_{0}},x_{B}}(B')\approx \frac{\omega_{\infty}(B')}{\omega_{\infty}(B)}
\approx \frac{r_{B'}^{n}}{r_{B}^{n}},
\]
for some $A_{0}\in \cC$. If $\sigma= \cH^{n}|_{\d\Omega_{\infty}}$, then $\sigma$ is Ahlfors-David $n$-regular and so if we set 
\[
k_{B}:=\frac{d \omega_{\Omega_{\infty}}^{L_{A_{0}},x_{B}}}{d\sigma},\]
then we have that for a.e. $x\in B\cap \d\Omega$,
\[
k_{B}(x) = \lim_{r\rightarrow 0} \frac{\omega_{\Omega_{\infty}}^{L_{A_{0}},x_{B}}(B(x,r))}{\sigma(B(x,r))}
\approx \frac{r^{n}/r_{B}^{n}}{r^{n}} = r_{B}^{-n}.
\]
Hence, if $B'\subset B$ is centered on $\d\Omega$,
\[
\ps{\avint_{B'}k_{B}^{2}\, d\sigma }^{\frac{1}{2}}
\approx r_{B}^{-n}\approx \avint_{B'}k_{B}\, d\sigma.
\]

Thus, by Theorem \ref{HMUT} (which still holds for constant coefficients since UR is invariant under bi-lipschitz maps), $\d\Omega_{\infty}$ is uniformly rectifiable. By the main result of \cite{AHMNT}, $\Omega_{\infty}$ is an NTA domain. In particular, we can find corkscrew balls $B_{1}\subset \bB\cap \Omega_{\infty}$ and $B_{2}\subseteq \bB\backslash \Omega_{\infty}$.  Also, for all $j$ sufficiently large, $B_{1}\subset \Omega_{j}\cap \bB$ and $B_{2}\subset \bB\backslash \Omega_{j}$. By the Besicovitch-Federer projection theorem, $\d\Omega_{j}\cap \bB$ contains an $n$-rectifiable set of $\cH^{n}$-measure $\gec 1$ (with constant depending on the corkscrew constant for $\Omega_{\infty}$ and hence on the corkscrew constant for $\Omega$). Thus, 
\[
\liminf_{j\rightarrow\infty} \frac{\cH^{n}( B(\xi,r_{j})\cap \d\Omega \backslash F_{2})}{\cH^{n}(B(\xi,r_{j}) \cap \d\Omega)} 
\gec \liminf_{j\rightarrow\infty} \frac{r_{j}^{n}}{ \cH^{n}(B(\xi,r_{j}) \cap \d\Omega)} \stackrel{\eqref{e:lowden}}{>}0 .\]
But this contradicts $\xi$ being a point of density for $F_{2}$. Therefore, $\cH^{n}(F_{2})=0$, and we have now shown that $\cH^{n}$-almost all of $E'$ is rectifiable, and thus $\omega^{x_{0}}$-almost all of $E$ is contained in a countable union of Lipschitz graphs. This finishes the proof of Theorem \ref{t:newABHM}.

\section{Proof of Proposition \ref{carlesonprop}}
Assume the conditions of the proposition. We recall the following result.

\begin{theorem}
	\cite[Theorem 1.3]{H-S94} Suppose that $\Omega\subset \R^{n+1}$ is a bounded $C$-uniform domain. If
	\[
	p\leq q \leq \frac{(n+1)p}{n+1-p(1-\delta)} \mbox{ and }p(1-\delta)<n+1,\]
	then for all $u\in L_{\loc}^{1}(\Omega)$ such that $\grad u(x)d(x,\d\Omega)^{\delta}\in L^{p}(\Omega)$,
	\begin{equation}\label{poincare1}
	\inf_{a\in \R}||u(x)-a||_{L^{q}(\Omega)} \lec_{n,p,q,\delta,C}  |\Omega|^{\frac{1-\delta}{n+1}+\frac{1}{q}-\frac{1}{p}} ||\grad u \dist(\cdot,\Omega^{c})^{\delta}||_{L^{p}(\Omega)}.
	\end{equation}
%		\begin{equation}\label{poincare1}
%	\inf_{a\in \R}||u(x)-a||_{L^{q}(\Omega)} \lec_{n,p,q,\delta} b^{n+1} |\Omega|^{\frac{1-\delta}{n+1}+\frac{1}{q}-\frac{1}{p}} ||\grad u \dist(\cdot,\Omega^{c})^{\delta}||_{L^{p}(\Omega)}.
%	\end{equation}
\end{theorem}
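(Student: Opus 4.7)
The plan is to combine a Besicovitch-type differentiation argument with the weighted Poincar\'e inequality stated at the end of the excerpt. First, let $\mu := |\nabla A|^{2}\,\delta(x)\,\chi_{\Omega\cap B_{0}}(x)\,dx$, which is by hypothesis a Radon measure of finite total mass on $\R^{n+1}$. Because $\mu$ is absolutely continuous with respect to Lebesgue measure, $\mu(\d\Omega) = 0$. A standard differentiation-of-measures argument (the upper-density theorem: if $F \subset \d\Omega$ consists of points where $\limsup_{r\to 0} r^{-n}\mu(B(\xi,r)) > t$, then $\cH^{n}(F) \lesssim \mu(F)/t = 0$) then yields that for $\cH^{n}$-almost every $\xi \in B_{0}\cap \d\Omega$,
\begin{equation}
\lim_{r\to 0} r^{-n}\int_{B(\xi,r)\cap \Omega} \delta(x)\,|\nabla A(x)|^{2}\,dx = 0. \tag{$\star$}
\end{equation}

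Next I would produce a Poincar\'e-type oscillation bound on $U_{r}:=B(\xi,r)\cap \Omega$. Applying the inequality of Hurri-Syrj\"anen quoted at the end of the excerpt with $p=2$, $\delta=1/2$, $q=2$ (the hypotheses $p(1-\delta) = 1 < n+1$ and $q \in [p,\, 2(n+1)/n]$ hold), and using $\dist(\cdot, U_{r}^{c}) \le \delta_{\Omega}(\cdot)$, $|U_{r}| \lesssim r^{n+1}$, and H\"older's inequality to pass from $L^{2}$ to $L^{1}$, yields for each entry $a_{ij}$
\begin{equation}
\inf_{C\in \R}\int_{U_{r}} |a_{ij} - C|\,dx \;\lesssim\; r^{(n+2)/2}\left(\int_{U_{r}} \delta(x)\,|\nabla a_{ij}(x)|^{2}\,dx\right)^{1/2}. \tag{$\star\star$}
\end{equation}
Dividing by $r^{n+1}$ gives
\[
r^{-(n+1)}\inf_{C}\int_{U_{r}}|a_{ij}-C|\,dx \;\lesssim\; \left(r^{-n}\int_{U_{r}}\delta\,|\nabla a_{ij}|^{2}\,dx\right)^{1/2},
\]
which tends to zero by $(\star)$. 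Since each component goes to zero, the matrix-valued infimum does as well (taking the componentwise-optimal constant matrix), proving that $L_{A} \in \VMO(\Omega, \xi)$.

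The main obstacle is that the Hurri-Syrj\"anen inequality requires $U_{r}$ to be a bounded $C$-uniform domain with $C$ independent of $r$, whereas the proposition makes \emph{no} geometric assumption on $\Omega$, so $U_{r}=B(\xi,r)\cap \Omega$ need not be uniform, or even connected. I would sidestep this by replacing $(\star\star)$ with a Whitney-decomposition argument: for each Whitney cube $Q \subset \Omega$ meeting $B(\xi,r)$ one has $\ell(Q) \lesssim r$ and $\delta(x) \approx \ell(Q)$ on $Q$, so the classical convex Poincar\'e inequality on $Q$ gives
\[
\int_{Q}|a_{ij} - (a_{ij})_{Q}|\,dx \lesssim \ell(Q)^{(n+2)/2}\left(\int_{Q}\delta\,|\nabla a_{ij}|^{2}\right)^{1/2};
\]
summing via Cauchy-Schwarz and using $\sum_{Q}\ell(Q)^{n+2} \lesssim r\sum_{Q}|Q| \lesssim r^{n+2}$ recovers an estimate of the form $(\star\star)$ but with $C$ replaced by a cube-dependent piecewise constant. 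The remaining technical point --- replacing this piecewise constant by a single global $C$ via a chaining argument between face-adjacent Whitney cubes (showing that telescoping sums of adjacent-cube oscillations of $(a_{ij})_{Q}$ are dominated by the same Cauchy-Schwarz bound on $\sum_Q \ell(Q)^{n+2}$) --- is the step that requires care, and it is where the real analytical content of the proof lies.
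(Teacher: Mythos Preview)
Your proposal is not a proof of the stated theorem --- the Hurri-Syrj\"anen weighted Poincar\'e inequality is simply quoted from \cite{H-S94} and the paper does not reprove it --- but rather a proof of Proposition~\ref{carlesonprop}, the claim that condition~\eqref{e:notcarl} forces $L_{A}\in\VMO(\Omega,\xi)$ at $\cH^{n}$-a.e.\ boundary point. I will compare on that basis.

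Your differentiation step $(\star)$ matches the paper's auxiliary lemma exactly, and your intended application of Hurri-Syrj\"anen with $p=q=2$, $\delta=\tfrac12$ is also what the paper does. The difference is in \emph{where} the inequality is applied. You correctly observe that $U_{r}=B(\xi,r)\cap\Omega$ need not be uniform for a general domain. The paper does not work harder on $U_{r}$; it tacitly assumes $\Omega$ is uniform and invokes a localization lemma (from \cite{HM14}, \cite{Azz16}, \cite{JK82}) producing, for each $r$, an $MC$-uniform subdomain $\Omega_{r}$ with $\Omega\cap B(\xi,r)\subset\Omega_{r}\subset\Omega\cap B(\xi,Mr)$, and applies Hurri-Syrj\"anen on $\Omega_{r}$. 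Since the proposition is only invoked under the standing hypotheses of Theorem~\ref{t:newABHM}, where $\Omega$ is uniform, this implicit assumption is harmless in context.

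Your Whitney-cube alternative does not actually avoid the difficulty. The telescoping step you flag --- replacing the cube-wise constants $(a_{ij})_{Q}$ by a single constant --- requires bounding $\sum_{k}\bigl|(a_{ij})_{Q_{k}}-(a_{ij})_{Q_{k+1}}\bigr|$ along chains of adjacent Whitney cubes joining any two points of $U_{r}$, with chain length controlled independently of the endpoints. That control is precisely the Harnack-chain condition, i.e.\ uniformity. For a general domain, $U_{r}$ may be disconnected or admit Whitney chains of unbounded length, and the telescoping sum is not dominated by the Cauchy--Schwarz bound you propose. So your route ultimately needs the same geometric hypothesis the paper uses; the localization-to-$\Omega_{r}$ device is simply the clean way to package it.
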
 
(The explicit constant in \eqref{poincare1} is written at the end of the proof on page 218 of \cite{H-S94}.) We will use this in the case that $\delta=\frac{1}{2}$ and $p=q=2$, so \eqref{poincare1} becomes 
	\begin{equation}\label{poincare2}
\inf_{a\in \R}||u(x)-a||_{L^{2}(\Omega)} \lec_{n,p,q,\delta,C} |\Omega|^{\frac{1}{2(n+1)}} ||\grad u \dist(\cdot,\Omega^{c})^{\frac{1}{2}}||_{L^{2}(\Omega)}.
\end{equation}

\begin{lemma}
	Suppose $E\subset \bR^{n+1}$ and $\ve:E^{c}\rightarrow [0,\infty]$ is a function such that for some ball $B_{0}$ centered on $\d\Omega$, there exists a constant $C>0$
	\[
	\int_{B_{0} \cap \Omega}\ve(z)dz\leq Cr^{n}.\]
	Then for almost every $x\in E\cap B_{0}$,
	\[
	\lim_{r\rightarrow 0} r^{-n} \int_{B(x,r)}\ve(z)dz=0
	\]
\end{lemma}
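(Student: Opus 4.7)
The plan is to argue by a Vitali $5r$-covering applied to the set where the upper $n$-density of the measure $d\mu := \ve\,\one_{E^c}\,dz$ exceeds a fixed threshold. The hypothesis states $\mu(B_0) < \infty$, while the restriction of the domain of $\ve$ to $E^c$ forces $\mu(E)=0$. For each $\alpha>0$, set
\[
F_\alpha := \ck{x \in E\cap B_0 : \limsup_{r\to 0}\, r^{-n}\,\mu(B(x,r)) \geq \alpha}.
\]
Since the integrand is non-negative, the set of $x\in E\cap B_0$ where the limit fails to be zero coincides with $\bigcup_{k\geq 1} F_{1/k}$; interpreting ``almost every'' in the $\cH^n$-sense (which is what the application to $\VMO$ coefficients requires), it suffices to prove $\cH^n(F_\alpha)=0$ for every $\alpha>0$.

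Fix $\alpha,\eta>0$. Because $\mu|_{B_0}$ is a finite Borel (hence Radon) measure and $\mu(F_\alpha) \leq \mu(E) = 0$, outer regularity produces an open set $V\supseteq F_\alpha$ with $\mu(V)<\eta$. Fix $\rho>0$. For each $x\in F_\alpha$, the definition of $F_\alpha$ supplies arbitrarily small $r\in (0,\rho)$ with $\mu(B(x,r)) \geq \tfrac{\alpha}{2}\, r^n$, and openness of $V$ allows us to further impose $\cnj{B(x,r)}\subset V$. The resulting collection of closed balls is a fine cover of $F_\alpha$ with radii bounded by $\rho$, so the $5r$-covering lemma delivers a disjoint countable subfamily $\{\cnj{B(x_i,r_i)}\}$ with $F_\alpha\subseteq\bigcup_i \cnj{B(x_i,5r_i)}$.

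Disjointness of the $B(x_i,r_i)\subseteq V$ together with the lower bound on each $\mu(B(x_i,r_i))$ gives
\[
\sum_i r_i^n \;\leq\; \frac{2}{\alpha}\sum_i \mu(B(x_i,r_i)) \;\leq\; \frac{2}{\alpha}\,\mu(V) \;<\; \frac{2\eta}{\alpha},
\]
so that
\[
\cH^n_{10\rho}(F_\alpha) \;\leq\; \sum_i (10\, r_i)^n \;<\; \frac{2\cdot 10^n\,\eta}{\alpha}.
\]
Letting $\eta\to 0$ first, and then $\rho\to 0$, yields $\cH^n(F_\alpha)=0$. The only subtle step is recognizing that the finiteness of $\int_{B_0}\ve\,dz$ together with the support condition $\ve \equiv 0$ on $E$ is precisely what secures the outer regularity estimate $\mu(V)<\eta$ on open neighborhoods of $F_\alpha$; once that is in hand the $5r$-covering argument is routine, and non-negativity of $\ve$ automatically promotes $\limsup = 0$ to $\lim = 0$.
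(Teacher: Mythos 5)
Your proof is correct, and it takes a genuinely different route from the paper's. You reduce the statement to the classical density comparison lemma for Radon measures: if $\mu(F_\alpha)=0$ and the upper $n$-density of $\mu$ is at least $\alpha$ at every point of $F_\alpha$, then $\cH^n(F_\alpha)=0$. Your argument uses outer regularity of the finite measure $\mu|_{B_0}$ to trap $F_\alpha$ in an open set of small $\mu$-mass, and then a $5r$-Vitali covering inside that open set to bound the Hausdorff content directly; the key observation that drives everything is $\mu(F_\alpha)\le \mu(E)=0$, which you correctly attribute to the fact that $\ve$ vanishes on $E$. The paper instead argues by contradiction with a repeated Besicovitch covering: it extracts a compact $G\subset E$ of positive $\cH^n$-measure on which the upper density exceeds some $t>0$, and inductively builds pairwise disjoint sets $L^1,L^2,\dots$ inside $B_0$, each of $\mu$-mass $\gtrsim t\,\cH^n_\infty(G)$, by removing thin neighborhoods of $G$ at each stage to force disjointness; the infinitude of these sets then contradicts $\mu(B_0)<\infty$. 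Both arguments rest on the same two ingredients (a covering lemma and the fact that $\mu$ carries no mass on $E$), but yours is the standard, direct proof one finds in Mattila and avoids the inductive bookkeeping; the paper's is more self-contained in that it never names outer regularity explicitly (it only uses continuity from above for shrinking neighborhoods of the compact set $G$). One small point worth making explicit in your write-up: since $\mu$ is only known to be finite on $B_0$, you should replace $V$ by $V\cap B_0$ (still open and still containing $F_\alpha$, since $B_0$ is open and $F_\alpha\subset E\cap B_0$) so that every ball in your Vitali family lies inside $B_0$; this is automatic once $\rho$ is small but should be stated.
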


\begin{proof}
	Without loss of generality, we can assume $E\subseteq B_{0}$. Let $d\mu(z) =\ve(z)\, dz|_{E^c}$. For $x\in E$ and $r>0$, set
	\[
	a(x,r) =\frac{\mu(B(x,r))}{ r^{n}} = r^{-n} \int_{B(x,r)}\ve(z)dz .\]
	Suppose there is $F\subset E$ with $\cH^{n}(F)>0$ such that 
	\[
	\limsup_{r\rightarrow 0}a(x,r)>0.
	\]
	Without loss of generality, we can assume $F$ is bounded. Then there is $t>0$ and a compact set $G\subset F$ with $\cH^{n}(G)>0$ and 
	\[
		\limsup_{r\rightarrow 0} a(x,r)> t>0 \;\; \mbox{ for all }x\in G. 
	\]
	For each $x$, pick $r_{x,1}>0$ so that  $B(x,r_{x,1})\subseteq B_{0}$ and $ a(x,r_{x,1})>t$. Let $B_{j}^{1}$ be a Besicovitch subcovering from $\cG_{1}:=\{B(x,r_{x}^{1}):x\in G\}$, that is, a countable collection of balls in $\cG_{1}$ so that 
	\[
	\one_{G} \leq \sum_{j} \one_{B_{j}^{1}}\lec_{n} 1. 
	\]
	Since the $B_{j}^{1}$ come from $\cG$, we have that for all $j$,
	\[
	\frac{\mu(B_{j}^{1})}{r_{B_{j}^{1}}^{n}}
	=a(x_{B_{j}^{1}},r_{B_{j}^{1}}) >t.
	\]
	Let 
	\[
	L_1 = \bigcup B_{j}^{1} \backslash E.\]
	Then since the $B_{j}^{1}$ have bounded overlap and come from $\cG_{1}$,
	\begin{align*}
	\mu(L_{1})
	& = \int_{L_{1}} d\mu 
	 \gec \int_{L_{1}} \sum_{j} \one_{B_{j}^{1}}d\mu 
	= \sum_j \mu(B_{j}^{1})>t\, \sum_j r_{B^1_{j}}^{n}
	\geq t\,\cH_{\infty}^{n}(G).
	\end{align*}
	Since $\mu(G)=0$, there is $\delta_{1}>0$ so that if $G_{\delta_{1}} = \{x\in \R^{n}:\dist(x,G)<\delta\})$
	and $L^{1} = L_{1}\backslash G_{\delta_{1}}$, then
	\[
	\mu(L^{1})> \frac{\mu(L_{1})}{2} \geq \frac{t}{2} \cH_{\infty}^{n}(G).\]
	Now inductively, suppose we have constructed disjoint sets $L^{1},...,L^{k}\subseteq B_{0}$ where
	\[
	\mu(L^{j})\gec t \cH_{\infty}^{n}(G) \;\; \mbox{ for all }j=1,2,...,k.
	\]
	and there is $\delta_{k}>0$ so that $L^{1}\cup\cdots \cup L^{k}\cap G_{\delta_{k}}=\emptyset$. 
	
	For each $x\in G$, we may find $r_{x,k+1}\in (0,\delta_{k})$ so that $B(x,r_{x,k+1})\subseteq B_{0}$ and  $a(x,r_{x,k+1})>t$. Let $\{B_{j}^{k+1}\}$ be a Besicovitch subcovering of the collection $\cG_{k+1} = \{B(x,r_{x,k+1}): x\in G\}$, so 
		\[
	\one_{G} \leq \sum_{j} \one_{B_{j}^{k+1}}\lec_{n} \one_{L_{k+1}},
	\]
	where $L_{k+1}=\bigcup_{j} B_{j}^{k+1}$. Since $G$ has $\mu(G)=0$, there is $\delta_{k+1}\in (0,\delta_{k})$ so that $L^{k+1} = L_{k+1}\backslash G_{\delta_{k+1}}$ has,
	\begin{align*}
	\mu(L^{k+1}) 
	& \geq \frac{\mu(L_{k+1})}{2} 
	=\frac{1}{2} \int \one_{L_{k+1}} d\mu \gec \int  \sum_j \mu(B^{k+1}_{j})\geq t\,\sum_j r_{B_{j}^{k+1}}^{n} \\
	& \gec t\, \cH^{n}(G).
	\end{align*}
	Also note that by our induction hypothesis
	\[
	L^{k+1}\subseteq L_{k+1}\subseteq G_{\delta_{k}}\subseteq (L^{1}\cup\cdots \cup L^{k})^{c}.\] 
	Thus, by induction, we can come up with a sequence of disjoint sets $L^{k}\subseteq B_{0}$ so that $\mu(L^{k})\gec t\, \cH^{n}(G)$ for all $k$, which contradicts the finiteness of $\mu$ since $\ve$ is locally integrable. 
\end{proof}

Now we finish the proof of Proposition \ref{carlesonprop}. By the previous lemma, we have that for $\cH^{n}$-a.e. $\xi \in B_{0}\cap \d\Omega$ that 

\begin{equation}
\lim_{r\rightarrow 0} r^{-n}\int_{B(\xi,r)}|\grad A|^{2} \dist(z,\Omega^{c})\,dz=0.
\end{equation}
Let $\xi \in B_{0}\cap \d\Omega$  be such a point. There is a universal constant $M$ depending on the uniformity constants so that, for all $r>0$, there is a $MC$-uniform domain $\Omega_{r}$ such that 
\[
\Omega\cap B(\xi,r)\subset \Omega_{r}\subset \Omega\cap B(\xi,Mr).
\]
This follows from the proof of \cite[Lemma 3.61]{HM14}. See also \cite[Lemma 4.1]{Azz16} or \cite[Lemma 6.3]{JK82}. 

%In particular, since $\Omega_{r}$ is $MC$-uniform, there is a corkscrew point $x_{r}\in \Omega_{r}$ for which $\Omega_{r}$ is also $CM$-John. 
Hence, by Cauchy-Schwarz inequality,
\begin{align*}
\inf_{C}r^{-(n+1)} &  \int_{B(\xi,r)\cap \Omega }|A-C| \\
&\lesssim  \inf_{C} \left(r^{-(n+1)} \int_{B(\xi,r)\cap \Omega }|A-C|^{2} \right)^{1/2}\\
& \leq \inf_{C} \left(r^{-(n+1)} \int_{\Omega_{r}}|A-C|^{2} \right)^{1/2} \\
&  \stackrel{\eqref{poincare2}}{\lec} |\Omega_{r}|^{\frac{1}{2(n+1)}} \left(\frac{1}{r^{n+1}} \int_{\Omega_{r}}|\grad A|^{2} \dist(z,\Omega_{r}^{c})dz\right)^{1/2} \\
& \lec \left( r^{-n} \int_{\Omega\cap B(\xi,Mr)}|\grad A|^{2} \dist(z,\Omega^{c})dz \right)^{1/2}\rightarrow 0,\,\,\textup{as} \,\,r \to 0.
\end{align*}

\frenchspacing
\bibliographystyle{alpha}
%\bibliography{reference}

\newcommand{\etalchar}[1]{$^{#1}$}
\def\cprime{$'$}

\end{document}